\makeatletter \@addtoreset{figure}{chapter} \makeatother
\long\def\@makecaption#1#2{%
   \vskip 10\p@
   \setbox\@tempboxa\hbox{{#1}\ \ #2}%
   \ifdim \wd\@tempboxa >\hsize
       {#1}\ \ #2\par
   \else
       \hbox to\hsize{\hfil\box\@tempboxa\hfil}%
   \fi}
\let\@afterindentrestore\@afterindentfalse
\let\@afterindentfalse\@afterindenttrue
\let\origdoublepage\cleardoublepage
\newcommand{\clearemptydoublepage}{%
  \clearpage
  {\pagestyle{empty}\origdoublepage}%
}
\let\cleardoublepage\clearemptydoublepage
\newtheorem{theorem}{Theorem}[chapter]
\newtheorem{lemma}[theorem]{Lemma}
\newtheorem{proposition}[theorem]{Proposition}
\newtheorem{corollary}[theorem]{Corollary}
\newtheorem{example}{Example}[chapter]
\newtheorem{remark}{Remark}[chapter]
\newtheorem{definition}{Definition}[chapter]
\renewenvironment{example}{\smallskip\noindent{\bf Example. }}{\smallskip}
\renewenvironment{remark}{\smallskip\noindent{\bf Remark. }}{\smallskip}
\def\qed{\nobreak\quad\raise -2pt\hbox{\vrule\vbox to 10pt{\hrule width 6pt
\vfill\hrule}\vrule}\par\vspace{2ex}}
\newcommand{\dif}{\text{d}}
\renewcommand{\dim}{\text{dim}}
\newcommand{\re}{\text{Re}}
\renewcommand{\i}{\text{i}}
\renewcommand{\exp}{\text{e}}
\newcommand{\im}{\text{Im}}
\renewcommand{\theequation}%
{\arabic{chapter}.\arabic{section}.\arabic{equation}}
\def\qed{\hfill \rule{4pt}{7pt}}
\newcommand{\newchap}[1]{ \chapter{#1}
\chead{\fancyplain{}{\small Chapter \thechapter. {#1} }}
\lhead{\fancyplain{}{\small }} \rhead{\fancyplain{}{\small }} }
\begin{document}

\newpage

%------------------------Frontгнгнгнгнгнгнгнгнгнгнгнгнгнгнгн
\thispagestyle{empty} \vspace*{.5cm} \centerline{\Large \bf
Doctoral Dissertation} \vspace{2cm}
\begin{center}
    {\huge\bf Combinatorial Analysis for Pseudoknot RNA with Complex Structure}
\end{center}
\vskip 1in  \begin{center}
        \large\rm By\\
       Yangyang Zhao
    \end{center}

    \vfill
\begin{center}
        A thesis submitted to the Graduate School of Nankai University\\
        in partial fulfillment of the requirement for the degree of\\
        Doctor of Philosophy\\
        [1.5cm]Nankai University \\
        Tianjin, People's Republic of China\\
        March 2012
    \end{center}
    \vskip0.75in
    \begin{center}
        \rm \copyright\ Copyright  2012
    \end{center}

\cleardoublepage
%%%%%%%%%%%%%%%%%%%%%%%%%%%%%%%%%%%%%%%%%%%%%%%%%%%%%%%%%%%%%%%%%%%%%%%%%%%%%%%%%%%%%%%%%%%%%%%%%%%%%%%%%%%%%%%%%%%%%%%
%\def\papername{Abbreviation}\label{symbolpage}
%\pagestyle{fancyplain} \chead{\fancyplain{}{\small
%\begin{CJK*}{GBK}{song}  Abbreviation \end{CJK*}}} \lhead{\fancyplain{}{\small }}
%\rhead{\fancyplain{}{\small }}

\pagenumbering{Roman}

%\vskip 24pt
%\begin{center}
%\begin{CJK*}{GBK}{hei}
%{\fontsize{20}{20}\selectfont Abbreviation  \\
%[18pt]}
%\end{CJK*}
%\end{center}
%\vspace{20mm}

%------------------------Abstract гнгнгнгнгнгнгнгнгнгнгнгнгнгнгн
\newpage
\def\papername{Abstract}\label{abstractpapge}
\pagestyle{fancyplain} \chead{\fancyplain{}{\small Abstract}}
\lhead{\fancyplain{}{\small }} \rhead{\fancyplain{}{\small }}
 \vskip 24pt
\addcontentsline{toc}{chapter}{Abstract (in English)}

\begin{center}
{\bf \fontsize {18}{18}\sffamily Abstract \\ [18pt]}
\end{center}
There exists many complicated $k$-noncrossing pseudoknot RNA structures in nature based on some special conditions. The special characteristic of RNA structures gives us great challenges in researching the enumeration, prediction and the analysis of prediction algorithm. We will study two kinds of typical $k$-noncrossing pseudoknot RNAs with complex structures separately.

The main content of Chapter 1 introduces the background and the significance of the project. We also present the chief results of this thesis.

In Chapter 2, we mainly illustrate the basic concepts, including the following items:
\begin{enumerate}
\item {\bf The construction of RNA}, containing the secondary structures, pseudoknot structures and substructure of RNA.
\item {\bf Symbolic enumeration method}. It is one of the most important method for studying the enumeration of RNA structures. All the generating functions are concluded by this method.
\item {\bf Combinatorial analysis}, basic tool for asymptotically analyzing the enumeration of RNA structures. We mainly show $D$-finiteness, $P$-recursive, singularity analysis and distribution analysis in detail.
\end{enumerate}

In Chapter 3, we discuss a kind of complicated structures named modular $k$-noncrossing diagram. Such diagrams frequently appear in RNA prediction. The prediction program based on modular $k$-noncrossing diagram is much more difficult to code. Therefore, we need to make the diagram clear, including the way of building it and the time complexity we build. To reach the purpose, we invent a whole new method to enumerate and find the asymptotic behavior of modular $k$-noncrossing diagrams. The method is made with reconstructing $V_k$-shape, recurrence, differential equation and symbolic method. Finally we get the result that the asymptotic formula of the number of modular, $k$-noncrossing diagram with $n$ nucleotides, which is,
\begin{equation*}
{\sf Q}_k(n)\sim  c_{k}\, n^{-((k-1)^2+(k-1)/2)}\,
(\gamma_{k}^{-1})^n ,\quad k\geq 3,
\end{equation*}
where $\gamma_k$ is the minimum real solution of $\vartheta(z)=\rho_k^2$  and  $c_k$ is some positive constant, see eq.~(\ref{E:gut1}).

In Chapter 4, we emphasize the research on $k$-noncrossing skeleton structure, especially when $k=3$. The skeleton structure is well known in Huang's folding algorithm for $3$-noncrossing RNA structures. The algorithm constructs a skeleton tree first, in which every leaf is a skeleton structure. The structure of the skeleton tree is so complicated that the complexity of the algorithm is only concluded by experiment data in their paper. We will give a proof for the complexity and obtain the asymptotic formula of the number of canonical $3$-noncrossing skeleton diagrams with $n$ vertices,
\begin{equation*}
{\sf S}^{[4]}_{3}(n)\sim  C'\, n^{-5}\,
(\eta^{-1})^n,
\end{equation*}
where $C'\approx7892.16$, $\eta\approx 0.4934$. Afterwards, we study the statistical properties of arcs in canonical $3$-noncrossing skeleton diagrams. We prove the central limit theorem for arcs distributions in terms of their bivariate generating function. Our result allows to estimate the arc numbers in a random canonical $3$-noncrossing skeleton diagrams.

\noindent{\bf Keywords}: pseudoknot RNA, $k$-noncrossing, matching, modular diagram, skeleton, symbolic enumeration, OGF, $D$-finiteness, $P$-recursive, singularity analysis, central limit law.
%\vspace{1cm}
\vskip 18pt

%------------------------ ─┐┬╝ гнгнгнгнгнгнгнгнгнгнгнгнгнгнгн

\newpage
\pagestyle{fancyplain} \chead{\fancyplain{}{\small Contents}}
\lhead{\fancyplain{}{\small }} \rhead{\fancyplain{}{\small }}

\setcounter{tocdepth}{1} \tableofcontents

\thispagestyle{fancyplain}

\contentsline{toc}{\numberline{}\bf \hspace{-0.8cm}
Bibliography}{\hfill\bf \pageref{bibliographypage}}\\ \vskip
-1.2cm \contentsline{toc}{\numberline{}\bf \hspace{-0.8cm}
Acknowledgement}{\hfill\bf \pageref{acknowledgepapge}}\\ \vskip
-1.2cm \contentsline{toc}{\numberline{}\bf \hspace{-0.8cm}
Resum\'{e}}{\hfill\bf \pageref{resumepapge}}

\newpage
%------------------------ ╒¤╬─ гнгнгнгнгнгнгнгнгнгнгн{
%%%%%%%%%%%%%%%%%%%%%%%%%%%%%%%%%%%%%%%%%%%%%%%%%%%%%%%%%%%%%%%%%%%%%%%%%%%%%%%%%%%%%%%%%%%%%%%%%%%%%%%%%%%%%%%%%%%%%%%
\newchap{Introduction}
\setcounter{page}{0} \pagenumbering{arabic}
\thispagestyle{fancyplain}

\section{Background}
The Ribonucleotide acid (RNA) is a single stranded molecule of four different nucleotides \textbf{A}, \textbf{C}, \textbf{G} and \textbf{U}, together with Watson-Crick (\textbf{A}-\textbf{U}, \textbf{G}-\textbf{C}) and (\textbf{U}-\textbf{G}) base pairs. It is one of the three major macromolecules that are essential for all known forms of life. The sequence of nucleotides allows RNA to encode genetic information. Over the decades of years, for the need of medicine and pharmacy, scientists give an intense interesting in the structure of RNA. The traditional way of researching RNA structure is cutting RNA by RNA enzyme or some reagent and analyzing by sequencing gel, which is time-consuming, high-tech demanding and expensive. To solve these problems, people turned to focus on using computer to predict a specific RNA structure.

\restylefloat{figure}\begin{figure}[h!t!b!p]
\centering
\scalebox{0.8}[0.8]{\includegraphics{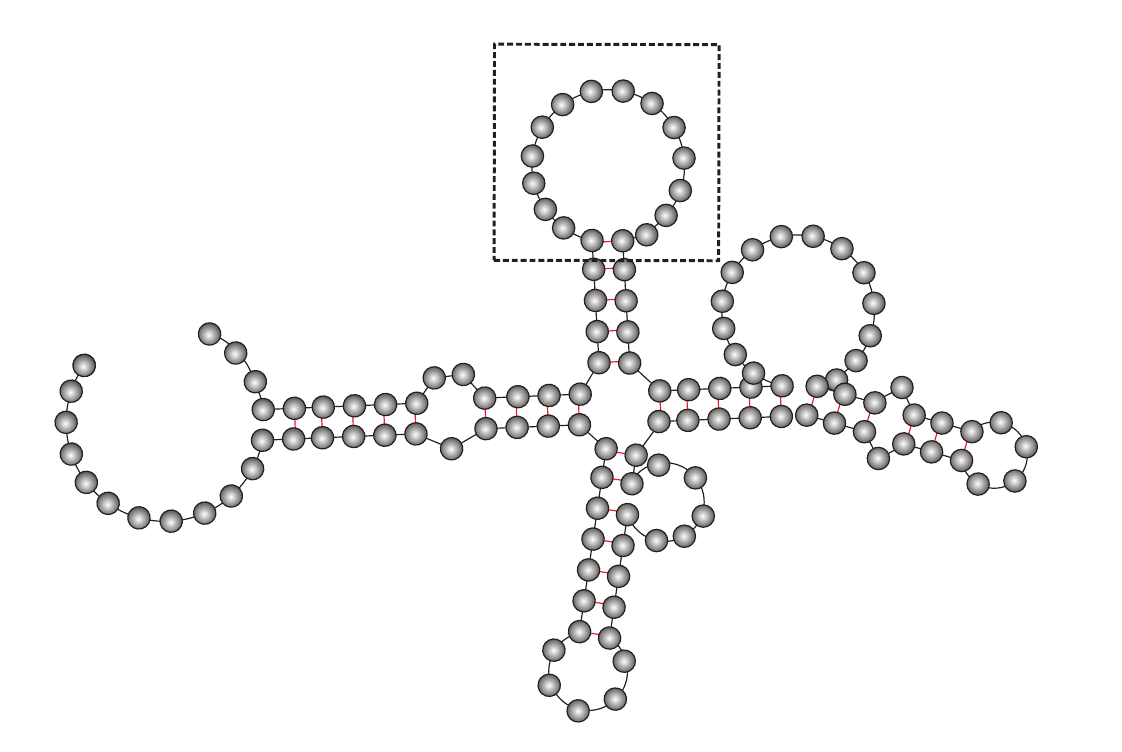}}
\caption{\small HAR1F RF00635 RNA secondary structure represented as a  planar graph. There are more than two isolated vertices in a hairpin marked by a dashed-line box.}
\label{F:secondary}
\end{figure}

RNA structures are separated into two categories, which are secondary structures and pseudoknot structures respectively. The secondary structures are the first been studied. In 1978, Waterman etc. completely investigate the combinatorial properties of secondary structure \cite{Waterman:78b}\cite{Waterman:79}\cite{Waterman:93}\cite{Waterman:94a}. Assume that the number of secondary structures with $n$ nucleotides having arcs of length at least $\lambda$ is ${\sf T}^{[\lambda]}_2(n)$. We have the recursion that
\begin{equation}
{\sf T}^{[\lambda]}_2(n)={\sf T}^{[\lambda]}_2(n-1)+\sum_{i=0}^{n-\lambda-1}{\sf T}^{[\lambda]}_2(i){\sf T}^{[\lambda]}_2(n-i-2),
\end{equation}
where ${\sf T}^{[\lambda]}_2(n)$ for $0\leq n\leq \lambda$. The recursion above tells us the way of constructing an arbitrary secondary structure and it is also the fundamental of efficient minimum free energy folding (mfe) of RNA secondary structure \cite{Hofacker:02,Hofacker:03,Hofacker:94}. The RNA secondary structure does not allow one or two isolated nucleotides in any one of hairpin loops, see Fig. \ref{F:secondary}\footnote{The planar graph of HAR1F RF00635 is drawn by reference to \url{http://en.wikipedia.org/wiki/File:HAR1F_RF00635_rna_secondary_structure.jpg}.}.

\restylefloat{figure}\begin{figure}[h!t!b!p]
\centering
\includegraphics{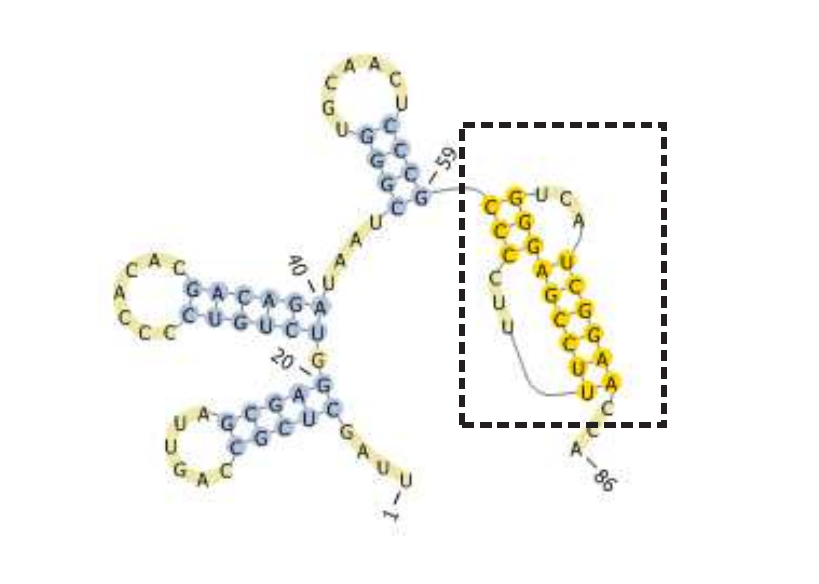}
\caption{\small A planar graph of turnip yellow mosaic virus. The substructure marked by a dashed-line box is the {\it pseudoknot}.}\label{F:yellow_mosaic}
\end{figure}

There exists other RNA structures having cross-serial nucleotide interactions which is different from secondary structure. We call them pseudoknot structure \cite{Stadler:99}. The pseudoknot structure was first discovered in yellow mosaic virus in \cite{Staple:05}, see Fig. \ref{F:yellow_mosaic}\footnote{The figure is cited from \url{http://pseudoviewer.inha.ac.kr/Examples.htm}.}. Unlike the secondary structure, they are more complicated and hard to find a bijection between them and some combinatorial structure. It gives people who want to predict pseudoknot structures a big trouble. In 1999, Elena Rivas and Sean R. Eddy gave us a classical dynamic programming algorithm for RNA pseudoknot structure \cite{Rivas:99}. Meanwhile, Lyns{\o} etc. \cite{lyngso:00} advanced a new predicting method based on a energy models. However, for the lack of the knowledge about the exact combinatorial meaning of pseudoknot, they cannot control the crossing number. For this reason, people began to study the combinatorial properties of RNA pseudoknot structure \cite{Waterman:80}. In the year of 2007, the paper \cite{Chen} written by Chen and Stanley etc. showed us an involution combinatorial structure, ``$k$-noncrossing matching". It actually builds a connection between a RNA pseudoknot structure and a $k$-noncrossing matching. Based on this idea, Reidys etc. generalize the $k$-noncrossing matching to specified $k$-noncrossing diagram, which have bijection to the RNA pseudoknot structure \cite{Reidys:08}\cite{Reidys:07lego}\cite{Reidys:08han}. Furthermore, the generating functions of some kinds of $k$-noncrossing diagram were computed the asymptotic formula, which gives us a proof of growth rate when the number of nucleotides is increasing.

\restylefloat{figure}\begin{figure}[h!t!b!p]
\centering
\scalebox{1.2}[1.2]{\includegraphics{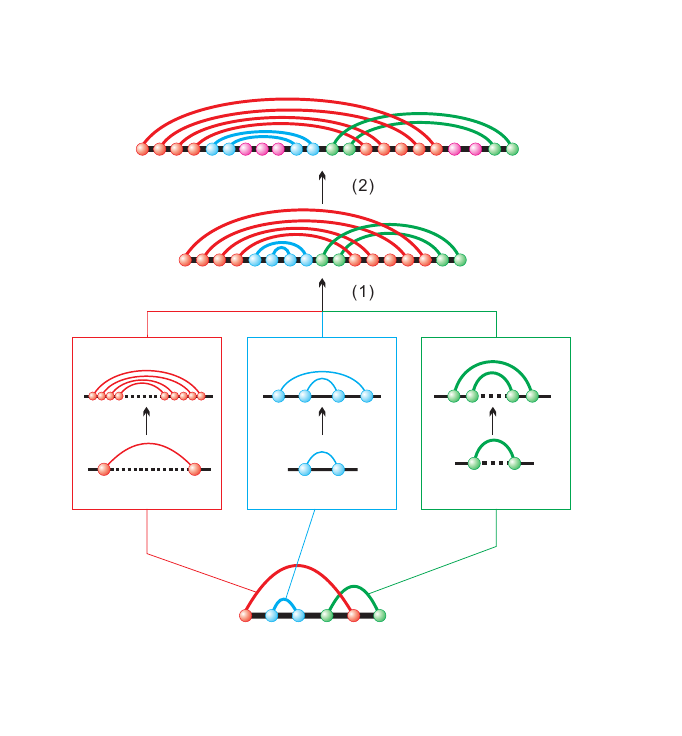}}
\caption{\small Modular $k$-noncrossing diagrams: the inflation
method. A modular $3$-noncrossing diagram (top) is derived by
inflating its ${\sf V}_3$-shape (bottom) in two steps. First we
individually inflate each shape-arc into a more complex
configuration and second insert isolated vertices (purple).}
\label{F:inflating}
\end{figure}

Around the year of 2010, Jin \cite{Reidys:07pseu} focused on deriving the generating functions and asymptotic formula of $3$-noncrossing diagram, while Han \cite{Reidys:08han} focused on the diagram with $4$ or higher cross number. Ma etc, \cite{Reidys:08ma} had further studies on some $k$-noncrossing diagrams satisfying some constraints, i.e.\textit{ $k$-noncrossing $\tau$-canonical diagrams}. The constraints specify minimum arc-length and stack-length of the diagrams, due to the biophysical constraints. All these kinds of diagrams belong to the class of diagrams, whose arc-length $\lambda$ satisfies $1\leq \lambda\leq \tau+1$. It means that the method used to compute the generating function and asymptotic formula of such a class of diagrams cannot be applied to some $k$-noncrossing $\tau$-canonical diagram with arc length larger than $\tau+1$, such as $k$-noncrossing 2-canonical diagrams with arc length not less than 4, which is also known as modular $k$-noncrossing diagram. We have to develop a new way to treat this kind of diagrams. Fortunately, the ${\sf V}_k$-shapes will help us find the answer. It is like a core of a $k$-noncrossing diagram. With an action called ``inflation", every $k$-noncrossing diagrams can be produced by adding isolated vertices and stacks based on some ${\sf V}_k$-shape. The ${\sf V}_k$-shape in \cite{Reidys:09shape} can only be inflated to a $k$-noncrossing $\tau$-canonical diagram with the arc length $\lambda$ satisfying $1\leq \lambda\leq \tau+1$. We have to reform the traditional ${\sf V}_k$-shape, called colored ${\sf V}_k$-shape, so as to adapt to the new problem. The main idea is to build modular $k$-noncrossing diagrams via inflating their colored ${\sf V}_k$-shapes \cite{Zhao:10}, see Fig. \ref{F:inflating}. The inflation gives
rise to ``stem-modules'' over shape-arcs and is the key for the
symbolic derivation of ${\bf Q}_k(z)$. One additional observation
maybe worth to be pointed out: the computation of the generating
function of colored shapes in Section~\ref{S:color}, hinges on the
intuition that the crossings of short arcs are relatively simple and
give rise to manageable recursions. The coloring of these shapes
then allows to identify the arc-configurations that require special
attention during the inflation process. Our results are of
importance in the context of RNA pseudoknot structures
\cite{Rietveld:82} and evolutionary optimization
\cite{Reidys:02}.  The method of generating modular $k$-noncrossing diagrams gives contributions to analyze the joint structure \cite{Thomas:11}\cite{Thomas:12}.

\restylefloat{figure}\begin{figure}[ht]
\centerline{\includegraphics[width=1\textwidth]{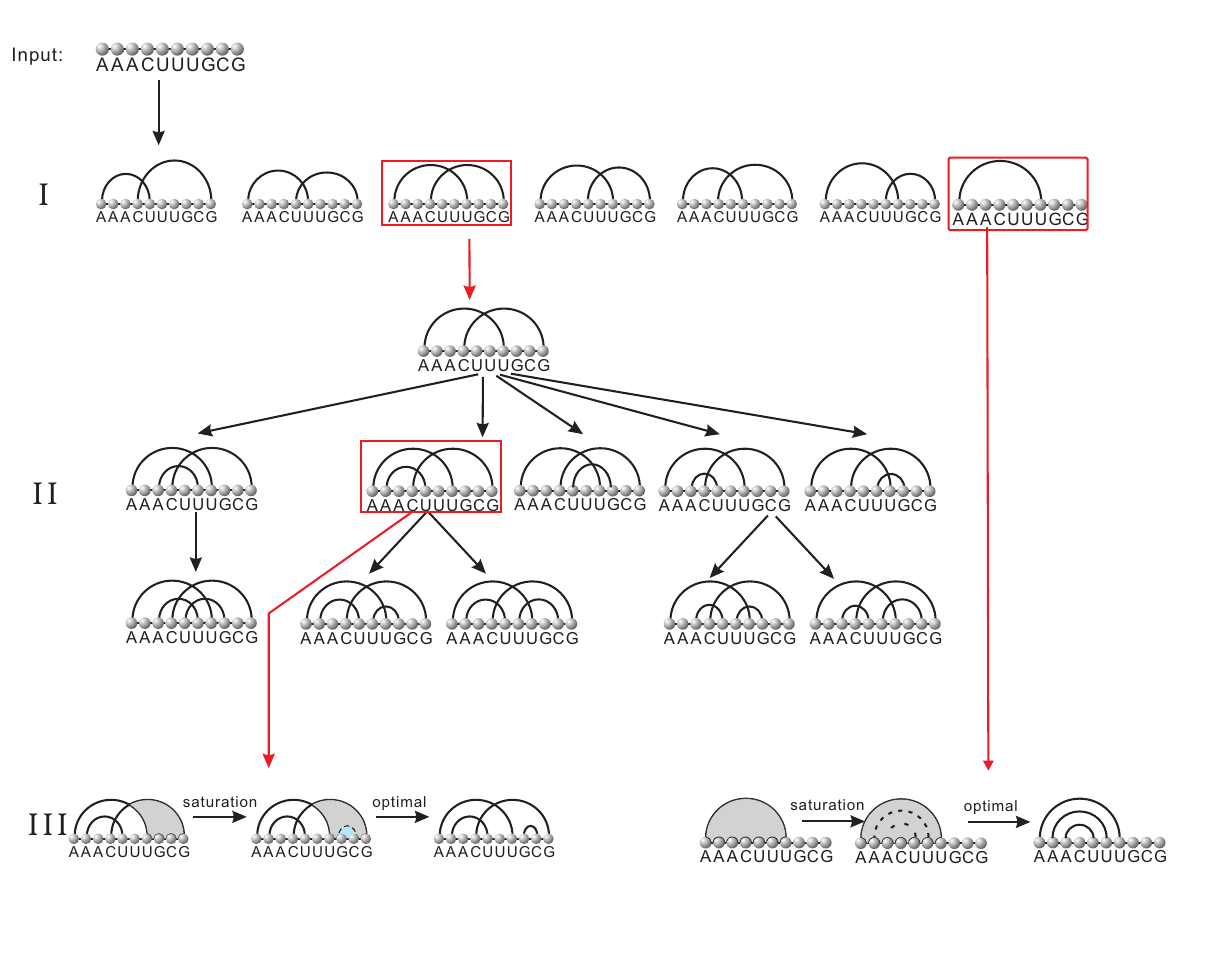}
\hskip8pt}
\caption{\small An outline of {\tt cross}\cite{Huang:09}: the generation of motifs (I), the
construction of skeleta-trees, that are rooted in irreducible shadows (II) and
the saturation (III). During the latter we derive via DP-routines optimal
fillings of intervals of skeleta. The red arrows represent the processing of
two motifs, one of which leads to the generation of a skeleton tree, while
the other leads directly to the saturation routine.
}
\label{F:sketch}
\end{figure}

Besides modular $k$-noncrossing diagram, another diagram, called {skeleton diagram}, is worth to study. It has a special characteristic that every arc is crossed by another arc. In the world of nature, the skeleton diagram rarely appears independently. However, skeleton diagram is an important substructure in every pseudoknot RNA structure. It is like a skeleton covering outside a pseudoknot diagram, and small substructures are filled in the ``gap" of the skeleton diagrams, like muscle, see Fig. \ref{F:skeleton-intro}. Making the use of this special property, we can design a dynamic program to predict RNA structure. In 2008, Huang etc. \cite{Huang:09} designed a folding $3$-noncrossing algorithm, {\sf cross}, based on the skeleton diagrams. The main idea is generating motifs first, then constructing a skeleta trees, rooted in irreducible shadows and saturation, during which via DP-routines, optimal fillings of skeleta-intervals are derived, see Fig. \ref{F:sketch}. The key difference to any other pseudoknot folding algorthm is the fact that {\sf cross} has a transparent, combinatorially specified, output class. {\sf Cross} is by design an algorithm of exponential time complexity by virtue of its construction of its skeleta-trees. Only in its saturation phase it employs vector versions of DP-routines. The fact shows us we must make clear the time complexity of generating skeleta-trees if we want to know the time complexity of the algorithm. It turns to be a problem that what the statistics property of the skeleta-trees is. Since any skeleton diagram can be found in a skeleta-tree, we need to find a way to enumerate the skeleton diagrams.

\restylefloat{figure}\begin{figure}[h!t!p!b]
\centering
\includegraphics{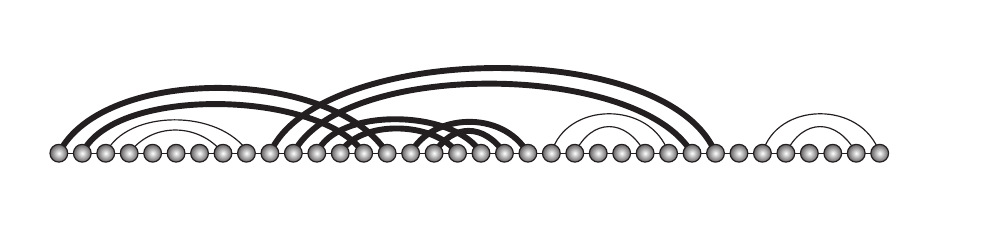}
\caption{A skeleton diagram in a pseudoknot structure. The skeleton diagram is marked by thick line. The substructures marked by thin line are embedded in the intervals generated through splitting nucleotide (grey balls) sequence by the skeleton diagram.\label{F:skeleton-intro}}
\end{figure}

%%%%%%%%%%%%%%%%%%%%%%%%%%%%%%%%%%%%%%%%%%%%%%%%%%%%%%%%%%%%%%%%%%%%%%%%%%%%%%%%paper1
\section{Outline of the thesis}
Besides introducing some basic concepts in this thesis, we mainly discuss about two subjects, which are the modular, $k$-noncrossing diagrams and the $3$-noncrossing skeleton diagrams, respectively. The thesis is organized as following.

In Chapter 1, we introduce the background and motivation of the thesis and illustrate the outline of the result and key method. The basic concepts and fundamental theorems which are highly used in the thesis, are present in Chapter 2.

The main results will be shown with the beginning of Chapter 3. The combinatorial analysis of modular, $k$-noncrossing diagram is the main content of Chapter 3 . Based on ${\sf V}_k$-shape, we compute the generating function of colored shape, ${\bf W}_k(x,y,w)$ and ${\bf I}_k(x,y,z,w)$, which are
\begin{equation}\label{chapter1:3gf}
{\bf W}_k(x,y,w)=(1+x)v \, {\bf F}_k\left(x(1+x)v^2\right),
\end{equation}
 and
\begin{equation}\label{chapter1:5gf}
{\bf I}_k(x,y,z,w,t)=\frac{1+x}{\theta}{\bf F}_k
\left(\frac{x(1+(2w-1)x+(t-1)x^2)}{\theta^2}\right),
\end{equation}
where ${\bf F}_k$ is the generating function of $k$-noncrossing matching (see Section ) and
\begin{eqnarray*}
 v&=&\left((1-w)x^3+(1-w)x^2+(2-y)x+1\right)^{-1}\\
 \theta&=&1-(y-2)x+(2w-z-1)x^2+(2w-z-1)x^3
\end{eqnarray*}
for $k>2$.

With help of the above two generating functions of colored shapes and symbolic enumeration method, we obtain the generating function of modular, $k$-noncrossing diagrams when $k>2$, which is
\begin{eqnarray}\label{chapter1:gut0}
{\bf Q}_{k}(z) &=&
\frac{1-z^2+z^4}{q(z)}
{\bf F}_k\left(\vartheta(z)\right),
\end{eqnarray}
where
\begin{eqnarray}
q(z)& = & 1-z-z^2+z^3+2z^4+z^6-z^8+z^{10}-z^{12}
\nonumber \\
\vartheta(z) & = &
\frac{z^4(1-z^2-z^4+2z^6-z^8)}{q(z)^2}.
\end{eqnarray}
Eq. (\ref{chapter1:gut0}) shows us the generating function of modular, $k$-noncrossing diagrams is composed of an rational expression and a composite function. Using the asymptotic method introduced in Chapter 2, we are able to conclude the asymptotic formula of modular, $k$-noncrossing diagrams with $n$ vertices, which is
\begin{equation*}
{\sf Q}_k(n)\sim  c_{k}\, n^{-((k-1)^2+(k-1)/2)}\,
(\gamma_{k}^{-1})^n ,\quad \text{for some {$c_{k}^{}>0$}}.
\end{equation*}

After we finish discussing about modular, $k$-noncrossing diagram, we start to focus another interesting diagram, $3$-noncrossing skeleton diagram, which will be studied in Chapter 4 \cite{Zhao:12}. Similarly as modular, $k$-noncrossing diagram, the generating function of $3$-noncrossing skeleton diagram should be given by inflating its shape (we call it $3$-noncrossing skeleton shape), and the conclusion is
\begin{equation}\label{chapter1:s34GF}
{\bf S}^{[4]}_{3}(z)=(1-z){\bf G}\left(\left({{\sqrt{w_0(z)}z}\over{1-z}}\right)^2\right),
\end{equation}
where
\begin{eqnarray*}
{\bf G}(x)={\bf S}(x)-1-x,
w_0(z)={z^4 \over {1-z^2+z^6}}.
\end{eqnarray*}
Here ${\bf S}(x)$ is the generating function of $3$-noncrossing skeleton shape. Before doing asymptotic of $3$-noncrossing skeleton diagram, we must obtain the generating function of ${\bf S}(x)$ and doing combinatorial analysis. The main idea is utilizing the relationship between skeleton diagram and irreducible structure \cite{Jin:10} to compute the generating function of skeleton matching (a matching satisfying the condition of skeleton). Then we ``shrink" the skeleton matching to the skeleton shape. Unlike the $3$-noncrossing matching, the generating function of $3$-noncrossing skeleton matching has no explicit formula. Thanks to the Bender's theorem \cite{Bender:86} and Eric's method \cite{Fusy:10}, we finally solve the asymptotic problem of $3$-noncrossing skeleton matching, which is
\begin{equation*}
{\sf S}(h)\ \sim\ C\cdot h^{-5}R^{-h}, \end{equation*}
where ${\sf S}(h)$ is the number of $3$-noncrossing skeleton matching with $h$ arcs and
\begin{eqnarray*}
C=\frac{24(-512+165\pi)^5}{3125(256-81\pi)^5\pi}\approx 3.03096,\quad
R=256\left(\frac{11}{8}-\frac{64}{15\pi}\right)^2\approx 0.0729.
\end{eqnarray*}

Using the asymptotic property of $3$-noncrossing matching and eq. (\ref{chapter1:s34GF}), we deduce that the asymptotic formula of $3$-noncrossing skeleton diagram is
\begin{equation*}
{\sf S}^{[4]}_{3}(n)\sim  C'\, n^{-5}\,
(\eta^{-1})^n ,\quad \text{for some {$C'>0$}},
\end{equation*}
where ${\sf S}^{[4]}_{3}(n)$ represents the number of canonical $3$-noncrossing skeleton diagrams with $n$ vertices and $C'\approx7892.16$, $\eta\approx 0.4934$.

%%%%%%%%%%%%%%%%%%%%%%%%%%%%%%%%%%%%%%%%%%%%%%%%%%%%%%%%%%%%%%%%%%%%%
%%%%%%%%%%%%%%%%%%%%%%%%%%%%%%%%%%%%%%%%%%%%%%%%%%%%5
\newchap{Basic concepts}
\thispagestyle{fancyplain}

In this chapter we provide the mathematical foundation of the thesis. The main object here is the generating function and asymptotic formula of $k$-noncrossing matchings, which will play a central role for in RNA pseudoknot structures.

We begin with the nomenclatures and notations needed for the thesis. It contains two kinds of expressions for RNA pseudoknot structure, the elements in the RNA structure, diagram,  and {\it $k$-noncrossing} {\it$\tau$-canonical}, etc.

Next we introduce the fundamental combinatorial tools for obtaining the generating functions of $k$-noncrossing diagrams. We use symbolic enumeration present by Flajolet \cite{Flajolet:07a} to get the generating function, avoiding the complicated recursive method.

After that we begin to discuss the combinatorial analyzing method for generating functions. The key method is the singularity analysis. However, before we introduce it, an important concept must be inducted, which is $D$-finiteness. The $D$-finiteness and singularity analysis are going to help us to analyze the generating function of $k$-noncrossing diagrams asymptotically.

We then conclude this chapter by the central limit theorem due to Bender \cite{Bender:73}. The central limit theorem shows that a sequence of random variables have a limit distribution Gaussian or normal distribution. The central limit theorem will be used in Chapter 4 to analyze the distribution of $3$-noncrossing skeleton diagrams with $n$ vertices and $h$ arcs.

\section{Nomenclatures and Notations}

An arbitrary RNA structures can be uniquely expressed by two forms respectively, which are planar graph and diagram. A {\it planar graph} of RNA structure is a graph that obtained by embedding the $3$-dimensional RNA structure into the plain, in which the nodes represent nucleotides and edges represent phosphodiester bond and Watson-Crick base pairs ({\bf A-U}, {\bf G-C} and {\bf U-G}), see Fig. \ref{F:yellow_mosaic}. If we ``straighten" the edges representing phosphodiester bond, we will get the other expression, {\it diagram}. The strict definition of diagram follows
\begin{definition}
A {\bf diagram} is a labeled graph over the vertex set $[n]={1,2,\ldots,n}$, represented by drawing its vertices on a horizonal line and its arcs $(i,j)$, $i<j$ in the upper half plain, where each vertex is connected with at most one arc, see Fig. \ref{F:intro-diagram}. A vertex in a diagram is called an {\bf isolated vertex} if there are not any arcs connecting with the vertex.
\end{definition}

\restylefloat{figure}\begin{figure}[h!t!b!p]
\centering \scalebox{0.8}[0.8]{\includegraphics{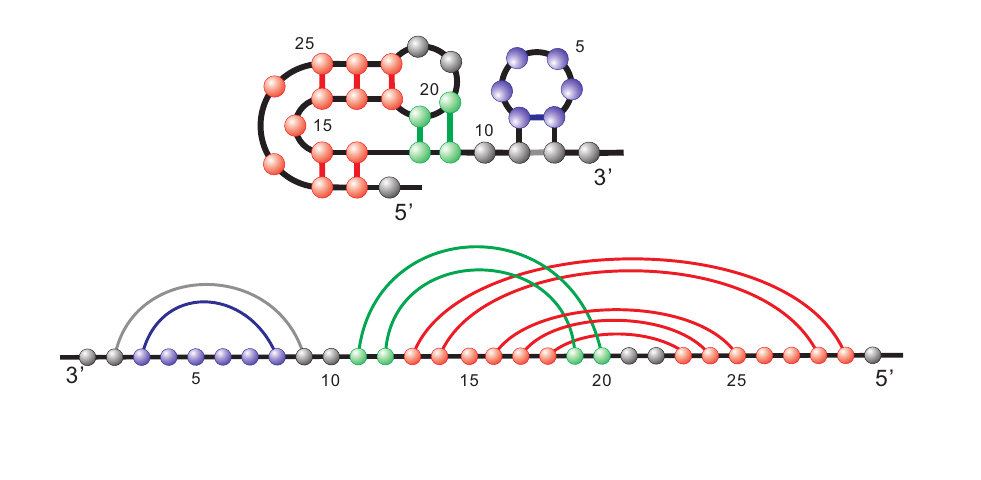}}
\caption{\small Features of a modular $3$-noncrossing diagram
represented as planar graph (top) and in diagram representation
(bottom). We display a stack of length two (green), a stem of size
two (red) and a $5$-arc (blue).} \label{F:intro-diagram}
\end{figure}

Obviously, vertices and arcs correspond to nucleotides and Waston-Crick base pairs, respectively. In a diagram, a sequence of arcs ${(i_s,j_s)}_{1\leq s\leq k}$ is called {\it $k$-nesting}, if they satisfy
\begin{equation*}
i_s<i_{s-1}<\ldots<i_2<i_1<j_1<j_2<\ldots<j_{s-1}<j_s.
\end{equation*}
Similarly,  a sequence of arcs ${(i_s,j_s)}_{1\leq s\leq k}$ is called {\it $k$-crossing} if
\begin{equation*}
i_1<i_2<\ldots<i_k<j_1<j_2<\ldots<j_k.
\end{equation*}
Similarly,
If a diagram have $k-1$-crossing arcs at most, we call it {\it $k$-noncrossing} diagram. Particularly, if $k$ equals 2, the diagram will degenerate to noncrossing diagram, which corresponds to secondary structure.

A {\it $k$-noncrossing, $\tau$-canonical} structure, is a diagram in which
\begin{itemize}
\item there exists no $k$-crossing arcs,
\item stack has at least size $\tau$, and
\item any arc $(i,j)$ has a minimum arc length $j-i\geq2$
\end{itemize}

We next specify further properties of $k$-noncrossing diagrams:
\begin{itemize}
\item a stack\index{stack!} of size\index{stack!-length} $\sigma$,
$S_{i,j}^{\sigma}$, is a
maximal sequence of ``parallel'' arcs,
\begin{equation*}
((i,j),(i+1,j-1),\dots,(i+(\sigma-1),j-(\sigma-1))).
\end{equation*}
We call a stack of size $\sigma$ a $\sigma$-stack\index{stack! $\sigma$-},
\item a stem\index{stem} of size $s$ is a sequence
\begin{equation*}
\left(S_{i_1,j_1}^{\sigma_1},S_{i_2,j_2}^{\sigma_2},
\ldots,S_{i_{s},j_{s}}^{\sigma_{s}}\right)
\end{equation*}
where $S_{i_{m},j_{m}}^{\sigma_m}$ is nested in $S_{i_{m-1},
j_{m-1}}^{\sigma_{m-1}}$ such that any arc nested in
$S_{i_{m-1},j_{m-1}}^{\sigma_{m-1}}$ is either
contained or nested in $S_{i_{m},j_{m}}^{\sigma_m}$, for $2\leq m\leq s$.
\end{itemize}
For an illustration of the above structural features, see
Fig.~\ref{F:intro-diagram}.
Note that given a stem
\begin{equation*}
\left(S_{i_1,j_1}^{\sigma_1},
S_{i_2,j_2}^{\sigma_2},\ldots,S_{i_{s},j_{s}}^{\sigma_{s}}\right)
\end{equation*}
the maximality of the stacks implies that any two nested stacks within
a stem, $S_{i_{m},j_{m}}^{\sigma_m}$ and $S_{i_{m-1},j_{m-1}}^{\sigma_{
m-1}}$ are separated by a nonempty interval of isolated vertices between
$i_{m-1}+(\sigma_{m-1}-1)$ and $i_m$ or $j_{m-1}-(\sigma_{m-1}-1)$ and
$j_m$, respectively.

\section{Symbolic enumeration method}

When we deal with the enumeration of diagrams, we actually deal with $discrete$ objects, which can be finitely described by constructions rules. For example, the vertices, arcs, stacks and stems in $k$-noncrossing RNA structures. Our major is to {\it enumerate} such objects according to some characteristics parameter(s).

\begin{definition}
A {\bf combinatorial class}, or simply a {\bf class}, is a finite or denumerable set on which a size function is defined, satisfying the following conditions:
(i) the size of an element is a non-negative integer;
(ii) the number of elements of any given size is finite.
\end{definition}

\begin{definition}
The {\bf counting sequence} of a combinatorial class is the sequence of integers $(a_n)_{n\geq 0}$ where $a_n={\rm card}(a_n)$ is the number of objects in class $\mathcal{A}$ that have size $n$.
\end{definition}

Next, for combinatorial enumeration purposes, it is convenient to identify combinatorial classes that are merely variants of one another.
\begin{definition}
Two combinatorial classes $\mathcal{A}$ and $\mathcal{B}$ are said to be (combinatorially) isomorphic, which is written $\mathcal{A}\cong\mathcal{B}$, iff their counting sequences are identical. This condition is equivalent to the existence of a bijection from $\mathcal{A}$ to $\mathcal{B}$ that preserves size, and one also says that $\mathcal{A}$ and $\mathcal{B}$ are bijectively equivalent.
\end{definition}

 Suppose we have a combinatorial class and the corresponding counting sequence. We define the ordinary generating function (OGF) as follows.

\begin{definition}
The {\bf ordinary generating function (OGF)} of a sequence $(a_n)$ is the formal power series
\begin{equation}
A(z)=\sum_{n=0}^\infty a_n z^n.
\end{equation}
The ordiary generating function (OGF) of a combinatorial class $\mathcal{A}$ is the generating function of the number $A_n={\rm card} (a_n)$. Equivalently, the OGF of class $\mathcal{A}$ admits the combinatorial form
\begin{equation}\label{chapter2:OGF}
A(z)=\sum_{\alpha\in \mathcal{A}}z^{|\alpha|}.
\end{equation}
It is also said that the variable $z$ marks size in the generating function
\end{definition}

The combinatorial form of an OGF in eq. (\ref{chapter2:OGF}) results straightforwardly from observing that the term $z^n$ occurs as many times as there are objects in $\mathbb{A}$ having size $n$.

We let generally $[z^n]f(z)$ denote the operation of extracting the coefficient of $z^n$ in the formal power series $f(z)=\sum f_n z^n$, so that
\begin{equation}
[z^n]\left(\sum_{n\geq 0}f_n z^n\right)=f_n
\end{equation}

\begin{example}
 {\bf Noncrossing matching.} Let $\mathcal{M}$ be the class of the noncrossing matchings. Then $\mathcal{M}$ can be represented as
 \begin{equation*}
 \mathcal{M}:=\{\varepsilon,\{(1,2)\},\{(1,2),(3,4)\},\{(1,4),(2,3)\},\ldots\},
 \end{equation*}
 where $\varepsilon$ is an empty noncrossing matching without arcs. For the matching $\gamma=\{(1,2),(3,4)\}$ we have $|\gamma|=2$. We already know the number of noncrossing matching with $n$ arcs is $c_n=\frac{1}{n+1}{{2n}\choose n}$. Therefore we have the following equations,
 \begin{equation*}
 M(z)=\sum_{\gamma\in\mathcal{M}}z^{\gamma}=\sum_{n\geq 0}c_n z^n=\frac{1-\sqrt{1-4z}}{2z}.
 \end{equation*}
\end{example}

Now we start to learn about the basic {\it constructions} that constitute the core of a specification language for combinatorial structures. For the demand of the thesis, we  mainly discuss about {\it disjoint union}, {\it Cartesian product} and {\it sequence construction}.

First, assume we are given a class $\mathcal{E}$ called the {\it neutral class} that consists of a single object of size $0$, and a class $\mathcal{Z}$ called an {\it atomic class} comprising a single element of size $1$. Clearly, the generating functions of a neutral class $\mathcal{E}$ and an atomic class $\mathcal{Z}$ are
\begin{equation*}
E(z)=1,\qquad Z(z)=z,
\end{equation*}
corresponding to the unit 1,and the variable $z$, of generating functions.
\begin{proposition}
Let $\mathcal{A}$, $\mathcal{B}$, $\mathcal{C}$ be combinatorial classes. $A_n$, $B_n$, $C_n$ are the corresponding cardinals of size $n$ and $A(x)$, $B(x)$, $C(x)$ are the corresponding ordinary generating functions.
\begin{enumerate}
\item {\bf Disjoint union.} $\mathcal{A}$, $\mathcal{B}$ and $\mathcal{C}$ satisfy
    \begin{equation}
    \mathcal{A}=\mathcal{B}\cup \mathcal{C}, \qquad\text{with}\quad \mathcal{B}\cap \mathcal{C}=\emptyset,
    \end{equation}
    with size defined in a consistent manner: for $\omega\in\mathcal{A}$,
    \begin{equation}
    |\omega|_\mathcal{A}=
    \begin{cases}
    |\omega|_\mathcal{B}&\quad \text{if $\omega\in\mathcal{B}$}\\
    |\omega|_\mathcal{C}&\quad \text{if $\omega\in\mathcal{C}$}.
    \end{cases}
    \end{equation}
    One has
    \begin{equation}
    A_n=B_n+C_n,
    \end{equation}
    which, at generating function level, means
    \begin{equation}
    A(z)=B(z)+C(z).
    \end{equation}
\item {\bf Cartesian product.} This construction applied to two classes $\mathcal{B}$ and $\mathcal{C}$ forms ordered pairs,
    \begin{equation}
    \mathcal{A}=\mathcal{B}\times\mathcal{C}\quad \text{iff}\quad \mathcal{A}=\{\alpha=(\beta,\gamma)\mid \beta\in\mathcal{B},\gamma\in\mathcal{C}\},
    \end{equation}
    with the size of a pair $\alpha=(\beta,\gamma)$ being defined by
    \begin{equation}
    |\alpha|_\mathcal{A}=|\beta|_\mathcal{B}+|\gamma|_\mathcal{C}.
    \end{equation}
    It is immediately seen that the counting sequences corresponding to $\mathcal{A}, \mathcal{B}, \mathcal{C}$ are related by the convolution relation
    \begin{equation}
    A_n=\sum_{k=0}^n B_k C_{n-k},
    \end{equation}
    which means admissibility. Furthermore, we recognize here the formula for a product of two power series,
    \begin{equation}
    A(z)=B(z)\cdot C(z).
    \end{equation}
\item {\bf Sequence construction.} If $\mathcal{B}$ is a class then the sequence class {\sc Seq}$(\mathcal{B})$ is defined as the infinite sum
    \begin{equation}\label{chapter2:sequence}
    \textsc{Seq}(\mathcal{B})=\{\epsilon\}+\mathcal{B}+\mathcal{B}\times\mathcal{B}
    +\cdots
    \end{equation}
    with $\epsilon$ being a neutral structure. In other words, we have
    \begin{equation*}
    \mathcal{A}=\left\{(\beta_1,\ldots,\beta_\ell)\mid \ell\geq 0, \beta_j\in\mathcal{B}\right\}.
    \end{equation*}
    According cartesian product and eq. (\ref{chapter2:sequence}), the generating function of $\mathcal{A}$ will hold as follows,
    \begin{equation}
    A(z)=1+B(z)+B(z)\cdot B(z)+\cdots=\frac{1}{1-B(z)}.
    \end{equation}
\end{enumerate}
\end{proposition}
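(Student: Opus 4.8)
The plan is to establish the three translation rules by a direct, coefficient-by-coefficient counting argument, treating all generating functions as formal power series, and then to deduce the sequence rule as a purely formal consequence of the first two. Throughout, the hypothesis that a combinatorial class has only finitely many objects of each size is what guarantees that every OGF is a well-defined element of $\mathbb{Z}[[z]]$ and that the constructed classes are again combinatorial classes.

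First, for the \textbf{disjoint union}: since $\mathcal{A}=\mathcal{B}\cup\mathcal{C}$ with $\mathcal{B}\cap\mathcal{C}=\emptyset$ and the size function restricts consistently, the set of objects of $\mathcal{A}$ of size $n$ is literally the disjoint union of the size-$n$ objects of $\mathcal{B}$ and those of $\mathcal{C}$; taking cardinalities gives $A_n=B_n+C_n$. Multiplying by $z^n$ and summing over $n\geq 0$ yields $A(z)=B(z)+C(z)$.

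Next, for the \textbf{Cartesian product}: an object of $\mathcal{A}=\mathcal{B}\times\mathcal{C}$ of size $n$ is a pair $(\beta,\gamma)$ with $|\beta|_\mathcal{B}+|\gamma|_\mathcal{C}=n$; sorting such pairs by the value $k=|\beta|$ exhibits a size-preserving bijection between the size-$n$ objects of $\mathcal{A}$ and $\bigcup_{k=0}^{n}(\mathcal{B}_k\times\mathcal{C}_{n-k})$, whence $A_n=\sum_{k=0}^{n}B_kC_{n-k}$. This is exactly the Cauchy convolution of the sequences $(B_n)$ and $(C_n)$, which is the coefficient rule for the product of power series, so $A(z)=B(z)\cdot C(z)$; in particular each $A_n$ is finite, so $\mathcal{A}$ is again a bona fide class.

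Finally, for the \textbf{sequence construction}: by definition $\textsc{Seq}(\mathcal{B})=\{\epsilon\}+\sum_{\ell\geq 1}\mathcal{B}^{\ell}$, where $\mathcal{B}^{\ell}$ is the $\ell$-fold Cartesian product. Iterating the product rule gives that the OGF of $\mathcal{B}^{\ell}$ is $B(z)^{\ell}$, and then the union rule gives $A(z)=\sum_{\ell\geq 0}B(z)^{\ell}$. The only point that genuinely needs care — and the step I expect to be the main (indeed the sole) obstacle — is the well-definedness of this infinite sum: one must assume $\mathcal{B}$ contains no object of size $0$, equivalently $B(0)=0$, since otherwise $\textsc{Seq}(\mathcal{B})$ would contain infinitely many objects of size $0$ and would fail to be a combinatorial class. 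Under this hypothesis $[z^n]B(z)^{\ell}=0$ for $\ell>n$, so the sum is locally finite and defines a formal power series; summing the resulting geometric series gives $A(z)=\frac{1}{1-B(z)}$, completing the proof.
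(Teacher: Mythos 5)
Your proof is correct and follows essentially the same route as the paper's own (largely implicit) justification: size-by-size counting gives $A_n=B_n+C_n$ for the disjoint union, the convolution $A_n=\sum_{k=0}^n B_kC_{n-k}$ for the Cartesian product, and the sequence rule then follows by iterating these and summing the geometric series. Your explicit remark that one must require $B(0)=0$ (no objects of size $0$ in $\mathcal{B}$) so that $\textsc{Seq}(\mathcal{B})$ is a genuine combinatorial class and the sum $\sum_{\ell\ge 0}B(z)^{\ell}$ is locally finite is a point the paper leaves tacit, and it is a worthwhile addition rather than a deviation.
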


\begin{example}
Interval coverings. Let $\mathcal{Z}:={a}$. Then $\mathcal{A}=\mathcal{Z}+\mathcal{Z}\times\mathcal{Z}$ is a set of two elements, $a$ and $(a,a)$, which we choose to note as $\{a, aa\}$. Then $\mathcal{C}=\textsc{Seq}(\mathcal{A})$ contains
\begin{equation*}
a, a\ a, aa, a\ aa, aa\ a, aa\ aa, a\ a\ a\ a,\ldots
\end{equation*}
with the notion of size adopted, the objects of size $n$ in $\mathcal{C}=\textsc{Seq}(\mathcal{Z}+\mathcal{Z}\times\mathcal{Z})$ are isomorphic to the covering of $[0,n]$ by intervals of length either 1 or 2. The OGF
\begin{equation*}
C(z)=1+z+2z^2+3z^3+5z^4+\cdots=\frac{1}{1-z-z^2}.
\end{equation*}
\end{example}

\section{$D$-finiteness and $P$-recursive}

$D$-finiteness and $P$-recursive are important concepts in the asymptotics of $k$-noncrossing diagrams. In Chapter 3, we will use these two concepts to do the asymptotic of modular $k$-noncrossing diagrams.  Based on the paper of Stanley \cite{Stanley:80}, an arbitrary power series, $S(x)=\sum_{n\geq 0}s_n x^n$ is analytically continuated in any simply-connected domain containing zero avoiding singularities if $S(x)$ is $D$-finite.

\begin{definition}[\cite{Stanley:00}]\label{chapter2:D-finite}
(a) A sequence $f(n)$ of complex number is said to be $P$-recursive,
if there are polynomials $p_0(n),\ldots,p_m(n)\in \mathbb{C}[n]$
with $p_m(n)\neq0$, such that for all $n\in\mathbb{N}$
\begin{equation}\label{chapter2:p-recursive}
p_m(n)f(n+m)+p_{m-1}(n)f(n+m-1)+\cdots+p_0(n)f(n)=0.
\end{equation}
(b) A formal power series
$F(x)=\sum_{n\geq0}f(n)x^n$ is rational, if
there are polynomials $A(x)$ and $B(x)$ in $\mathbb{C}[x]$ with
$B(x)\neq0$, such that
\begin{equation*}
F(x)=\frac{A(x)}{B(x)}.
\end{equation*}
(c) $F(x)$ is algebraic of degree $m$, if there exist polynomials
$q_0(x),\ldots,q_m(x)\in \mathbb{C}[x]$
with $q_m(x)\neq0$, such that
\begin{equation*}
q_m(x)F^{m}(x)+q_{m-1}(x)F^{m-1}(x)+\cdots+q_1(x)F(x)+q_0(x)=0.
\end{equation*}
(d) $F(x)$ is $D$-finite,
if there are polynomials $q_0(x),\ldots,q_m(x)\in \mathbb{C}[x]$
with $q_m(x)\neq0$, such that
\begin{equation}\label{chapter2:D-finite-eq}
q_m(x)F^{(m)}(x)+q_{m-1}(x)F^{(m-1)}(x)+\cdots+q_1(x)F'(x)+q_0(x)F(x)=0,
\end{equation}
where $F^{(i)}(x)=d^iF(x)/dx^i$, and $\mathbb{C}[x]$ is the
ring of polynomials in $x$ with complex coefficients.
\end{definition}

Let $\mathbb{C}(x)$ denote the rational function field, i.e.~the
field generated by taking equivalence classes of fractions of
polynomials. Let $\mathbb{C}_{\text{\rm alg}}[[x]]$ and
$\mathcal{D}$ denote the sets of algebraic power series over
$\mathbb{C}(x)$ and $D$-finite power series, respectively. The four definitions above actually can imply the following theorem.

\begin{theorem}[\cite{Stanley:00}]
\begin{enumerate}
\item A rational power series is algebraic.
\item Let $F\in \mathbb{C}[[x]]$ be algebraic of degree $m$. Then $F$ is $D$-finite.
\item Let $F=\sum_{n\geq 0}{f(n)x^n}\in \mathbb{C}[[x]]$, then $F$ is $D$-finite if and only if $f$ is $P$-recursive.
\end{enumerate}
\end{theorem}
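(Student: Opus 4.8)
The plan is to establish the three assertions in order, each reducing to a formal manipulation once the right algebraic setting is fixed; the whole statement is the classical theorem of Stanley \cite{Stanley:00}. Part~1 is immediate from Definition~\ref{chapter2:D-finite}: if $F(x)=A(x)/B(x)$ with $B\neq0$, then $B(x)F(x)-A(x)=0$ is a polynomial relation of degree one in $F$, namely $q_1(x)F(x)+q_0(x)=0$ with $q_1=B\neq0$ and $q_0=-A$, so $F$ is algebraic of degree at most one.

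For Part~2 I would work inside $K:=\mathbb{C}(x)(F)$, viewed as a subfield of the Laurent series field $\mathbb{C}((x))$ containing $\mathbb{C}(x)$. Let $P(x,y)$ be the minimal polynomial of $F$ over $\mathbb{C}(x)$; it is irreducible, of $y$-degree $d\leq m$, so $K$ is a $\mathbb{C}(x)$-vector space of dimension $d$. Differentiating $P(x,F)=0$ gives $P_x(x,F)+P_y(x,F)F'=0$, and since $\deg_y P_y<\deg_y P$ with $P$ minimal and the characteristic zero, $P_y(x,F)\neq0$; hence $F'=-P_x(x,F)/P_y(x,F)\in K$. By the product and chain rules $K$ is then stable under $d/dx$, so every derivative $F^{(j)}$ lies in $K$. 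Consequently the $d+1\leq m+1$ elements $F,F',\dots,F^{(d)}$ are $\mathbb{C}(x)$-linearly dependent; clearing denominators in a dependence relation of minimal length yields an equation of the form~(\ref{chapter2:D-finite-eq}) with nonzero leading coefficient, so $F$ is $D$-finite.

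Part~3 is the substantial part, and I would set up the dictionary between operators acting on $F=\sum_{n\geq0}f(n)x^n$ and operators acting on the sequence $(f(n))$. Writing $\theta=x\,d/dx$, one has $\theta F=\sum_n nf(n)x^n$, so $\theta$ encodes multiplication by $n$ on coefficients, while $x^{-1}$ encodes, up to the finitely many lowest-order terms, the forward shift $f(n)\mapsto f(n+1)$; the commutation law $\theta x=x(\theta+1)$ mirrors the corresponding relation on sequences. To get from $D$-finiteness to $P$-recursiveness I would extract $[x^n]$ from~(\ref{chapter2:D-finite-eq}): the quantity $[x^n]\,q_i(x)F^{(i)}(x)$ is a $\mathbb{C}[n]$-linear combination of finitely many values $f(n+\ell)$, so after reindexing the identity becomes a recursion of the shape~(\ref{chapter2:p-recursive}), whose extreme coefficient polynomial one checks to be not identically zero. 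For the converse, feed $\sum_{i=0}^{m}p_i(n)f(n+i)=0$ through the dictionary: multiplying by a suitable power of $x$ to clear negative powers and moving all $\theta$'s to the left by means of $\theta x=x(\theta+1)$ produces an identity $L\cdot F(x)=R(x)$, where $L\in\mathbb{C}[x]\langle d/dx\rangle$ is a nonzero polynomial differential operator and $R(x)$ is a polynomial absorbing the initial values $f(0),\dots,f(m-1)$; applying any nonzero operator $M$ with $M\cdot R=0$, for instance $M=(d/dx)^{\deg R+1}$, gives $(ML)\cdot F=0$, and since the Ore algebra $\mathbb{C}(x)\langle d/dx\rangle$ has no zero divisors, $ML\neq0$, whence $F$ is $D$-finite.

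I expect the converse direction of Part~3 to be the main obstacle. The delicate points are: the identification of $x^{-1}$ with a shift is valid only modulo the low-order terms of $F$, so the polynomial remainder $R(x)$ must be carried along rather than discarded; the reordering of the $\theta$'s and $x$'s has to be arranged so that the resulting operator carries polynomial, not merely rational, coefficients; and one must actually verify that $ML\neq0$, which is exactly where the domain property of the Weyl/Ore algebra is used. In both directions of Part~3 the verifications that the leading, respectively extreme, coefficient polynomial is not the zero polynomial are routine but essential, since the definitions of $D$-finite and $P$-recursive each insist on a nonvanishing top coefficient.
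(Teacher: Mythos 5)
Your proposal is correct, and for parts~1 and~2 and for the forward direction of part~3 it coincides with the paper's argument: part~1 is the same one-line observation $B(x)F(x)-A(x)=0$; part~2 is the same differentiation of a polynomial relation to get $F'=-P_x(x,F)/P_y(x,F)\in\mathbb{C}(x,F)$, closure of $\mathbb{C}(x,F)$ under $d/dx$, and a dimension count forcing $F,F',\dots,F^{(m)}$ to be $\mathbb{C}(x)$-linearly dependent (you are slightly more careful in passing to the minimal polynomial, which is what actually justifies $P_y(x,F)\neq0$ and the finite dimension, whereas the paper works directly with the given degree-$m$ relation); and the implication ``$D$-finite $\Rightarrow$ $P$-recursive'' is the same coefficient extraction from eq.~(\ref{chapter2:D-finite-eq}), though the paper records a concrete reason why the resulting recursion does not collapse ($[x^j]q_d(x)\neq0$ forces $[n^d]p_{d-j+k}(n)\neq0$), which you only flag as ``routine''. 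Where you genuinely diverge is the converse of part~3. The paper sums the recurrence against $x^n$ to get $0=\sum a_{ij}x^{j-i}F^{(j)}+R(x)$ with $R\in x^{-1}\mathbb{C}[x^{-1}]$, as in eq.~(\ref{chapter2:p-recursive-proof}), and then simply multiplies by a large power $x^q$, asserting an equation of the form~(\ref{chapter2:D-finite-eq}); strictly speaking this leaves a polynomial inhomogeneity $x^qR(x)$ that the paper does not explicitly remove. Your operator-dictionary route ($\theta=x\,d/dx$, clearing low-order terms into a polynomial right-hand side $R(x)$, then applying $M=(d/dx)^{\deg R+1}$ and using that the Ore algebra $\mathbb{C}(x)\langle d/dx\rangle$ has no zero divisors so that $ML\neq0$) handles exactly this inhomogeneity cleanly, which is a real gain in rigor; the price is that you assert rather than verify that the translated operator $L$ is nonzero, i.e.\ that a nontrivial recurrence with $p_m\neq0$ cannot map to the zero differential operator — this is true (the paper's ``not all $a_{ij}=0$'' plays the same role) but it is the one point in your write-up that still needs a sentence of justification, for instance by looking at the term of $L$ of highest order in $d/dx$ and lowest degree in $x$ coming from the leading monomial of $p_m(n)$.
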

\begin{proof}
\begin{enumerate}
\item Here we use the notations of Definition \ref{chapter2:D-finite} (b). Let $q_0(x)=-A(x)$, $q_1(x)=B(x)$, then we have $B(x)F(x)-A(x)=0$. Therefore $F$ is algebraic.
\item Using the notations of Definition \ref{chapter2:D-finite} (c), let $P(x,y)\in \mathbb{C}[x,y]$ and nonzero,
    \begin{equation}
    P(x,F)=q_m(x)F^{m}(x)+q_{m-1}(x)F^{m-1}(x)+\cdots+q_1(x)F(x)+q_0(x),
    \end{equation}
    then we have
    \begin{equation}
    0=\frac{\dif}{\dif x}P(x,F)=\frac{\partial P(x,y)}{\partial x}\big|_{y=F}+F'\frac{\partial P(x,y)}{\partial y}\big|_{y=F}.
    \end{equation}
    It therefore follows that
    \begin{equation}\label{chapter2:algebratoDf}
    F'=-\frac{\frac{\partial P(x,y)}{\partial x}\big|_{y=F}}{\frac{\partial P(x,y)}{\partial y}\big|_{y=F}}\in \mathbb{C}(x,F).
    \end{equation}
    Continually differentiating eq. (\ref{chapter2:algebratoDf}) with respect to $x$ shows by induction that $F^{(k)}\in\mathbb{C}(x,F)$ for all $k\geq0$. But $\dim_{\mathbb{C}(x)}\mathbb{C}(x,F)=m$, so $F, F', \ldots, F^{(m)}$ are linearly dependent over $\mathbb{C}(x)$, yielding an equation of the form eq. (\ref{chapter2:D-finite-eq}). The $D$-finiteness of $F$ is proved.
\item Suppose $F$ is $D$-finite, so that eq. (\ref{chapter2:D-finite-eq}) holds (with $q_m\neq 0$). Since
    \begin{equation}
    x^j F^{(i)}=\sum_{n\geq 0}(n+i-j)_i f(n+i-j)x^n,
    \end{equation}
    when we equate coefficients of $x^{n+k}$ in eq. (\ref{chapter2:D-finite-eq}) for fixed $k$ sufficiently large, we will obtain a recurrence of the form eq. (\ref{chapter2:p-recursive}). This recurrence will not collapse to $0=0$ because if $[x^j]q_m(x)\neq 0$, then $[n^d]p_{m-j+k}(n)\neq 0$.
    Conversely, suppose that $f$ satisfies eq. (\ref{chapter2:p-recursive}) with $p_m(n)\neq 0$. For fixed $i\in \mathbb{N}$, the polynomials $(n+i)_j$, $j\geq 0$, form a $\mathbb{C}$-basis for the space $\mathbb{C}[n]$ (since $\deg(n+i)_j)=j$). Thus $p_i(n)$ is a $\mathbb{C}$-linear combination of the polynomials $(n+i)_j$, so $\sum_{n\geq 0}p_i f(n+i)x^n$ is a $\mathbb{C}$-linear combination of series of the form $\sum_{n\geq 0}(n+i)_j f(n+i)x^n$. Now
    \begin{equation}
    \sum_{n\geq 0}(n+i)_j f(n+i)x^n=R_i(x)+x^{j-i}F^{(j)}
    \end{equation}
    for some $R_i(x)\in x^{-1}\mathbb{C}[x^{-1}]$ (i.e. $R_i(x)$ is a Laurent polynomial all of whose exponents are negative). Hence multiplying eq. (\ref{chapter2:p-recursive}) by $x^n$ and summing on $n\geq 0$ yields
    \begin{equation}\label{chapter2:p-recursive-proof}
    0=\sum a_{ij}x^{j-i}F^{(j)}+R(x),
    \end{equation}
    where the sum is finite, $a_{ij}\in \mathbb{C}$ and $R(x)\in x^{-1}\mathbb{C}[x^{-1}]$. One easily sees that not all $a_{ij}=0$. Now multiply eq. (\ref{chapter2:p-recursive-proof}) by $x^q$ for $q$ sufficiently large to get an equation of the form eq. (\ref{chapter2:D-finite-eq}).
    \end{enumerate}
\end{proof}

For easily understanding, we present an example.

\begin{example}
The {\it Motzkin number} $M_n$ are defined by
\begin{align*}
M(x)&=\sum_{n\geq 0}M_n x^n=\frac{1-x-\sqrt{1-2x-3x^2}}{2x^2}\nonumber\\
&=1+x+2x^2+4x^3+9x^4+21x^5+51x^6+\cdots,
\end{align*}
$M(x)$ is algebraic and $D$-finite since
\begin{equation*}
x^2 M(x)^2+(x-1)M(x)+1=0
\end{equation*}
and
\begin{equation*}
(6x+3)M(x)+(12x^2+7x-3)M'(x)+x(x+1)(3x-1)M''(x)=0
\end{equation*}
which implies $M_n$ is $P$-recursive since
\begin{equation*}
-(n^2+4n+3)M_{n+1}+(2n^2+5n+3)M_n+(3n^2+3n)M_{n-1}=0
\end{equation*}
\end{example}

We proceed by studying closure properties of $D$-finite power series which are of key important in the following chapters.

\begin{theorem}\label{T:d-finite_property}\cite{Stanley:00}
$P$-recursive sequences, $D$-finite and algebraic power
series\index{algebraic!power series}
have the following properties:\\
(a) If $f,g$ are $P$-recursive, then $f\cdot g$ is $P$-recursive.\\
(b) If $F,G\in\mathcal{D}$, and $\alpha,\beta\in \mathbb{C}$,
then $\alpha F+\beta G\in\mathcal{D}$ and $FG\in\mathcal{D}$. \\
(c) If $F\in\mathcal{D}$ and $G\in\mathbb{C}_{alg}[[x]]$ with
$G(0)=0$, then $F(G(x))\in\mathcal{D}$.
\end{theorem}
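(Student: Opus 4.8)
The plan is to recast $D$-finiteness in linear-algebraic terms and then verify each closure property by a dimension count. First I would establish the reformulation: a power series $F\in\mathbb{C}[[x]]$ is $D$-finite if and only if the $\mathbb{C}(x)$-vector space $V_F=\langle F,F',F'',\dots\rangle_{\mathbb{C}(x)}$ spanned by $F$ and all of its derivatives is finite-dimensional. Indeed, if $\dim V_F<\infty$ then $F,F',\dots,F^{(m)}$ are $\mathbb{C}(x)$-linearly dependent for some $m$, and clearing denominators yields an identity of the shape (\ref{chapter2:D-finite-eq}); conversely, if (\ref{chapter2:D-finite-eq}) holds with $q_m\neq 0$ then $F^{(m)}\in\langle F,\dots,F^{(m-1)}\rangle_{\mathbb{C}(x)}$, and since $\mathbb{C}(x)$ is closed under $d/dx$, differentiating repeatedly shows $F^{(k)}$ lies in this same space for every $k$, so $\dim V_F\le m$. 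An analogous statement holds for $P$-recursive sequences: $f$ is $P$-recursive iff the shifts $f(n),f(n+1),f(n+2),\dots$ span a finite-dimensional space over $\mathbb{C}(n)$ (interpreting rational coefficients as eventually defined), the passage between the two pictures being exactly the equivalence $F$ $D$-finite $\iff$ $f$ $P$-recursive already proved above.

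Granting the reformulation, parts (a) and (b) become dimension counts. For $\alpha F+\beta G$ with $F,G\in\mathcal{D}$: every derivative of $\alpha F+\beta G$ lies in $V_F+V_G$, which has dimension at most $\dim V_F+\dim V_G<\infty$. For the product $FG$: by the Leibniz rule $(FG)^{(n)}=\sum_i\binom{n}{i}F^{(i)}G^{(n-i)}$, and writing each $F^{(i)}$ (resp. $G^{(j)}$) in terms of a fixed finite $\mathbb{C}(x)$-spanning set of $V_F$ (resp. $V_G$) shows that every $(FG)^{(n)}$ lies in the fixed space spanned by the finitely many products $F^{(i_k)}G^{(j_l)}$, whose dimension is at most $\dim V_F\cdot\dim V_G$. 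Part (a) is the same argument transported to the shift picture: the shifts of the pointwise product satisfy $(fg)(n+k)=f(n+k)g(n+k)$, so they all lie in the $\mathbb{C}(n)$-span of the finitely many products $f(n+i)g(n+j)$; hence $fg$ satisfies a linear recurrence over $\mathbb{C}(n)$, which becomes a genuine $P$-recursion after clearing denominators.

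For part (c) I would use the chain rule together with the fact, available from the theorem above, that $G$ algebraic implies $G$ is $D$-finite and, more to the point, that $G$ together with all of its derivatives lies in the finite field extension $K=\mathbb{C}(x,G)$ of $\mathbb{C}(x)$: differentiating the minimal polynomial of $G$ gives $G'\in K$, and $K$ is closed under $d/dx$, so $G^{(j)}\in K$ for all $j$. Setting $H=F\circ G$, the Fa\`a di Bruno formula expresses $H^{(n)}$ as a $\mathbb{C}[G',G'',\dots]$-combination, hence a $K$-combination, of the composites $F^{(i)}\circ G$ with $i\le n$. Using a relation $F^{(m)}=\sum_{i<m}c_i(x)F^{(i)}$ coming from the $D$-finiteness of $F$ and substituting $x\mapsto G(x)$, the composites $F^{(i)}\circ G$ themselves span a $K$-vector space of dimension at most $\dim V_F$. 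Therefore all $H^{(n)}$ lie in a $K$-vector space of dimension $\le\dim V_F$, and since $[K:\mathbb{C}(x)]<\infty$ this space, regarded over $\mathbb{C}(x)$, has dimension $\le[K:\mathbb{C}(x)]\cdot\dim V_F<\infty$; by the reformulation, $H$ is $D$-finite.

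The main obstacle is the bookkeeping in part (c): one must make sure that the substitution $x\mapsto G(x)$ applied to the rational coefficients $c_i(x)$ is legitimate, i.e.\ that $c_i$ has no pole at $G(0)=0$, which is precisely where the hypothesis $G(0)=0$ enters and must be used — possibly after first normalizing the $D$-finite equation for $F$ so that its leading coefficient does not vanish at the origin — and one must check that the resulting $K$-linear relations can be bounded uniformly in $n$, rather than growing with the order of differentiation. Once the coefficient field has been pinned down to the single finite extension $K$ and the substitutions have been justified, the dimension count closes the argument.
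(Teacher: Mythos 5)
Your argument for part (c) is essentially the paper's own proof: express the derivatives of $F(G(x))$ via the chain rule/Fa\`a di Bruno as combinations over $\mathbb{C}[G',G'',\ldots]\subset\mathbb{C}(x,G)$, use the $D$-finiteness of $F$ to bound the dimension of the span of the composites $F^{(i)}(G)$ over $\mathbb{C}(x,G)$, and finish with the tower formula using $\dim_{\mathbb{C}(x)}\mathbb{C}(x,G)<\infty$; the substitution issue you flag is resolved more simply by clearing denominators so the $D$-finite relation for $F$ has polynomial coefficients, after which composing with $G$ (a ring homomorphism on formal power series precisely because $G(0)=0$) is unproblematic, the hypothesis $G(0)=0$ being needed only to make $F(G(x))$ well defined. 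For (a) and (b) the paper offers no proof (it cites Stanley), and your dimension-count arguments for them are the standard ones and are correct.
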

%%%
%%%%%%%%%%%%%%%%%%%%%%%%%%%%%%%%%%%%%%%%%%%%%%%%%%%%%%%%%%%%%%%%%%%%%%%%
%%%
Here we omit the proof of (a) and (b) which can be found in
\cite{Stanley:00}. We present however a direct proof of
(c).
\begin{proof}
(c) We assume that $G(0)=0$ so that the composition $F(G(x))$ is
well-defined. Let $K=F(G(x))$. Then $K^{(i)}$ is a linear
combination of $F(G(x)),\,F'(G(x)),\,\ldots$ over
$\mathbb{C}[G,\,G',\ldots]$, i.e.~the ring of polynomials in
$G,\,G',\ldots$ with complex coefficients. \\
{\it Claim.} $G^{(i)}\in \mathbb{C}(x,G),\ i\geq 0$ and therefore
$\mathbb{C}[G,\,G',\ldots]\subset \mathbb{C}(x,G)$, where $\mathbb{C}(x,G)$
denotes the field generated by $x$ and $G$.\\
Since $G$ is algebraic, it satisfies
\begin{equation}\label{E:proof_alg}
q_d(x)G^{d}(x)+q_{d-1}(x)G^{d-1}(x)+\cdots+q_1(x)G(x)+q_0(x)=0,
\end{equation}
where $q_0(x),\ldots,q_d(x)\in \mathbb{C}[x],\ q_d(x)\neq0$ and $d$
is minimal, i.e.~$(G^{i}(x))_{i=0}^{d-1}$ is linear
independent over $\mathbb{C}[x]$. In other words, for all
$(\widetilde{q}_{i}(x))_{i=1}^{d-1}\neq 0$ we have
\begin{equation*}
\widetilde{q}_{d-1}(x)G^{d-1}(x)+\cdots+\widetilde{q}_1(x)G(x)+
\widetilde{q}_0(x)\neq 0.
\end{equation*}
We consider
\begin{equation*}
P(x,G)=q_d(x)G^{d}(x)+q_{d-1}(x)G^{d-1}(x)+\cdots+q_1(x)G(x)+q_0(x).
\end{equation*}
Differentiating eq.~(\ref{E:proof_alg}) once, we derive
\begin{equation*}
0=\frac{d}{dx}P(x,G)=\frac{\partial P(x,y)}{\partial x}\Big
|_{y=G}+G'\frac{\partial P(x,y)}{\partial y}\Big|_{y=G}.
\end{equation*}
The degree of $\frac{\partial P(x,y)}{\partial y}\Big|_{y=G}$ in $G$
is $d-1$ and $q_d(x)\neq0$, whence $\frac{\partial
P(x,y)}{\partial y}\Big|_{y=G}\neq 0$. We therefore arrive at
\begin{equation*}
G'=-~\frac{\frac{\partial P(x,y)}{\partial x}\Big
|_{y=G}}{\frac{\partial P(x,y)}{\partial y}\Big|_{y=G}}\,\in
\mathbb{C}(x,G).
\end{equation*}
Iterating the above argument, we obtain $G^{(i)}\in
\mathbb{C}(x,G),\ i\geq 0$ and therefore
$\mathbb{C}[G,\,G',\ldots]\subset \mathbb{C}(x,G)$, whence the
Claim.\\
Let $\widetilde{V}$ be the $\mathbb{C}(x,G)$ vector space spanned by
$F(G(x)),\,F'(G(x)),\,\ldots$. Since  $F\in\mathcal{D} $, we have
$\dim_{\mathbb{C}(x)}\langle F,F',\cdots\rangle<\infty$, immediately
implying the finiteness of $\dim_{\mathbb{C}(G)}\langle F(G),F'(G),
\cdots\rangle$. Thus, since $\mathbb{C}(G)$ is a subfield of
$\mathbb{C}(x,G)$, we derive
\begin{equation*}
\dim_{\mathbb{C}(x,G)}\langle F,F',\cdots\rangle <\infty
\end{equation*}
and consequently $ \dim_{\mathbb{C}(x,G)} \widetilde{V}<\infty$ and
$ \dim_{\mathbb{C}(x)}\mathbb{C}(x,G)<\infty$. As a result
\begin{equation*}
\dim_{\mathbb{C}(x)} \widetilde{V}=\dim_{\mathbb{C}(x,G)}
\widetilde{V}\cdot \dim_{\mathbb{C}(x)}\mathbb{C}(x,G)<\infty
\end{equation*}
follows and since each $K^{(i)}\in \widetilde{V}$, we conclude that
$F(G(x))$ is $D$-finite.
\end{proof}

Assume ${\bf F}_k=\sum_{n\geq 0}f_k(2n,0)x^n$, where $f_k(2n,0)$ is the number of $k$-noncrossing matching with $n$ arcs, or equivalently, $2n$ vertices. In Chen's paper \cite{Chen}, we have known that the exponential generating function of $k$-noncrossing matching is
\begin{equation}\label{chapter2:matching-gf}
\sum_{n\geq 0}f_k(2n,0)\frac{x^{2n}}{(2n)!}=\det [I_{i-j}(2x)-I_{i+j}(2x)]|^{k-1}_{i,j=1},
\end{equation}
where $I_m(x)$ is hyperbolic Bessel function of the first kind of order $m$. Particularly, when $k=3$, $f_k(2n,0)$ has a formula of general term, which is
\begin{equation*}
f_k(2n,0)=C(n)C(n+2)-C(n+1)^2,
\end{equation*}
where $C(n)$ is the $n$-th Catalan number. The generating function of $3$-noncrossing matching can be explicitly written by
\begin{equation}\label{E:F3-gf}
{\bf F}_3(x)=\frac{1}{4x^2}\left(1+6x-_2F_1(-1/2,-1/2;1;16x)-2x\cdot\ _2 F_1(-1/2,3/2;3;16x)\right),
\end{equation}
where $_2F_1(a,b;c;z)$ is hypergeometric function. Before the further discussion in the following chapters, we have to confirm the D-finiteness of ${\bf F}_k$.

\begin{corollary}\label{C:Bessel}
The generating function of $k$-noncrossing
matchings over $2n$ vertices, {${\bf
F}_k(z)=\sum_{n\ge 0}f_k(2n,0)\,z^{n}$} is $D$-finite.
\end{corollary}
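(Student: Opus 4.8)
The plan is to deduce the $D$-finiteness of ${\bf F}_k(z)$ from the exact formula~(\ref{chapter2:matching-gf}) for its exponential generating function together with the closure properties collected in Theorem~\ref{T:d-finite_property}. The starting point is the observation that the entries $I_{i-j}(2x)-I_{i+j}(2x)$ of the determinant are each $D$-finite functions of $x$: the hyperbolic Bessel function $I_m(x)$ satisfies the second-order linear ODE $x^2 I_m'' + x I_m' - (x^2+m^2)I_m = 0$ with polynomial coefficients, so $I_m(2x)$ is $D$-finite, and by part~(b) of Theorem~\ref{T:d-finite_property} finite $\mathbb{C}$-linear combinations and products of $D$-finite series are $D$-finite. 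Hence the determinant $E(x) := \det[I_{i-j}(2x)-I_{i+j}(2x)]|^{k-1}_{i,j=1}$, being a polynomial in finitely many $D$-finite entries, is $D$-finite.

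Next I would pass from the exponential generating function to the ordinary one. Write $E(x) = \sum_{n\ge 0} f_k(2n,0)\, x^{2n}/(2n)!$; I want ${\bf F}_k(z) = \sum_{n\ge0} f_k(2n,0)\, z^n$. The standard route is through the Borel--Laplace (Hadamard-type) operations that preserve $D$-finiteness: if $A(x)=\sum a_n x^n$ is $D$-finite then so is $\sum a_n n!\, x^n$ and so is the ``section'' $\sum a_{2n} x^n$, and composition with the algebraic series $x\mapsto x^{1/2}$ is covered by part~(c) of Theorem~\ref{T:d-finite_property} once one works on the level of the even part. Concretely, one first extracts the even-indexed subseries of $E$ (which is just $E$ itself here, since only even powers occur), then applies the coefficient-wise multiplication by $(2n)!$, which is $D$-finiteness-preserving because $P$-recursiveness is preserved under multiplying $f(n)$ by a hypergeometric term and $(2n)!/n!$-type factors; finally a change of variable $x^2\mapsto z$ yields ${\bf F}_k(z)$. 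Each of these steps produces a $P$-recursive coefficient sequence, and by part~3 of the Theorem on $D$-finiteness a power series is $D$-finite iff its coefficient sequence is $P$-recursive, so the conclusion follows.

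An alternative and perhaps cleaner argument, which I would present if the bookkeeping above gets heavy, is to work directly with the sequence $f_k(2n,0)$. Chen, Deng, Du, Stanley and Yan showed that $f_k(2n,0)$ is $P$-recursive (it is in fact the quantity counted by the Bessel-determinant, which is classically known to satisfy a linear recurrence with polynomial coefficients; for $k=3$ this is transparent from $f_3(2n,0)=C(n)C(n+2)-C(n+1)^2$ and the $P$-recursiveness of Catalan numbers, using part~(a) of Theorem~\ref{T:d-finite_property} for the products). Granting that $f_k(2n,0)$ is $P$-recursive, part~3 of the $D$-finiteness theorem immediately gives that ${\bf F}_k(z)=\sum_{n\ge0}f_k(2n,0)z^n$ is $D$-finite, which is exactly the claim.

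The main obstacle is the transfer between the exponential and ordinary generating functions: Theorem~\ref{T:d-finite_property} as stated in the excerpt gives closure under sum, product, and composition with an algebraic series, but not explicitly under the Hadamard/Borel operations of dividing or multiplying coefficients by $n!$ or of taking bisections. So the delicate point is to justify those operations within the tools available — either by citing the standard fact that these operations preserve $D$-finiteness (they do, e.g.\ via the $P$-recursive characterization in part~3 of the theorem, since multiplying $f(n)$ by $(2n)!/n!$ keeps the sequence $P$-recursive), or by routing everything through the known $P$-recursiveness of $f_k(2n,0)$ as in the alternative argument, thereby sidestepping the EGF entirely.
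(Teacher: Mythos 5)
Your main argument is correct and is essentially the paper's own proof: the paper likewise derives $D$-finiteness of the Bessel-determinant EGF from the Bessel ODE via Theorem~\ref{T:d-finite_property}(b)--(c), and then handles the EGF-to-OGF transfer exactly along your first route, namely by noting that $f(n)=f_k(2n,0)/(2n)!$ is $P$-recursive, that $g(n)=(2n)!$ is $P$-recursive (from $(2n+1)(2n+2)g(n)-g(n+1)=0$), and that the product $f(n)g(n)=f_k(2n,0)$ is $P$-recursive by part~(a), whence ${\bf F}_k(z)$ is $D$-finite. The Hadamard/bisection subtlety you flag is glossed over in the paper in the same way, so your proposal matches the intended argument.
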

%%%
%%%%%%%%%%%%%%%%%%%%%%%%%%%%%%%%%%%%%%%%%%%%%%%%%%%%%%%%%%%%%%%%%%%%%%%%%%%%
%%%
\begin{proof}
Corollary~\ref{C:Bessel} gives the exponential generating function
of eq. (\ref{chapter2:matching-gf}). Recall that the Bessel function of the first kind
satisfies $I_n(x)=i^{-n}J_n(ix)$ and $J_n(x)$ is the solution of the
Bessel differential equation
\begin{equation*}
x^2\frac{d^2y}{d x^2}+x\frac{d y}{d x}+(x^2-n^2)y=0.
\end{equation*}
For every fixed $n\in\mathbb{N}$, $J_n(x)$ is $D$-finite.  Let
$G(x)=ix$. Clearly, $G(x)\in \mathbb{C}_{\text{\rm alg}}[[x]]$ and
$G(0)=0$, $J_n(ix)$ and $I_n(x)$ are accordingly $D$-finite in view
of the assertion (c) of Theorem~\ref{T:d-finite_property}.
Analogously we show that $I_n(2x)$ is $D$-finite for every fixed
$n\in\mathbb{N}$. Using eq.~(\ref{chapter2:matching-gf}) and assertion (b)
of Theorem~\ref{T:d-finite_property}, we conclude that
\begin{equation*}
{\bf H}_k(x)=\sum_{n\ge 0}\frac{f_k(2n,0)}{(2n)!}\,x^{2n}
\end{equation*}
is $D$-finite. In other words the sequence
$f(n)=\frac{f_k(2n,0)}{(2n)!}$ is $P$-recursive and furthermore
$g(n)=(2n)!$ is, in view of $(2n+1)(2n+2)g(n)-g(n+1)=0$,
$P$-recursive. Therefore, $f_k(2n,0)=f(n)g(n)$ is $P$-recursive.
This proves that {${\bf F}_k(z)=\sum_{n\geq 0}f_k(2n,0)z^{n}$} is
$D$-finite.
\end{proof}

\section{Singularity analysis of generating functions}

This section is meant to introduce the basic technology of singularity analysis, which is largely of a methodological nature. Precisely, the method of singularity analysis applies to functions whose singular expansion involves fractional powers and logarithms--one sometimes refers to such singularities as ``algebraic-logarithmic". It centrally relies on two ingredients.
\begin{itemize}
\item A {\it catalogue} of asymptotic expansions for coefficients of the standard functions that occur in such singular expansion.
\item {\it Transfer theorems}, which allow us to extract the asymptotic order of coefficients of error terms in singular expansions.
\end{itemize}

The singularity analysis are based on Cauchy's coefficient formula, used in conjunction with special contours of integration known as {\it Hankel contours}. The contours come very close to the singularities then go around a circle with a sufficient large radius: by design, they  capture essential asymptotic information contained in the functions' singularities.

\subsection{Basic singularity analysis theory}

Before discussing about the singularity analysis of general functions, we first have a glimpse of basic singularity analysis theory. We consider functions whose expansion at a singularity $\eta$ involves elements of the form
\begin{equation*}
\left(1-\frac{z}{\eta}\right)^{-\alpha}\left(\log \frac{1}{1-\frac{z}{\eta}}\right)^\beta.
\end{equation*}

Under suitable conditions to be discussed in detail in this section, any such element contributes a term of the form
\begin{equation*}
\eta^{-n}n^{\alpha-1}(\log n)^\beta,
\end{equation*}
where $\alpha$ and $\beta$ are arbitrary complex numbers.

Virtually, discussing a function with a singularity $\eta$ is equivalently discussing a function which has a singularity at $1$. Indeed, if $f(z)$ is singular at $z=\eta$, then $g(z)=f(z\eta)$ satisfies, by scaling rule of Taylor expansion,
\begin{equation*}
[z^n]f(z)=\eta^{-n}[z^n]f(z\eta)=\eta^{-n}[z^n]g(z),
\end{equation*}
where $g(z)$ is now singular at $z=1$. Based on this fact, we will focus on the function which has a singularity at $1$.

\restylefloat{figure}\begin{figure}[h!t!b!p]
\begin{center}
\includegraphics{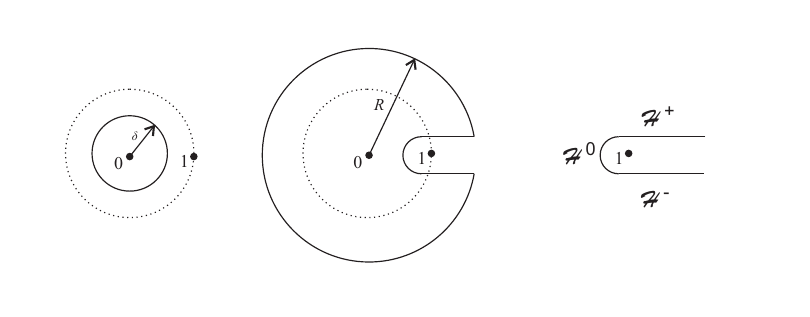}
\end{center}
\caption{Three different contours, $\mathcal{C}_0$, $\mathcal{C}_1$ and $\mathcal{C}_2\equiv \mathcal{H}(n)$ (from left to right) used for estimating the coefficients of functions from the standard function scale.\label{F:contour}}
\end{figure}

\begin{theorem}[\cite{Flajolet:90}]\label{T:standard-function-scale}
Let $\alpha$ be an arbitrary complex number in $\mathbb{C}\setminus \mathbb{Z}_{\leq 0}$. The coefficient of $z^n$ in
\begin{equation}\label{chapter2:standard-function-scale}
f(z)=(1-z)^{-\alpha}
\end{equation}
admits for large $n$ a complete asymptotic expansion in descending powers of $n$,
\begin{equation*}
[z^n]f(z)\sim
\frac{n^{\alpha-1}}{\Gamma(\alpha)}\left(1+\sum_{k=1}^\infty
\frac{e_k}{n^k}\right),
\end{equation*}
where $e_k$ is a polynomial in $\alpha$ of degree $2k$. In
particular:
\begin{align}\label{result1}
[z^n]f(z)\sim\frac{n^{\alpha-1}}{\Gamma(\alpha)}\bigg(1+&\frac{\alpha
(\alpha-1)}{2n}+\frac{\alpha(\alpha-1)(\alpha-2)(3\alpha-1)}{24n^2}+\cdots\bigg).
\end{align}
\end{theorem}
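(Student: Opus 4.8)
The plan is to evaluate $[z^n]f(z)$ by Cauchy's coefficient formula and to deform the circle of integration onto a Hankel contour hugging the singularity of $f$ at $z=1$, exactly along the lines suggested by the three contours of Figure~\ref{F:contour}. First I would write
\[
[z^n]f(z)=\frac{1}{2\pi\i}\oint_{\mathcal{C}_0}\frac{(1-z)^{-\alpha}}{z^{n+1}}\,dz,
\]
where $\mathcal{C}_0$ is a small positively oriented circle about the origin. For $\alpha\notin\mathbb{Z}_{\le0}$ the function $(1-z)^{-\alpha}$ has at most a branch-point singularity at $z=1$ and is holomorphic in the slit plane $\mathbb{C}\setminus[1,+\infty)$, so I would deform $\mathcal{C}_0$ first to a keyhole contour $\mathcal{C}_1$ of outer radius $R>1$ notched along $[1,R]$, and then, noting that the outer circle contributes only $O(R^{-n})$ and may be pushed to infinity, to the contour $\mathcal{C}_2\equiv\mathcal{H}(n)$: two horizontal rays at heights $\pm 1/n$ flanking the cut, joined by the circular arc $|z-1|=1/n$.

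Next I would apply the rescaling $z=1+t/n$, which maps $\mathcal{H}(n)$ onto a fixed Hankel contour $\mathcal{H}$ in the $t$-plane (the arc becoming $|t|=1$, the rays becoming $\im t=\pm1$) and yields
\[
[z^n]f(z)=\frac{n^{\alpha-1}}{2\pi\i}\int_{\mathcal{H}}(-t)^{-\alpha}\Bigl(1+\tfrac{t}{n}\Bigr)^{-n-1}\,dt.
\]
The analytic core is Hankel's integral representation $\frac{1}{\Gamma(\alpha)}=\frac{1}{2\pi\i}\int_{\mathcal{H}}(-t)^{-\alpha}e^{-t}\,dt$. Using
\[
\Bigl(1+\tfrac{t}{n}\Bigr)^{-n-1}=\exp\!\Bigl(-(n+1)\log\bigl(1+\tfrac tn\bigr)\Bigr)=e^{-t}\Bigl(1+\frac{c_1(t)}{n}+\frac{c_2(t)}{n^2}+\cdots\Bigr),
\]
where each $c_k(t)$ is a polynomial in $t$ of degree $2k$ (explicitly $c_1(t)=\tfrac{t^2}{2}-t$), I would integrate term by term. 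Each monomial integral evaluates, again by the Hankel formula, to $\frac{1}{2\pi\i}\int_{\mathcal{H}}(-t)^{-\alpha}e^{-t}t^m\,dt=(-1)^m/\Gamma(\alpha-m)=(-1)^m(\alpha-1)(\alpha-2)\cdots(\alpha-m)/\Gamma(\alpha)$, so the whole sum collapses to $\frac{n^{\alpha-1}}{\Gamma(\alpha)}\bigl(1+\sum_{k\ge1}e_k/n^k\bigr)$. Since $c_k$ has $t$-degree $2k$, the monomial $t^{2k}$ forces $e_k$ to be a polynomial of degree $2k$ in $\alpha$; feeding in $c_1$ gives, after a one-line simplification, $e_1=\tfrac{(\alpha-1)(\alpha-2)}{2}+(\alpha-1)=\tfrac{\alpha(\alpha-1)}{2}$, in agreement with eq.~(\ref{result1}).

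The step I expect to be the main obstacle is not this formal bookkeeping but the rigorous control of the contour manipulations and the term-by-term integration. Concretely, I would truncate the expansion of $(1+t/n)^{-n-1}$ at order $N$ with an explicit remainder, bound the corresponding remainder integral uniformly in $n$ over the compact part of $\mathcal{H}$, and then separately estimate the two horizontal tails, where one must verify that $(1+t/n)^{-n-1}$ retains enough of the decay of $e^{-\re t}$ to make the tail contributions exponentially small in $n$; the discarded outer circle and the branch-cut approach are handled by the same kind of estimate. Finally I would record the shortcut available for this particular $f$: since $[z^n](1-z)^{-\alpha}=\binom{n+\alpha-1}{n}=\Gamma(n+\alpha)/\bigl(\Gamma(\alpha)\,\Gamma(n+1)\bigr)$, the full expansion also follows directly from Stirling's formula applied to the ratio of Gamma functions, the $e_k$ being the (generalized Bernoulli) polynomials of degree $2k$ in $\alpha$; the Hankel-contour argument is nonetheless the one to present, as it is the template for the transfer theorems used in the later chapters.
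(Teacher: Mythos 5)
Your proposal is correct and follows essentially the same route as the paper: Cauchy's coefficient formula, deformation onto the rescaled Hankel contour $\mathcal{H}(n)$, the substitution $z=1+t/n$, expansion of $\left(1+\tfrac{t}{n}\right)^{-(n+1)}$ as $\exp^{-t}\left(1+\tfrac{t^2-2t}{2n}+\cdots\right)$, and term-by-term evaluation via $\frac{1}{2\pi \i}\int_{\mathcal{H}}(-t)^{-\alpha}t^r\exp^{-t}\dif t=\frac{(-1)^r}{\Gamma(\alpha-r)}$. The tail control you correctly identify as the delicate point is exactly what the paper handles by splitting $\mathcal{H}$ at $\re(t)=\log^2 n$ and bounding the $\mathcal{H}_2$ contribution.
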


\begin{proof}
The first step is to express the coefficient $[z^n](1-z)^{-\alpha}$
as a complex integral by means of Cauchy's coefficient formula,
\begin{equation}\label{coef}
f_n=\frac1{2\pi \i}\int_{\mathcal{C}}(1-z)^{-\alpha}\frac{\dif z}{z^{n+1}},
\end{equation}
where $\mathcal{C}$ is a small enough contour that encircles the
origin, see Figure \ref{F:contour}. We can start with
$\mathcal{C}\equiv\mathcal{C}_0$, where $\mathcal{C}_0$ is the
positively oriented circle $\mathcal{C}_0=\{z:|z|=\frac1{2}\}$. The
second step is to deform $C_0$ into another simple closed curve
$\mathcal{C}_1$ around the origin that does not cross the half-line
$\text{Re}(z)\geq 1$: the contour $\mathcal{C}_1$ consists of a large
circle $\mathcal{R}$ of radius $R>1$ with a notch that comes back
near and to the left of $z=1$. Since
\begin{align*}
\left|\int_{\mathcal{R}}(1-z)^{-\alpha}\frac{\dif z}{z^{n+1}}\right|&\leq \int_{\mathcal{R}}|(1-z)^{-\alpha}|\frac{|\dif z|}{|z|^{n+1}}\\
&=\frac{1}{R^{n+1}}\int_{\mathcal{R}}|1-z|^{\re(-\alpha)}\exp^{-\im(-\alpha)\cdot\arg(1-z)}|\dif z|\\
&\leq \frac{1}{R^{n+1}}M R^{\re(-\alpha)}\int_{\mathcal{R}}|dz|\\
&\leq\frac{2\pi M}{R^{n-\re(-\alpha)}},
\end{align*}
where $M$ is some positive constant, we can finally let
$R\rightarrow \infty$ and are left with an integral representation
for $f_n$ where $\mathcal{C}$ has been replaced by a contour
$\mathcal{C}_2$ that starts from $+\infty$ in the lower half-plane,
winds clockwise around 1, and ends at $+\infty$ in the upper
half-plane. The latter is a typical case of a {\it Hankel contour}
(see \cite{Flajolet:07a} Theorem B.1, p.745). a judicious choice of its
distance to the half-line $\mathbb{R}_{\geq 1}$ yields the
expansion.

To specify precisely the integration path, we particularize
$\mathcal{C}_2$ to be the contour $\mathcal{H}(n)$ that passes at a
distance $\frac1{n}$ from the half line $\mathbb{R}_{\geq 1}$:
\begin{equation}
\mathcal{H}(n)=\mathcal{H}^-(n)\cup \mathcal{H}^+(n)\cup \mathcal{H}^\circ (n),
\end{equation}
where
\begin{equation}
\begin{cases}
\mathcal{H}^-(n)\ =\ \{z=w-\frac{\i}{n}, w\geq 1\}\\
\mathcal{H}^+(n)\ =\ \{z=w+\frac{\i}{n}, w\geq 1\}\\
\mathcal{H}^\circ(n)\ =\ \{z=1-\frac{\exp^{\phi\i}}{n}, \phi\in[-\frac{\pi}{2},\frac{\pi}{2}]\}.
\end{cases}
\end{equation}
Now, a change of variable
\[
z=1+\frac{t}{n}
\]
in the integral (\ref{coef}) gives the form
\begin{equation}
f_n=\frac{n^{\alpha-1}}{2\pi \i}\int_{\mathcal{H}}(-t)^{-\alpha}\left(1+{t\over n}\right)^{-(n+1)}\dif t,
\end{equation}
(The Hankel contour $\mathcal{H}$ winds about 0, being at distance 1
from the positive real axis; it is the same as the one in the proof
of Theorem B.1, p.745 \cite{Flajolet:07a}.)

Now we split the contour $\mathcal{H}$ according to $\re(t)\leq
\log^2n$ and $\re(t)> \log^2n$ (see Figure \ref{F:split}), and we set
\[
\mathcal{H}_1=\mathcal{H}\cap \{\re(t)\leq
\log^2n\}, \mathcal{H}_2=\mathcal{H}\cap \{\re(t)>
\log^2n\}.
\]
\restylefloat{figure}\begin{figure}[h!t!b!p]
\begin{center}
\includegraphics{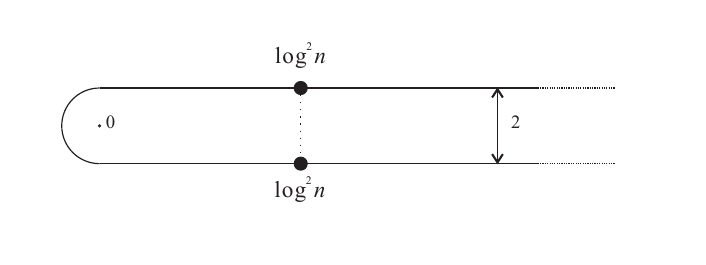}
\end{center}
\caption{The splitting of $\mathcal{H}$.\label{F:split}}
\end{figure}

For $\mathcal{H}_2$, let $k$ be a sufficient large positive integer
which make $\re(-\alpha)-k<-1$, then
\begin{align*}
T &\equiv\left|\frac{n^{\alpha-1}}{2\pi \i}\int_{\mathcal{H}_2}(-t)^{-\alpha}\left(1+{t\over n}\right)^{-(n+1)}\dif t\right|\\
&\leq \frac{|n^{\alpha-1}|}{2\pi \i}\int_{\mathcal{H}_2}|(-t)^{-\alpha-k}|\cdot\left|t^k\left(1+{t\over n}\right)^{-(n+1)}\right||\dif t|\\
&=\frac{|n^{\alpha-1}|}{2\pi \i}\int_{\mathcal{H}_2}\exp ^{-\im (-\alpha)\arg(-t)}|t|^{\re(-\alpha)-k}\cdot n^k\cdot\left|\frac{\left({t\over n}\right)^k}{\left(1+{t\over n}\right)^{\xi(n+1)}}\right|\cdot
\left|\left(1+{t\over n}\right)^{-(1-\xi)(n+1)}\right||\dif t|
\end{align*}
where $\xi$ is a real number which satisfies ${k\over{n+1}}<\xi<1$,
and we know it is possible when $n$ is big enough.

Explicitly, $\exp ^{-\im (-\alpha)\arg (-t)}$ is bounded. We also notice that whatever $\left(t\over n\right)$ is bounded or not, when
$n\rightarrow \infty$,
\begin{align*}
\frac{\left({t\over n}\right)^k}{\left(1+{t\over n}\right)^{\xi(n+1)}}=O(1)
\end{align*}
is always true. Moreover, we have
\[
\left(1+{t\over n}\right)^{-(1-\xi)(n+1)}=O(\exp^{-(1-\xi)\log^2 n}),\ n\rightarrow \infty,
\]
then
\begin{align*}
T\leq M'\frac{|n^{\alpha-1}|\cdot n^k}{2\pi}\exp^{-(1-\xi)\log^2 n}\int_{\mathcal{H}_2}|t|^{\re(-\alpha)-k}|\dif t|
\end{align*}

 where $M'$ is some positive constant. The integration of the
formula above is obviously convergence, so $T$ tends to 0 when $n$
tends to infinity.

For $\mathcal{H}_1$, when $\re(t)\leq \log ^2n$,
$t^{m_1}/n^m=o(n^{-(m-1)})$, $m, m_1\geq 1$. Therefore $\left(1+{t\over n}\right)^{-n-1}$ has asymptotic expansion of form
\begin{equation}
\exp^{-t}\left(1+\frac{t^2-2t}{2n}+\frac{3t^4-20t^3+24t^2}{24n^2}+o\left(\frac1{n^2}\right)\right)\label{eterm}.
\end{equation}
According the discussion about $\mathcal{H}_1$ and $\mathcal{H}_2$
above, when $n$ tends to infinity, we have
\begin{align}
[z^n](1-z)^{-\alpha}=f_n&=\frac{n^{\alpha-1}}{2\pi \i}\int_{\mathcal{H}_1}(-t)^{-\alpha}\left(1+{t\over n}\right)^{-(n+1)}\dif t
\nonumber\\
&+\frac{n^{\alpha-1}}{2\pi \i}\int_{\mathcal{H}_2}(-t)^{-\alpha}\left(1+{t\over n}\right)^{-(n+1)}\dif t\quad(\text{$\mathcal{H}_1$ tends to $\mathcal{H}$})\nonumber\\
 \sim\,\frac{n^{\alpha-1}}{2\pi \i}&\left[\int_{\mathcal{H}}(-t)^{-\alpha}\exp^{-t}\left(1+\frac{t^2-2t}{2n}+\frac{3t^4-20t^3+24t^2}{24n^2}\right)
 \dif t+o\left(\frac1{n^2}\right)\right]\label{E:termcomp}
\end{align}
Applying the equation
\begin{equation*}
\frac1{2\pi \i}\int_{\mathcal{H}}(-t)^{-\alpha}t^r\exp^{-t}\dif t=\frac{(-1)^r}{\Gamma(\alpha-r)}=\frac1{\Gamma(\alpha)}(1-\alpha)
(2-\alpha)\cdots(r-\alpha)
\end{equation*}
to eq.~(\ref{E:termcomp}), we can easily get eq.~(\ref{result1}). The proof is complete.
\end{proof}

If we add a logarithmic factor for $f(z)$ in eq.~(\ref{chapter2:standard-function-scale}), we will have the following asymptotic theorem.

\begin{theorem}[\cite{Flajolet:07a}]\label{chapter2:logarithm-asym}
$\alpha$ and $\beta$ are respectively negative and positive integer. The coefficient of $z^n$ in the function
\begin{equation}
f(z)=(1-z)^{-\alpha}\left(\log{1\over{1-z}}\right)^\beta
\end{equation}
admits for large $n$ a full asymptotic expansion of the form
\begin{equation}\label{chapter2:logasym}
f_n\equiv [z^n]f(z)\sim n^{\alpha-1}\left[F_0(\log n)+\frac{F_1(\log n)}{n}+\cdots\right],
\end{equation}
where $F_j$ is a polynomial with degree of $\beta-1$.
\end{theorem}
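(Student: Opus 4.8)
The plan is to reduce the statement to the pure power-law case already handled in Theorem~\ref{T:standard-function-scale} by differentiating in the exponent. The starting point is the identity
\[
(1-z)^{-\alpha}\left(\log\frac{1}{1-z}\right)^{\beta}
=\left.\frac{\partial^{\beta}}{\partial s^{\beta}}\,(1-z)^{-s}\right|_{s=\alpha},
\]
which holds since $(1-z)^{-s}=\exp\!\big(s\log\frac{1}{1-z}\big)$ and hence $\partial_s(1-z)^{-s}=(1-z)^{-s}\log\frac{1}{1-z}$. Because coefficient extraction commutes with $\partial_s$, this gives $f_n=\left.\partial_s^{\beta}\,[z^n](1-z)^{-s}\right|_{s=\alpha}$, so everything comes down to differentiating the asymptotic expansion of Theorem~\ref{T:standard-function-scale} $\beta$ times in $s$ and evaluating at $s=\alpha$.

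First I would rewrite the expansion of Theorem~\ref{T:standard-function-scale} as $[z^n](1-z)^{-s}=n^{s-1}\sum_{k\ge0}c_k(s)\,n^{-k}$ with $c_0(s)=1/\Gamma(s)$ and $c_k(s)=e_k(s)/\Gamma(s)$, each analytic in $s$ near $\alpha$. The step that licenses term-by-term differentiation of the expansion is the observation that Theorem~\ref{T:standard-function-scale} actually holds \emph{uniformly} for $s$ in a small disc about $\alpha$: in its proof the Hankel-contour estimates depend on $s$ only through $\mathrm{Re}(s)$ and $\mathrm{Im}(s)$, which are controlled uniformly on a compact disc. One may then apply Cauchy's formula for derivatives, $\left.\partial_s^{\beta}[z^n](1-z)^{-s}\right|_{s=\alpha}=\frac{\beta!}{2\pi\i}\oint\frac{[z^n](1-z)^{-\sigma}}{(\sigma-\alpha)^{\beta+1}}\,d\sigma$ over a small circle, and feed the uniform expansion into the contour integral, producing $n^{\alpha-1}\sum_{k\ge0}n^{-k}\left.\partial_s^{\beta}\!\big(n^{s-\alpha}c_k(s)\big)\right|_{s=\alpha}$ up to the error term.

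Next I would carry out the Leibniz expansion. With $n^{s-\alpha}=\exp\!\big((s-\alpha)\log n\big)$ one gets $\left.\partial_s^{\beta}\!\big(n^{s-\alpha}c_k(s)\big)\right|_{s=\alpha}=\sum_{i=0}^{\beta}\binom{\beta}{i}(\log n)^{\beta-i}c_k^{(i)}(\alpha)$, so $F_k(\log n)=\sum_{i=0}^{\beta}\binom{\beta}{i}c_k^{(i)}(\alpha)(\log n)^{\beta-i}$ is a priori a polynomial of degree $\le\beta$ in $\log n$. Here the hypothesis that $\alpha$ is a negative integer, say $\alpha=-m$, is used: $1/\Gamma$ has a simple zero at $-m$, so $c_k(\alpha)=0$ and the $(\log n)^{\beta}$ term disappears, while $c_k'(-m)=e_k(-m)\,(1/\Gamma)'(-m)$ with $(1/\Gamma)'(-m)=(-1)^m m!\neq0$, so the $(\log n)^{\beta-1}$ term survives with nonzero coefficient (this is where $\beta\ge1$ is needed). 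Collecting the surviving terms yields exactly eq.~(\ref{chapter2:logasym}).

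The main obstacle, I expect, is not the bookkeeping but the two points that make it rigorous: (i) upgrading Theorem~\ref{T:standard-function-scale} to a form uniform in the exponent over a complex neighbourhood of $\alpha$, which requires re-inspecting its proof to see that every $O$-bound is uniform there; and (ii) confirming that the degree genuinely drops to $\beta-1$ for every $F_j$, i.e.\ that $e_k(-m)\neq0$ for all $k$ and all $m\ge1$ (one checks this for the low-order $e_k$, e.g.\ $e_1(-m)=\tfrac{m(m+1)}{2}$, $e_2(-m)=\tfrac{m(m+1)(m+2)(3m+1)}{24}$). A fully self-contained alternative that sidesteps (i) is to repeat the Hankel-contour argument of Theorem~\ref{T:standard-function-scale} verbatim, now with integrand $(-t)^{-\alpha}\big(\log n-\log(-t)\big)^{\beta}(1+t/n)^{-(n+1)}$ after the substitution $z=1+t/n$, expand $\big(\log n-\log(-t)\big)^{\beta}$ binomially, and evaluate the resulting integrals via $\frac{1}{2\pi\i}\int_{\mathcal H}(-t)^{-s}(\log(-t))^{j}e^{-t}\,dt=(-1)^j\frac{d^{j}}{ds^{j}}\frac{1}{\Gamma(s)}$; the degree drop then reappears at the last step for the same reason.
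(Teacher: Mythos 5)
The paper gives no proof of this theorem---it simply defers to Flajolet--Sedgewick---and your argument is exactly the proof found there: reduce to $(1-z)^{-s}$, differentiate $\beta$ times in the exponent, and justify term-by-term differentiation of the expansion of Theorem~\ref{T:standard-function-scale} by the uniformity of the Hankel-contour estimates in $s$ (your direct Hankel computation with $(\log n-\log(-t))^{\beta}$ is likewise the standard variant), with the degree drop coming from $1/\Gamma(-m)=0$ and $(1/\Gamma)'(-m)=(-1)^m m!\neq 0$. The proposal is correct; the only residual point, which you yourself flag, is that exact degree $\beta-1$ for every $F_j$ (rather than just $F_0$ and degree at most $\beta-1$ for the rest) needs $e_j(-m)\neq 0$ for all $j$.
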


The proof of Theorem \ref{chapter2:logarithm-asym} has been present in \cite{Flajolet:07a}.

\subsection{Transfers}

In this subsection our general objective is to translate an approximation of a function near a singularity into an asymptotic approximation of its coefficients.

In previous subsection, we have noticed that the functions we want to approximate is analytic in the complex plane slit along the real half line $\mathbb{R}_{\geq 1}$. As a matter of fact, there exists a  weaker condition sufficing that any domain whose boundary makes an acute angle with the half line $\mathbb{R}_{\geq 1}$ appears to be suitable.

\begin{definition}[\cite{Flajolet:07a}]
Given two numbers $\phi, R$ with $R > 1$ and $0 < \phi <
{\pi\over2}$, the open domain $\Delta(\phi, R)$ is defined as
\[
\Delta(\phi, R) = \left\{z:
|z| < R, z \neq 1, | \arg(z - 1)| > \phi\right\}.
\]

A domain is a $\Delta$-domain at 1 if it is a $\Delta(\phi, R)$ for
some $R$ and $\phi$ (see Figure \ref{F:delta-contour}). For a complex
number $\zeta \neq 0$, a $\Delta$-domain at $\zeta$ is the image by
the mapping $z \mapsto \zeta z$ of a $\Delta$-domain at 1. A
function is $\Delta$-analytic if it is analytic in some
$\Delta$-domain.
\end{definition}

\restylefloat{figure}\begin{figure}[h!b!t!p]
\centering
\includegraphics{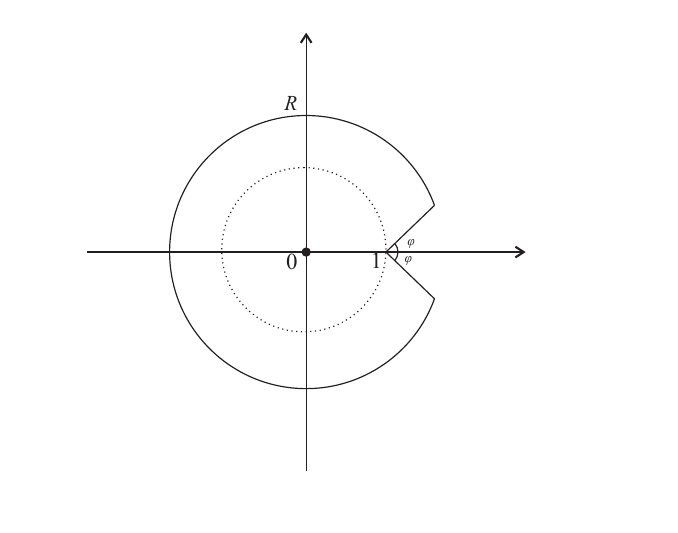}
\caption{$\Delta$-domain: The solid line is the edge of $\Delta$-domain and the dash line represents the edge of convergence domain.\label{F:delta-contour}}
\end{figure}

Based on the analyticity in a $\Delta$-domain, we have the following transfer theorem, which is also the key theorem in the asymptotic part of this thesis.

\begin{theorem}[Transfer\cite{Flajolet:07a}]\label{chapter2:transfer}
Let $\alpha, \beta$ be arbitrary real numbers, $\alpha, \beta \in R$
and let $f (z)$ be a function that is $\Delta$-analytic.
Assume that $f (z)$ satisfies in the intersection of a neighborhood of 1 with its
$\Delta$--domain the condition
\begin{equation}
f(z)=o\left((1-z)^{-\alpha}\left(\log{1\over {1-z}}\right)^\beta\right).
\end{equation}
Then one has: $[z^n]f(z)=o(n^{\alpha-1}(\log n)^\beta)$.
\end{theorem}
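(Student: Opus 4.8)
The plan is to mirror the proof of Theorem~\ref{T:standard-function-scale}, where the coefficient integral was evaluated along a Hankel contour, but now to track an \emph{error} term rather than a leading asymptotic. First I would invoke $\Delta$-analyticity of $f$: since $f$ is analytic in a $\Delta$-domain $\Delta(\phi, R)$, Cauchy's coefficient formula $f_n = \frac{1}{2\pi\i}\int_{\mathcal{C}} f(z)\,z^{-n-1}\,\dif z$ may be evaluated on a contour that lies entirely inside $\Delta(\phi,R)$. The natural choice, as in Figure~\ref{F:delta-contour}, is a contour $\gamma = \gamma_1 \cup \gamma_2 \cup \gamma_3 \cup \gamma_4$ consisting of: a small circle of radius $1/n$ around $z=1$ ($\gamma_1$), two rectilinear segments leaving $z=1$ at angles $\pm\theta$ with $\phi < \theta < \pi/2$ and running out to distance $r$ from the origin for some fixed $1 < r < R$ ($\gamma_2,\gamma_4$), and the major arc of the circle $|z| = r$ that closes the contour ($\gamma_3$). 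The angle condition $\theta > \phi$ is exactly what forces $\gamma$ to stay in the $\Delta$-domain, so the hypothesis $f(z) = o\big((1-z)^{-\alpha}(\log\frac{1}{1-z})^\beta\big)$ can be applied on $\gamma_1 \cup \gamma_2 \cup \gamma_4$ once we are in a neighborhood of $1$.

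Next I would estimate the three pieces. On the outer arc $\gamma_3$ we have $|z| = r > 1$, so $|z^{-n-1}| \leq r^{-n-1}$ decays geometrically while $f$ is bounded there (analyticity on a compact subset of the $\Delta$-domain); this contributes $O(r^{-n}) = o(n^{\alpha-1}(\log n)^\beta)$ trivially. On the small circle $\gamma_1$, the length is $O(1/n)$, the factor $|z^{-n-1}|$ is bounded (since $|z|$ is near $1$, indeed $(1+1/n)^{-n-1} = O(1)$), and by hypothesis $|f(z)| = o\big(n^{\alpha}(\log n)^\beta\big)$ there because $|1-z| = 1/n$; multiplying gives a contribution that is $o\big(n^{\alpha-1}(\log n)^\beta\big)$. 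The substantive piece is the pair of rays $\gamma_2, \gamma_4$: here I would parametrize $z = 1 + \frac{t}{n}e^{\pm\i\theta}$ (or, more simply, $z = 1 + (u/n)$ along the rotated ray), bound $|f(z)|$ by $\varepsilon_n \cdot |1-z|^{-\alpha}\big(\log\frac{1}{|1-z|}\big)^\beta$ with $\varepsilon_n \to 0$ uniformly (that is what the little-$o$ hypothesis provides), and bound $|z^{-n-1}| = |1 + \frac{t}{n}e^{\pm\i\theta}|^{-n-1} \leq e^{-c t}$ for some $c = c(\theta) > 0$ because $\re(e^{\pm\i\theta}) = \cos\theta > 0$. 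After the change of variable $z \to 1 + t/n$ this produces a factor $n^{\alpha-1}$ out front and a convergent integral $\int_0^\infty t^{-\re\alpha}\,|\log(t/n)|^\beta e^{-ct}\,\dif t$, which, handled exactly as the $\mathcal{H}_1/\mathcal{H}_2$ splitting in the proof of Theorem~\ref{T:standard-function-scale} (splitting at $\re(t) \leq \log^2 n$), is $O\big((\log n)^\beta\big)$. Collecting: $|f_n| \leq \varepsilon_n \cdot O\big(n^{\alpha-1}(\log n)^\beta\big) + o\big(n^{\alpha-1}(\log n)^\beta\big) = o\big(n^{\alpha-1}(\log n)^\beta\big)$.

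The main obstacle is the uniformity bookkeeping on the rays $\gamma_2, \gamma_4$: the hypothesis only gives $f(z) = o(\cdots)$ as $z \to 1$ \emph{within} the $\Delta$-domain, so one must split each ray into a near-$1$ portion (length $\sim \log^2 n / n$, where the little-$o$ estimate with $\varepsilon_n \to 0$ applies) and a far portion (where $f$ is merely bounded but $|z^{-n}|$ is already exponentially small, like the $\mathcal{H}_2$ estimate), and then check that the logarithmic factor $\big(\log\frac{1}{|1-z|}\big)^\beta = \big(\log\frac{n}{|t|}\big)^\beta$ does not spoil convergence of the resulting $t$-integral — this is the same technical point that made $\mathcal{H}_1$ delicate in Theorem~\ref{T:standard-function-scale}, now carrying an extra $\beta$ power of a logarithm. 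Everything else is a routine adaptation of the Hankel-contour machinery already developed above; indeed the statement is precisely the ``transfer of error terms'' companion to the catalogue entry of Theorem~\ref{chapter2:logarithm-asym}, and the proof is the one given in \cite{Flajolet:07a}.
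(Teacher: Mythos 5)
Your proposal is sound: the paper itself offers no proof of this statement (it is quoted from \cite{Flajolet:07a}), and your contour argument --- inner circle of radius $1/n$ about $z=1$, two rays at an angle $\theta\in(\phi,\pi/2)$ so as to stay inside the $\Delta$-domain, an outer arc $|z|=r<R$, with the little-$o$ hypothesis invoked only on the near-$1$ portion and boundedness of $f$ together with geometric decay of $|z|^{-n-1}$ used elsewhere --- is exactly the standard proof from that reference, and it is consistent with the Hankel-contour machinery the paper develops in the proof of Theorem~\ref{T:standard-function-scale}. The uniformity/splitting issue on the rays that you single out is indeed the only delicate point, and your treatment of it (near-$1$ piece with $\varepsilon_n\to 0$, far piece killed by exponential decay, logarithm absorbed as in the $\mathcal{H}_1/\mathcal{H}_2$ split) is the correct one.
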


An immediate corollary of Theorem \ref{chapter2:logarithm-asym} and Theorem \ref{chapter2:transfer} is the possibility of transferring asymptotic equivalence from singular forms to coefficients:

\begin{corollary}[sim-transfer\cite{Fusy:10}]\label{chapter2:sim-transfer}
Assume that $f(z), g(z)$ are all $\Delta$-analytic at $1$ and
\begin{equation*}
f(z)\sim (1-z)^{-\alpha},\ g(z)=(z-1)^{-r}\log (z-1),\qquad \text{as $z\rightarrow 1, z\in \Delta$},
\end{equation*}
with $\alpha\not\in \mathbb{Z}_{\leq 0}$ and $r\in\mathbb{Z}_{\leq 0}$. Then the coefficients of $f$ and $g$ satisfy
\begin{align*}
[z^n]f(z)&\sim \frac{n^{\alpha-1}}{\Gamma(\alpha)},\\
[z^n]g(z)&\sim -(-r)!\ n^{r-1}.
\end{align*}
\end{corollary}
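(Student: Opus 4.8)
The plan is to handle $f$ and $g$ in parallel: in each case one writes the function as a ``standard scale'' element plus a remainder that the hypothesis forces to be $o(\cdot)$ of that element, computes the coefficient asymptotics of the standard element from Theorem~\ref{T:standard-function-scale} (respectively Theorem~\ref{chapter2:logarithm-asym}), and disposes of the remainder by the Transfer Theorem~\ref{chapter2:transfer}. Since $f$ (resp.\ $g$) and the standard element are each $\Delta$-analytic, their difference is $\Delta$-analytic too, so the transfer machinery applies.

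For $f$, put $r_f(z)=f(z)-(1-z)^{-\alpha}$; the assumption $f(z)\sim(1-z)^{-\alpha}$ as $z\to1$ in $\Delta$ is exactly $r_f(z)=o((1-z)^{-\alpha})$. As $\alpha\notin\mathbb{Z}_{\le0}$, Theorem~\ref{T:standard-function-scale} gives $[z^n](1-z)^{-\alpha}\sim n^{\alpha-1}/\Gamma(\alpha)$, while Theorem~\ref{chapter2:transfer} with $\beta=0$ gives $[z^n]r_f(z)=o(n^{\alpha-1})$. Adding, $[z^n]f(z)\sim n^{\alpha-1}/\Gamma(\alpha)$, which is the first claim.

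For $g$ the preliminary step is to express $(z-1)^{-r}\log(z-1)$ in the scale of Theorem~\ref{chapter2:logarithm-asym}. Because $r\in\mathbb{Z}_{\le0}$ the exponent $-r$ is a non-negative integer, so $(z-1)^{-r}=(-1)^{-r}(1-z)^{-r}$ with $(1-z)^{-r}$ an honest polynomial, and on a $\Delta$-domain the continuous branch of $\log(z-1)$ satisfies $\log(z-1)=-\log\frac{1}{1-z}+c$ with the fixed constant $c=\pm\i\pi$. Hence $(z-1)^{-r}\log(z-1)=-(-1)^{-r}(1-z)^{-r}\log\frac{1}{1-z}+c\,(-1)^{-r}(1-z)^{-r}$, and the last summand, being a polynomial of degree $-r$, contributes nothing to $[z^n]$ once $n>-r$. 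Theorem~\ref{chapter2:logarithm-asym} with exponent $r\le0$ and $\beta=1$ (and, in the boundary case $r=0$, simply $\log\frac{1}{1-z}=\sum_{k\ge1}z^k/k$) then yields $[z^n]\big((1-z)^{-r}\log\frac{1}{1-z}\big)\sim c_r\,n^{r-1}$, the logarithmic factor dropping out of the leading term because the polynomial $F_0$ in that theorem has degree $\beta-1=0$. The constant is pinned down by a short computation: expanding $(1-z)^{m}$, $m=-r$, against $\sum_{k\ge1}z^k/k$ gives $[z^n]=\sum_{j=0}^m\binom{m}{j}(-1)^j\frac{1}{n-j}=\frac{(-1)^m m!}{(n-m)(n-m+1)\cdots n}\sim\frac{(-1)^m m!}{n^{m+1}}$, so $c_r=(-1)^{-r}(-r)!$; combining the pieces, $[z^n]g(z)\sim-(-1)^{-r}c_r\,n^{r-1}=-(-r)!\,n^{r-1}$, up to the remainder.

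The main obstacle is controlling that remainder in the $g$-case. With $r_g(z)=g(z)-(z-1)^{-r}\log(z-1)=o\big((1-z)^{-r}\log\frac{1}{1-z}\big)$, transferring via Theorem~\ref{chapter2:transfer} with exponent $r$ and $\beta=1$ gives only $[z^n]r_g(z)=o(n^{r-1}\log n)$, a priori of larger order than the main term $n^{r-1}$ — the bare ``$\sim$'' on functions is too weak to yield ``$\sim$'' on coefficients once a logarithm is present. To close the argument one reads the hypothesis in its intended strength, namely that $g$ has a singular expansion whose remainder past the term $(z-1)^{-r}\log(z-1)$ is $o((1-z)^{-r})$ (as happens e.g.\ when it is $O((1-z)^{-r+\delta})$ for some $\delta>0$), so that Theorem~\ref{chapter2:transfer} with $\beta=0$ sends the transferred remainder to $o(n^{r-1})$; this is also the situation in the Chapter~4 application to skeleton matchings. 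The remaining items — choosing the branch of $\log(z-1)$ consistently and tracking the signs $(-1)^{-r}$ and the factorial through the reduction — are routine but are where a sign slip would be easy.
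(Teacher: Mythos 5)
Your proposal follows essentially the same route the paper intends: the corollary is stated there without a separate proof, as an immediate consequence of Theorem~\ref{T:standard-function-scale}/Theorem~\ref{chapter2:logarithm-asym} together with the transfer Theorem~\ref{chapter2:transfer}, which is exactly the decomposition (standard singular element plus $o$-remainder) that you carry out, and your constant $-(-r)!\,n^{r-1}$ is correct. Your observation that the bare hypothesis $g(z)\sim(z-1)^{-r}\log(z-1)$ transfers the remainder only to $o(n^{r-1}\log n)$ is a genuine subtlety the paper glosses over (its own application in Lemma~\ref{L:sk-sing-expansion} has remainder of the weak form $o\left((y-R)^4\log(y-R)\right)$), and your resolution --- reading the hypothesis as a singular expansion whose error term is $o\left((1-z)^{-r}\right)$, as in the cited source --- is the intended one.
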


\subsection{The process of singularity analysis}

We let $\mathcal{S}$ denote the set of such singular functions:
\begin{equation}
\mathcal{S}=\{(1-z)^{-\alpha}\lambda(z)^\beta\mid \alpha, \beta\in\mathbb{C}\},\quad \lambda(z):=\log\frac{1}{1-z}.
\end{equation}

According to the previous subsection, every element in $\mathcal{S}$ is available to do the asymptotic estimate under suitable conditions. Generally, if we have an expansion of a function at its singularity (also called {\it singular expansion}) which is composed by the elements of $\mathcal{S}$, we can do the approximation for the coefficients of this function. We state the following theorem \cite{Flajolet:07a}.

\begin{theorem}[Singularity analysis, single singularity]\label{chapter2:singularity-analysis}
Let $f (z)$ be function analytic at $0$ with a singularity at $\zeta$
, such that $f (z)$ can be continued to a domain of the form
$\zeta\cdot\Delta_0$ , for a  $\Delta$-domain $\Delta_0$, where
$\zeta\cdot\Delta_0$ is the image of $\Delta_0$ by the mapping $z
\rightarrow \zeta z$. Assume that there exist two functions
$\sigma$, $\tau$ , where $\sigma$ is a (finite) linear combination
of functions in $\mathcal{S}$ and $\tau\in\mathcal{S}$, so that
\begin{equation}\label{chapter2:singular-expansion}
f(z)=\sigma(z/\zeta)+o(\tau(z/\zeta))\qquad \text{as $z\rightarrow\zeta$ in
$\zeta\cdot\Delta_0$}.
\end{equation}
Then, the coefficients of $f(z)$ satisfy the asymptotic estimate
\begin{equation}\label{chapter2:asym-coef}
[z^n]f(z)=\zeta^{-n}[z^n]\sigma(z)+o(\zeta^{-n}\tau_n^\star),
\end{equation}
where $\tau_n^\star=n^{a-1}(\log n)^b$, if
$\tau(z)=(1-z)^{-a}\lambda(z)^b$, $\lambda(z)=z^{-1}\log(1-z)^{-1}$.
\end{theorem}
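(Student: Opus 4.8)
The plan is to reduce to the normalized case $\zeta=1$ and then to read off the statement as a packaging of the Transfer Theorem (Theorem~\ref{chapter2:transfer}) together with the linearity of coefficient extraction; the genuinely hard analytic input --- the Hankel-contour estimates --- has already been carried out in Theorems~\ref{T:standard-function-scale} and~\ref{chapter2:transfer}, so what remains is essentially bookkeeping.

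First I would apply the scaling rule recorded earlier in this chapter: set $g(z):=f(\zeta z)$, so that $[z^n]f(z)=\zeta^{-n}[z^n]g(z)$, and $g$ is analytic at $0$, singular at $1$, and analytic on the $\Delta$-domain $\Delta_0$. Substituting $z\mapsto\zeta z$ in~(\ref{chapter2:singular-expansion}) turns the hypothesis into $g(z)=\sigma(z)+o(\tau(z))$ as $z\to 1$ in $\Delta_0$, and turns the desired conclusion~(\ref{chapter2:asym-coef}) into $[z^n]g(z)=[z^n]\sigma(z)+o(\tau_n^\star)$. Hence it suffices to establish the latter.

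Next I would write $g=\sigma+R$ with $R:=g-\sigma$, so that $[z^n]g(z)=[z^n]\sigma(z)+[z^n]R(z)$ by linearity. Each building block $(1-z)^{-\alpha}$ and $\lambda(z)=\log\frac{1}{1-z}$ is analytic in the slit plane $\mathbb{C}\setminus[1,\infty)$, hence in every $\Delta$-domain at $1$, so the finite linear combination $\sigma$ is $\Delta$-analytic; intersecting its domain of analyticity with $\Delta_0$ --- again a $\Delta$-domain --- shows $R$ is $\Delta$-analytic there. By hypothesis $R(z)=o(\tau(z))=o\big((1-z)^{-a}\lambda(z)^{b}\big)$ as $z\to 1$ inside that domain, so Theorem~\ref{chapter2:transfer} gives $[z^n]R(z)=o(n^{a-1}(\log n)^{b})=o(\tau_n^\star)$. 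Adding back $[z^n]\sigma(z)$ and undoing the scaling (multiplying through by $\zeta^{-n}$) yields exactly~(\ref{chapter2:asym-coef}). If in addition one wants the main term in explicit form, apply Theorems~\ref{T:standard-function-scale} and~\ref{chapter2:logarithm-asym} termwise to $\sigma$, but this is not needed for the statement as written.

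The main obstacle, modest as it is, is organizational: one must ensure that the $o$-estimate in~(\ref{chapter2:singular-expansion}) is being invoked in the intersection of a neighbourhood of $1$ with the $\Delta$-domain --- precisely the hypothesis format demanded by Theorem~\ref{chapter2:transfer} --- rather than merely along the positive real axis, and that a single common $\Delta$-domain can be taken for $f$, $\sigma$, $R$ and the transfer step; both are guaranteed since a finite intersection of $\Delta$-domains is again a $\Delta$-domain.
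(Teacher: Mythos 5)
The paper never proves this theorem: it is stated verbatim with a citation to Flajolet--Sedgewick \cite{Flajolet:07a}, so there is no in-paper argument to compare against. Your proposal is correct and is precisely the standard derivation that the citation points to, and it fits the machinery the chapter has already set up: the scaling rule $[z^n]f(z)=\zeta^{-n}[z^n]f(\zeta z)$ stated at the start of the singularity-analysis section reduces to $\zeta=1$; the decomposition $g=\sigma+R$ uses only linearity of coefficient extraction and the fact that the conclusion's main term is the \emph{exact} coefficient $[z^n]\sigma(z)$ (so no asymptotics of $\sigma$ are needed); the $\Delta$-analyticity of $R$ follows since each element of $\mathcal{S}$ is analytic in the slit plane; and the error term is handled by Theorem~\ref{chapter2:transfer}. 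The only wrinkle worth flagging is one you inherit from the paper rather than create: Theorem~\ref{chapter2:transfer} is stated for real $\alpha,\beta$, whereas $\mathcal{S}$ and hence $\tau$ formally allow complex exponents; for the applications in Chapters~3 and~4 (real exponents, $\tau_n^\star=n^{a-1}(\log n)^b$) your argument applies without change, and covering complex exponents would simply require the stronger form of the transfer theorem from the cited source.
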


Theorem \ref{chapter2:singularity-analysis} gives us a general process of singularity analysis for a generating function, $f(z)$, progressing in following three steps.
\begin{enumerate}
\item {\it Preparation.} Locate the singularity of $\zeta$ of $f(z)$, and check if $f(z)$ is analytic in some domain of the form $\zeta\Delta_0$.
\item {\it Singular expansion.} Analyze $f(z)$ as $z\rightarrow\eta$ in the domain $\zeta\Delta_0$ and determine in that domain an expansion of the form as eq. (\ref{chapter2:singular-expansion}).
\item {\it Transfer.} Using Theorem \ref{chapter2:singularity-analysis} to determine the asymptotic estimate of the $n$-th coefficient of the generating function $f(z)$, as eq. (\ref{chapter2:asym-coef}).
\end{enumerate}

\subsection{Supercritical paradigm}

In our derivations the following instance of the supercritical paradigm
\cite{Flajolet:07a} is of central importance: we
are given a function, $f(z)$, which admits a log-power expansion
, $\Delta$-analytic and an algebraic function
$g(u)$ satisfying $g(0)=0$. Furthermore we suppose that $f(g(u))$
has a unique real valued dominant singularity $\gamma$ and $g$ is
regular in a disc with radius slightly larger than $\gamma$. Then
the supercritical paradigm stipulates that the subexponential
factors of $f(g(u))$ at $u=0$, given that $g(u)$ satisfies certain
conditions, coincide with those of $f(z)$.

Theorem \ref{chapter2:standard-function-scale}, Corollary \ref{chapter2:sim-transfer} and
Theorem~\ref{chapter2:singularity-analysis} allow under certain conditions to obtain the
asymptotic estimate of the coefficients of supercritical compositions of the
``outer'' function $f(z)$ and ``inner'' function $\psi(z)$.

Suppose $f(z)$ has single singularity at $\rho$ and the singular expansion is given by
\begin{equation}\label{E:fx-sing-exp}
f(z)=\sum_{i=0}^{k} a_i (z-\rho)^{i}+a_k(z-\rho)^k\log(z-\rho)+o((z-\rho)^k),\quad a_i\in\mathbb{C},
\end{equation}
then we have the following proposition.
%%%
%%%%%%%%%%%%%%%%%%%%%%%%%%%%%%%%%%%%%%%%%%%%%%%%%%%%%%%%%%%%%%%%%%%%%%%%%
%%%
\begin{proposition}\label{chapter2:algeasym}
Let $\psi(z)$ be an analytic function in a
domain $\mathcal{D}=\{z||z|\leq r\}$ such that
$\psi(0)=0$. Suppose $\gamma$ is the unique
dominant singularity of $f(\psi(z))$ and minimum positive real
solution of $\psi(\gamma)=\rho$, $|\gamma|<r$. $|\psi(z)|\leq \rho$ for $|z|\leq \gamma$
and $\psi'(\gamma)>0$.
Then $f(\psi(z))$ has a singular expansion and
\begin{equation}
[z^n]f(\psi(z)) \sim A\,n^{-k-1}
\left(\frac{1}{\gamma}\right)^n,
\end{equation}
where $A$ is some constant.
\end{proposition}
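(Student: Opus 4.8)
The plan is to treat $f\circ\psi$ as a supercritical composition and to run the three-step singularity-analysis programme described after Theorem~\ref{chapter2:singularity-analysis}: locate and control the dominant singularity, produce a singular expansion there, and transfer it to the coefficients. \textbf{Step 1 ($\Delta$-analyticity of $f(\psi(z))$).} First I would show that $f\circ\psi$ continues analytically to a domain $\gamma\cdot\Delta_0$ for some $\Delta$-domain $\Delta_0$. Since $\psi$ is analytic on $\{|z|\le r\}$ with $|\gamma|<r$, it is analytic on a disc strictly containing $\{|z|\le\gamma\}$; on $\{|z|\le\gamma\}$ we have $|\psi(z)|\le\rho$, and by the hypothesis that $\gamma$ is the \emph{unique} dominant singularity of $f\circ\psi$ together with the $\Delta$-analyticity of $f$ at $\rho$, the value $\rho$ is attained by $\psi$ on this closed disc only at $z=\gamma$. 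Because $\psi'(\gamma)>0$, $\psi$ is a local biholomorphism near $\gamma$ that maps a neighbourhood of $\gamma$ onto a neighbourhood of $\rho$ and carries the ray forbidden to $f$ onto a smooth arc issuing from $\gamma$; pulling back the $\Delta$-domain of $f$ through this local inverse and gluing with the analyticity of $f\circ\psi$ on a disc slightly larger than $\{|z|\le\gamma\}$ yields the required $\Delta$-domain at $\gamma$.

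\textbf{Step 2 (singular expansion at $\gamma$).} Next I would Taylor-expand $\psi$ at $\gamma$, writing $\psi(z)-\rho=\psi'(\gamma)(z-\gamma)\bigl(1+\mathrm{O}(z-\gamma)\bigr)$, and substitute into the singular expansion~(\ref{E:fx-sing-exp}) of $f$. The part $\sum_{i=0}^{k-1}a_i(\psi(z)-\rho)^i$ is analytic at $\gamma$; the factor $(\psi(z)-\rho)^k=(\psi'(\gamma))^k(z-\gamma)^k\bigl(1+\mathrm{O}(z-\gamma)\bigr)$ is analytic and vanishes to order exactly $k$; and $\log(\psi(z)-\rho)=\log(z-\gamma)+\log\psi'(\gamma)+\log\bigl(1+\mathrm{O}(z-\gamma)\bigr)$ isolates the genuine singularity $\log(z-\gamma)$ plus an analytic remainder. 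Collecting terms, and absorbing every purely analytic $(z-\gamma)^k$ contribution into the $(1+o(1))$ factor (legitimate since $(z-\gamma)^k=o\bigl((z-\gamma)^k\log(z-\gamma)\bigr)$), one obtains
\begin{equation*}
f(\psi(z))=\sum_{j=0}^{k-1}b_j(z-\gamma)^j+b_k(z-\gamma)^k\log(z-\gamma)\,(1+o(1)),\qquad b_k=(\psi'(\gamma))^k\neq0,
\end{equation*}
which proves that $f(\psi(z))$ has a singular expansion of the same shape as~(\ref{E:fx-sing-exp}).

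\textbf{Step 3 (transfer).} Finally I would rescale to a singularity at $1$ and write $f(\psi(z))=\sigma(z/\gamma)+o\bigl(\tau(z/\gamma)\bigr)$ with $\tau(z)=(1-z)^k\lambda(z)$, $\lambda(z)=\log\frac{1}{1-z}$, and $\sigma$ a finite linear combination of functions from $\mathcal{S}$: the analytic part contributes only a polynomial, hence no coefficients for large $n$, while the singular term contributes a nonzero multiple of $(1-z/\gamma)^k\lambda(z/\gamma)$ (the constant term of $\log(z-\gamma)=\log(-\gamma)+\log(1-z/\gamma)$ multiplies only a polynomial and drops out). Applying Theorem~\ref{chapter2:singularity-analysis}, the logarithmic coefficient asymptotics of Theorem~\ref{chapter2:logarithm-asym} with $\alpha=-k$ and $\beta=1$ (so $F_0$ has degree $0$ and no $\log n$ survives), and the Transfer Theorem~\ref{chapter2:transfer} / Corollary~\ref{chapter2:sim-transfer} for the error term, one arrives at $[z^n]f(\psi(z))\sim A\,n^{-k-1}(1/\gamma)^n$, with $A$ a constant proportional to $b_k$.

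\textbf{Expected main obstacle.} Steps~2--3 are essentially bookkeeping on top of Theorems~\ref{chapter2:logarithm-asym} and~\ref{chapter2:singularity-analysis}; the delicate point is Step~1, namely verifying rigorously that $f\circ\psi$ is $\Delta$-analytic -- that $\psi$ neither re-attains $\rho$ nor crosses the branch locus of $f$ inside a domain pushed slightly beyond $\{|z|=\gamma\}$, and that the local inverse of $\psi$ at $\gamma$ glues with the global analyticity into an honest $\Delta$-domain. This is exactly where the hypotheses $|\psi(z)|\le\rho$ on $\{|z|\le\gamma\}$, $|\gamma|<r$, $\psi'(\gamma)>0$, and uniqueness of the dominant singularity are used. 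A secondary care point is reading the error term in~(\ref{E:fx-sing-exp}) sharply enough that, after composition with $\psi$, its coefficients are genuinely $o(n^{-k-1})$ rather than merely $o(n^{-k-1}\log n)$.
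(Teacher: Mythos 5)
Your proposal is correct and follows essentially the same route as the paper: the paper's own proof consists precisely of your Step 1 (analyticity of $f(\psi(z))$ on $\{z\mid |z|\le\gamma\}\setminus\{\gamma\}$ from $|\psi(z)|\le\rho$ and uniqueness, a sector $\tilde{\Delta}_\gamma$ at $\gamma$ built from the Taylor expansion $\psi(z)=\rho+\psi'(\gamma)(z-\gamma)+o(z-\gamma)$, and an extension beyond the circle $|z|=\gamma$ via a finite-cover argument rather than your ``slightly larger disc''), while your Steps 2--3 are exactly the part the paper delegates to Theorem 3.21 of \cite{Reidys:10book}. The care point you flag yourself is the one real issue: keeping the error only as $o\bigl((z-\gamma)^k\log(z-\gamma)\bigr)$ transfers to $o(n^{-k-1}\log n)$, which would swamp the main term $n^{-k-1}$, so one must use the sharper regular-singularity form of the expansion (analytic factors as in Theorem~\ref{T:reg}) exactly as the cited reference does.
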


\begin{proof}
Here we only present the proof of $\Delta$-analyticity of $f(\psi(z))$. The asymptotic part is totally the same as the proof of Theorem 3.21 in \cite{Reidys:10book} when $s=0$.

Suppose the outer function, $f(w)$ is analytic in a $\Delta$-domain defined as
\begin{equation*}
\Delta_f=\{w\mid |w|<\lambda, w\neq \rho, |\arg(w-\rho)|>\phi\},\quad \text{where $\lambda>\rho$}.
\end{equation*}

Step {\sf (a)}. {\it Prove that $f(\psi(z))$ is analytic in the domain of $\{z\mid |z|\leq\gamma\}-\{\gamma\}$.}
Since $\gamma$ is the unique dominant singularity, $\psi(z)=\rho$ if and only if $z=\gamma$ when $|z|\leq \gamma$. According to the condition of $|\psi(z)|\leq \rho$ for $|z|\leq \gamma$, we easily conclude that $\psi(z)$ lies in $\Delta_f$ for any $z$ satisfying $|z|\leq \gamma$ without $z=\gamma$, whence the claim {\sf (a)}.

Step {\sf (b)}. {\it Prove that there exists a domain around $z=\gamma$, defined as
\begin{equation}\label{E:gamma-delta}
\tilde{\Delta}_\gamma=\{z\mid |z-\gamma|<\delta, \arg|z-\gamma|>\beta, 0<\beta<\frac{\pi}{2}\},\quad \text{where $\delta<r-\gamma$}
\end{equation}
satisfying $\psi(\tilde{\Delta}_\gamma)\subset \Delta_f$.
}

Due to the supercritical paradigm of $f(\psi(z))$, $\psi(z)$ is analytic at $\gamma$. Therefore we have Taylor expansion,
\begin{equation}\label{E:psi-expansion}
\psi(z)=\rho+\psi'(\gamma)(z-\gamma)+e(z)
\end{equation}
where $e(z)$ is the error term satisfying $e(z)=o(z-\gamma)$.

Now we begin to construct a domain presented in eq. (\ref{E:gamma-delta}).

%%%%%%%%%%%%%%%%%%%%%%%
\restylefloat{figure}\begin{figure}[h!t!p!b]
\centering
\includegraphics{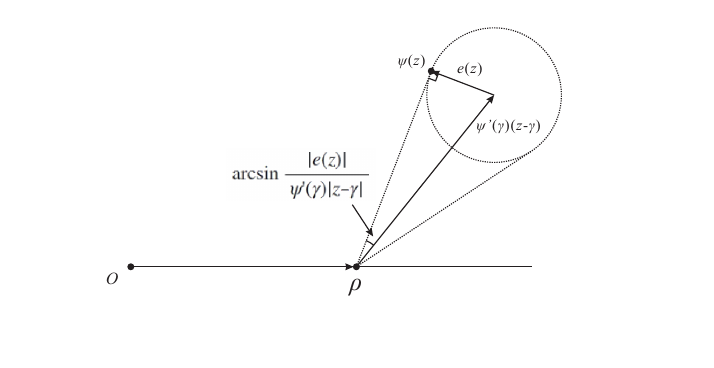}
\caption{The figure shows us the range of argument of $\psi(z)-\rho$ when $z$ is in a sufficient small neighborhood of $\gamma$.\label{F:delta-neigh}}
\end{figure}

%%%%%%%%%%%%%%%%%%%%%%

 We give a positive real number $\varepsilon$ which is as small as $\varepsilon<\min\{10^{-2},\lambda-\rho\}$, then there exists $\delta$ satisfying
\begin{equation}\label{E:psi-ep}
|\psi(z)-\rho|<\varepsilon
\end{equation}
 and
\begin{equation}\label{E:gamma-epsilon}
|e(z)|\leq \varepsilon \psi'(\gamma)|z-\gamma|
\end{equation}
for all $|z-\gamma|<\delta$.

As a result of very small quantity of $\varepsilon$, we have the following inequality concluded from eq. (\ref{E:psi-expansion}),
\begin{equation}\label{E:gamma-ineq}
\arg(z-\gamma)-\theta\leq\arg(\psi(z)-\rho)\leq \arg(z-\gamma)+\theta,
\end{equation}
for all $|z-\gamma|<\delta$, where $\theta=\arcsin\left|\frac{e(z)}{\psi'(\gamma)(z-\gamma)}\right|\leq\arcsin\varepsilon$. The inequality is easily approved geometrically, see Fig.~\ref{F:delta-neigh}. Picking up $\beta$ sufficing $\phi+\theta<\beta<\pi/2$, then eq. (\ref{E:gamma-ineq}) implies
\begin{equation}\label{E:arg-psi}
|\arg(\psi(z)-\rho)|>\phi
\end{equation}
when $|\arg(z-\gamma)|>\beta$.
According to eq. (\ref{E:psi-ep}) and eq. (\ref{E:arg-psi}), $\psi(z)$ lies in $\Delta_f$ if $|z-\gamma|<\delta$ and $|\arg(z-\gamma)|>\beta$. $\tilde{\Delta}_\gamma$ in eq. (\ref{E:gamma-delta}) is settled. Hence the claim {\sf (b)} is true.

%%%%%%%%%%%%%%%%%%%%%%%%%%%%5
\restylefloat{figure}\begin{figure}[h!t!b!p]
\centering
\scalebox{0.8}[0.8]{\includegraphics{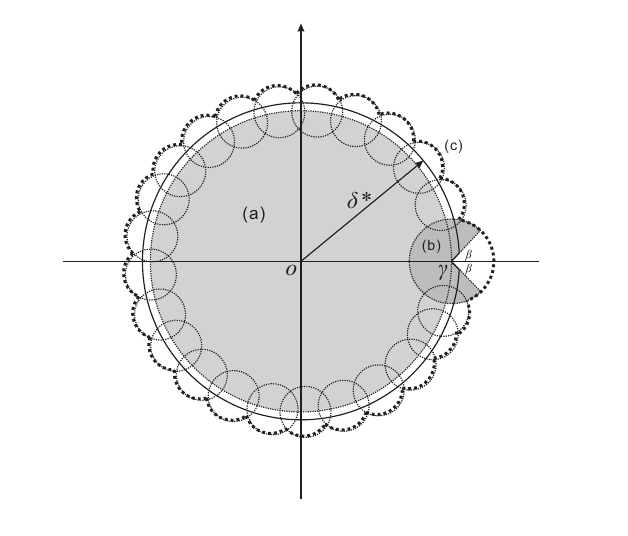}}
\caption{Three steps to generate a $\Delta$-domain for $f(\psi(z))$: Step (a) is proving $\psi(z)$ is constraint in the circle of radius $\rho$ when $|z|\leq \gamma$. Step (b) is creating a split domain $\tilde{\Delta}_\gamma$ (dark grey), while Step (c) is to expand the disc (light grey) to a larger domain $\Psi\cup U(\gamma, \delta)\cup U(0,\gamma)$ (with thick dotted edge), then choose a proper $\delta^*$ and construct a $\Delta$-domain (solid black curve) where $f(\psi(z))$ is analytic.\label{F:continuation}}
\end{figure}

%%%%%%%%%%%%%%%%%%%%%%%%%%%%%

Step {\sf (c)}. Suppose $z'$ is an arbitrary point lying on the circle of $|z|=\gamma$ except $z=\gamma$. We deduce from {\sf (a)} that $\psi(z)$ is analytic at $z'$. Based on this fact, giving a sufficient small neighborhood $U(\psi(z'),\varepsilon_{z'})\subset \Delta_f$, there exists $\delta_{z'}$ such that
\begin{equation}\label{E:psi-ep-0}
\psi(z)\in U(\psi(z'),\varepsilon_{z'})
\end{equation}
for all $z\in U(z',\delta_{z'})$, where $U(\clubsuit,\mu)$ denotes the neighborhood of $\clubsuit$ with radius $\mu$. According to the condition of, eq. (\ref{E:psi-ep-0}) can be deducted to $|\psi(z)|<\lambda$. {\sf (b)} has given us a domain $\tilde{\Delta}_\gamma$ which covers a part of circle $|z|=\gamma$. Our goal here is to find a domain made by a discs chain $\Psi$ covering the arc $\iota$ of $\{z\mid|z|=\gamma\}-\tilde{\Delta}_\gamma$ and every $z$ lies in $\Psi$ satisfies $\psi(z)\in\Delta_f$. Clearly $\iota$ is a closed line, therefore Among the set of $\{\delta_{z'}\}_{z'\in\iota}$, we have a limited sequence $\{\delta_{z_1},\delta_{z_2},\ldots,\delta_{z_n}\}$, satisfying
\begin{equation*}
\iota\subset\Psi=\bigcup_{i=1}^n U(z_i, \delta_{z_i})
\end{equation*}
by finite covering theorem. $\Psi$ is well defined.

Choose $\hat{\delta}=\min\{|z|: z\in\partial(\Psi\cup U(\gamma, \delta)\cup U(0,\gamma))\}$, see Fig. \ref{F:continuation}. Clearly $\hat{\delta}>\gamma$. We set a $\Delta$-domain,
\begin{equation}
\Delta=\{z\mid |z|<\delta^*, |\arg(z-\gamma)|>\beta\},
\end{equation}
where $\delta^*\in (\gamma, \hat{\delta})$.

For an arbitrary $z\in \Delta$, $z$ should be contained only in $\Psi$, $U(\gamma,\delta)$ or $U(0,\gamma)$ since the disc $U(0,\delta^*)$ is inside $\Psi\cup U(\gamma, \delta)\cup U(0,\gamma)$. $z$ would be in $\tilde{\Delta}_\gamma$ if $z\in U(\gamma,\delta)$ for the reason of $|\arg(z-\gamma)|>\beta$. These will imply that $\psi(z)$ is analytic in $\Delta$ and always lies on the analytic domain $\Delta_f$ of $f(w)$, which infers that $f(\psi(z))$ is also analytic in $\Delta$. The $\Delta$-analyticity of $f(\psi(z))$ is proved.

\end{proof}

\section{Singular expansion of the OGF ${\bf F}_k(z)$ of $k$-noncrossing matchings}

${\bf F}_k(z)$ is denoted as the ordinary generating function (OGF) of $k$-noncrossing matchings. In this section ,we will give the singular expansion of ${\bf F}_k(z)$ through the ordinary differential equation (ODE) that ${\bf F}_k(z)$ satisfies, for $2\leq k\leq 9$.

\subsection{The ordinary differential equations}

The $D$-finiteness has been proved in Corollary \ref{C:Bessel}. That is, there exists some $e\in \mathbb{N}$ for which ${\bf F}_k(z)$
satisfies an ODE of the form
\begin{equation}\label{E:JKH}
q_{0,k}(z)\frac {d^e}{dz^e}{\bf
F}_k(z)+q_{1,k}(z)\frac{d^{e-1}}{dz^{e-1}}{\bf F}_k(z)+\cdots+
q_{e,k}(z){\bf F}_k(z)=0,
\end{equation}
where $q_{j,k}(z)$ are polynomials. The fact that ${\bf F}_k(z)$ is
the solution of an ODE implies the existence of an analytic
continuation into any simply-connected domain containing original point avoiding zeros of $q_{0,k}(z)$ \cite{Stanley:80,Wasow:87},
i.e.~{$\Delta$}-analyticity at singularity of $\rho_k^2$.

The reasons for discussing about the ODE of ${\bf F}_k(z)$ are the following.

\begin{itemize}
\item Any dominant singularity of a solution is contained
      in the set of roots of $q_{0,k}(z)$ \cite{Stanley:80}. In other
      words the ODE ``controls'' the dominant singularities, that are
      crucial for asymptotic enumeration.
\item Under certain regularity conditions (discussed below) the singular
      expansion of ${\bf F}_k(z)$ follows from the ODE, see
      Proposition~\ref{P:fk}.
\end{itemize}

Accordingly, let us first compute for $2\le k\le 9$ the ODEs for
${\bf F}_k(z)$.
%%%
%%%%%%%%%%%%%%%%%%%%%%%%%%%%%%%%%%%%%%%%%%%%%%%%%%%%%%%%%%%%%%%%%%%%%%%
%%%
\begin{proposition}[\cite{Reidys:10}]\label{P:q}
For $2\le k\le 9$, ${\bf F}_k(z)$ satisfies the ODEs listed in
Table~\ref{Table:F_kdiff} and we have in particular
\begin{eqnarray}
\label{E:r2}
q_{0,2}(z) & = &  (4z-1)\, z,    \\
\label{E:r3}
q_{0,3}(z) & = &  (16z-1)\, z^2,    \\
\label{E:r4}
q_{0,4}(z) & = & (144\, z^2-40\, z +1)  \, z^3,\\
\label{E:r5}
q_{0,5}(z) & = & (1024\, z^2-80\, z  + 1)\, z^4, \\
\label{E:r6} q_{0,6}(z) & =& (14400\, z^3-4144\, z^2 + 140\, z-1)
\,z^{5}, \\
\label{E:r7} q_{0,7}(z) & =&(147456\,z^3-12544\, z^2+224\, z-1)\,z^{6}, \\
\label{E:r8}
q_{0,8}(z)  & = & (2822400z^4-826624z^3+31584z^2-336z+1)z^{7},\\
\label{E:r9}
q_{0,9}(z) & = & (37748736z^4-3358720z^3+69888z^2-480z+1)z^{8},\qquad
\end{eqnarray}
\end{proposition}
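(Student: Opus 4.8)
The plan is to obtain the ODEs for ${\bf F}_k(z)$, $2\le k\le 9$, by a direct computational route built on the structural results already available in the excerpt, and then to read off the leading coefficient polynomials $q_{0,k}(z)$. The starting point is Corollary~\ref{C:Bessel}: ${\bf F}_k(z)=\sum_{n\ge 0}f_k(2n,0)z^n$ is $D$-finite, hence it satisfies \emph{some} ODE of the form \eqref{E:JKH} with polynomial coefficients, and moreover the proof of $D$-finiteness is constructive. Indeed, eq.~\eqref{chapter2:matching-gf} expresses the exponential generating function ${\bf H}_k(x)=\sum_{n\ge 0}\frac{f_k(2n,0)}{(2n)!}x^{2n}$ as a $(k-1)\times(k-1)$ determinant in the Bessel functions $I_m(2x)$, each of which satisfies a known second-order ODE. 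Applying the closure operations of Theorem~\ref{T:d-finite_property}(b),(c) — products, sums, algebraic substitution — in the explicit, algorithmic form in which they are usually implemented (e.g.\ via the $\mathbb{C}(x)$-module generated by the function and its derivatives), produces an explicit annihilating operator for ${\bf H}_k(x)$. Translating from ${\bf H}_k$ to ${\bf F}_k$ via the $P$-recursive relation between $f(n)=\frac{f_k(2n,0)}{(2n)!}$, $g(n)=(2n)!$ and their product $f_k(2n,0)$ — exactly the step carried out at the end of the proof of Corollary~\ref{C:Bessel} — yields a recurrence for $f_k(2n,0)$, and hence, by part (3) of the theorem relating $D$-finiteness and $P$-recursiveness, an ODE for ${\bf F}_k(z)$ itself.

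Concretely, I would carry this out as follows. First, for each $k$ from $2$ to $9$, fix the determinantal formula \eqref{chapter2:matching-gf} and compute enough Taylor coefficients $f_k(2n,0)$ — either from the determinant or, more cheaply, from the known combinatorial recursions for $k$-noncrossing matchings (for $k=3$ one even has the closed form $f_3(2n,0)=C(n)C(n+2)-C(n+1)^2$ quoted in the text). Second, for an ansatz ODE $\sum_{j=0}^{e}q_{j,k}(z){\bf F}_k^{(e-j)}(z)=0$ with undetermined polynomial coefficients $q_{j,k}$ of bounded degree, expand in powers of $z$ and solve the resulting homogeneous linear system over $\mathbb{Q}$ for the coefficients of the $q_{j,k}$; taking enough coefficients beyond the number of unknowns both determines the operator and certifies it. Third, once the operator is found, normalize so the coefficients are coprime polynomials with integer coefficients, and record $q_{0,k}(z)$; the claimed factorizations \eqref{E:r2}--\eqref{E:r9}, with the visible factors $z^{k-1}$ and a low-degree polynomial factor, are then verified by inspection. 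Finally, to make this a genuine proof rather than a guess, I would verify \emph{a posteriori} that the operator found actually annihilates ${\bf F}_k(z)$: since we know \emph{a priori} (Corollary~\ref{C:Bessel}) that a $D$-finite annihilator exists, and since the order and degree bounds used in the ansatz can be taken large enough to contain it (the bounds being computable from the closure-operation bookkeeping on the Bessel determinant), agreement on sufficiently many Taylor coefficients forces the identity of formal power series.

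The role of $q_{0,k}(z)$ in what follows explains why only this polynomial is singled out: by the classical theory (cf.~\cite{Stanley:80} as cited in the text), every dominant singularity of a solution of \eqref{E:JKH} is a root of $q_{0,k}$, so the factorizations above immediately locate the candidate singularities $\rho_k^2$ of ${\bf F}_k(z)$ — for instance $1/4$ for $k=2$, $1/16$ for $k=3$, and for larger $k$ the smallest positive root of the displayed low-degree polynomial factor. This connects the proposition to the singular-expansion result (Proposition~\ref{P:fk}) referenced just after it.

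The main obstacle is not conceptual but one of scale and rigor: the determinant in \eqref{chapter2:matching-gf} grows with $k$, the closure operations inflate the order and degree of the operators rapidly, and for $k=8,9$ the polynomials $q_{0,k}$ already have degree $11$ and $12$, so the linear systems to be solved are sizable. The delicate point is to pin down, honestly, the order $e$ and the degree bounds needed for the ansatz — large enough that the true minimal-order annihilator is guaranteed to lie in the search space — so that the coefficient-matching argument is a proof and not merely strong numerical evidence; this requires tracking the effect of each application of Theorem~\ref{T:d-finite_property}(b),(c) on the size of the annihilating operator, which is the genuinely laborious part of the verification.
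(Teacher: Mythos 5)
Your proposal is correct and is essentially the route the paper itself takes: the paper offers no independent derivation, simply citing \cite{Reidys:10} and noting (in the caption of Table~\ref{Table:F_kdiff}) that the ODEs were obtained with the Maple package {\tt gfun}, i.e., exactly the computer-algebra guess-and-certify procedure, grounded in the constructive $D$-finiteness of ${\bf F}_k(z)$ from the Bessel-determinant formula and the closure properties of Theorem~\ref{T:d-finite_property}, that you describe. Your added discussion of order/degree bounds needed to turn coefficient matching into a proof is the right way to make that computation rigorous, and your reading of the role of $q_{0,k}(z)$ in locating the dominant singularity $\rho_k^2$ agrees with the paper.
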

%%%
%%%%%%%%%%%%%%%%%%%%%%%%%%%%%%%%%%%%%%%%%%%%%%%%%%%%%%%%%%%%%%%%%%%%%%%
%%%
Proposition~\ref{P:q} immediately implies the following sets of roots:
{\begin{eqnarray*}
\mathfrak{r}_2 = \{\frac{1}{4}\}; \
\mathfrak{r}_4  =  \mathfrak{r}_2 \,\cup\,\{\frac{1}{36}\};\
\mathfrak{r}_6  =  \mathfrak{r}_4 \,\cup\,\{\frac{1}{100}\};\
\mathfrak{r}_8  =  \mathfrak{r}_6 \,\cup\,\{\frac{1}{196}\};\\
\mathfrak{r}_3 = \{\frac{1}{16}\}; \
\mathfrak{r}_5  =  \mathfrak{r}_3\,\cup\, \{\frac{1}{64}\}; \
\mathfrak{r}_7  =  \mathfrak{r}_5\,\cup\,\{\frac{1}{144}\}; \
\mathfrak{r}_9  =  \mathfrak{r}_7\,\cup\,\{\frac{1}{256}\}.
\end{eqnarray*}}
Eq.~(\ref{E:r2})-(\ref{E:r9}) and Theorem~\ref{T:B} show that
for $2\le k\le 9$ the unique dominant singularity of ${\bf
F}_k(z)$ is given by {$\rho_k^2$}, where $\rho_k=1/2(k-1)$.

\subsection{The singular expansion}

Before solving the singular expansion of ${\bf F}_k(z)$, let us learn some concepts \cite{Flajolet:07a}.

\begin{definition}\label{D:meromorphic}
A {\bf meromorphic ODE} is an ODE
of the form
\begin{equation}\label{m-ode}
f^{(r)}(z)+d_1(z)f^{(r-1)}(z)+\cdots+d_r(z)f(z)=0,
\end{equation}
where $f^{(m)}(z)=\frac{d^m}{d z^m}f(z)$, $0\leq m\leq r$ and the
$d_j(z)$ are meromorphic in some domain $\Omega$. Assuming that $\zeta$
is a pole of a meromorphic function $d (z)$, $\omega_\zeta ( d )$
denotes the {\bf order} of the pole $\zeta$. In case $d (z)$ is analytic at
$\zeta$ we write $\omega_\zeta(d)=0$.
\end{definition}

\begin{definition}\label{D:mero-sing}
Meromorphic differential equations have a {\bf singularity} at $\zeta$ if at
least one of the $\omega_\zeta (d_j)$ is positive. Such a $\zeta$ is
said to be a regular singularity\index{singularity! regular} if
\begin{equation*}
\forall\, 1\leq j\leq r;\quad \omega_\zeta (d_j )\le j
\end{equation*}
and an {\bf irregular} singularity\index{singularity! irregular} otherwise.
\end{definition}

\begin{definition}\label{D:indicial}
The indicial equation\index{indicial equation} $I(\alpha)=0$ of an differential equation of
the form eq.~(\ref{m-ode}) at its regular singularity $\zeta$ is
given by
\begin{equation*}
I(\alpha)=(\alpha)_r+\delta_1(\alpha)_{r-1}+\cdots
+\delta_r,\qquad (\alpha)_{\ell}:=\alpha(\alpha- 1)\cdots
(\alpha-\ell + 1),
\end{equation*}
where $\delta_ j := \lim_{z\rightarrow\zeta} (z -\zeta)^ jd_j (z)$.
\end{definition}

\begin{theorem}\cite{Henrici:74,Wasow:87}
\label{T:reg}
Suppose we are given a meromorphic differential equation (\ref{m-ode})
with regular singularity $\zeta$.
Then, in a slit neighborhood of $\zeta$, any solution of eq.~(\ref{m-ode})
is a linear combination of functions of the form
\begin{equation*}
(z-\zeta)^{\alpha_i}\left(\log(z-\zeta)\right)^{\ell_{ij}}
H_{ij}(z-\zeta),
\quad\text{for }1\leq i\leq r,\ 1\leq j\leq i,
\end{equation*}
where $\alpha_1, \ldots , \alpha_r$ are the roots of the indicial
equation at $\zeta$, $\ell_{ij}$ are non-negative integer and each
$H_{ij}$ is analytic at $0$.
\end{theorem}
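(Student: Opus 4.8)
This is the classical Fuchs--Frobenius structure theorem for a regular singular point, and I would prove it in three stages: normalization at the singularity and extraction of the indicial equation, the Frobenius power-series construction in the non-resonant case, and the resonant (logarithmic) case by induction on the order $r$. Two preliminary remarks frame everything. First, since $\zeta$ is an isolated singularity the $d_j$ are analytic on a punctured disc about $\zeta$, hence on any simply connected slit neighborhood of $\zeta$; standard linear ODE theory then makes the local solution space exactly $r$-dimensional, so it suffices to produce $r$ linearly independent solutions of the asserted shape. Second, one may alternatively rewrite eq.~(\ref{m-ode}) as a first-order system and clear the pole to get $z\,Y'=A(z)\,Y$ with $A$ analytic at $\zeta$ and $A(\zeta)$ having the indicial roots as eigenvalues; a finite sequence of shearing transformations then brings the system to $Y(z)=\Phi(z)\,(z-\zeta)^{R}$ with $\Phi$ analytic and invertible at $\zeta$, and expanding $(z-\zeta)^{R}=\mathrm{e}^{R\log(z-\zeta)}$ into its Jordan pieces gives the asserted basis directly. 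I would nevertheless develop the scalar Frobenius argument as the main line, since it keeps the indicial polynomial of Definition~\ref{D:indicial} explicitly in view.

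After the translation $z\mapsto z-\zeta$ we may take $\zeta=0$. Regularity, $\omega_0(d_j)\le j$, says $e_j(z):=z^j d_j(z)$ is analytic at $0$ with $e_j(0)=\delta_j$. Multiplying eq.~(\ref{m-ode}) by $z^r$, introducing the Euler operator $\theta:=z\frac{\dif}{\dif z}$ and using $z^m f^{(m)}=(\theta)_m f$ with $(\theta)_m:=\theta(\theta-1)\cdots(\theta-m+1)$, the equation becomes
\begin{equation*}
\big[(\theta)_r+e_1(z)(\theta)_{r-1}+\cdots+e_r(z)\big]\,f=0 .
\end{equation*}
Applying the operator to a monomial $z^\alpha$ gives $\big(I(\alpha)+O(z)\big)z^\alpha$ with $I(\alpha)=(\alpha)_r+\delta_1(\alpha)_{r-1}+\cdots+\delta_r$ exactly the indicial polynomial, so its roots $\alpha_1,\dots,\alpha_r$ are the only admissible leading exponents. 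Ordering the roots so that $\re(\alpha_1)$ is maximal and inserting $f(z)=z^{\alpha_1}\sum_{n\ge0}c_n z^n$ with $c_0=1$, the coefficient of $z^{\alpha_1+N}$ yields the recursion
\begin{equation*}
I(\alpha_1+N)\,c_N=-\sum_{n=0}^{N-1}c_n\sum_{j=1}^{r}e_{j,\,N-n}\,(\alpha_1+n)_{\,r-j},\qquad e_{j,l}:=[z^l]e_j(z).
\end{equation*}
Maximality of $\re(\alpha_1)$ forces $I(\alpha_1+N)\ne0$ for all $N\ge1$, so the $c_N$ are determined, and Cauchy's method of majorants (dominating the $e_j$ by a geometric series and comparing with an explicitly solvable majorant recursion) shows $\sum c_n z^n$ has positive radius of convergence. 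This produces a solution $f_1(z)=z^{\alpha_1}H_1(z)$ with $H_1$ analytic, $H_1(0)=1$; if moreover no two indicial roots differ by a nonzero integer, the same argument at each $\alpha_i$ gives $r$ solutions $z^{\alpha_i}H_i(z)$, independent by comparison of leading terms, which is the log-free instance, all $\ell_{ij}=0$.

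The general case, where resonances (indicial roots differing by a positive integer) force logarithms, is the part I expect to be the main obstacle. From the solution $f_1=z^{\alpha_1}H_1$ for the root of maximal real part I would reduce the order: substituting $f=f_1\cdot u$ and putting $v=u'$ turns eq.~(\ref{m-ode}) into a meromorphic ODE of order $r-1$ for $v$. The two things that need checking are (i) that this reduced equation again has a regular singularity at $0$ and (ii) that its indicial roots are the shifts $\alpha_2-\alpha_1-1,\dots,\alpha_r-\alpha_1-1$. Granting this, one inducts on $r$: the base case $r=1$ is the explicit integration of a first-order linear equation, giving $f=z^{-\delta_1}H(z)$ with $H$ analytic and $H(0)\ne0$ (log-free); the inductive hypothesis applied to the order-$(r-1)$ equation for $v$ yields $r-1$ solutions of the stated shape, and recovering $u$ by integration and multiplying by $f_1$ returns $r-1$ further solutions for $f$. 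The only mechanism producing a new logarithm is the integration $\int z^{-1}\,\dif z=\log z$, which happens precisely at a genuine resonance (a term of $v$ with exponent $-1$); tracking how many successive integrations a given exponent cluster survives bounds the powers $\ell_{ij}$ of $\log(z-\zeta)$ and shows that exactly $r$ independent solutions of the claimed form are obtained. (Equivalently one may use the $\partial_\alpha$-trick: treat $c_N=c_N(\alpha)$ as rational in $\alpha$, premultiply the ansatz by a suitable power of $(\alpha-\alpha_i)$ to cancel the poles of $I(\alpha+N)^{-1}$, and differentiate in $\alpha$; since $\partial_\alpha z^\alpha=z^\alpha\log z$, each differentiation adds one factor of $\log z$, again capped by the resonance multiplicity.) Confirming in this last stage that the count of independent solutions and the log-degrees come out right is the delicate bookkeeping; everything preceding it is routine.
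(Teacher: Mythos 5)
The paper does not prove Theorem~\ref{T:reg} at all: it is quoted as a classical result with citations to Henrici and Wasow, and those references are exactly where the argument you sketch lives. So there is no in-paper proof to compare against; your proposal is the standard Fuchs--Frobenius argument (normalization via the Euler operator $\theta=z\,\dif/\dif z$ to read off the indicial polynomial, Frobenius series at the root of maximal real part with convergence by majorants, then induction on the order through d'Alembert reduction, or equivalently the $\partial_\alpha$-trick, to produce the logarithmic solutions at resonances), and in outline it is sound and matches the cited treatments.

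One small correction to your intermediate discussion: the log-free case is not ``no two indicial roots differ by a nonzero integer'' but ``the roots are simple and no two differ by a positive integer.'' A repeated root (difference zero) also obstructs the construction of a second pure Frobenius solution and generically forces a logarithm, and your ``independence by comparison of leading terms'' argument fails there since the leading exponents coincide. This does not damage the proof as a whole, because your general inductive step (reduction of order, with the shifted indicial roots $\alpha_i-\alpha_1-1$ and logarithms arising only from integrating exponent $-1$ terms) handles repeated roots along with integer resonances; but the two items you flag there --- that the reduced equation is again regular singular with the shifted indicial roots, and the bookkeeping that caps each $\ell_{ij}$ by the resonance/multiplicity count so that exactly $r$ independent solutions emerge --- are genuinely the parts that need to be written out for the argument to be complete, and they are carried out in full in Henrici and Wasow.
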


\begin{theorem}\label{T:B}\cite{Reidys:08k}
For arbitrary $k\in\mathbb{N}$, $k\ge 2$ we have
\begin{equation}\label{E:theorem}
f_{k}(2n,0) \, \sim  \, c_k  \, n^{-((k-1)^2+(k-1)/2)}\,
(2(k-1))^{2n},\quad \text{\it where $c_k>0$}.
\end{equation}
Particularly, $c_3=24/\pi$.
\end{theorem}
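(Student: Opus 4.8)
The plan is to interpret $f_k(2n,0)$ as a count of lattice walks confined to a Weyl chamber and then combine the reflection principle with a uniform local limit expansion; I will also note a self-contained alternative through the ODEs of Proposition~\ref{P:q} that covers the range $2\le k\le 9$ actually used later in the thesis. First I would recall that the determinantal formula~(\ref{chapter2:matching-gf}) is the generating-function shadow of the Chen--Deng--Du--Stanley--Yan correspondence: a $k$-noncrossing matching on $[2n]$ is encoded by an oscillating tableau with at most $k-1$ rows, equivalently by a closed lattice walk of length $2n$ in $\mathbb{Z}^{k-1}$ that starts and ends at $\delta=(k-1,k-2,\dots,1)$, takes unit steps $\pm e_i$, and stays in the open chamber $C=\{x:x_1>x_2>\cdots>x_{k-1}>0\}$. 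The relevant reflection group is the hyperoctahedral group $W=B_{k-1}$ of signed permutations, whose positive system $\Phi^+$ has $(k-1)^2$ elements; this number is exactly the source of the polynomial exponent to come.

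By the Gessel--Zeilberger reflection principle one has $f_k(2n,0)=\sum_{\sigma\in W}\operatorname{sgn}(\sigma)\,p_{2n}(\delta,\sigma\delta)$, where $p_m(a,b)$ counts unrestricted $m$-step $\pm e_i$-walks from $a$ to $b$. Splitting according to how the $2n$ steps are shared among the $k-1$ coordinates, $p_{2n}(a,b)$ becomes a multinomial sum of products of one-dimensional walk counts $\binom{m_i}{(m_i+b_i-a_i)/2}$, which is precisely the expansion producing the Bessel determinant in~(\ref{chapter2:matching-gf}), so no fresh bijective work is required. I would then insert the refined Stirling estimate $\binom{2n}{n+t}=\frac{4^n}{\sqrt{\pi n}}\,e^{-t^2/n}\bigl(1+O((1+t^4)/n)\bigr)$, valid uniformly for $t=o(n^{3/4})$, and carry out the sum over step-splits, which concentrates near the balanced split of $2n/(k-1)$ steps per coordinate. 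This reduces $f_k(2n,0)/(2(k-1))^{2n}$ to $c\,n^{-(k-1)/2}$ times the antisymmetrized Gaussian sum $\sum_{\sigma\in W}\operatorname{sgn}(\sigma)\prod_{i=1}^{k-1}e^{-(\delta_i-(\sigma\delta)_i)^2/(\kappa n)}$, up to controlled remainders.

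The core computation is then to expand each exponential in powers of $1/n$ and antisymmetrize over $W$: every contribution of $1/n$-order strictly below $|\Phi^+|=(k-1)^2$ cancels, because an alternating sum over $B_{k-1}$ annihilates any polynomial of degree less than that of the type-$B_{k-1}$ Weyl denominator. The first surviving term has order $n^{-(k-1)^2}$, and its coefficient is, up to the Gaussian normalization, a Vandermonde-type product evaluated at $\delta$ (equivalently a Weyl dimension value), hence a positive constant $c_k$; multiplying by the prefactor $n^{-(k-1)/2}$ gives exactly $f_k(2n,0)\sim c_k\,n^{-((k-1)^2+(k-1)/2)}(2(k-1))^{2n}$. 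The hard part is making this cancellation rigorous: one must show that the local-limit error terms, weighted by $\operatorname{sgn}(\sigma)$ and summed, do not spoil the $(k-1)^2$-fold cancellation — this needs uniformity of the expansion over target points in an $O(\sqrt{n\log n})$ window together with separate exponential bounds on far-away walks and on strongly unbalanced step-splits — and then to identify the leading coefficient with the Weyl-dimension value that pins down $c_k>0$. As a consistency check, the explicit identity $f_3(2n,0)=C(n)C(n+2)-C(n+1)^2$ and plain Stirling give $c_3=24/\pi$ directly.

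Finally, for the range $2\le k\le 9$ one may sidestep the reflection analysis entirely: Corollary~\ref{C:Bessel} gives the $D$-finiteness of ${\bf F}_k(z)$, Proposition~\ref{P:q} exhibits the governing ODE, and the dominant singularity is the root of smallest modulus of $q_{0,k}(z)$, namely $\rho_k^2=1/(2(k-1))^2$. Reading the indicial equation at that point via Theorem~\ref{T:reg} yields the singular exponent of ${\bf F}_k(z)$, and Theorem~\ref{T:standard-function-scale} transfers it to the claimed $n$-power, with $c_k>0$ obtained from the connection coefficient of the relevant solution of the ODE. This route is the one consistent with the singularity-analysis framework of this chapter and suffices for all later applications.
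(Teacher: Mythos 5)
This theorem is never proved in the thesis: it is quoted from \cite{Reidys:08k} and then used, together with Proposition~\ref{P:q} and Theorem~\ref{T:reg}, to fix the singular expansion of ${\bf F}_k(z)$ in Proposition~\ref{P:fk}, so there is no internal proof to compare against and your argument must stand on its own. Your main route does stand: the transfer via \cite{Chen} to closed $\pm e_i$-walks of length $2n$ based at $\delta=(k-1,\dots,1)$ in the chamber $x_1>\cdots>x_{k-1}>0$, the Gessel--Zeilberger signed sum over the hyperoctahedral group $B_{k-1}$, and the local-limit-plus-antisymmetrization step are the standard machinery for exactly this asymptotic, and your bookkeeping is correct: $n^{-(k-1)/2}$ from the $(k-1)$-dimensional local limit normalization, an extra $n^{-(k-1)^2}$ because an alternating sum over $B_{k-1}$ annihilates all polynomials of degree below the degree $(k-1)^2$ of the type-$B$ Weyl denominator, and a positive leading constant proportional to $\prod_{\alpha\in\Phi^+}\langle\alpha,\delta\rangle>0$. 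The $k=3$ check is also right: the exact identity collapses to $f_3(2n,0)=\frac{12(2n+1)}{(n+2)^2(n+3)}\,C(n)^2$, whence $c_3=24/\pi$. The only place your sketch is heavier than needed is the error control you flag as ``the hard part'': since the targets $\sigma\delta$ lie at bounded distance from $\delta$, a sign-weighted Edgeworth expansion is awkward; it is cleaner to write $\sum_{\sigma}\mathrm{sgn}(\sigma)\,p_{2n}(\delta,\sigma\delta)$ as a Fourier integral of $\bigl(\tfrac{1}{k-1}\sum_j\cos\theta_j\bigr)^{2n}$ against the antisymmetrized kernel (a determinant of sines) which vanishes to order exactly $(k-1)^2$ at $\theta=0$, so that a single Laplace estimate at the origin delivers both the exponent and the positive constant with no delicate cancellation bookkeeping.

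Your ``self-contained alternative'' for $2\le k\le 9$, however, is not self-contained, and inside this thesis it would be circular. Proposition~\ref{P:q} together with Theorem~\ref{T:reg} only yields the indicial roots at $\rho_k^2$, i.e.\ a list of admissible local behaviors; it does not tell you which of them occur in the particular solution ${\bf F}_k(z)$ with nonzero coefficient, nor the value or sign of that coefficient --- that is the connection problem, and ``the connection coefficient of the relevant solution of the ODE'' is precisely what this machinery does not produce. You would also still have to show that $\rho_k^2$ is genuinely a singularity of ${\bf F}_k$ (roots of $q_{0,k}$ are only candidates) and that the exponential growth rate is $(2(k-1))^2$. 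The thesis in fact runs the implication in the opposite direction: Theorem~\ref{T:B}, imported from \cite{Reidys:08k}, is what selects the correct singular term in Proposition~\ref{P:fk}. So keep the reflection-principle argument as the proof, and demote the ODE remark to a consistency check rather than an alternative derivation.
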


Thanks to Proposition \ref{P:q}, Theorem \ref{T:reg} and Theorem \ref{T:B}, we have the singular expansion of ${\bf F}_k(z)$ for $2\leq k\leq 9$.

\begin{proposition}[\cite{Reidys:10book}]\label{P:fk}
For $2\le k\le 9$, the singular expansion of ${\bf F}_k(z)$
for $z\rightarrow \rho_k^2$ is given by
\begin{equation}\label{E:Fk_sing_ex}
\begin{split}
{\bf F}_k(z)=
\begin{cases}
P_k(z-\rho_k^2)+c_k'(z-\rho_k^2)^{m}
\log(z-\rho_k^2)+o((z-\rho)^m)  \\
P_k(z-\rho_k^2)+c_k'(z-\rho_k^2)^{((k-1)^2+(k-1)/2)-1}
\left(1+o(1)\right)
\end{cases}
\end{split}
\end{equation}
where $m=((k-1)^2+(k-1)/2)-1$, depending on $k$ being odd or even. Furthermore, the terms $P_k(z)$ are
polynomials of degree less than $m+1$, $c_k'$ is
some constant, and $\rho_k=1/2(k-1)$. Particularly, $c_3'=-65536/\pi$.
\end{proposition}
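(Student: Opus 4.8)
Write $\mu=(k-1)^{2}+(k-1)/2$, so that Theorem~\ref{T:B} reads $f_k(2n,0)\sim c_k\,n^{-\mu}\,(2(k-1))^{2n}$ with $c_k>0$; recall $\rho_k=1/2(k-1)$, hence $(2(k-1))^{2n}=(\rho_k^{2})^{-n}$ and the dominant singularity of ${\bf F}_k(z)=\sum_{n\ge0}f_k(2n,0)z^{n}$ is $\rho_k^{2}$. The plan is to combine three ingredients: the explicit linear ODE~(\ref{E:JKH}) of Proposition~\ref{P:q}, the Frobenius description of its solutions near a regular singular point (Theorem~\ref{T:reg}), and the coefficient asymptotics of Theorem~\ref{T:B}; the first two produce the \emph{shape} of the local expansion at $\rho_k^{2}$, and the last one fixes the exponent and decides whether a logarithm is present.

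First I would check, case by case for $2\le k\le9$, that $\rho_k^{2}$ is a regular singular point in the sense of Definition~\ref{D:mero-sing}. From~(\ref{E:r2})--(\ref{E:r9}) (and, beyond the displayed cases, from Table~\ref{Table:F_kdiff}) the point $\rho_k^{2}$ is a \emph{simple} root of $q_{0,k}(z)$: after factoring off the power of $z$, the remaining quadratic or quartic factor has the simple roots listed in $\mathfrak r_k$, of which $\rho_k^{2}$ is one. Dividing~(\ref{E:JKH}) by $q_{0,k}(z)$, each coefficient $d_j(z)=q_{j,k}(z)/q_{0,k}(z)$ then has a pole of order at most $j$ at $\rho_k^{2}$, so the singularity is regular; this is a finite verification. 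Theorem~\ref{T:reg} now gives, in a slit neighbourhood of $\rho_k^{2}$, a representation of ${\bf F}_k(z)$ as a \emph{finite} linear combination of terms $(z-\rho_k^{2})^{\alpha_i}\big(\log(z-\rho_k^{2})\big)^{\ell_{ij}}H_{ij}(z-\rho_k^{2})$, with $\alpha_i$ the roots of the indicial equation at $\rho_k^{2}$ (Definition~\ref{D:indicial}) and $H_{ij}$ analytic at $0$. I would compute these indicial equations explicitly and record their roots; the expectation, which the computation should confirm, is that the roots are real, that those strictly below $\mu-1$ are ordinary non-negative integers, and that $\mu-1$ occurs among them. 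Note $\mu-1$ is a non-negative integer precisely when $k$ is odd and a half-integer when $k$ is even; in the odd case the roots $0$ and $\mu-1$ are congruent modulo $1$, so Frobenius resonance forces a true $\log(z-\rho_k^{2})$ into the solution attached to the smaller exponent, while in the even case $(z-\rho_k^{2})^{\mu-1}$ is already a branch term and no logarithm arises.

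It remains to single out the dominant term, and here Theorem~\ref{T:B} does the work through the transfer dictionary (Theorem~\ref{T:standard-function-scale}, Theorem~\ref{chapter2:logarithm-asym}, Corollary~\ref{chapter2:sim-transfer}). Every term with $\alpha_i\in\mathbb{Z}_{\ge 0}$ and $\ell_{ij}=0$ is analytic at $\rho_k^{2}$; their sum is analytic, and its Taylor polynomial of degree $<\mu-1$ is what we call $P_k(z-\rho_k^{2})$, the tail being $o\big((z-\rho_k^{2})^{\mu-1}\big)$ and hence negligible against the leading singular term. Any surviving term whose exponent is strictly below $\mu-1$ and which is \emph{not} of this analytic type would contribute to $[z^{n}]{\bf F}_k(z)$ a quantity of order $n^{-\beta}(\rho_k^{-2})^{n}$ with $\beta<\mu$ (up to a power of $\log n$), contradicting the sharp estimate $f_k(2n,0)\sim c_k n^{-\mu}(\rho_k^{-2})^{n}$; hence none is present. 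Conversely $c_k\ne0$ forces a term of exponent $\mu-1$ with nonzero coefficient, and for $k$ odd the pure power $(z-\rho_k^{2})^{\mu-1}$ is a polynomial contributing nothing to the coefficient asymptotics, so the dominant singular behaviour must be $c_k'(z-\rho_k^{2})^{\mu-1}\log(z-\rho_k^{2})(1+o(1))$ for $k$ odd and $c_k'(z-\rho_k^{2})^{\mu-1}(1+o(1))$ for $k$ even, with $c_k'\ne0$; this is exactly~(\ref{E:Fk_sing_ex}). Reading the dictionary backwards determines $c_k'$ from $c_k$: for $k=3$, $\mu-1=4$, $\rho_3^{2}=1/16$, and $[z^{n}]\big((z-\tfrac1{16})^{4}\log(z-\tfrac1{16})\big)\sim-\tfrac{24}{16^{4}}\,16^{n}n^{-5}$, so $c_3=24/\pi$ gives $c_3'=-65536/\pi$.

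The main obstacle is the opening step: for each $k$ in the range one must genuinely verify from Table~\ref{Table:F_kdiff} that $\rho_k^{2}$ is a regular singular point and then compute and factor the indicial equation, certifying that the only exponents below $\mu-1$ are plain non-negative integers. This is elementary but unavoidably case-by-case, and it is the part that does not reduce to a uniform argument. By contrast the $\Delta$-analyticity required to justify the transfer step is free: it follows from the $D$-finiteness of ${\bf F}_k$ (Corollary~\ref{C:Bessel}) together with the location of the roots of $q_{0,k}$ in Proposition~\ref{P:q}, as already remarked before the statement.
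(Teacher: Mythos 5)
Your proposal is correct and follows essentially the same route the paper indicates: it states Proposition~\ref{P:fk} as a cited result obtained precisely from the ODEs of Proposition~\ref{P:q}, the regular-singularity structure of Theorem~\ref{T:reg}, and the coefficient asymptotics of Theorem~\ref{T:B}, which is exactly how you argue (including the odd/even dichotomy forcing or excluding the logarithm, and the transfer computation giving $c_3'=-65536/\pi$). The only caveat is the deferred case-by-case check of regularity and the indicial roots for $2\le k\le 9$, which you acknowledge and largely neutralize via the contradiction argument with Theorem~\ref{T:B}.
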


Since ${\bf F}_k(z)$ is $D$-finite, ${\bf F}_k(\psi(z))$ will be $D$-finite if $\psi(z)$ is algebraic, which can imply ${\bf F}_k(\psi(z))$ is $\Delta$-analytic. Meanwhile, the singular expansion of ${\bf F}_k(z)$ possesses the same form of $f(z)$ expressed in eq. (\ref{E:fx-sing-exp}). Therefore we have the following corollary implied from Proposition \ref{chapter2:algeasym}.

\begin{corollary}[\cite{Reidys:10book}]\label{C:algeasym}
Let $\psi(z)$ be an algebraic, analytic function in a
domain $\mathcal{D}=\{z||z|\leq r\}$ such that
$\psi(0)=0$. Suppose $\gamma$ is the unique
dominant singularity of ${\bf F}_k(\psi(z))$ and minimum positive real
solution of $\psi(\gamma)=\rho_k^2$, $|\gamma|<r$,
where $\psi'(\gamma)\neq 0$.
Then ${\bf F}_k(\psi(z))$ has a singular expansion and
\begin{equation}\label{E:LL}
[z^n]{\bf F}_k(\psi(z)) \sim A\,n^{-((k-1)^2+(k-1)/2)}
\left(\frac{1}{\gamma}\right)^n,
\end{equation}
where $A$ is some constant.
\end{corollary}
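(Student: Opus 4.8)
The plan is to recognize Corollary~\ref{C:algeasym} as the specialization of Proposition~\ref{chapter2:algeasym} to the outer function $f={\bf F}_k$, so that the proof reduces to checking that ${\bf F}_k$ meets the hypotheses imposed on $f$ there and that the regularity built into the present statement (namely $\psi$ algebraic and $\psi'(\gamma)\neq 0$) is enough to recover the conditions used in that proposition.

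First I would record the two structural facts about ${\bf F}_k$ already available. By Corollary~\ref{C:Bessel}, ${\bf F}_k(z)$ is $D$-finite; since $\psi$ is algebraic with $\psi(0)=0$, Theorem~\ref{T:d-finite_property}(c) gives that ${\bf F}_k(\psi(z))$ is again $D$-finite, hence analytically continuable to any simply-connected domain avoiding its singularities — this is exactly the input that the $\Delta$-analyticity argument in the proof of Proposition~\ref{chapter2:algeasym} (Steps {\sf (a)}--{\sf (c)}) requires, and it also uses that $\psi$ is analytic at $\gamma$, which holds because $|\gamma|<r$. Second, by Proposition~\ref{P:fk} the singular expansion of ${\bf F}_k$ at $\rho_k^2$ has the shape~(\ref{E:fx-sing-exp}) with order of expansion equal to $(k-1)^2+(k-1)/2-1$; for odd $k$ this is precisely the integer-order logarithmic case covered verbatim by Proposition~\ref{chapter2:algeasym}, and for even $k$ it is the half-integer power case, which is handled by the same contour deformation with Corollary~\ref{chapter2:sim-transfer} in place of the logarithmic transfer.

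Next I would translate the hypotheses. The condition $\psi'(\gamma)>0$ demanded in Proposition~\ref{chapter2:algeasym} follows from $\psi'(\gamma)\neq 0$ together with the fact that $\psi$ carries $[0,\gamma]$ into $[0,\rho_k^2]$ with $\psi(0)=0$, $\psi(\gamma)=\rho_k^2$, and $\gamma$ the minimal positive real solution of $\psi(\gamma)=\rho_k^2$, which forces $\psi$ to be nondecreasing on a left neighborhood of $\gamma$. The bound $|\psi(z)|\le\rho_k^2$ for $|z|\le\gamma$ — the remaining ingredient of Step {\sf (a)} of that proof — must be extracted from the assumption that $\gamma$ is the \emph{unique dominant} singularity of ${\bf F}_k(\psi(z))$ (equivalently, in the concrete applications, from nonnegativity of the Taylor coefficients of the inner function, which yields $|\psi(z)|\le\psi(|z|)\le\psi(\gamma)=\rho_k^2$). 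With all hypotheses of Proposition~\ref{chapter2:algeasym} in force and order of expansion $\kappa=(k-1)^2+(k-1)/2-1$, its conclusion $[z^n]{\bf F}_k(\psi(z))\sim A\,n^{-\kappa-1}\gamma^{-n}$ is exactly~(\ref{E:LL}).

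The main obstacle I anticipate is not the singularity-analysis machinery, which is bookkeeping once the hypotheses are in place, but the careful verification that the weaker stated hypotheses really imply $\psi'(\gamma)>0$ and $|\psi(z)|\le\rho_k^2$ on $|z|\le\gamma$: one must rule out that $\psi$ leaves and re-enters the domain of analyticity of ${\bf F}_k$ inside the disc, or that ${\bf F}_k$ possesses a competing singularity reachable by $\psi$ before $\gamma$, and — relatedly — confirm for even $k$ that the non-logarithmic square-root singular expansion transfers with the same $n^{-\kappa-1}$ order. These points are routine in Chapters~3--4, where $\psi$ is given explicitly, but should be made explicit here.
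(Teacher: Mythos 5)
Your proposal is correct and follows essentially the same route as the paper: the corollary is obtained by specializing Proposition~\ref{chapter2:algeasym} to $f={\bf F}_k$, using Corollary~\ref{C:Bessel} together with Theorem~\ref{T:d-finite_property}(c) for the $D$-finiteness (hence $\Delta$-analyticity) of ${\bf F}_k(\psi(z))$ and Proposition~\ref{P:fk} for the singular expansion of ${\bf F}_k$ at $\rho_k^2$. In fact you are more explicit than the paper's brief justification about reconciling the stated hypotheses ($\psi'(\gamma)\neq 0$, uniqueness of the dominant singularity) with the proposition's conditions ($\psi'(\gamma)>0$, $|\psi(z)|\leq\rho_k^2$ on $|z|\leq\gamma$) and about the non-logarithmic expansion for even $k$, which the paper passes over silently.
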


\section{Distribution analysis}

We will introduce the central limit theorem\index{central limit theorem} for
distributions given in terms of bivariate generating functions. The result is going to be applied to do the statistical computation for the arc of canonical $3$-noncrossing skeleton diagrams in Section \ref{S:statistics}. The central limit theorem is due to Bender \cite{Bender:73} and based on
the following classic result on limit distributions \cite{Feller:91}:

\begin{theorem}\label{T:K}{\bf (L\'{e}vy-Cram\'{e}r)
}
Let $\{\xi_{n}\}$ be a sequence of random variables and let
$\{\varphi_{n}(x)\}$ and $\{F_{n}(x)\}$ be the corresponding
sequences of characteristic and distribution functions. If there
exists a function $\varphi(t)$, such that $\lim_{n \rightarrow
\infty}\varphi_{n}(t)=\varphi(t)$ uniformly over an arbitrary finite
interval enclosing the origin, then there exists a random variable
$\xi$ with distribution function $F(x)$ such that
$$F_{n}(x)\Longrightarrow F(x)$$
uniformly over any finite or infinite interval of continuity of
$F(x)$.
\end{theorem}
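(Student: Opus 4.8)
The plan is to prove Theorem~\ref{T:K} by the classical route: produce weakly convergent subsequences of $\{F_n\}$ via Helly's selection principle, use the hypothesis near the origin to rule out any escape of mass to infinity, and then pin down the limit through the uniqueness theorem for characteristic functions. First I would record two elementary consequences of the hypothesis. Since each $\varphi_n$ is continuous and $\varphi_n\to\varphi$ uniformly on a finite interval $I$ containing the origin, the limit $\varphi$ is continuous on $I$; moreover $\varphi(0)=\lim_n\varphi_n(0)=1$. These two facts are exactly what power the tightness estimate below.

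The key step is \emph{tightness} of the family $\{F_n\}$. For $\delta>0$ small enough that $[-\delta,\delta]\subset I$, Fubini's theorem and the symmetry $\varphi_n(-t)=\overline{\varphi_n(t)}$ give the identity
\begin{equation*}
\frac{1}{2\delta}\int_{-\delta}^{\delta}\bigl(1-\varphi_n(t)\bigr)\,dt
=\int_{\mathbb{R}}\left(1-\frac{\sin(\delta x)}{\delta x}\right)dF_n(x)
\ \ge\ \frac{1}{2}\,\mathbb{P}\!\left(|\xi_n|\ge\frac{2}{\delta}\right),
\end{equation*}
using that $1-\sin u/u\ge0$ for all $u$ and $1-\sin u/u\ge\tfrac12$ for $|u|\ge2$. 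Because $\varphi_n\to\varphi$ uniformly on $[-\delta,\delta]$, the left-hand side is at most $\sup_{|t|\le\delta}|1-\varphi(t)|+o(1)$, which I can make smaller than any prescribed $\varepsilon$ by first choosing $\delta$ small (continuity of $\varphi$ at $0$ with $\varphi(0)=1$) and then $n$ large; the finitely many remaining $F_n$ are each a fixed distribution function and so are handled individually. Hence $\{F_n\}$ is tight.

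With tightness in hand, every subsequence of $\{F_n\}$ has, by Helly's selection theorem, a further subsequence $F_{n_k}$ converging weakly to a nondecreasing right-continuous $F$, and tightness forces $F(-\infty)=0$, $F(+\infty)=1$, so $F$ is a genuine distribution function. Since $x\mapsto e^{itx}$ is bounded and continuous, weak convergence yields $\varphi_{n_k}(t)\to\int_{\mathbb{R}}e^{itx}\,dF(x)$ for every $t$; comparing with $\varphi_{n_k}(t)\to\varphi(t)$ shows that $\varphi$ is the characteristic function of $F$. By the uniqueness theorem for characteristic functions, any two subsequential weak limits of $\{F_n\}$ have the same characteristic function $\varphi$ and hence coincide, and the usual subsequence argument then gives $F_n\Rightarrow F$ for the whole sequence. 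Finally, $F_n(x)\to F(x)$ at every continuity point of $F$, and since $F_n$, $F$ are monotone and bounded with matching limits at $\pm\infty$, a Dini/P\'olya-type argument upgrades this to uniform convergence on any interval of continuity of $F$, finite or infinite, as claimed. The main obstacle is precisely the tightness step: without control of $\varphi_n$ near $0$ (equivalently, continuity of the limit at $0$) the conclusion is false — for $\xi_n$ uniform on $[-n,n]$ one has $\varphi_n(t)\to0$ for $t\ne0$ while $\varphi_n(0)=1$, and no limiting random variable exists — so the real content is converting local-at-origin information about the characteristic functions into a global tail bound uniform in $n$, after which everything is soft.
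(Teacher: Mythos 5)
Your proposal is correct, but note that the paper itself offers no proof of this statement: Theorem~\ref{T:K} is quoted as a classical result and attributed to the literature (\cite{Feller:91}), so there is no in-paper argument to compare against. What you give is the standard proof of the L\'evy continuity theorem, and each step is sound: the truncation inequality
$\frac{1}{2\delta}\int_{-\delta}^{\delta}\bigl(1-\varphi_n(t)\bigr)\,dt=\int_{\mathbb{R}}\bigl(1-\tfrac{\sin(\delta x)}{\delta x}\bigr)\,dF_n(x)\ge\tfrac12\,\mathbb{P}\bigl(|\xi_n|\ge 2/\delta\bigr)$
is the right tool to convert continuity of the limit at the origin (which you correctly extract from uniform convergence of continuous functions, together with $\varphi(0)=1$) into tightness; Helly selection plus tightness gives proper subsequential limits; the Helly--Bray step identifies their characteristic functions; uniqueness and the sub-subsequence trick give convergence of the whole sequence; and the monotonicity/P\'olya argument upgrades pointwise convergence at continuity points to uniform convergence on intervals of continuity, including infinite ones. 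One point you should make explicit: in the identification step you compare $\varphi_{n_k}(t)\to\int e^{itx}\,dF(x)$ with $\varphi_{n_k}(t)\to\varphi(t)$ \emph{for every} $t$, which requires reading the hypothesis as uniform convergence on \emph{every} finite interval enclosing the origin (the intended classical reading), not merely on a single such interval. If convergence were assumed only on one fixed bounded interval, agreement of characteristic functions on that interval would not determine the limit law (P\'olya-type characteristic functions can coincide on a bounded interval while the distributions differ), and indeed the conclusion can fail; your tightness step needs only the local information, but the uniqueness step genuinely needs convergence at all $t$. With that reading made explicit, the argument is complete.
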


%%%
%%%%%%%%%%%%%%%%%%%%%%%%%%%%%%%%%%%%%%%%%%%%%%%%%%%%%%%%%%%%%%%%%%%%%%%%%%
%%%
We come now to the central limit theorem. It analyzes the characteristic
function via the above L\'{e}vy-Cram\'{e}r Theorem.
%%%%%
%%%%%%%%%%%%%%%%%%%%%%%%%%%%%%%%%%%%%%%%%%%%%%%%%%%%%%%%%%%%%%%%
%%%%
\begin{theorem}[\cite{Reidys:10book}]\label{T:normal}
Suppose we are given the bivariate generating function
\begin{equation*}
f(z,u)=\sum_{n,m\geq 0}f(n,m)\,z^n\,u^m,
\end{equation*}
where $f(n,m)\geq 0$
and $f(n)=\sum_tf(n,t)$. Let $\mathbb{X}_n$ be a r.v.~such that
$\mathbb{P}(\mathbb{X}_n=t)=f(n,t)/f(n)$. Suppose
\begin{equation}\label{E:knackpunkt}
[z^n]f(z,e^s)\sim c(s) \, n^{\alpha}\,  \gamma(s)^{-n}
\end{equation}
uniformly in $s$ in a neighborhood of $0$, where $c(s)$ is continuous
and nonzero near $0$, $\alpha$ is a constant, and $\gamma(s)$ is $C\infty$
near $0$, which means $\gamma(s)$ is analytic in infinite times.
Then there exists a pair $(\mu,\sigma)$ such that the normalized
random variable
\begin{equation*}\label{E:normalized}
\mathbb{X}^*_{n}=\frac{\mathbb{X}_{n}- \mu \, n}{\sqrt{{n\,\sigma}^2 }}
\end{equation*}
has asymptotic normal distribution with parameter $(0,1)$.
That is, we have
\begin{equation}\label{E:converge-2}
\lim_{n\to\infty}
\mathbb{P}\left(
\mathbb{X}^*_{n} < x \right)  =
\frac{1}{\sqrt{2\pi}} \int_{-\infty}^{x}\,e^{-\frac{1}{2}c^2} dc \,
\end{equation}
where $\mu$ and $\sigma^2$ are given by
\begin{equation}\label{E:was}
\mu= -\frac{\gamma'(0)}{\gamma(0)}
\quad \text{\it and} \quad \sigma^2=
\left(\frac{\gamma'(0)}{\gamma(0)}
\right)^2-\frac{\gamma''(0)}{\gamma(0)}.
\end{equation}
\end{theorem}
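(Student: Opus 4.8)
The plan is to identify the characteristic function of the normalized variable $\mathbb{X}^*_n$ with a ratio of $n$-th coefficients of $f(z,u)$, feed it into the uniform estimate eq.~(\ref{E:knackpunkt}), and then invoke the L\'evy--Cram\'er Theorem~\ref{T:K}. First I would record the elementary identity
\begin{equation*}
\mathbb{E}\bigl(e^{s\mathbb{X}_n}\bigr)=\frac{\sum_{m}f(n,m)\,e^{sm}}{\sum_{m}f(n,m)}=\frac{[z^n]f(z,e^s)}{[z^n]f(z,1)},
\end{equation*}
valid because $[z^n]f(z,e^s)=\sum_m f(n,m)e^{sm}$ and $f(n)=[z^n]f(z,1)$. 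Applying eq.~(\ref{E:knackpunkt}) once for $s$ near $0$ and once for $s=0$ (so that $f(n)\sim c(0)\,n^{\alpha}\gamma(0)^{-n}$, with $\gamma(0)>0$ by Pringsheim since $f(n)\ge 0$) gives
\begin{equation*}
\mathbb{E}\bigl(e^{s\mathbb{X}_n}\bigr)\sim\frac{c(s)}{c(0)}\Bigl(\frac{\gamma(0)}{\gamma(s)}\Bigr)^{n},
\end{equation*}
uniformly for $s$ in a fixed (complex) neighbourhood of $0$; concretely, writing $[z^n]f(z,e^s)=c(s)n^{\alpha}\gamma(s)^{-n}(1+\eta_n(s))$ with $\sup_{|s|\le\delta}|\eta_n(s)|\to 0$, the error factor is $(1+\eta_n(s))/(1+\eta_n(0))=1+o(1)$ uniformly.

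Next I would linearize the exponent. Set $g(s):=\log\gamma(s)-\log\gamma(0)$, which is well defined and as smooth as $\gamma$ near $0$ because $\gamma(0)\neq 0$; one computes $g(0)=0$, $g'(0)=\gamma'(0)/\gamma(0)=-\mu$ and $g''(0)=\gamma''(0)/\gamma(0)-(\gamma'(0)/\gamma(0))^2=-\sigma^2$ — these are precisely the formulas eq.~(\ref{E:was}) — so Taylor's theorem yields $g(s)=-\mu s-\tfrac12\sigma^2 s^2+o(s^2)$ as $s\to 0$. Substituting $s=\mathrm{i}t/\sqrt{n\sigma^2}$, which for each fixed real $t$ eventually lies in the neighbourhood where the uniform estimate holds, and incorporating the centering factor $e^{-\mathrm{i}t\mu n/\sqrt{n\sigma^2}}=e^{-\mathrm{i}t\mu\sqrt{n}/\sigma}$, I would obtain
\begin{equation*}
\mathbb{E}\bigl(e^{\mathrm{i}t\mathbb{X}^*_n}\bigr)=e^{-\mathrm{i}t\mu\sqrt{n}/\sigma}\,\mathbb{E}\bigl(e^{\mathrm{i}t\mathbb{X}_n/\sqrt{n\sigma^2}}\bigr)=\frac{c(s)}{c(0)}\,e^{-\mathrm{i}t\mu\sqrt{n}/\sigma-n\,g(s)}\,(1+o(1)).
\end{equation*}
Now $-n\,g(s)$ with $s^2=-t^2/(n\sigma^2)$ expands as $-n\,g(s)=\mathrm{i}t\mu\sqrt{n}/\sigma-\tfrac12 t^2+n\cdot o(s^2)$, and $n\cdot o(s^2)=o(1)$. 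Hence the two $\mathrm{i}t\mu\sqrt{n}/\sigma$ contributions cancel, $c(s)/c(0)\to 1$ by continuity of $c$ at $0$, and altogether $\mathbb{E}(e^{\mathrm{i}t\mathbb{X}^*_n})\to e^{-t^2/2}$ for every real $t$, the convergence being uniform on every bounded $t$-interval. Since $e^{-t^2/2}$ is the characteristic function of the standard normal law, Theorem~\ref{T:K} delivers eq.~(\ref{E:converge-2}), completing the argument.

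The main obstacle — indeed the only genuinely delicate point — is the substitution of the moving argument $s=\mathrm{i}t/\sqrt{n\sigma^2}\to 0$ into the asymptotic relation eq.~(\ref{E:knackpunkt}): a merely pointwise-in-$s$ estimate would not suffice, and it is exactly the \emph{uniformity} hypothesis that licenses replacing $[z^n]f(z,e^{s})$ by $c(s)n^{\alpha}\gamma(s)^{-n}$ along such a sequence. A secondary matter to settle explicitly is the implicit non-degeneracy $\sigma^2>0$, needed both to normalize by $\sqrt{n\sigma^2}$ and to ensure the limit law is non-trivial; one should also note that every $o(\cdot)$ above refers to $n\to\infty$ with $t$ confined to a compact set, so that these errors can be absorbed as claimed.
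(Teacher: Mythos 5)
Your proposal is correct and follows essentially the route the paper itself relies on: the paper does not reprove Theorem~\ref{T:normal} (it cites Bender and Reidys' book) but states that the result is obtained by analyzing the characteristic function via the L\'evy--Cram\'er Theorem~\ref{T:K}, which is exactly your quasi-power argument with $s=\mathrm{i}t/\sqrt{n\sigma^2}$, the ratio $[z^n]f(z,e^s)/[z^n]f(z,1)$, and the Taylor expansion of $\log\gamma(s)$ yielding eq.~(\ref{E:was}). You also correctly identify the two points that must be read into the hypotheses — that the uniform estimate is used along the moving (purely imaginary) argument, and the implicit non-degeneracy $\sigma^2>0$ — so no gap remains.
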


The crucial points for applying Theorem~\ref{T:normal} are
\begin{itemize}
\item eq.~(\ref{E:knackpunkt})
\begin{equation*}
[z^n]f(z,e^s)\sim c(s) \, n^{\alpha}\,  \gamma(s)^{-n},
\end{equation*}
uniformly in $s$ in a neighborhood of $0$, where $c(s)$ is continuous
and nonzero near $0$ and $\alpha$ is a constant,
\item $\gamma(s)$ is analytic in $s$,
\end{itemize}

\begin{table}
\tabcolsep 0pt
\tiny
\begin{center}
\def\temptablewidth{1\textwidth}
{\rule{\temptablewidth}{1pt}}
\begin{tabular*}{\temptablewidth}{@{\extracolsep{\fill}}l|l}
$k$  & $ $ \\
\hline
$2\ $& $\left( 4\,x-1 \right) x f'' \left( x \right)
+ \left( 10\,x-2 \right) f' \left( x \right) +2\,f\left( x \right)=0$\vspace{3pt}\\
\hline
$3\ $ &$\left( 16\,{x}^{3}-{x}^{2} \right) f^{(3)}  \left( x \right)
 + \left( 96\,{x}^{2}-8\,x \right)f''
 \left( x \right) + \left( 108\,x -12\right)f' \left(
x \right) +12\,f \left( x \right)=0
$\vspace{3pt}\\
\hline
$4\ $ & $ \left( 144\,{x}^{5}-40\,{x}^{4}+{x}^{3} \right) f^{(4)}
\left( x \right) + \left( 1584\,{x}^{4}-556\,{x}^{3}+20\,{x}^{2}
 \right) f^{(3)} \left( x \right)$\\
$ $&  $+ \left(4428\,{x}^{3} -1968\,{x}^{2}+112 \,x \right) f''
\left( x \right)+ \left( 3024\,{x}^{2}-1728\,x+168 \right)f' \left(
x
 \right)$\\
$ $&$ + \left( 216\,x -168\right) f \left( x \right)=0
$\vspace{3pt} \\
\hline $5\ $ & $\left( 1024\,{x}^{6}-80\,{x}^{5}+{x}^{4} \right)
f^{(5)} \left( x \right) + \left(
20480\,{x}^{5}-2256\,{x}^{4}+40\,{x}^{3}
 \right) f^{(4)} \left( x \right)$\\
$$&$ + \left(
121600\,{x}^{4}-19380\,{x}^{3}+532\,{x}^{2} \right) f^{(3)} \left( x
\right) + \left( 241920\,{x}^{3}-56692\,{x}^{2}+
2728\,x \right) f'' \left( x \right)$\\
$$&$ + \left(
130560\,{x}^{2}-46048\,x+4400 \right)f' \left( x
 \right) + \left( 7680\,x-4400 \right) f \left( x \right)=0
$\vspace{3pt} \\
\hline
$6\ $ & $\left( 14400\,{x}^{8}-4144\,{x}^{7}+140\,{x}^{6}- {x}^{5}
\right) f^{(6)} \left( x \right)$\\
$$&$ + \left( 367200\,{x}^{7}-
148368\,{x}^{6}+7126\,{x}^{5}-70\,{x}^{4} \right) f^{(5)}
\left( x \right)$\\
$$&$ + \left(3078000\,{x}^{6}-
1728900\,{x}^{5}+123850\,{x}^{4}-1792\,{x}^{3} \right)f^{(4)}
\left( x \right)$\\
$$&$ +
 \left( 10179000\,{x}^{5}-7880640\,{x}^{4}+880152\,x^3-20704\,{x}^{2} \right)
 f^{(3)} \left( x \right)$\\
 $$&$ + \left(12555000\,{x}^{4} -13367880\,{x}^{3}+
2399184\,{x}^{2}-106016\,x \right) f'' \left( x \right) $\\
$$&$+ \left(
4374000\,{x}^{3}-6475680\,{x}^{2}+1922736\,x-187200 \right) f'
 \left( x \right)$\\
$$&$ + \left( 162000\,{x}^{2}-350640\,x+
187200 \right) f \left( x \right)=0
$ \vspace{3pt}\\
\hline
$7\ $ & $\left( 147456\,{x}^{9}-12544\,{x}^{8} +224\,{x}^{7}-{x}^{6}
\right) f^{(7)} \left( x \right)$\\
$$&$ + \left( 6193152\,{x}^{8}-
757760\,{x}^{7}+18816\,{x}^{6}-112\,{x}^{5} \right) f^{(6)}
\left( x \right)$\\
$$&$ + \left( 89800704\,{x}^{7}-16035456\,{x}^
{6}+582280\,{x}^{5}-4872\,{x}^{4} \right) f^{(5)}
 \left( x \right)$\\
  $$&$+ \left( 561254400\,{x}^{6}-146691840\,{x}^{5}+8254664\,{x}^{4}
  -104480
\,{x}^{3} \right) f^{(4)}
 \left( x \right)$\\
  $$&$+ \left( 1535708160\,{x}^{5}-
585419280\,{x}^{4}+54069792\,{x}^{3}-1151984\,{x}^{2} \right) f^{(3)}
 \left( x \right)$\\
  $$&$+ \left( 1651829760\,{x}^{4}-916833600\,{
x}^{3} +144777216\,{x}^{2}-6094528\,x\right) f'' \left( x
 \right)$\\
  $$&$+ \left( 516741120\,x^3-421901280\,{x}^{2}+117590208\,x-11797632 \right)
  f' \left( x \right)$\\
  $$&$ + \left(17418240\,{x}^{2} -22034880\,x+11797632
 \right) f \left( x \right)=0
$\vspace{3pt}\\
\hline
$8\ $ & $\left( 2822400\,{x}^{11}-826624\,{x}^{10}+31584\,{x}^{9}
-336\,{x}^{8}+{x}^{7} \right) f^{(8)} \left( x \right)$\\
$$&$ +
 \left( 129830400\,{x}^{10}-55968384\,{x}^
{9}+3026208\,{x}^{8}-43512\,{x}^{7}+168\,{x}^{6} \right) f^{(7)}
 \left( x
 \right)$\\
 $$&$ + \left( 2202883200\,{x}^{9}-1363532352\,{x}^{8}
 +107691912\,{x}^{7
}-2188752\,{x}^{6}+11424\,{x}^{5} \right) f^{(6)}
\left( x \right)$\\
$$&$ + ( 17455132800\,{x}^{8}-15140260128\,{x}^{7}
+1789953376\,{x}^{6}$\\
$$&$\quad -54349728\,{x}^{5}+
405200\,{x}^{4} ) f^{(5)} \left( x \right)$\\
$$&$ + ( 67586778000\,{x}^{
7}-80551356480\,{x}^{6}+14421855200\,{x}^{5}$\\
$$&$\quad
-698609104\,{x}^{4}+8035104\,{x}^{3} ) f^{(4)} \left( x
 \right)$\\
 $$&$ + ( 122393376000\,
{x}^{6}-197784236160\,{x}^{5}+
53661386080\,{x}^{4}$\\
$$&$\quad -4437573920\,{x}^{3}+88180864\,{x}^{2} ) f^{(3)} \left( x \right)$\\
$$&$ + ( 90239184000\,{x}^{5}-196676000640\,{x}^{4}+80758975680\,{x
}^{3}$\\
$$&$\quad-11973419104\,x^2+ 488846272\,x) f'' \left( x \right)$\\
$$&$ + (19559232000\,x^4-57892907520\,x^3+
36309203520\,{x}^{2}$\\
$$&$\quad-9969500032\,x+1033305728) f'\left( x \right)$\\
$$&$ +
 \left(444528000\,x^3-1852865280\,{x}^{2}+1869893760\,x-1033305728 \right) f \left(
x \right)=0
$\vspace{3pt}\\
\hline
$9\ $ &$\left(37748736\,{x}^{12}-3358720\,{x}^{11}
+69888\,{x}^{10}-480\,{x}^{9}+{x}^{8} \right)f^{(9)}  \left( x \right)$\\
$$&$ +
 \left( 2717908992\,{x}^{11}-
351387648\,{x}^{10}+10065408\,{x}^{9}-90912\,{x}^{8}+240\,{x}^{7} \right) f^{(8)}
 \left( x \right) $\\
 $$&$+ ( 72873934848\,{x}^{10}-
13784408064\,{x}^{9}$\\
$$&$\quad+563449728\,{x}^{8}
-6950616\,{x}^{7}+24024\,{x}^{6} )
f^{(7)}  \left( x \right)$\\
$$&$ + (
940566380544\,{x}^{9}-258478202880\,{x}^{8}
+15638941312\,{x}^{7}$\\
$$&$\quad-276275160\,{x}^{6}+1304336\,{x}^{5} ) f^{(6)}  \left( x
 \right)$\\
  $$&$+ ( 6273464795136\,{x}^{8}-2467959432192\,x^7+227994061392\,{x}^{6}$\\
  $$&$\quad-6121052128\,{x}^{5}+41782224
\,{x}^{4} ) f^{(5)}
 \left( x \right)$\\
  $$&$+ ( 21523928186880\,{x}^{7}-11931746135040\,{x}^{6}+1713129509184\,{x}^{5}$\\
  $$&$\quad-75115763872\,{x}^{4
}+802970368\,{x}^{3}
 ) f^{(4)}  \left( x \right)$\\
 $$&$ + (35583374131200\,{x}^{6}-27454499665920\,{x}^{5} +
6147724228704\,{x}^{4}$\\
$$&$\quad-
475182777504\,{x}^{3}+8956331968\,{x}^{2} ) f^{(3)}  \left( x \right)$\\
$$&$ + ( 24400027975680\,{x}^{5}-26056335882240\,{x}^{4}
+9086553292608\,{x}^{3}$\\
$$&$\quad-
1308864283488\,{x}^{2}+ 52313960192\,x ) f''  \left( x
 \right)$\\
  $$&$+ ( 4976321495040\,{x}^{4}-7402528051200\,{x}^{3}+4051342551744\,{x}^{2
}$\\
$$&$\quad-1122348764928\,x+120086385408 ) f' \left( x \right)$\\
 $$&$+ \left(107017666560\,{x}^{3}-
230051819520\,{x}^{2}+208033076736\,x-120086385408 \right) f \left( x \right)=0
$ \\
\hline
\end{tabular*}
\end{center}
\caption{The differential equations for ${\bf F}_k(z)(2\leq k\leq 9)$,
obtained by Maple package {\tt gfun}.} \vspace*{-12pt}
\label{Table:F_kdiff}
\end{table}

%%%%%%%%%%%%%%%%%%%%%%%%%%%%%%%%%%%%%%%%
%%%%%%%%%%%%%%%%%%%%%%%%%%%%%%%%%%%%%%%%

\newchap{Modular, $k$-noncrossing diagrams}
\thispagestyle{fancyplain}

In this chapter we will introduce modular, $k$-noncrossing diagrams, which is a kind of the complicated RNA structures. We first use the method of inflation from ${\sf V}_k$-shape to obtain the generating function of modular, noncrossing diagrams. Then we improve the ${\sf V}_k$-shape to {\it colored shape} so as to compute the modular, $k$-noncrossing diagrams where $k>2$. Finally, we do the singularity analysis for the generating function and conclude the asymptotic enumeration of modular, $k$-noncrossing diagrams with length $n$.

\section{Shape}

%%%
%%%%%%%%%%%%%%%%%%%%%%%%%%%%%%%%%%%%%%%%%%%%%%%%%%%%%%%%%%%%%%%%%%%%%%%%
%%%

%%%
%%%%%%%%%%%%%%%%%%%%%%%%%%%%%%%%%%%%%%%%%%%%%%%%%%%%%%%%%%%%%%%%%%%%%%%%%%
%%%
\begin{definition}
A ${\sf V}_k$-shape is a $k$-noncrossing matching having stacks of length
exactly one.
\end{definition}
%%%
%%%%%%%%%%%%%%%%%%%%%%%%%%%%%%%%%%%%%%%%%%%%%%%%%%%%%%%%%%%%%%%%%%%%%%%%%%
%%%
In the following we refer to ${\sf V}_k$-shape simply as shapes.
That is, given a modular, $k$-noncrossing diagram, $\delta$, its shape
is obtained by first replacing each stem by an arc and then removing all
isolated vertices, see Fig.~\ref{F:Ikmapchain}.
%%%%%%%%%%%%%%%%%%%%%%%%%%%%%%%%%%%
%%%%%%%%%%%%%%%%%%%%%%%%%%%%%%%%%%%%%%%%%%%%%%%%%%%%%%%%%%%%%%%%%%%%%%
%%%%%%%%%%%%%%%%%%%%%%%%%%%%%%%%%%%
\restylefloat{figure}\begin{figure}[ht]
\centerline{\includegraphics[width=0.65\textwidth]{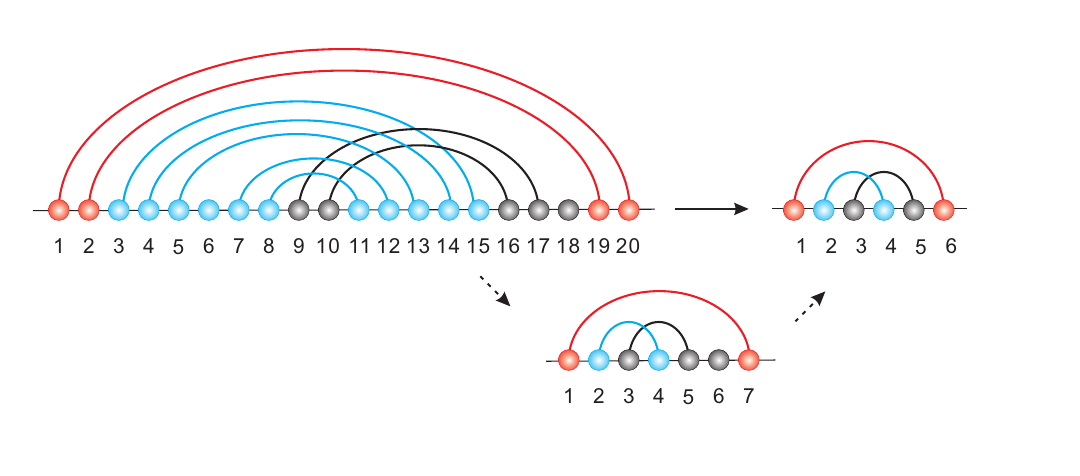}}
\caption{\small From diagrams to shapes: A modular, $3$-noncrossing
diagram (top-left) is mapped in two steps into its ${\sf V}_3$-shape
(top-right). A stem (blue) is replaced by an single shape-arc
(blue).} \label{F:Ikmapchain}
\end{figure}
%%%%%%%%%%%%%%%%%%%%%%%%%%%%%%%%%%%
%%%%%%%%%%%%%%%%%%%%%%%%%%%%%%%%%%%%%%%%%%%%%%%%%%%%%%%%%%%%%%%%%%%%%%
%%%%%%%%%%%%%%%%%%%%%%%%%%%%%%%%%%%

Let ${\mathcal I}_k(s,m)$ ($i_k(s,m)$) denote the set (number) of
the ${\sf V}_k$-shapes with $s$ arcs and $m$ $1$-arcs having the bivariate
generating function
\begin{equation}
{\bf I}_k(z,u)=\sum_{s\geq0}\sum_{m=0}^{s} i_k(s,m)z^su^m.
\end{equation}
The bivariate generating function of $i_k(s,m)$ and the
generating function of ${\bf F}_k(z)$ are related as follows:
%%%
%%%%%%%%%%%%%%%%%%%%%%%%%%%%%%%%%%%%%%%%%%%%%%%%%%%%%%%%%%%%%%%%%%%%%%%%%%%%
%%%
\begin{lemma}\label{T:gfIk}\cite{Reidys:09shape}
Let $k$ be a natural number where $k\geq 2$, then the generating
function ${\bf I}_k(z,u)$ satisfies
\begin{eqnarray}\label{E:gfIk}
{\bf I}_k(z,u) & = & \frac{1+z}{1+2z-zu}
{\bf F}_k\left(\frac{z(1+z)}{(1+2z-zu)^2}\right).
\end{eqnarray}
\end{lemma}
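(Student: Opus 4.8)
The plan is to set up a bijective/combinatorial decomposition of ${\sf V}_k$-shapes that tracks both the total number of arcs and the number of $1$-arcs (arcs $(i,i+1)$), and then translate that decomposition into a functional relation with the generating function ${\bf F}_k$ of $k$-noncrossing matchings via the symbolic method. The starting observation is that a ${\sf V}_k$-shape is, by definition, a $k$-noncrossing matching all of whose stacks have length exactly one; equivalently, it is a $k$-noncrossing matching with no two ``parallel'' arcs $(i,j),(i+1,j-1)$. So I would first recall that $k$-noncrossing matchings (with no length restriction and no stack restriction) are enumerated by ${\bf F}_k$, and then describe how an arbitrary such matching can be built from a ${\sf V}_k$-shape by a suitable inflation of each arc, inserting ``ladders'' of nested parallel arcs. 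The point is that this inflation is reversible: contracting each maximal stack of a $k$-noncrossing matching to a single arc yields a ${\sf V}_k$-shape.

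Concretely, I would argue as follows. First, encode a $k$-noncrossing matching by choosing a ${\sf V}_k$-shape $\sigma$ with $s$ arcs, and then, for each of the $s$ shape-arcs, deciding how many extra parallel (immediately nested) arcs to stack on top of it; a stack of size $t\ge 1$ on a shape-arc contributes $t$ arcs and $2t$ vertices, while also the ``$1$-arc'' status of a shape-arc may be altered by inflation. The bookkeeping variable $u$ in ${\bf I}_k(z,u)$ marks the shape-arcs that are $1$-arcs, i.e.\ those of the form $(i,i+1)$ with nothing between their endpoints. Inflating such an arc into a stack of size $t$ produces a stack $((i,j),\dots)$ whose innermost arc is a $1$-arc; inflating a non-$1$-arc never creates an innermost $1$-arc. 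This is exactly the distinction that lets one invert the relation between ${\bf F}_k$ and ${\bf I}_k$: starting from ${\bf F}_k(x)=\sum_h f_k(2h,0)x^h$, one substitutes a rational function of $z$ (accounting for the stacking geometry) and multiplies by a rational prefactor (accounting for the outermost layer), and the exponents/shifts are chosen so that the coefficient of $z^s u^m$ on the right counts precisely ${\sf V}_k$-shapes with $s$ arcs and $m$ $1$-arcs. Matching the combinatorial classes gives
\begin{equation*}
{\bf I}_k(z,u)=\frac{1+z}{1+2z-zu}\,{\bf F}_k\!\left(\frac{z(1+z)}{(1+2z-zu)^2}\right),
\end{equation*}
and one verifies the algebraic identity by checking that applying the inverse substitution to ${\bf I}_k$ recovers ${\bf F}_k$, or equivalently by the symbolic-method derivation: a matching is a shape each of whose arcs carries a geometric sequence of inflations, which at the generating-function level is the substitution $x\mapsto z(1+z)/(1+2z-zu)^2$ together with the prefactor $(1+z)/(1+2z-zu)$ that records the top layer of the outermost stacks.

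The main obstacle I anticipate is getting the substitution and the prefactor \emph{exactly} right — in particular, being careful about (i) which arcs count as $1$-arcs after inflation and how the variable $u$ must be threaded through the stacking series, and (ii) the fact that inflating a shape-arc and the presence or absence of a nested vertex interact, so the naive ``each arc independently gets a geometric series $\sum_{t\ge 1}z^t$'' must be corrected by the precise local picture (this is why the denominator is $(1+2z-zu)^2$ rather than $(1-z)^2$, and why there is an extra factor $1+z$). I would handle this by first doing the case $u=1$ (no marking), where ${\bf I}_k(z,1)$ should equal $\frac{1+z}{1+2z}{\bf F}_k\!\big(\frac{z(1+z)}{(1+2z)^2}\big)$, checking it against the known relation between ${\sf V}_k$-shapes and $k$-noncrossing matchings, and then reintroducing $u$ by tracking a single $1$-arc through the inflation. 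Everything else — verifying $k$-noncrossingness is preserved under inflation/contraction and that the correspondence is a bijection on the relevant classes — is routine and follows the template already used for ${\sf V}_k$-shapes in \cite{Reidys:09shape}.
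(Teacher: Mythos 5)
There is a genuine gap. The inflation/collapse bijection you describe (shape-arc $\mapsto$ stack of length $t\ge 1$) only pins down the \emph{univariate} specialization: it yields ${\bf F}_k(x)={\bf I}_k\bigl(\tfrac{x}{1-x},1\bigr)$, equivalently ${\bf I}_k(z,1)={\bf F}_k\bigl(\tfrac{z}{1+z}\bigr)$, because the stack lengths are forgotten when you only mark shape-arcs. The whole content of the lemma is the $u$-dependence, i.e.\ where the factors $1+2z-zu$ come from, and your proposal never produces them: you correctly observe that a shape-arc is a $1$-arc iff its inflated stack contains a matching $1$-arc, so ${\bf I}_k\bigl(\tfrac{x}{1-x},u\bigr)$ equals the bivariate generating function of $k$-noncrossing matchings counted by arcs and $1$-arcs; but that bivariate matching generating function is itself a nontrivial identity (an inclusion--exclusion/core-type argument, or a recursion on $1$-arcs plus a uniqueness argument of the kind this thesis uses for the finer colored refinements in Lemma~\ref{L:recursion-0}/Proposition~\ref{P:trivariate} and Lemma~\ref{L:recursions-0}/Proposition~\ref{P:5tri}), and your text explicitly defers exactly this step (``getting the substitution and the prefactor exactly right''). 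So what is written is a plan that reduces the lemma to the statement that still needs proving, not a proof. Note also that the thesis itself gives no proof of this lemma --- it is quoted from \cite{Reidys:09shape} --- so the standard to meet is a self-contained derivation of the $u$-marked formula.

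Two further concrete problems. First, your proposed anchor check at $u=1$ is miscomputed: setting $u=1$ in eq.~(\ref{E:gfIk}) gives $\frac{1+z}{1+z}{\bf F}_k\bigl(\tfrac{z(1+z)}{(1+z)^2}\bigr)={\bf F}_k\bigl(\tfrac{z}{1+z}\bigr)$, not $\frac{1+z}{1+2z}{\bf F}_k\bigl(\tfrac{z(1+z)}{(1+2z)^2}\bigr)$ (the latter is the $u=0$ specialization, i.e.\ shapes without $1$-arcs); since you intended to build the bivariate argument on top of this check, the slip would propagate. Second, the assertion that the ``$1$-arc status of a shape-arc may be altered by inflation'' is confused --- under the inflation the number of $1$-arcs is preserved (each stack over a shape $1$-arc contributes exactly one matching $1$-arc, and no other stack contributes any), and this preservation is precisely what you need to state and use; as it stands the bookkeeping that should force the denominator $(1+2z-zu)^2$ and the prefactor $\frac{1+z}{1+2z-zu}$ is absent. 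Finally, the claim that collapse/inflation are mutually inverse (no merging of stacks coming from distinct shape-arcs) is indeed routine but should be argued, e.g.\ by noting that two nested, non-parallel shape-arcs leave at least one vertex between the corresponding stacks after inflation.
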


\section{Modular noncrossing diagrams}\label{S:k=2}

%%%
%%%%%%%%%%%%%%%%%%%%%%%%%%%%%%%%%%%%%%%%%%%%%%%%%%%%%%%%%%%%%%%%%%%%%%%%%%
%%%

Let us begin by studying first the case $k=2$ \cite{Schuster:98},
where the asymptotic formula
\begin{equation*}
{\sf Q}_2(n)\sim 1.4848\cdot n^{-3/2}\cdot 1.8489^n
\end{equation*}
has been derived. In the following we extend the result in
\cite{Schuster:98} by computing the generating function explicitly.
The above asymptotic formula follows then easily by means of
singularity analysis.

%%%
%%%%%%%%%%%%%%%%%%%%%%%%%%%%%%%%%%%%%%%%%%%%%%%%%%%%%%%%%%%%%%%%%%%%%
%%%
\begin{proposition}\label{P:k=2}
The generating function of modular, noncrossing diagrams is given by
\begin{equation}\label{E:UU}
{\bf Q}_2(z)=\frac{1-z^2+z^4}{1 - z - z^2 + z^3 + 2 z^4 + z^6}\cdot {\bf F}_2
\left(\frac{z^4 - z^6 + z^8}{(1 - z - z^2 + z^3 + 2 z^4 + z^6)^2}\right)
\end{equation}
and the coefficients of  ${\sf Q}_2(n)$ satisfy
\begin{equation*}
{\sf Q}_2(n)\sim c_2 n^{-3/2} \gamma_{2}^{-n},
\end{equation*}
where $\gamma_{2}$ is the minimal, positive real solution
of $\vartheta(z)=1/4$, and
\begin{equation}
\vartheta(z)=
\frac{z^4 - z^6 + z^8}{(1 - z - z^2 + z^3 + 2 z^4 + z^6)^2}.
\end{equation}
Here we have $\gamma_{2}\approx 1.8489$ and
$c_2\approx 1.4848$.
\end{proposition}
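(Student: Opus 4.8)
The plan is to establish (\ref{E:UU}) by the inflation/symbolic method and then to read off the asymptotics of ${\sf Q}_2(n)$ by singularity analysis of the composition ${\bf F}_2(\vartheta(z))$. Since for $k=2$ there are no crossings, plain ${\sf V}_2$-shapes suffice and the colored shapes of the later sections are not needed, which is what makes this case comparatively clean.

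\textbf{Deriving the generating function.} I would begin from the bijective inflation picture of Fig.~\ref{F:inflating} specialised to $k=2$: every modular noncrossing diagram arises from a unique ${\sf V}_2$-shape by (a)~replacing each shape-arc by a \emph{stem-module} --- a maximal nested sequence of stacks of size $\geq 2$ whose consecutive stacks are separated by nonempty intervals of isolated vertices --- and (b)~inserting blocks of isolated vertices into the gaps of the shape. The symbolic method then assigns to a single stack of size $\geq 2$ the generating function $z^{4}/(1-z^{2})$, to a nonempty isolated-vertex interval the generating function $z/(1-z)$, and, via a \textsc{Seq}-construction over the pattern ``stack, then separated nested stem'', a rational generating function $W(z)$ to a stem-module over a generic shape-arc. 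The one point needing care is the modular constraint that every arc have length $\geq 4$: for a stem-module over a non-$1$-arc this is automatic, since there is always inflated material inside the innermost arc, but over a $1$-arc it forces an extra condition on the innermost arc, so $1$-arcs receive a different rational generating function $W_{1}(z)$. Substituting in Lemma~\ref{T:gfIk} the arc-variable $z$ by $W(z)$ and the $1$-arc marker $u$ by $W_{1}(z)/W(z)$ (so that $z^{s}u^{m}\mapsto W(z)^{s-m}W_{1}(z)^{m}$), and multiplying by the generating function that records the isolated vertices inserted in the shape's gaps, turns ${\bf I}_{2}(z,u)$ into ${\bf Q}_{2}(z)$; a routine simplification of the resulting rational expressions collapses the prefactor to $(1-z^{2}+z^{4})/q_{2}(z)$ with $q_{2}(z)=1-z-z^{2}+z^{3}+2z^{4}+z^{6}$ and the argument of ${\bf F}_{2}$ to $\vartheta(z)=(z^{4}-z^{6}+z^{8})/q_{2}(z)^{2}$, i.e.~precisely (\ref{E:UU}). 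Comparing the first several Taylor coefficients of (\ref{E:UU}) against a direct enumeration of small modular noncrossing diagrams provides a useful independent check.

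\textbf{Asymptotics.} Here I would invoke Corollary~\ref{C:algeasym} with $k=2$ and $\psi=\vartheta$. As $\vartheta$ is rational with $\vartheta(0)=0$, and $\rho_{2}^{2}=1/4$ is the unique dominant singularity of ${\bf F}_{2}(z)=(1-\sqrt{1-4z})/(2z)$, it remains to check: (i)~$\vartheta$ has nonnegative, aperiodic Taylor coefficients (the aperiodicity coming from the odd powers of $z$ in $q_{2}$), so that, writing $\gamma_{2}$ for the minimal positive real solution of $\vartheta(z)=1/4$, one has $|\vartheta(z)|<1/4$ on $|z|\leq\gamma_{2}$ except at $z=\gamma_{2}$, whence $\gamma_{2}$ is the unique dominant singularity of ${\bf F}_{2}(\vartheta(z))$; (ii)~$q_{2}(z)\neq 0$ on a disc of radius slightly larger than $\gamma_{2}$, so the rational prefactor and $\vartheta$ are analytic past $\gamma_{2}$ and introduce no competing singularity; and (iii)~$\vartheta'(\gamma_{2})\neq 0$. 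Granting (i)--(iii), Corollary~\ref{C:algeasym} yields $[z^{n}]{\bf F}_{2}(\vartheta(z))\sim A\,n^{-3/2}\,\gamma_{2}^{-n}$ --- the exponent being $(k-1)^{2}+(k-1)/2=3/2$ at $k=2$ --- and multiplying by the prefactor, which is analytic at $\gamma_{2}$, only rescales the constant, giving ${\sf Q}_{2}(n)\sim c_{2}\,n^{-3/2}\,\gamma_{2}^{-n}$. The claimed numerical values follow by solving $\vartheta(z)=1/4$ numerically for $\gamma_{2}$ and by tracing the constant $A$ through the square-root singular expansion of ${\bf F}_{2}$ at $1/4$ together with the value of $(1-z^{2}+z^{4})/q_{2}(z)$ at $\gamma_{2}$.

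\textbf{Main obstacle.} The delicate part is the first step: getting the inflation bookkeeping exactly right --- ensuring the reconstructed diagram has the prescribed shape (no two stem-modules merging into a single stack) and correctly handling the $1$-arc case forced by the arc-length~$\geq 4$ condition. This is precisely where the extra $z$-factors distinguishing $q_{2}(z)$ from the polynomial $1+2z-zu$ of Lemma~\ref{T:gfIk} come from, and an off-by-one in any gap makes the rational functions fail to collapse to (\ref{E:UU}). By contrast, checking the analytic hypotheses (i)--(iii) is routine, requiring only a short (in part numerical) verification that no zero of $q_{2}$ precedes $\gamma_{2}$.
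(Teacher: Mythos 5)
Your derivation of eq.~(\ref{E:UU}) follows the paper's own route: inflate each ${\sf V}_2$-shape in two steps (stems over shape-arcs, then isolated vertices in the $2s+1$ gaps), treat the $1$-arcs specially because of the arc-length $\geq 4$ condition, and sum over shapes via Lemma~\ref{T:gfIk}. Your substitution $u\mapsto W_1(z)/W(z)$ is exactly the paper's substitution $y=z^3$ — the paper attaches the correction to the gap inside the $1$-arc ($z^3/(1-z)$ instead of $1/(1-z)$) rather than to the stem, but the bookkeeping is equivalent — and the asymptotic step via Corollary~\ref{C:algeasym} with exponent $(k-1)^2+(k-1)/2=3/2$ is likewise the paper's argument.

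The one step that fails as stated is your item (i): $\vartheta(z)=(z^4-z^6+z^8)/q(z)^2$ does \emph{not} have nonnegative Taylor coefficients. Indeed $\vartheta(z)=z^4+2z^5+4z^6+6z^7+6z^8+2z^9-13\,z^{10}-\cdots$, so $[z^{10}]\vartheta(z)=-13<0$, and the usual ``a power series with nonnegative, aperiodic coefficients attains its maximum modulus on a circle only at the positive real point'' argument is unavailable; you cannot conclude $|\vartheta(z)|<1/4$ for $|z|\leq\gamma_2$, $z\neq\gamma_2$, in this way (nor does aperiodicity of $q$ help). That inequality is genuinely needed — it is the hypothesis $|\psi(z)|\leq\rho$ for $|z|\leq\gamma$ in Proposition~\ref{chapter2:algeasym}, on which Corollary~\ref{C:algeasym} rests — and it is true, but it must be verified by other means: for instance by bounding $|\vartheta(z)|$ directly on the circle $|z|=\gamma_2$ factor by factor, as is done for the analogous composition in the proof of Theorem~\ref{T:canonical-sk-asym}, or by a short numerical check; the paper's proof of Proposition~\ref{P:k=2} itself only asserts this verification without the positivity claim. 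With that repair, your remaining checks (ii), (iii) and the transfer of the constant through the square-root expansion of ${\bf F}_2$ at $1/4$ go through as in the paper.
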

%%%
%%%%%%%%%%%%%%%%%%%%%%%%%%%%%%%%%%%%%%%%%%%%%%%%%%%%%%%%%%%%%%%%%%%%%
%%%
\begin{proof}
Let ${\mathcal Q}_{2}$ denote the set of modular noncrossing diagrams,
${\mathcal I}_2$ the set of all ${\sf V}_2$-shapes and ${\mathcal I}_2(m)$
those having exactly $m$ $1$-arcs. Then we have the surjective map
\begin{equation*}
\varphi\colon {\mathcal Q}_{2}\rightarrow {\mathcal I}_2.
\end{equation*}
The map $\varphi$ is obviously surjective, inducing the partition
${\mathcal Q}_{2}=\dot\cup_\gamma\varphi^{-1}(\gamma)$, where
$\varphi^{-1}(\gamma)$ is the preimage set of shape $\gamma$ under
the map $\varphi$. Accordingly, we arrive at
\begin{equation}\label{E:Hgf}
{\bf Q}_{2}(z) =
\sum_{m\geq 0}\sum_{\gamma\in\,{\mathcal I}_2(m)}
\mathbf{Q}_{\gamma}(z).
\end{equation}
We proceed by computing the generating function
$\mathbf{Q}_{\gamma}(z)$. We shall construct
$\mathbf{Q}_{\gamma}(z)$ from certain combinatorial classes as
``building blocks''. The latter are: $\mathcal{M}$ (stems),
$\mathcal{K}$ (stacks), $\mathcal{N}$ (induced stacks),
$\mathcal{L}$ (isolated vertices), $\mathcal{R}$ (arcs) and
$\mathcal{Z}$ (vertices), where $\mathbf{Z}(z)=z$ and
$\mathbf{R}(z)=z^2$. We inflate $\gamma\in {\mathcal I}_2(m)$ having
$s$ arcs, where $s\geq \max\{1,m\}$, to a modular noncrossing
diagram in two steps: \\
%%%
%%%%%%%%%%%%%%%%%%%%%%%%%%%%%%%%%%%%%%%%%%%%%%%%%%%%%%%%%%%%%%%%%%%%%%%%%%
%%%
{\it Claim.} For any shape $\gamma\in\mathcal{I}_2(s,m)$ we have
\begin{eqnarray*}
{\bf Q}_{\gamma}(z)&=& \left(\frac{\frac{z^{4}}{1-z^2}}
{1-\frac{z^{4}}{1-z^2}\left(2\frac{z}{1-z}
+\left(\frac{z}{1-z}\right)^2\right)}\right)^s
\left(\frac{1}{1-z}\right)^{2s+1-m}
\left(\frac{z^3}{1-z}\right)^{m}\nonumber\\
&=&(1-z)^{-1}\left(\frac{z^{4}}{(1-z^2)(1-z)^2-(2z-z^2)
z^{4}}\right)^s \, (z^3)^m.
\end{eqnarray*}
%%%
%%%%%%%%%%%%%%%%%%%%%%%%%%%%%%%%%%%%%%%%%%%%%%%%%%%%%%%%%%%%%%%%%%%%%%%%%%
%%%
{\bf Step I:} we inflate any shape-arc to a stack of length at least
$2$ and subsequently add additional stacks. The latter are called
induced stacks \index{induced stack} and have to be separated by
means of inserting isolated vertices, see Fig.~\ref{F:addstack}.
%%%%%%%%%%%%%%%%%%%%%%%%%%%%%%%%%%%%%%%%%%%%%%%%%%%%%%%%%%%%%%%%%%%%%%%%
%%%%%%%%%%%%%%%%%%%%%%%%%%%%%%%%%%%%%%%%%%%%%%%%%%%%%%%%%%%%%%%%%%%%%%%%
\restylefloat{figure}\begin{figure}[ht]
\centerline{\includegraphics[width=0.9\textwidth]{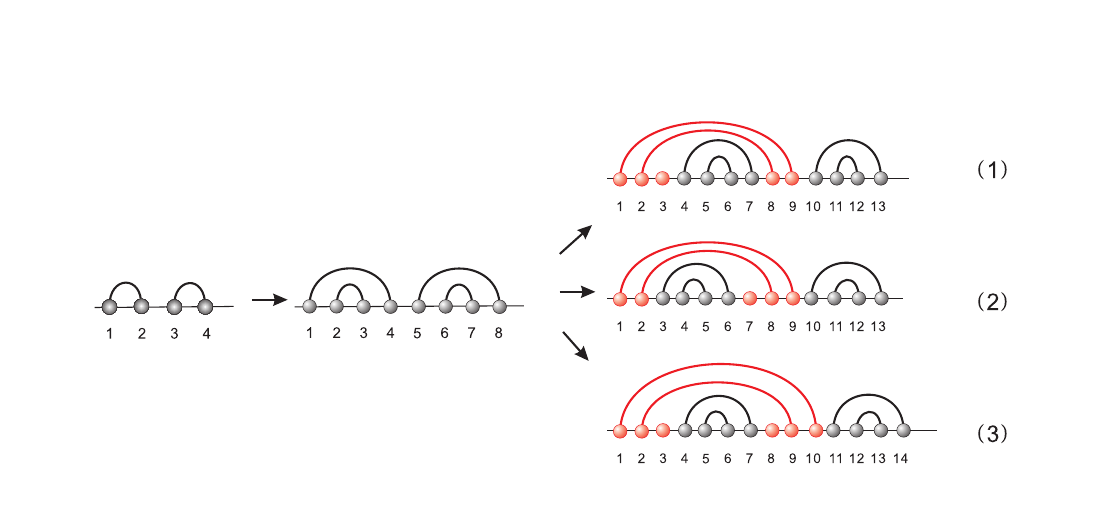}}
\caption{\small Illustration of Step I.} \label{F:addstack}
\end{figure}
%%%%%%%%%%%%%%%%%%%%%%%%%%%%%%%%%%%%%%%%%%%%%%%%%%%%%%%%%%%%%%%%%%%%%%%%
%%%%%%%%%%%%%%%%%%%%%%%%%%%%%%%%%%%%%%%%%%%%%%%%%%%%%%%%%%%%%%%%%%%%%%%%
Note that during this first inflation step no intervals of isolated
vertices, other than those necessary for separating the nested
stacks are inserted. We generate
\begin{itemize}
\item sequences of isolated vertices
$\mathcal{L}= \textsc{Seq}(\mathcal{Z})$, where
\begin{eqnarray*}
 {\bf L}(z) & = &  \frac{1}{1-z}
\end{eqnarray*}
\item stacks, i.e.
\begin{equation*}
\mathcal{K}=
\mathcal{R}^{2}\times\textsc{Seq}\left(\mathcal{R}\right)
\end{equation*}
with the generating function
\begin{eqnarray*}
\mathbf{K}(z) & = & z^{4}\cdot \frac{1}{1-z^2},
\end{eqnarray*}
\item induced stacks, i.e.~stacks together with at least one nonempty
interval of isolated vertices on either or both its sides.
\begin{equation*}
\mathcal{N}=\mathcal{K}\times \left(\mathcal{Z}\times\mathcal{L}
+\mathcal{Z}\times\mathcal{L}+\left(\mathcal{Z}\times
\mathcal{L}\right)^2\right)
\end{equation*}
with generating function
\begin{equation*}
\mathbf{N}(z)=\frac{z^{4}}{1-z^2}\left(2\frac{z}{1-z}
+\left(\frac{z}{1-z}\right)^2\right),
\end{equation*}
\item stems\index{stem}, that is pairs consisting of a stack $\mathcal{K}$
and an arbitrarily long sequence of induced stacks
\begin{equation*}
\mathcal{M}=\mathcal{K} \times \textsc{Seq}\left(\mathcal{N}\right)
\end{equation*}
with generating function
\begin{eqnarray*}
\mathbf{M}(z)=\frac{\mathbf{K}(z)}{1-\mathbf{N}(z)}=
\frac{\frac{z^{4}}{1-z^2}}
{1-\frac{z^{4}}{1-z^2}\left(2\frac{z}{1-z}
+\left(\frac{z}{1-z}\right)^2\right)}.
\end{eqnarray*}
\end{itemize}
{\bf Step II:} we insert additional isolated vertices at the
remaining $(2s+1)$ positions. For each $1$-arc at least three such
isolated vertices are necessarily inserted, see
Fig.~\ref{F:addvertex}.
%%%%%%%%%%%%%%%%%%%%%%%%%%%%%%%%%%%%%%%%%%%%%%%%%%%%%%%%%%%%%%%%%%%%%%%%
%%%%%%%%%%%%%%%%%%%%%%%%%%%%%%%%%%%%%%%%%%%%%%%%%%%%%%%%%%%%%%%%%%%%%%%%
\restylefloat{figure}\begin{figure}[ht]
\centerline{\includegraphics[width=1\textwidth]{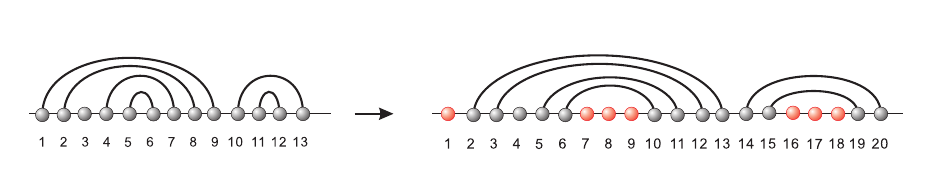}}
\caption{\small Step II: the noncrossing diagram (left) obtained in
{\sf (1)} in Fig.~\ref{F:addstack} is inflated to a modular
noncrossing diagram (right) by adding isolated vertices (red).}
\label{F:addvertex}
\end{figure}
%%%%%%%%%%%%%%%%%%%%%%%%%%%%%%%%%%%%%%%%%%%%%%%%%%%%%%%%%%%%%%%%%%%%%%%%
%%%%%%%%%%%%%%%%%%%%%%%%%%%%%%%%%%%%%%%%%%%%%%%%%%%%%%%%%%%%%%%%%%%%%%%%
We arrive at
\begin{equation}
\mathcal{Q}_{\gamma}=\left(\mathcal{M}\right)^s
\times\mathcal{L}^{2s+1-m}\times\left(\mathcal{Z}^3\times
\mathcal{L}\right)^{m},
\end{equation}
where $\mathcal{Q}_{\gamma}$ is the combinatorial class of modular
noncrossing diagrams having shape $\gamma\in\mathcal{I}_2(s,m)$.
Combining these generating functions the Claim follows.\\
Since for any $\gamma,\gamma_1\in \mathcal{I}_2(s,m)$
\index{$\mathcal{I}_k(s,m)$}
we have $\mathbf{Q}_{\gamma}(z)=\mathbf{Q}_{\gamma_1}(z)$, we derive
\begin{equation*}
{\bf Q}_{2}(z) = \sum_{m\geq 0}\sum_{\gamma\in\,{\mathcal
I}_2(m)} \mathbf{Q}_\gamma(z) =
\sum_{s\geq 0}\sum_{m=0}^s i_2(s,m)\mathbf{Q}_\gamma(z).
\end{equation*}
We set
\begin{equation*}
\eta(z)=\frac{z^{4}}{(1-z^2)(1-z)^2-(2z-z^2)z^{4}}
\end{equation*}
and note that Lemma~\ref{T:gfIk} guarantees
\begin{eqnarray*}
\sum_{s\geq0}\,\sum_{m=0}^{s} \,i_2(s,m)\,x^s\,y^m & = &
\frac{1+x}{1+2x-xy}\sum_{s\geq
0}f_2(2s)\left(\frac{x(1+x)}{(1+2x-xy)^2}\right)^s.
\end{eqnarray*}
Therefore, setting $x=\eta(z)$ and $y=z^3$ we arrive at
\begin{eqnarray*}
{\bf Q}_2(z)=\frac{1-z^2+z^4}{1 - z - z^2 + z^3 + 2 z^4 + z^6}\cdot {\bf F}_2
\left(\frac{z^4 - z^6 + z^8}{(1 - z - z^2 + z^3 + 2 z^4 + z^6)^2}\right)
\end{eqnarray*}
By Theorem~\ref{T:d-finite_property}, ${\bf Q}_2(z)$ is $D$-finite. Pringsheim's
Theorem \cite{Tichmarsh:39} guarantees that ${\bf Q}_2(z)$ has a
dominant real positive singularity $\gamma_{2}$. We verify that
$\gamma_{2}$ which is the unique solution of minimum modulus of the
equation $\vartheta(z)=\rho_2^2$, where  $\rho_2^2$ is the unique
dominant singularity of ${\bf F}_2(z)$ and $\rho_2=1/2$. Furthermore
we observe that $\gamma_{2}$ is the unique dominant singularity of
${\bf Q}_2(z)$. It
is straightforward to verify that $\vartheta'(\gamma_{2})\neq 0$.
According to Corollary~\ref{C:algeasym}, we therefore have
\begin{equation*}
{\sf Q}_2(n)\sim c_2 n^{-3/2} \gamma_{2}^{-n},
\end{equation*}
and the proof of Proposition~\ref{P:k=2} is complete.
\end{proof}
%%%%%%%%%%%%%%%%%%%%%%%%%%%%%%%%%%%%%%%%%%%%%%%%%%%%%%%%%%%%%%%%%%%

%%%
%%%%%%%%%%%%%%%%%%%%%%%%%%%%%%%%%%%%%%%%%%%%%%%%%%%%%%%%%%%%%%%%%%%%%%%%%%%%%%
%%%
\section{Colored shapes}\label{S:color}
%%%
%%%%%%%%%%%%%%%%%%%%%%%%%%%%%%%%%%%%%%%%%%%%%%%%%%%%%%%%%%%%%%%%%%%%%%%%%%%%%%
%%%

In the following part we shall assume that $k>2$, unless stated
otherwise. The key steps to compute the generating function of modular
$k$-noncrossing diagrams are certain refinements of their ${\sf
V}_k$-shapes. These refined shapes are called colored shapes and
obtained by distinguishing a variety of crossings of $2$-arcs,
i.e.~arcs of the form $(i,i+2)$. Each such class requires its
specific inflation-procedure in Theorem~\ref{T:queen}.

Let us next have a closer look at these combinatorial classes (colors):
\begin{itemize}
\item $\mathbf{C}_1$\index{$\mathbf{C}_1$} the class of $1$-arcs,
\item $\mathbf{C}_2$\index{$\mathbf{C}_2$}
      the class of arc-pairs consisting of mutually crossing
      $2$-arcs,
\item $\mathbf{C}_3$\index{$\mathbf{C}_3$} the class of arc-pairs
      $(\alpha,\beta)$ where $\alpha$ is the unique $2$-arc crossing
      $\beta$ and $\beta$ has length at least three.
\item $\mathbf{C}_4$\index{$\mathbf{C}_4$}
      the class of arc-triples $(\alpha_1,\beta,\alpha_2)$, where
      $\alpha_1$ and $\alpha_2$ are $2$-arcs that cross $\beta$.
\end{itemize}
In Fig.\ \ref{F:map2} we illustrate how these classes are induced by
modular $k$-noncrossing diagrams.
%%%
%%%%%%%%%%%%%%%%%%%%%%%%%%%%%%%%%%%%%%%%%%%%%%%%%%%%%%%%%%%%%%%%%%%%%%%%%%%%%%%
%%%
\restylefloat{figure}\begin{figure}[h!t!b!p]
\centering
\includegraphics{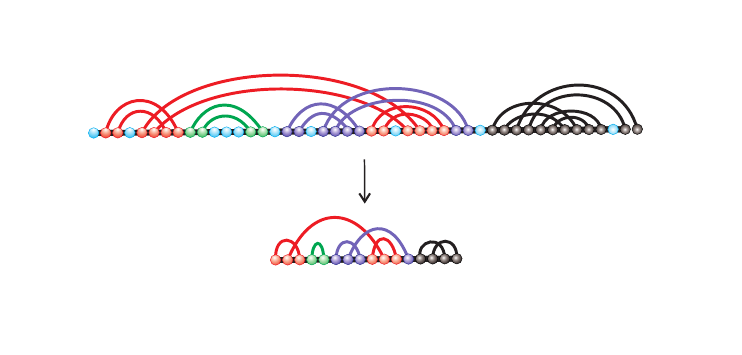}
\caption{\small
Colored ${\sf V}_k$-shapes: a modular $3$-noncrossing diagram (top)
and its colored ${\sf V}_3$-shape (bottom). In the resulting ${\sf
V}_3$-shape we color the four classes as follows:
$\mathbf{C}_1$(green), $\mathbf{C}_2$(black), $\mathbf{C}_3$(blue)
and $\mathbf{C}_4$(red).} \label{F:map2}
\end{figure}
%%%
%%%%%%%%%%%%%%%%%%%%%%%%%%%%%%%%%%%%%%%%%%%%%%%%%%%%%%%%%%%%%%%%%%%%%%%%%%%%%%%
%%%

Let us refine ${\sf V}_k$-shapes in two stages. For this
purpose let ${\mathcal I}_k(s,u_1,u_2)$\index{${\mathcal
I}_k(s,u_1,u_2)$} and $i_k(s,u_1,u_2)$\index{${\mathcal
I}_k(s,u_1,u_2)$} denote the set and cardinality of ${\sf V}_k$-shapes
having $s$ arcs, $u_1$ $1$-arcs and $u_2$ pairs of
mutually crossing $2$-arcs. Our first objective consists in
computing the generating function
\begin{equation*}
{\bf W}_k(x,y,w)=\sum_{s\geq 0}
\sum_{u_1=0}^{s}\sum_{u_2=0}^{\lfloor\frac{s-u_1}{2}\rfloor}
i_k(s,u_1,u_2)\, x^sy^{u_1}w^{u_2}.
\end{equation*}
That is, we first take the classes $\mathbf{C}_1$ and
$\mathbf{C}_2$ into account.

%%%
%%%%%%%%%%%%%%%%%%%%%%%%%%%%%%%%%%%%%%%%%%%%%%%%%%%%%%%%%%%%%%%%%%%%%%%%%%%%%%%
%%%
\begin{lemma}\label{L:recursion-0}
For $k> 2$, the coefficients $i_k(s,u_1,u_2)$ satisfy
\begin{eqnarray}\label{E:00}
i_k(s,u_1,u_2)& = & 0 \quad \text{for } u_1+2u_2>s\\
\label{E:wq}
\sum_{u_2=0}^{\lfloor\frac{s-u_1}{2}\rfloor}i_k(s,u_1,u_2) & = & i_k(s,u_1),
\end{eqnarray}
where $i_k(s,u_1)$ denotes the number of ${\sf V}_k$-shapes
having $s$ arcs, $u_1$ $1$-arcs. Furthermore we have the recursion:
\begin{eqnarray}\label{E:3rec}
(u_2+1)i_k(s+1,u_1,u_2+1)&=&(u_1+1)i_k(s,u_1+1,u_2)\nonumber\\&&
+(u_1+1)i_k(s-1,u_1+1,u_2).\label{E:2arcp}
\end{eqnarray}
The solution of eq.~(\ref{E:00})--(\ref{E:2arcp}) is unique.
\end{lemma}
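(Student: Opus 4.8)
The strategy is to dispose of (\ref{E:00}) and (\ref{E:wq}) by a direct structural observation, to prove the recursion (\ref{E:2arcp}) by an explicit bijection, and then to deduce uniqueness by induction on the number of arcs. For the first two identities, I would begin by recording that a $2$-arc $(i,i+2)$ can cross only the $2$-arcs $(i-1,i+1)$ and $(i+1,i+3)$, and that these last two share the vertex $i+1$ and so cannot both occur in a single matching. Hence, restricted to the $2$-arcs of a shape, the crossing relation is a partial matching: each $2$-arc lies in at most one mutually crossing pair. Consequently, in a ${\sf V}_k$-shape with $s$ arcs the $u_1$ $1$-arcs and the $2u_2$ $2$-arcs constituting the $u_2$ crossing pairs are pairwise distinct arcs, so $u_1+2u_2\le s$; this is (\ref{E:00}). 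Equation (\ref{E:wq}) is then just the partition of ${\mathcal I}_k(s,u_1)$ according to the value of $u_2$, which ranges over the finite set $\{0,\dots,\lfloor(s-u_1)/2\rfloor\}$ by (\ref{E:00}).

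For the recursion I would construct, for fixed $k>2$ and $s,u_1,u_2$, a bijection between pairs (a shape in ${\mathcal I}_k(s+1,u_1,u_2+1)$ with one of its $u_2+1$ crossing pairs marked) and the disjoint union of pairs (a shape in ${\mathcal I}_k(s,u_1+1,u_2)$ with one of its $u_1+1$ $1$-arcs marked) and (a shape in ${\mathcal I}_k(s-1,u_1+1,u_2)$ with one of its $u_1+1$ $1$-arcs marked); counting the three classes yields (\ref{E:2arcp}). Given a shape with marked crossing pair $\bigl((i,i+2),(i+1,i+3)\bigr)$, ordered by left endpoint, the forward map deletes the arc $(i+1,i+3)$ together with its two vertices, which removes one arc and turns $(i,i+2)$ into a new $1$-arc that we mark; if in addition the arc $(i-1,i+4)$ is present, we also delete it together with its two vertices, removing a second arc and again turning $(i,i+2)$ into a new $1$-arc. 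The first alternative lands in ${\mathcal I}_k(s,u_1+1,u_2)$, the second in ${\mathcal I}_k(s-1,u_1+1,u_2)$; the inverse reinserts two (respectively four) vertices around the marked $1$-arc and restores the deleted arc(s). The parameter bookkeeping is routine: exactly one new $1$-arc is created, no other $2$-arc changes length, and only the marked crossing pair is affected.

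The delicate point, and the reason the right-hand side has two terms, is to check that all four maps stay inside the class of ${\sf V}_k$-shapes, that is, produce no stack of length $\ge 2$ and no $k$-crossing. After a contraction the only stack that could be created corresponds, in the pre-image, to $\bigl((i-1,i+4),(i,i+2)\bigr)$ in the first case and to $\bigl((i-2,i+5),(i-1,i+4)\bigr)$ in the second; but in a ${\sf V}_k$-shape the arc $(i-1,i+2)$ cannot coexist with the $1$-arc $(i,i+1)$, and $(j-1,j+2)$ cannot coexist with a $1$-arc $(j,j+1)$, precisely because all stacks have length one. Hence the forbidden configuration never arises, and the case split is governed exactly by whether $(i-1,i+4)$ is present. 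In the inverse map, re-adding the arc $(i+1,i+3)$ crossing $(i,i+2)$ creates only a $2$-crossing, so the result remains $k$-noncrossing because $k>2$; this is where the hypothesis $k>2$ enters. I expect this verification — tracking which arcs move under the relabelling and ruling out spurious stacks and crossings — to be the main obstacle, everything else being formal.

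Finally, for uniqueness, I would regard the numbers $i_k(s,u_1)$ as given (their generating function is known from Lemma~\ref{T:gfIk}) and induct on $s$. For $u_2\ge 1$, eq.~(\ref{E:2arcp}) expresses $i_k(s,u_1,u_2)$ through values whose second argument is $u_2-1$ and whose first argument is $s-1$ or $s-2$, hence strictly smaller than $s$; for $u_2=0$, eq.~(\ref{E:wq}) gives $i_k(s,u_1,0)=i_k(s,u_1)-\sum_{u_2\ge 1}i_k(s,u_1,u_2)$, whose right-hand side has just been determined. The base case $s=0$ is forced to $i_k(0,0,0)=i_k(0,0)$ by (\ref{E:00})--(\ref{E:wq}), which completes the induction and the proof.
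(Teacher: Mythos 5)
Your proposal is correct and follows essentially the same route as the paper: mark one of the $u_2+1$ crossing pairs, delete one of its two $2$-arcs, and split according to whether the covering arc $(i-1,i+4)$ is present (the paper phrases this second case as ``the removal produces a matching with a unique stack of length two, which is then collapsed'', which is the same operation as your additional deletion), so that each case yields a shape with a distinguished new $1$-arc, giving the two terms $(u_1+1)i_k(s,u_1+1,u_2)$ and $(u_1+1)i_k(s-1,u_1+1,u_2)$; uniqueness is likewise settled by induction on $s$. Your write-up merely supplies explicitly the inverse maps and the stack/crossing verifications (including where $k>2$ is used) that the paper dismisses as straightforward, so there is no substantive difference.
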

%%%
%%%%%%%%%%%%%%%%%%%%%%%%%%%%%%%%%%%%%%%%%%%%%%%%%%%%%%%%%%%%%%%%%%%%%%%%%%%%%%%
%%%
The proof of Lemma~\ref{L:recursion-0} is given in Section~\ref{S:appendix}.
We next proceed by computing ${\bf W}_k(x,y,w)$.
%%%
%%%%%%%%%%%%%%%%%%%%%%%%%%%%%%%%%%%%%%%%%%%%%%%%%%%%%%%%%%%%%%%%%%%%%%%%%%%%%%%
%%%
\begin{proposition}\label{P:trivariate}
For $k>2$, we have
\begin{equation}\label{E:3gf}
{\bf W}_k(x,y,w)=(1+x)v \, {\bf F}_k\left(x(1+x)v^2\right),
\end{equation}
where $v=\left((1-w)x^3+(1-w)x^2+(2-y)x+1\right)^{-1}$.
\end{proposition}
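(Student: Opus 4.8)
The plan is to promote the recursions of Lemma~\ref{L:recursion-0} to a linear partial differential equation for ${\bf W}_k(x,y,w)$, solve that equation using the known bivariate generating function ${\bf I}_k$ of Lemma~\ref{T:gfIk} as boundary data, and then simplify. Concretely, I would multiply the recursion~(\ref{E:2arcp}), namely $(u_2+1)i_k(s+1,u_1,u_2+1)=(u_1+1)i_k(s,u_1+1,u_2)+(u_1+1)i_k(s-1,u_1+1,u_2)$, by $x^{s+1}y^{u_1}w^{u_2}$ and sum over all $s,u_1,u_2\ge 0$; the support condition~(\ref{E:00}) together with the convention $i_k(s,\cdot,\cdot)=0$ for $s<0$ makes every coefficient sum finite, so the manipulation is legitimate at the level of formal power series. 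Reindexing the three sums, the left-hand side collapses to $\partial_w{\bf W}_k$, the first term on the right to $x\,\partial_y{\bf W}_k$, and the second to $x^2\,\partial_y{\bf W}_k$ (the terms one might worry about losing all carry a vanishing factor), whence
\begin{equation*}
\partial_w{\bf W}_k(x,y,w)=x(1+x)\,\partial_y{\bf W}_k(x,y,w).
\end{equation*}

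For the boundary data I would use~(\ref{E:wq}): summing it over $s$ and $u_1$ gives ${\bf W}_k(x,y,1)=\sum_{s,u_1}i_k(s,u_1)\,x^sy^{u_1}={\bf I}_k(x,y)$, which by Lemma~\ref{T:gfIk} equals $\frac{1+x}{1+2x-xy}{\bf F}_k\!\bigl(\frac{x(1+x)}{(1+2x-xy)^2}\bigr)$. To solve the first-order PDE, note that $\partial_w$ and $\partial_y$ commute and $x(1+x)$ is free of $y$ and $w$, so an easy induction gives $\partial_w^{\,n}{\bf W}_k=(x(1+x))^n\,\partial_y^{\,n}{\bf W}_k$ for all $n$. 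Expanding ${\bf W}_k$ in powers of $(w-1)$ — a finite re-expansion for each power of $x$, thanks again to~(\ref{E:00}) — and invoking this identity at $w=1$ yields
\begin{equation*}
{\bf W}_k(x,y,w)=\sum_{n\ge 0}\frac{\bigl(x(1+x)(w-1)\bigr)^n}{n!}\,\partial_y^{\,n}{\bf I}_k(x,y)={\bf I}_k\bigl(x,\;y+x(1+x)(w-1)\bigr),
\end{equation*}
the last step being Taylor's formula applied coefficient-wise in $x$. Uniqueness is not an issue, since Lemma~\ref{L:recursion-0} already asserts that the recursion together with~(\ref{E:00})--(\ref{E:wq}) has a unique solution.

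It then remains to substitute $Y:=y+x(1+x)(w-1)$ into the formula~(\ref{E:gfIk}) for ${\bf I}_k$. A short computation gives
\begin{equation*}
1+2x-xY=1+(2-y)x+(1-w)x^2+(1-w)x^3=v^{-1},
\end{equation*}
so ${\bf I}_k(x,Y)=(1+x)v\,{\bf F}_k\!\bigl(x(1+x)v^2\bigr)$, which is exactly~(\ref{E:3gf}). The only genuinely substantive ingredient is Lemma~\ref{L:recursion-0} itself, whose combinatorial proof is deferred to Section~\ref{S:appendix}; granting it, the remaining work is routine. I expect the steps requiring the most care to be the index bookkeeping when passing from~(\ref{E:2arcp}) to the PDE — in particular checking the boundary values $s=0$ and $u_1=0$ so that no spurious terms are created or dropped — and the plain algebraic identity $1+2x-xY=v^{-1}$; neither is a real obstacle.
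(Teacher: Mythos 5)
Your proposal is correct, and it rests on the same two pillars as the paper's proof: the transport equation $\partial_w{\bf W}_k=(x+x^2)\,\partial_y{\bf W}_k$ extracted from the recursion~(\ref{E:2arcp}) (this is exactly eq.~(\ref{E:wdiff})), and the boundary identity ${\bf W}_k(x,y,1)={\bf I}_k(x,y)$ coming from eq.~(\ref{E:wq}) together with Lemma~\ref{T:gfIk}. Where you diverge is in how the PDE is exploited. The paper proceeds by ansatz-verification: it writes down the closed form ${\bf W}_k^*$ of eq.~(\ref{E:3gf}), checks by differentiation that it solves the PDE, checks the support condition $i_k^*(s,u_1,u_2)=0$ for $u_1+2u_2>s$ and the boundary condition, and then invokes the uniqueness clause of Lemma~\ref{L:recursion-0} to conclude ${\bf W}_k^*={\bf W}_k$. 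You instead solve the PDE constructively: iterating the equation gives $\partial_w^{\,n}{\bf W}_k=(x(1+x))^n\partial_y^{\,n}{\bf W}_k$, and since eq.~(\ref{E:00}) makes each $x$-coefficient a polynomial in $y,w$, the Taylor expansion around $w=1$ is coefficient-wise finite and yields ${\bf W}_k(x,y,w)={\bf I}_k\bigl(x,\,y+x(1+x)(w-1)\bigr)$; the algebraic check $1+2x-xY=v^{-1}$ (which is correct) then gives eq.~(\ref{E:3gf}). Your route buys an explanation of \emph{why} the closed form arises (it is just a shift of the $y$-variable along the characteristics), and it needs only the finiteness consequence of eq.~(\ref{E:00}) rather than the full uniqueness statement or any verification of the support condition for a guessed solution; the price is the careful formal-power-series bookkeeping you flag (legitimacy of the coefficient-wise Taylor re-expansion and of substituting the polynomial $Y$ into Lemma~\ref{T:gfIk}), which you handle correctly. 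The paper's guess-and-check pattern is blunter but scales more readily to the five-variable situation of Proposition~\ref{P:5tri}, where the analogous system of PDEs would be considerably harder to integrate explicitly.
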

%%%
%%%%%%%%%%%%%%%%%%%%%%%%%%%%%%%%%%%%%%%%%%%%%%%%%%%%%%%%%%%%%%%%%%%%%%%%%%%%%%%
%%%
\begin{proof}
According to Lemma~\ref{T:gfIk}, we have
\begin{equation*}
{\bf I}_k(z,u)  =  \frac{1+z}{1+2z-zu} {\bf
F}_k\left(\frac{z(1+z)}{(1+2z-zu)^2}\right).
\end{equation*}
This generating function is connected to ${\bf W}_k(x,y,z)$ via
eq.~(\ref{E:wq}) of Lemma~\ref{L:recursion-0} as follows: setting
$w=1$, we have ${\bf W}_k(x,y,1)={\bf I}_k(x,y)$.
The recursion of eq.~(\ref{E:2arcp}) gives rise to the partial differential
equation
\begin{eqnarray}\label{E:wdiff}
\frac{\partial {\bf W}_k(x,y,w)}{\partial w}&=&x\frac{\partial {\bf
W}_k(x,y,w)}{\partial y}+x^2\frac{\partial {\bf
W}_k(x,y,w)}{\partial y}.
\end{eqnarray}
We next show
\begin{itemize}
\item the function
\begin{eqnarray}
{\bf W}_k^*(x,y,w)&=&\frac{(1+x)}{(1-w)x^3+(1-w)x^2+(2-y)x+1}
\times \nonumber\\
&&{\bf
F}_k\left(\frac{(1+x)x}{((1-w)x^3+(1-w)x^2+(2-y)x+1)^2}\right)\qquad
\end{eqnarray}
is a solution of eq.~(\ref{E:wdiff}),
\item the coefficients
$i_k^*(s,u_1,u_2)=[x^sy^{u_1}w^{u_2}]{\bf
W}_k^*(x,y,w)$ satisfy $$i_k^*(s,u_1,u_2)=
0\quad\text{for}\quad u_1+2u_2>s,$$
\item ${\bf W}_k^*(x,y,1)={\bf I}_k(x,y)$.
\end{itemize}
Firstly,
\begin{eqnarray}\label{E:diffw}
\frac{\partial {\bf W}^*_k(x,y,w)}{\partial y} & = & u\,
{\bf F}_k\left(u\right)+ 2u \,{\bf F}_k'\left(u\right)\\
\frac{\partial {\bf W}^*_k(x,y,w)}{\partial w} & = & x(1+x)u \, {\bf
F}_k\left(u\right)+ 2x(1+x)u {\bf F}_k'\left(u\right),
\end{eqnarray}
where $$u= \frac{x(1+x)}{\left( (1-w)x^3+(1-w)x^2+(2-y)x+1\right)
^{2}}$$ and ${\bf F}_k'\left(u\right)=\sum_{n\geq
0}nf_k(2n)(u)^n$. Consequently, we derive that
\begin{equation}
\frac{\partial {\bf W}_k^*(x,y,w)}{\partial w}=x\frac{\partial {\bf
W}_k^*(x,y,w)}{\partial y}+x^2\frac{\partial {\bf
W}_k^*(x,y,w)}{\partial y}.
\end{equation}
Secondly we prove $i_k^*(s,u_1,u_2)=0$ for $u_1+2u_2>s$.
To this end we observe that ${\bf W}^*_k(x,y,w)$ is a power series,
since it is analytic in $(0,0,0)$. It now suffices to note that the
indeterminants $y$ and $w$ only appear in form of products $xy$ and
$x^2w$ or $x^3w$.
Thirdly, the equality ${\bf W}_k^*(x,y,1)={\bf I}_k(x,y)$ is obvious.\\
{\it Claim.}
\begin{equation}\label{E:eequal2}
{\bf W}^*_k(x,y,w)={\bf W}_k(x,y,w).
\end{equation}
By construction the coefficients $i^*_k(s,u_1,u_2)$ satisfy
eq.~(\ref{E:3rec}) and we have just proved $i_k^*(s,u_1,u_2)=0$ for $u_1+2u_2>s$.
In view of ${\bf W}_k^*(x,y,1)={\bf I}_k(x,y)$ we have
\begin{equation*}
\forall\, s,u_1;\qquad
 \sum_{u_2=0}^{\lfloor\frac{s-u_1}{2}\rfloor}i_k^*(s,u_1,u_2)=i_k(s,u_1).
\end{equation*}
Using these three properties, Lemma~\ref{L:recursion-0} implies
\begin{equation*}
\forall\, s,u_1,u_2\ge 0;\qquad i_k^*(s,u_1,u_2)=i_k(s,u_1,u_2),
\end{equation*}
whence the Claim and the proposition is proved.
\end{proof}
In addition to $\mathbf{C}_1$ and $\mathbf{C}_2$, we consider next
the classes $\mathbf{C}_3$ and $\mathbf{C}_4$. For this purpose we
have to identify two new recursions, see Lemma~\ref{L:recursions-0}.
Setting $\vec{u}=(u_1,\dots,u_4)$, we denote by
$\mathcal{I}_k(s,\vec{u})$\index{$\mathcal{I}_k(s,\vec{u})$} and
$i_k(s,\vec{u})$\index{$i_k(s,\vec{u})$} the set and the number of
colored ${\sf V}_k$-shapes over $s$ arcs, containing $u_i$
elements of class $\mathbf{C}_i$, where $1\le i\le 4$. The key
result is
%%%
%%%%%%%%%%%%%%%%%%%%%%%%%%%%%%%%%%%%%%%%%%%%%%%%%%%%%%%%%%%%%%%%%%%%%%%%%%%%%%%
%%%
\begin{lemma}\label{L:recursions-0}
For $k>2$, the coefficients $i_k(s,\vec{u})$ satisfy
\begin{eqnarray}\label{E:erni2}
i_k(s,u_1,u_2,u_3,u_4) & = & 0  \quad\text{for }u_1+2u_2+2u_3+3u_4>s\\
\label{E:5ini}
\sum_{u_3,u_4\geq 0}i_k(s,u_1,u_2,u_3,u_4) & = & i_k(s,u_1,u_2).
\end{eqnarray}
Furthermore we have the recursions
\begin{align}
(u_3+1)&i_k(s+1,u_1,u_2,u_3+1,u_4)=\nonumber \\
&\quad \,  2u_1i_k(s-1,u_1,u_2,u_3,u_4)\nonumber\\
&+4(u_2+1)i_k(s-1,u_1,u_2+1,u_3,u_4)\nonumber\\
&+4(u_2+1)i_k(s-1,u_1,u_2+1,u_3-1,u_4)\nonumber\\
&+4(u_2+1)i_k(s-2,u_1,u_2+1,u_3-1,u_4)\nonumber\\
&+2(u_3+1)i_k(s,u_1,u_2,u_3+1,u_4)\nonumber\\
&+2u_3i_k(s-1,u_1,u_2,u_3,u_4)\nonumber\\
&+{6(u_3+1)i_k(s-1,u_1,u_2,u_3+1,u_4)}\nonumber\\
&+2(u_3+1)i_k(s-2,u_1,u_2,u_3+1,u_4)\nonumber\\
&+2u_3i_k(s-2,u_1,u_2,u_3,u_4)\nonumber\\
&+4(u_4+1)i_k(s,u_1,u_2,u_3-1,u_4+1)\nonumber\\
&+4(u_4+1)i_k(s-1,u_1,u_2,u_3-1,u_4+1)\nonumber\\
&+4u_4i_k(s-1,u_1,u_2,u_3,u_4)\nonumber\\
&+4(u_4+1)i_k(s-1,u_1,u_2,u_3,u_4+1)\nonumber\\
&+4u_4i_k(s-2,u_1,u_2,u_3,u_4)\nonumber\\
&+2(u_4+1)i_k(s-2,u_1,u_2,u_3,u_4+1)\nonumber\\
&+({2s}-2u_1-4u_2-4u_3-6u_4)i_k(s,u_1,u_2,u_3,u_4)\nonumber\\
&+2(2(s-1)-2u_1-4u_2-4u_3-6u_4)i_k(s-1,u_1,u_2,u_3,u_4)\nonumber\\
&+({2(s-2)}-4u_2-4u_3-6u_4)i_k(s-2,u_1,u_2,u_3,u_4)\label{u3recursion}
\end{align}
and
\begin{align}
2(u_4+1)i_k(s+1,u_1,u_2,u_3,u_4+1)&=(u_3+1)i_k(s,u_1,u_2,u_3+1,u_4)\nonumber\\
&\quad +2(u_2+1)i_k(s,u_1,u_2+1,u_3,u_4).\quad\label{u4recursion}
\end{align}
The sequence satisfying eq.~(\ref{E:erni2})--(\ref{u4recursion}) is unique.
\end{lemma}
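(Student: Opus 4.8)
The proof I have in mind is combinatorial, and handles the five assertions in turn: the support condition~(\ref{E:erni2}), the marginal identity~(\ref{E:5ini}), the recursions~(\ref{u3recursion}) and~(\ref{u4recursion}), and uniqueness. The starting observation is that the colours $\mathbf{C}_1,\dots,\mathbf{C}_4$ are canonically attached to a shape: in any $k$-noncrossing matching a length-$2$ arc $(p,p+2)$ crosses a longer arc $(a,b)$ only ``from the right'', as $(b-1,b+1)$, or ``from the left'', as $(a-1,a+1)$, so a length-$\ge 3$ arc is crossed by at most one $2$-arc on each side; hence a length-$\ge 3$ arc that is crossed by a $2$-arc determines either a unique $\mathbf{C}_3$-pair or a unique $\mathbf{C}_4$-triple, never both. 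Granting this, (\ref{E:5ini}) is immediate: splitting the $i_k(s,u_1,u_2)$ shapes according to their (finitely many) numbers of $\mathbf{C}_3$- and $\mathbf{C}_4$-configurations yields $\sum_{u_3,u_4\ge 0}i_k(s,u_1,u_2,u_3,u_4)=i_k(s,u_1,u_2)$, refining the identity already used in Proposition~\ref{P:trivariate}.

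For~(\ref{E:erni2}) I would show that the arc-sets underlying the $u_1$ $\mathbf{C}_1$-arcs, the $u_2$ $\mathbf{C}_2$-pairs, the $u_3$ $\mathbf{C}_3$-pairs and the $u_4$ $\mathbf{C}_4$-triples are pairwise disjoint, which forces $u_1+2u_2+2u_3+3u_4\le s$. Disjointness is a short case check from the local picture above: a $2$-arc crosses at most one longer arc (a crossing forces an endpoint of the longer arc to be adjacent to the $2$-arc, and a second such constraint would demand an already-occupied vertex), and a $2$-arc that is crossed by another $2$-arc crosses no longer arc at all; consequently a $\mathbf{C}_1$-arc, a $\mathbf{C}_2$-leg, a $2$-arc leg of a $\mathbf{C}_3$- or $\mathbf{C}_4$-configuration, and the length-$\ge 3$ arc $\beta$ of a $\mathbf{C}_3$- or $\mathbf{C}_4$-configuration are all distinct, and distinct configurations of the same type involve disjoint arcs by the uniqueness clause in the definitions.

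The core of the proof is the derivation of~(\ref{u3recursion}) and~(\ref{u4recursion}) by a distinguished-arc deletion/insertion bijection, in the spirit of the recursion of Lemma~\ref{L:recursion-0} but now carrying all four colour statistics. For~(\ref{u4recursion}): in a colored ${\sf V}_k$-shape on $s+1$ arcs with $u_4+1$ $\mathbf{C}_4$-triples, mark one triple $(\alpha_1,\beta,\alpha_2)$ together with one of its two legs — accounting for the factor $2(u_4+1)$ — delete the marked leg, and record the colour acquired by the remaining $s$-arc shape (the two cases producing the right-hand terms with multiplicities $u_3+1$ and $2(u_2+1)$); the inverse map reinserts the deleted $2$-arc at the essentially unique admissible position, and checking bijectivity is the work. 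For~(\ref{u3recursion}) one deletes the unique $\mathbf{C}_3$-$2$-arc from a shape on $s+1$ arcs; upon reinsertion into an $s$-arc shape its two endpoints either land at ``free'' positions — giving the generic terms with coefficients of the form $2s-2u_1-4u_2-4u_3-6u_4$ and their $s-1$, $s-2$ shifted analogues, the three indices reflecting how many base positions a length-$2$ arc effectively occupies next to the ambient structure — or interact with a specific nearby $1$-arc, $\mathbf{C}_2$-pair, $\mathbf{C}_3$-pair or $\mathbf{C}_4$-triple, each such interaction and each admissible ``distance'' contributing a further term with multiplicity $u_1$, $4(u_2+1)$, $6(u_3+1)$, $4(u_4+1)$, and so on. Uniqueness is then an induction on $s$: Lemma~\ref{L:recursion-0} fixes $i_k(s,u_1,u_2)$; given $i_k(s',\vec{u})$ for all $s'\le s$, equation~(\ref{u4recursion}) determines $i_k(s+1,\vec{u})$ when $u_4\ge 1$, equation~(\ref{u3recursion}) then determines it when $u_3\ge 1$ (its right side involves only index $\le s$ or values already obtained), and~(\ref{E:5ini}) pins down the remaining $i_k(s+1,u_1,u_2,0,0)$; the support condition~(\ref{E:erni2}) keeps all of this finite, and small $s$ is a direct enumeration.

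I expect the main obstacle to be~(\ref{u3recursion}): turning the deletion/insertion of a length-$2$ arc into an exact bijection means enumerating, with no omission and no double count, every local configuration that can be created or destroyed when a $2$-arc is slid past each neighbouring arc-type, and matching each to one of the roughly twenty right-hand terms with the correct integer coefficient and the correct choice of shift among $s$, $s-1$, $s-2$. Getting that colour-bookkeeping right — in particular deciding precisely when an inserted $2$-arc produces a fresh $\mathbf{C}_2$, $\mathbf{C}_3$ or $\mathbf{C}_4$ rather than an uncoloured arc, and how a single deletion can demote one configuration while promoting another — is the delicate step; by contrast~(\ref{E:erni2}),~(\ref{E:5ini}),~(\ref{u4recursion}) and uniqueness are comparatively routine.
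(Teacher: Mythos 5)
Your overall strategy is the paper's: fix a labeled distinguished colored element, delete its $2$-arc $\alpha$, and count the outcomes; your treatment of eq.~(\ref{E:erni2}), eq.~(\ref{E:5ini}), eq.~(\ref{u4recursion}) and the uniqueness induction matches the paper (your induction scheme -- eq.~(\ref{u4recursion}) for $u_4\geq 1$, then eq.~(\ref{u3recursion}) for $u_3\geq 1$, then eq.~(\ref{E:5ini}) for $i_k(s+1,u_1,u_2,0,0)$ -- is in fact spelled out more explicitly than in the paper). However, there is a genuine gap exactly where you predicted trouble, in eq.~(\ref{u3recursion}), and it is not merely bookkeeping: you assume that deleting $\alpha$ always leaves ``an $s$-arc shape'' and you attribute the $s-1$ and $s-2$ shifts to ``how many base positions a length-$2$ arc effectively occupies''. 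That explanation cannot be right, because $s$ counts arcs, not vertices: removing one arc changes the arc count by exactly one, so no vertex-position accounting can produce terms indexed by $s-1$ or $s-2$.

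The missing idea is that the deletion of $\alpha$ from a ${\sf V}_k$-shape need not yield a shape at all: since shapes have stacks of length exactly one, removing $\alpha$ (whose inner endpoint sits next to an endpoint of $\beta$) can make two or three previously non-parallel arcs into a stack of length two or three. The paper splits the labeled shapes as $\mathcal{L}=\mathcal{L}_1\dot\cup\mathcal{L}_2$ according to whether the result is a shape or only a matching; in the latter case the unique $2$-stack or $3$-stack is collapsed into a single arc, and it is this collapse that produces the shapes on $s-1$ and $s-2$ arcs (the further split $\mathcal{L}_2=\mathcal{L}_{2,1}\dot\cup\mathcal{L}_{2,2}\dot\cup\mathcal{L}_{2,3}$, by which arc of the stack $\alpha$ crosses, organizes the roughly twenty right-hand terms, with the colour acquired by the collapsed arc $\beta_2$ or $\beta_3$ deciding among the $\mathbf{C}_1,\dots,\mathbf{C}_4$ and ``uncolored'' contributions). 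The same mechanism already underlies the term $i_k(s-1,u_1+1,u_2)$ in Lemma~\ref{L:recursion-0}. Without the collapse (equivalently, in your insertion picture, without first expanding an arc of an $(s-1)$- or $(s-2)$-arc shape into a $2$- or $3$-stack before inserting $\alpha$), the case analysis behind eq.~(\ref{u3recursion}) cannot be set up, so this step of your proposal would fail as described; with it, your double-counting plan becomes the paper's proof.
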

%%%
%%%%%%%%%%%%%%%%%%%%%%%%%%%%%%%%%%%%%%%%%%%%%%%%%%%%%%%%%%%%%%%%%%%%%%%%%%%%%%%
%%%
The proof of Lemma~\ref{L:recursions-0} is given in
Section~\ref{S:appendix}. It is obtained by removing a specific arc in a
labeled ${\bf C}_3$-element or a labeled ${\bf C}_4$-element and accounting
of the resulting arc-configurations.

Proposition~\ref{P:trivariate} and Lemma~\ref{L:recursions-0} put us
in position to compute the generating function of colored ${\sf
V}_k$-shapes
\begin{equation}
{\bf I}_k(x,y,z,w,t)=
\sum_{s,u_1,u_2,u_3,u_4}i_k(s,\vec{u})\,
x^s y^{u_1} z^{u_2}w^{u_3} t^{u_4}.
\end{equation}
%%%
%%%%%%%%%%%%%%%%%%%%%%%%%%%%%%%%%%%%%%%%%%%%%%%%%%%%%%%%%%%%%%%%%%%%%%%%%%%%%%%%%%
%%%
\begin{proposition}\label{P:5tri}
For $k>2$, the generating function of colored ${\sf V}_k$-shapes is given by
\begin{equation}
{\bf I}_k(x,y,z,w,t)=\frac{1+x}{\theta}{\bf F}_k
\left(\frac{x(1+(2w-1)x+(t-1)x^2)}{\theta^2}\right),
\end{equation}
where $\theta=1-(y-2)x+(2w-z-1)x^2+(2w-z-1)x^3$.
\end{proposition}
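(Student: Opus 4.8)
The plan is to follow the blueprint of the proof of Proposition~\ref{P:trivariate}: encode the two recursions of Lemma~\ref{L:recursions-0} as partial differential equations for ${\bf I}_k(x,y,z,w,t)$, check that the claimed closed form solves them, verify the degree constraint~(\ref{E:erni2}) and the specialization to the already-known three-variable generating function ${\bf W}_k$, and then invoke the uniqueness clause of Lemma~\ref{L:recursions-0} to conclude.

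First I would pass from recursions to PDEs. Multiplying eq.~(\ref{u4recursion}) by $x^{s+1}y^{u_1}z^{u_2}w^{u_3}t^{u_4}$ and summing over all indices gives, after reindexing, the first-order relation
\begin{equation*}
2\,\frac{\partial {\bf I}_k}{\partial t}=x\,\frac{\partial {\bf I}_k}{\partial w}+2x\,\frac{\partial {\bf I}_k}{\partial z},
\end{equation*}
and the same procedure applied to the much longer eq.~(\ref{u3recursion}) produces a second linear PDE of the shape
\begin{equation*}
\frac{\partial {\bf I}_k}{\partial w}=L\!\left(x,\tfrac{\partial}{\partial x},\tfrac{\partial}{\partial y},\tfrac{\partial}{\partial z},\tfrac{\partial}{\partial w},\tfrac{\partial}{\partial t}\right){\bf I}_k,
\end{equation*}
where the terms of~(\ref{u3recursion}) carrying the prefactor $(2s-2u_1-\cdots)$ are precisely the ones producing the $x\partial_x$ contribution (via $\sum_s s\,a_s x^s=x\partial_x$) and the terms $2u_i\,i_k(\cdots)$ produce the $y\partial_y,z\partial_z,\dots$ contributions.

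Next I would check that the candidate
\begin{equation*}
{\bf I}_k^{\star}=\frac{1+x}{\Theta}\,{\bf F}_k\!\left(\frac{x\bigl(1+(2w-1)x+(t-1)x^2\bigr)}{\Theta^2}\right),
\end{equation*}
with $\Theta=1-(y-2)x+(2w-z-1)x^2+(2w-z-1)x^3$, solves both PDEs. Since only the inner argument $u=x(1+(2w-1)x+(t-1)x^2)/\Theta^2$ and the prefactor $(1+x)/\Theta$ depend on the marking variables, every partial derivative of ${\bf I}_k^{\star}$ has the form $R_1\,{\bf F}_k(u)+R_2\,{\bf F}_k'(u)$ with $R_1,R_2$ rational in $x,y,z,w,t$; substituting these into each PDE and separately equating the coefficients of ${\bf F}_k(u)$ and of ${\bf F}_k'(u)$ reduces each identity to a handful of rational-function identities, which are verified mechanically. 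I would then confirm~(\ref{E:erni2}), i.e.~$i_k^{\star}(s,\vec u)=[x^sy^{u_1}z^{u_2}w^{u_3}t^{u_4}]{\bf I}_k^{\star}=0$ whenever $u_1+2u_2+2u_3+3u_4>s$: the series is well defined because ${\bf I}_k^{\star}$ is analytic at the origin, and in both $\Theta$ and in $u$ the variable $y$ occurs only paired with $x$, the variables $z,w$ only paired with $x^2$ or $x^3$, and $t$ only paired with $x^3$; the one delicate point is that in the numerator factor $N=1+(2w-1)x+(t-1)x^2$ the variables $w,t$ appear with seemingly lower powers, but this factor enters ${\bf F}_k(u)=\sum_n f_k(2n,0)\,x^nN^n/\Theta^{2n}$ only through $N^n$, and picking $a$ of the $n$ factors to supply the $w$-term and $b$ to supply the $t$-term forces $a+b\le n$, so the explicit $x^n$ makes up the missing powers. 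Finally, setting $w=t=1$ collapses $\Theta$ to $(1-z)x^3+(1-z)x^2+(2-y)x+1$ and $u$ to $x(1+x)v^2$ with $v$ the reciprocal of that polynomial, so by eq.~(\ref{E:3gf}) (with $w$ renamed $z$) we get ${\bf I}_k^{\star}(x,y,z,1,1)={\bf W}_k(x,y,z)$; this supplies the ``initial data''~(\ref{E:5ini}), and together with the two PDEs and the support bound the uniqueness assertion of Lemma~\ref{L:recursions-0} forces ${\bf I}_k^{\star}={\bf I}_k$.

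The main obstacle I anticipate is purely computational: verifying that the candidate satisfies the PDE coming from eq.~(\ref{u3recursion}). That recursion has nearly twenty terms, so after conversion to a PDE and insertion of the ${\bf F}_k(u)$/${\bf F}_k'(u)$ expansion one must confirm two nontrivial rational-function identities in five variables; organizing this so that it is transparent rather than an opaque computer-algebra output is the genuinely delicate part. Everything else — the passage to PDEs, the support bound, and the specialization to ${\bf W}_k$ — is routine bookkeeping once the pattern of Proposition~\ref{P:trivariate} is in hand.
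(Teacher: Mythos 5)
Your proposal follows essentially the same route as the paper's proof: translate eq.~(\ref{u3recursion}) and eq.~(\ref{u4recursion}) into the partial differential equations (\ref{E:KA})--(\ref{E:KB}), verify by direct computation that the candidate $\frac{1+x}{\theta}{\bf F}_k(\cdot)$ satisfies them, check the support condition $i_k^*(s,\vec u)=0$ for $u_1+2u_2+2u_3+3u_4>s$ and the specialization ${\bf I}_k^*(x,y,z,1,1)={\bf W}_k(x,y,z)$, and conclude by the uniqueness clause of Lemma~\ref{L:recursions-0}. The argument is correct, and your more careful coefficient-counting for the numerator factor $1+(2w-1)x+(t-1)x^2$ is a harmless refinement of the paper's observation that $y,z,w,t$ only occur in the products $xy$, $x^2z$ or $x^3z$, $x^2w$ or $x^3w$, and $x^3t$.
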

%%%
%%%%%%%%%%%%%%%%%%%%%%%%%%%%%%%%%%%%%%%%%%%%%%%%%%%%%%%%%%%%%%%%%%%%%%%%%%%%%%%%
%%%
\begin{proof}
The first recursion of Lemma~\ref{L:recursions-0} implies the partial
differential equation
\begin{align}
\frac{\partial {\bf I}_k}{\partial w}&=
\frac{\partial {\bf I}_k}{\partial x}(2x^2+4x^3+2x^4)-
\frac{\partial {\bf I}_k}{\partial y}(2xy+2x^2 y)\nonumber\\
&+\frac{\partial {\bf I}_k}{\partial z}
(-4xz+4x^2w+4x^2-4x^3z-8x^2z+4x^3w)\nonumber\\
&+\frac{\partial {\bf I}_k}{\partial w}(-4xw+2x-6x^2w+6x^2-2x^3w+2x^3)
\nonumber\\
&+\frac{\partial {\bf I}_k}{\partial t}(-6xt+4xw-8x^2t+4x^2w+4x^2-2x^3t+2x^3).
\label{E:KA}
\end{align}
Analogously, the second recursion of Lemma~\ref{L:recursions-0} gives rise to the
partial differential equation
\begin{align}
 2\frac{\partial {\bf I}_k}{\partial t}
&=\frac{\partial {\bf I}_k}{\partial w}x+
\frac{\partial {\bf I}_k}{\partial z}\, 2x.\label{E:KB}
\end{align}
Aside from being a solution of eq.~(\ref{E:KA}) and
eq.~(\ref{E:KB}), we take note of the fact that eq.~(\ref{E:5ini})
of Lemma~\ref{L:recursions-0} is equivalent to
\begin{equation}
{\bf I}_k(x,y,z,1,1)={\bf W}_k(x,y,z).\label{bound}
\end{equation}
We next show that
\begin{itemize}
\item the function
\begin{align*}
{\bf I}_k^*(x,y,z,w,t)&=\frac{1+x}{1-(y-2)x+(2w-z-1)x^2+(2w-z-1)x^3}
\times \\
&{\bf F}_k
\left(\frac{x(1+(2w-1)x+(t-1)x^2)}
{(1-(y-2)x+(2w-z-1)x^2+(2w-z-1)x^3)^2}\right)
\end{align*}
is a solution of eq.~(\ref{E:KA}) and eq.~(\ref{E:KB}),
\item its coefficients,
$i_k^*(s,u_1,u_2,u_3,u_4)=[x^sy^{u_1}z^{u_2}w^{u_3}t^{u_4}]{\bf
I}_k^*(x,y,z,w,t)$, satisfy
$$i_k^*(s,u_1,u_2,u_3,u_4)=0\quad\text{for}\quad u_1+2u_2+2u_3+3u_4>s,$$
\item ${\bf I}_k^*(x,y,z,1,1)={\bf W}_k(x,y,z)$.
\end{itemize}
We verify by direct computation that ${\bf I}_k^*(x,y,z,w,t)$
satisfies eq.~(\ref{E:KA}) as well as eq.~(\ref{E:KB}).
Next we prove $i_k^*(s,u_1,u_2,u_3,u_4)=0$ for $u_1+2u_2+2u_3+3u_4>s$.
Since ${\bf I}_k^*(x,y,z,w,t)$ is analytic in $(0,0,0,0,0)$, it is
a power series. As the indeterminants $y$, $z$, $w$ and $t$
appear only in form of products $xy$, $x^2z$ or $x^3z$, $x^2w$ or
$x^3w$, and $x^3t$, respectively, the assertion follows.\\
{\it Claim.}
\begin{equation*}
{\bf I}_k^*(x,y,z,w,t)={\bf I}_k(x,y,z,w,t).
\end{equation*}
By construction,
$i_k^*(s,\vec{u})$ satisfies the recursions eq.~(\ref{u3recursion})
and eq.~(\ref{u4recursion}) as well as
$i_k^*(s,u_1,u_2,u_3,u_4)=0$ for $u_1+2u_2+2u_3+3u_4>s$.
Eq.~(\ref{bound}) implies
\begin{equation*}
\sum_{u_3,u_4\geq 0}i_k^*(s,u_1,u_2,u_3,u_4)=i_k(s,u_1,u_2).
\end{equation*}
Using these properties we can show via Lemma~\ref{L:recursions-0},
\begin{equation*}
\forall\, s,u_1,u_2,u_3,u_4\geq 0;\qquad
i_k^*(s,u_1,u_2,u_3,u_4)=i_k(s,u_1,u_2,u_3,u_4)
\end{equation*}
and the proposition is proved.
\end{proof}

%%%
%%%%%%%%%%%%%%%%%%%%%%%%%%%%%%%%%%%%%%%%%%%%%%%%%%%%%%%%%%%%%%%%%%%%%%%%%%
%%%

%%%
%%%%%%%%%%%%%%%%%%%%%%%%%%%%%%%%%%%%%%%%%%%%%%%%%%%%%%%%%%%%%%%%%%%%%%%%%%
%%%
\section{The main theorem}\label{S:main}
%%%
%%%%%%%%%%%%%%%%%%%%%%%%%%%%%%%%%%%%%%%%%%%%%%%%%%%%%%%%%%%%%%%%%%%%%%%%%%
%%%

We are now in position to compute ${\bf Q}_{k}(z)$. All technicalities aside,
we already introduced the main the strategy in the proof of
Proposition~\ref{P:k=2}:
as in the case $k=2$ we shall take care of all ``critical'' arcs by specific
inflations.

%%%
%%%%%%%%%%%%%%%%%%%%%%%%%%%%%%%%%%%%%%%%%%%%%%%%%%%%%%%%%%%%%%%%%%%%%%%%%%%%%%%
%%%
\begin{theorem}\label{T:queen}
Suppose $k>2$, then
\begin{eqnarray}\label{E:gut0}
{\bf Q}_{k}(z) &=&
\frac{1-z^2+z^4}{q(z)}
{\bf F}_k\left(\vartheta(z)\right),
\end{eqnarray}
where
\begin{eqnarray}
q(z)& = & 1-z-z^2+z^3+2z^4+z^6-z^8+z^{10}-z^{12}
\nonumber \\
\label{sing_eq}
\vartheta(z) & = &
\frac{z^4(1-z^2-z^4+2z^6-z^8)}{q(z)^2}.
\end{eqnarray}
Furthermore, for $3\leq k\leq 9$, ${\sf Q}_k(n)$ satisfies
\begin{equation}\label{E:gut1}
{\sf Q}_k(n)\sim  c_{k}\, n^{-((k-1)^2+(k-1)/2)}\,
\gamma_{k}^{-n} ,\quad \text{for some
$c_{k}>0$,}\qquad
\end{equation}
where $\gamma_{k}$ is the minimal, positive real solution
of $\vartheta(z)=\rho_k^2$, see Table~\ref{T:tab20}.
\end{theorem}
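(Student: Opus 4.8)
The plan is to split the argument into two essentially independent parts: first the derivation of the closed form (\ref{E:gut0}) for ${\bf Q}_k(z)$ by inflating colored ${\sf V}_k$-shapes, and then the singularity analysis that produces (\ref{E:gut1}).

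For the generating function I would mimic the strategy used in the proof of Proposition~\ref{P:k=2}, replacing ${\sf V}_k$-shapes by the colored ${\sf V}_k$-shapes of Section~\ref{S:color}. One considers the surjective map $\varphi$ sending a modular $k$-noncrossing diagram onto its colored ${\sf V}_k$-shape, obtained by replacing every stem by a single arc, deleting all isolated vertices, and recording the coloring of the $2$-arc configurations into the classes $\mathbf{C}_1,\dots,\mathbf{C}_4$; this induces the partition $\mathcal{Q}_k=\dot\cup_\gamma\varphi^{-1}(\gamma)$, hence ${\bf Q}_k(z)=\sum_\gamma \mathbf{Q}_\gamma(z)$. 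The core computation is $\mathbf{Q}_\gamma(z)$ for a colored shape $\gamma$ with $s$ arcs and $u_i$ elements of class $\mathbf{C}_i$. Using the building blocks $\mathcal{M}$ (stems), $\mathcal{K}$ (stacks), $\mathcal{N}$ (induced stacks) and $\mathcal{L}$ (isolated vertices) exactly as in the $k=2$ case, one inflates every non-critical shape-arc to a stem (factor $\mathbf{M}(z)$) and pads the remaining gaps with sequences of isolated vertices; the critical arcs require the tailored inflations suggested by Fig.~\ref{F:map2}: a $1$-arc of $\mathbf{C}_1$ forces at least three isolated vertices on one side, while a crossing pair of $2$-arcs ($\mathbf{C}_2$), a $2$-arc crossing a longer arc ($\mathbf{C}_3$), and the arc-triple configuration ($\mathbf{C}_4$) each admit only a restricted family of stack/isolated-vertex patterns, dictated by the requirement that the inflation create no $k$-crossing and no arc shorter than the allowed minimum. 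This makes $\mathbf{Q}_\gamma(z)$ a monomial in explicit rational functions of $z$ with exponents $s,u_1,u_2,u_3,u_4$. Since $\mathbf{Q}_\gamma$ depends only on $(s,\vec u)$, summing over all $\gamma$ amounts to substituting these rational functions for the variables $x,y,z,w,t$ into the generating function ${\bf I}_k(x,y,z,w,t)$ of Proposition~\ref{P:5tri}, and simplifying the result — checking in particular that the prefactor collapses to $(1-z^2+z^4)/q(z)$ and the inner argument of ${\bf F}_k$ to $\vartheta(z)$ — yields (\ref{E:gut0}).

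For the asymptotics I would proceed as in Proposition~\ref{P:k=2}. The function $\vartheta(z)$ is rational, hence algebraic, with $\vartheta(0)=0$, while ${\bf F}_k(z)$ is $D$-finite by Corollary~\ref{C:Bessel}; therefore ${\bf Q}_k(z)$ is $D$-finite by Theorem~\ref{T:d-finite_property}(c) and consequently $\Delta$-analytic. By Pringsheim's theorem ${\bf Q}_k(z)$ has a dominant positive real singularity; its only candidate singularities are the roots of $q(z)$ and the preimages under $\vartheta$ of the singularities of ${\bf F}_k$, so one verifies that the smallest of these in modulus is $\gamma_k$, the minimal positive real solution of $\vartheta(z)=\rho_k^2$ (where $\rho_k^2$ is the dominant singularity of ${\bf F}_k$), that $\gamma_k$ is in fact the \emph{unique} dominant singularity of ${\bf Q}_k(z)$, and that $\vartheta'(\gamma_k)\neq 0$. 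Applying Corollary~\ref{C:algeasym} with $\psi=\vartheta$ then gives
\begin{equation*}
{\sf Q}_k(n)=[z^n]{\bf Q}_k(z)\sim c_k\, n^{-((k-1)^2+(k-1)/2)}\,\gamma_k^{-n},\qquad c_k>0,
\end{equation*}
for $3\le k\le 9$, the upper bound on $k$ being forced because the explicit ODEs and singular expansion of ${\bf F}_k$ used in Proposition~\ref{P:q} and Proposition~\ref{P:fk} are available only in that range.

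The main obstacle is the combinatorial bookkeeping of the first part: for each of the four color classes one must enumerate exactly the admissible inflation patterns — a stack of length at least two followed by an arbitrary tail of induced stacks, together with the minimal padding of isolated vertices forced by the arc-length and crossing constraints — assemble them into $\mathbf{Q}_\gamma(z)$, and then verify that substituting the resulting rational functions into ${\bf I}_k(x,y,z,w,t)$ collapses precisely to (\ref{E:gut0}); the algebra is delicate because the roles of $\mathbf{C}_2,\mathbf{C}_3,\mathbf{C}_4$ overlap and must not be double-counted. A secondary, more routine obstacle is confirming that $\gamma_k$ is the unique dominant singularity of ${\bf Q}_k(z)$, which requires locating the roots of $q(z)$ and comparing their moduli with $\gamma_k$ — a finite computation, but one needed to legitimize the single-singularity transfer underlying Corollary~\ref{C:algeasym}.
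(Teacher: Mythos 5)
Your proposal follows essentially the same route as the paper's own proof: partition $\mathcal{Q}_k$ by colored ${\sf V}_k$-shapes, compute ${\bf Q}_\gamma(z)$ by class-specific inflation of the $\mathbf{C}_1,\dots,\mathbf{C}_4$ configurations, substitute the resulting rational functions into ${\bf I}_k(x,y,z,w,t)$ from Proposition~\ref{P:5tri} to obtain eq.~(\ref{E:gut0}), and then use $D$-finiteness, Pringsheim's theorem, uniqueness of the dominant singularity $\gamma_k$ with $\vartheta'(\gamma_k)\neq 0$, and Corollary~\ref{C:algeasym} for eq.~(\ref{E:gut1}). The only remaining work is the explicit case analysis of the inflation patterns (the $\varsigma_i(z)$ factors), which you correctly identify as the main bookkeeping burden, and your explanation of the restriction $3\leq k\leq 9$ matches the paper's reliance on the known ODEs and singular expansions of ${\bf F}_k$.
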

%%%
%%%%%%%%%%%%%%%%%%%%%%%%%%%%%%%%%%%%%%%%%%%%%%%%%%%%%%%%%%%%%%%%%%%%%%%%%%%%%%%
%%%
\begin{table}[!h]
\tabcolsep 0pt
\begin{center}
\def\temptablewidth{1\textwidth}
{\rule{\temptablewidth}{1pt}}
\begin{tabular*}{\temptablewidth}{@{\extracolsep{\fill}}llllllllll}
$k$   & $ 3 $ & $ 4 $ & $ 5 $ & $ 6 $ &  $7$ & $8$ & $9$\\
\hline
$\theta(n)$ &$n^{-5}$ & $n^{-\frac{21}{2}}$ &
$n^{-18}$ & $n^{-\frac{55}{2}}$ &
$n^{-39}$ & $n^{-\frac{105}{2}}$ & $n^{-68}$
\vspace{3pt}\\
\hline $\gamma_{k}^{-1}$& $2.5410$ & $3.0132$ & $3.3974$ &
$3.7319$ & $4.0327$ & $4.3087$ & $4.5654$\\
\hline
\end{tabular*}
\end{center}
\caption{\small Exponential growth rates $\gamma_{k}^{-1}$ and
subexponential factors $\theta(n)$, for modular, $k$-noncrossing
diagrams.} \vspace*{-12pt} \label{T:tab20}
\end{table}
%%%
%%%%%%%%%%%%%%%%%%%%%%%%%%%%%%%%%%%%%%%%%%%%%%%%%%%%%%%%%%%%%%%%%%%%%%%%%%%%%%%
%%%
\begin{proof}
Let $\mathcal{Q}_k$ denote the set of modular, $k$-noncrossing diagrams
and let $\mathcal{I}_k$ and $\mathcal{I}_k(s,\vec{u})$ denote
the set of all ${\sf V}_k$-shapes and those having $s$ arcs and
$u_i$ elements belonging to class $\mathbf{C}_i$, where $1\le i\le 4$.
Then we have the surjective map,
\begin{equation*}
\varphi_k:\ \mathcal{Q}_k\rightarrow \mathcal{I}_k,
\end{equation*}
%%%%%%%%%%%%%%%%%%%%%%%%%%%%%%%%%%%%%%%%%%%%%%%%%%%%%%%%%%%%%%%%
inducing the partition
$\mathcal{Q}_k=\dot{\cup}_\gamma\varphi_k^{-1}(\gamma)$, where
$\varphi_k^{-1}(\gamma)$ is the preimage set of shape $\gamma$ under
the map $\varphi_k$. This partition allows us to organize ${\bf
Q}_k(z)$ with respect to colored ${\sf V}_k$-shapes,
$\gamma$, as follows:
\begin{equation}\label{E:tk2}
{\bf Q}_k(z)= \sum_{s,\vec{u}}
\sum_{\gamma\in\mathcal{I}_k(s,\vec{u})} {\bf Q}_\gamma(z).
\end{equation}
We proceed by computing the generating function ${\bf Q}_\gamma(z)$
following the strategy of Proposition~\ref{P:k=2}, also using the
notation therein. The key point is that the inflation-procedures are
specific to the $\mathbf{C}_i$-classes. We next inflate all ``critical''
arcs, i.e.~arcs that require the insertion of additional isolated vertices
in order to satisfy the minimum arc length condition.\\
%%%
%%%%%%%%%%%%%%%%%%%%%%%%%%%%%%%%%%%%%%%%%%%%%%%%%%%%%%%%%%%%%%%%%%%%%%%%%%%%
%%%
{\it Claim $1$.} For a shape $\gamma\in \mathcal{I}_k(s,\vec{u})$ we have
\begin{align*}
{\bf Q}_\gamma(z) &=
{\bf C}_1(z)^{u_1}\cdot{\bf C}_2(z)^{u_2}\cdot
{\bf C}_3(z)^{u_3}\cdot{\bf C}_4(z)^{u_4}\cdot
{\bf S}(z)\\
&=\frac{1}{1-z}\varsigma_0(z)^s \varsigma_1(z)^{u_1} \varsigma_2(z)^{u_2}
\varsigma_3(z)^{u_3} \varsigma_4(z)^{u_4},
\end{align*}
where
\begin{align*}
\varsigma_0(z)&=\frac{z^4}{1 - 2 z + 2 z^3 - z^4 - 2 z^5 + z^6}
,\quad\varsigma_1(z)=z^3\\
\varsigma_2(z)&=
   \frac{z (1 - 4 z^3 + 2 z^4 + 8 z^5 - 6 z^6 - 7 z^7 + 8 z^8 + 2 z^9 -
   4 z^{10} + z^{11})}{1-z}\\
\varsigma_3(z)&=z (2 - 2 z^2 + z^3 + 2 z^4 - z^5)\\
\varsigma_4(z)&=z^2 (5 - 4 z - 3 z^2 + 6 z^3 + 2 z^4 - 4 z^5 + z^6).
\end{align*}
We show how to inflate a shape into a modular $k$-noncrossing diagram,
distinguishing specific classes of shape-arcs. For this purpose we refer
to a stem different from a $2$-stack as a $\dagger$-stem.
Accordingly, the combinatorial class of $\dagger$-stems is given by
$(\mathcal{M}-\mathcal{R}^2)$.
\begin{itemize}
\item {\bf $\mathbf{C}_1$-class:} here we insert isolated vertices, see
Fig. \ref{F:step1},
%%%
%%%%%%%%%%%%%%%%%%%%%%%%%%%%%%%%%%%%%%%%%%%%%%%%%%%%%%%%%%%%%%%%%%%%%%%%%%%%%%%
%%%
\restylefloat{figure}\begin{figure}[h!t!b!p]
\centering
\includegraphics[width=1\textwidth]{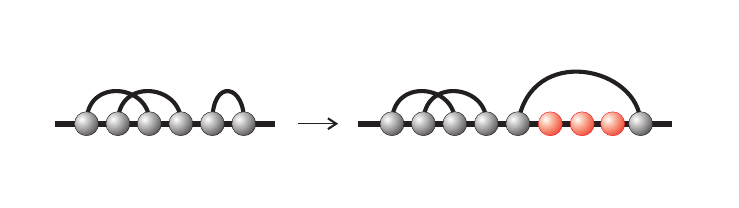}
\caption{\small $\mathbf{C}_1$-class: insertion of at least three
vertices (red) \label{F:step1}}
\end{figure}
%%%
%%%%%%%%%%%%%%%%%%%%%%%%%%%%%%%%%%%%%%%%%%%%%%%%%%%%%%%%%%%%%%%%%%%%%%%%%%%%%%%
%%%
and obtain immediately
\begin{equation}\label{E:K1}
{\bf C}_1(z)=\frac{z^3}{1-z}.
\end{equation}
%%%
%%%%%%%%%%%%%%%%%%%%%%%%%%%%%%%%%%%%%%%%%%%%%%%%%%%%%%%%%%%%%%%%%%%%%%%%%%%%%%%
%%%

\item {\bf $\mathbf{C}_2$-class:} any such element is a pair
$((i,i+2),(i+1,i+3))$ and we shall distinguish the following
scenarios:
%%%
%%%%%%%%%%%%%%%%%%%%%%%%%%%%%%%%%%%%%%%%%%%%%%%%%%%%%%%%%%%%%%%%%%%%%%%%%%%%%%%
%%%
\begin{itemize}
\item both arcs are inflated to stacks of length two, see Fig. \ref{F:step2a}.
      Ruling out the cases where no isolated vertex is inserted and the
      two scenarios, where there is no insertion into the
      interval\index{interval}
      $[i+1,i+2]$ and only in either $[i,i+1]$ or $[i+2,i+3]$, see
      Fig.\ \ref{F:step2a}, we arrive at
\begin{equation*}
      \mathcal{C}_2^{(\text{a})}
      = \mathcal{R}^4 \times [(\textsc{Seq}(\mathcal{Z}))^3-\mathcal{E}
      -2(\mathcal{Z}\times\textsc{Seq}(\mathcal{Z}))].
\end{equation*}
      This combinatorial class has the generating function
\begin{equation*}
{\bf C}_{2}^{(\text{a})}(z)=
z^8\left(\left(\frac{1}{1-z}\right)^3-1-\frac{2z}{1-z}\right).
\end{equation*}
%%%
%%%%%%%%%%%%%%%%%%%%%%%%%%%%%%%%%%%%%%%%%%%%%%%%%%%%%%%%%%%%%%%%%%%%%%%%%%%%%%%
%%%
\restylefloat{figure}\begin{figure}[h!t!b!p]
\centering
\includegraphics[width=1\textwidth]{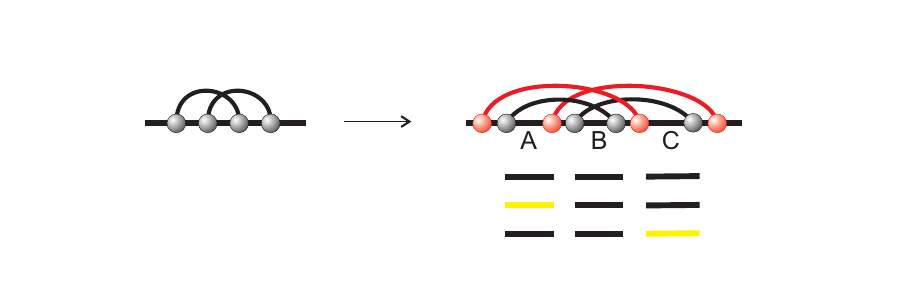}
\caption{\small {\bf $\mathbf{C}_2$-class:} inflation of both arcs
to 2-stacks. Inflated arcs are colored red while the original arcs
of the shape are colored black. We set $A=[i,i+1]$, $B=[i+1,i+2]$
and $C=[i+2,i+3]$ and illustrate the ``bad'' insertion scenarios as
follows: an insertion of some isolated vertices is represented by a
yellow segment and no insertion by a black segment. See the text for
details.} \label{F:step2a}
\end{figure}
%%%
%%%%%%%%%%%%%%%%%%%%%%%%%%%%%%%%%%%%%%%%%%%%%%%%%%%%%%%%%%%%%%%%%%%%%%%%%%%%%%%
%%%
\item one arc, $(i+1,i+3)$ or $(i,i+2)$ is inflated to a $2$-stack,
      while its counterpart is inflated to an arbitrary  $\dagger$-stem,
      see Fig. \ref{F:step2b}.
      Ruling out the cases where no vertex is inserted
      in $[i+1,i+2]$ and $[i+2,i+3]$ or $[i,i+1]$ and $[i+1,i+2]$, we obtain
\begin{equation*}
      \mathcal{C}_{2}^{(\text{b})}=2\cdot\left[
      \mathcal{R}^2\times(\mathcal{M}-\mathcal{R}^2)
      \times((\textsc{Seq}(\mathcal{Z}))^2-\mathcal{E})
      \times\textsc{Seq}(\mathcal{Z})\right],
\end{equation*}
      having the generating function
    \begin{equation*}
      {\bf C}_{2}^{(\text{b})}(z)=2
      z^4\left(\frac{\frac{z^4}{1-z^2}}{1-\frac{z^4}{1-z^2}
      \left(\frac{2z}{1-z}+\left(\frac{z}{1-z}\right)^2\right)}-z^4\right)
      \left(\left(\frac{1}{1-z}\right)^2-1\right)
      {\frac{1}{1-z}}.
\end{equation*}
%%%
%%%%%%%%%%%%%%%%%%%%%%%%%%%%%%%%%%%%%%%%%%%%%%%%%%%%%%%%%%%%%%%%%%%%%%%%%%%%%%%
%%%
\restylefloat{figure}\begin{figure}[h!t!b!p]
\centering
\includegraphics[width=0.55\textwidth]{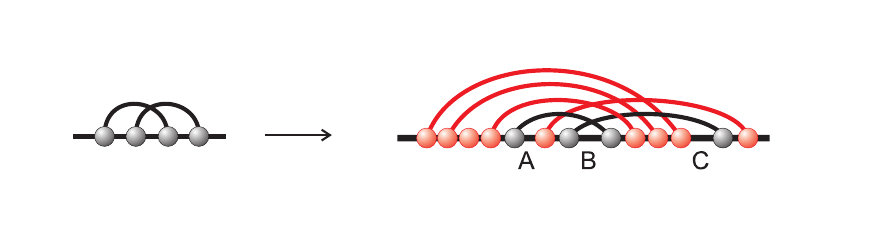}
\caption{\small {\bf $\mathbf{C}_2$-class:} inflation of only one
arc to a $2$-stack. Arc-coloring and labels as in Fig.\
\ref{F:step2a}} \label{F:step2b}
\end{figure}
%%%
%%%%%%%%%%%%%%%%%%%%%%%%%%%%%%%%%%%%%%%%%%%%%%%%%%%%%%%%%%%%%%%%%%%%%%%%%%%%%%%
%%%
\item both arcs are inflated to an arbitrary $\dagger$-stem,
      respectively, see Fig.\ \ref{F:step2c}. In this case
      the insertion of isolated
      vertices is arbitrary, whence
\begin{equation*}
      \mathcal{C}_{2}^{(\text{c})}=
      (\mathcal{M}-\mathcal{R}^2)^2\times(\textsc{Seq}(\mathcal{Z}))^3,
\end{equation*}
      with generating function
    \begin{equation*}
      {\bf C}_{2}^{(\text{c})}(z)=
      \left(\frac{\frac{z^4}{1-z^2}}{1-\frac{z^4}{1-z^2}
      \left(\frac{2z}{1-z}+\left(\frac{z}{1-z}\right)^2\right)}-z^4\right)^2
      \left(\frac{1}{1-z}\right)^3.
\end{equation*}
\end{itemize}
%%%
%%%%%%%%%%%%%%%%%%%%%%%%%%%%%%%%%%%%%%%%%%%%%%%%%%%%%%%%%%%%%%%%%%%%%%%%%%%%%%%
%%%
\restylefloat{figure}\begin{figure}[h!t!b!p]
\centering
\includegraphics[width=1\textwidth]{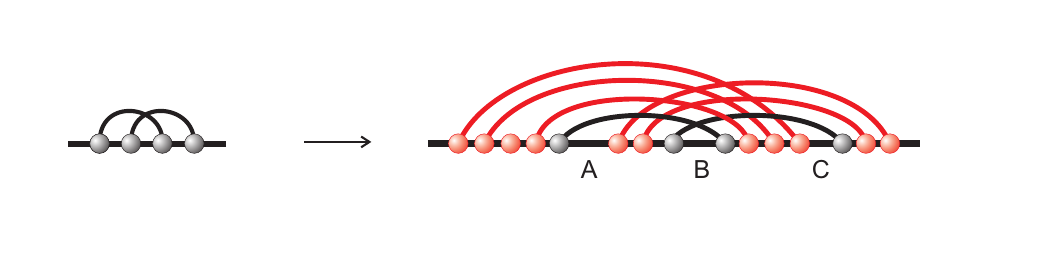}
\caption{\small {\bf $\mathbf{C}_2$-class:} inflation of both arcs
to an arbitrary $\dagger$-stem. Arc-coloring and labels as in Fig.\
\ref{F:step2a}} \label{F:step2c}
\end{figure}
%%%
%%%%%%%%%%%%%%%%%%%%%%%%%%%%%%%%%%%%%%%%%%%%%%%%%%%%%%%%%%%%%%%%%%%%%%%%%%%%%%%
%%%
As the above scenarios are mutually exclusive, the generating
function of the $\mathcal{C}_2$-class is given by
\begin{equation}\label{E:K2}
{\bf C}_2(z)= {\bf C}_2^{(\text{a})}(z)+{\bf
C}_2^{(\text{b})}(z)+{\bf C}_2^{(\text{c})}(z).
\end{equation}
Furthermore note that {\it both} arcs of an $\mathcal{C}_2$-element
are inflated in the cases (a), (b) and (c).

%%%
%%%%%%%%%%%%%%%%%%%%%%%%%%%%%%%%%%%%%%%%%%%%%%%%%%%%%%%%%%%%%%%%%%%%%%%%%%%%%%%
%%%
\item {\bf $\mathbf{C}_3$-class:} this class consists of arc-pairs $(\alpha,\beta)$
      where $\alpha$ is the unique $2$-arc crossing $\beta$ and $\beta$ has
      length at least three. Without loss of generality we can restrict our
      analysis to the case {$((i,i+2),(i+1,j))$}, $(j>i+3)$.
%%%
%%%%%%%%%%%%%%%%%%%%%%%%%%%%%%%%%%%%%%%%%%%%%%%%%%%%%%%%%%%%%%%%%%%%%%%%%%%%%%%
%%%
\begin{itemize}
\item the arc $(i+1,j)$ is inflated to a $2$-stack. Then
      we have to insert at least one isolated vertex in either
      $[i,i+1]$ or $[i+1,i+2]$, see Fig. \ref{F:step3}. Therefore we have
\begin{equation*}
    \mathcal{C}_{3}^{(\text{a})}=
    \mathcal{R}^2 \times (\textsc{Seq}(\mathcal{Z})^2-\mathcal{E}),
\end{equation*}
    with generating function
    \begin{equation*}
    {\bf C}_{3}^{(\text{a})}(z)=
      z^4\left(\left(\frac{1}{1-z}\right)^2-1\right).
    \end{equation*}
     Note that the arc $(i,i+2)$ is not considered here, it can be inflated
     without any restrictions.
\item the arc $(i+1,j)$ is inflated to an arbitrary  $\dagger$-stem,
      see Fig.\ \ref{F:step3}). Then
\begin{equation*}
    \mathcal{C}_{3}^{(\text{b})}=
    (\mathcal{M}-\mathcal{R}^2)\times
    {\textsc{Seq}(\mathcal{Z})^2,}
\end{equation*}
    with generating function
    \begin{equation*}
    {\bf C}_{3}^{(\text{b})}(z)=
    \left(\frac{\frac{z^4}{1-z^2}}{1-\frac{z^4}{1-z^2}
    \left(\frac{2z}{1-z}+\left(\frac{z}{1-z}\right)^2\right)}-z^4\right)
    \cdot \left(\frac{1}{1-z}\right)^2.
\end{equation*}
\end{itemize}
%%%
%%%%%%%%%%%%%%%%%%%%%%%%%%%%%%%%%%%%%%%%%%%%%%%%%%%%%%%%%%%%%%%%%%%%%%%%%%%%%%%
%%%
\restylefloat{figure}\begin{figure}[h!t!b!p]
\centering
\includegraphics[width=1\textwidth]{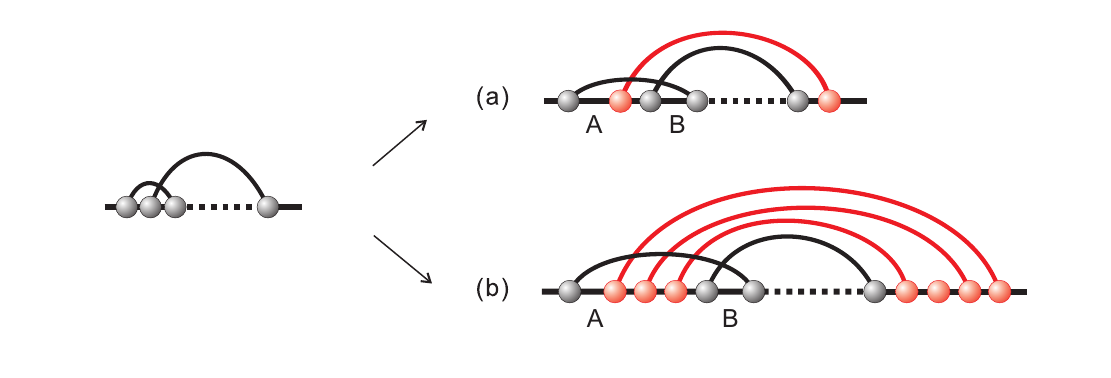}
\caption{\small $\mathbf{C}_3$-class: only one arc is inflated here
and its inflation distinguishes two subcases. Arc-coloring as in
Fig.\ \ref{F:step2a}}\label{F:step3}
\end{figure}
%%%
%%%%%%%%%%%%%%%%%%%%%%%%%%%%%%%%%%%%%%%%%%%%%%%%%%%%%%%%%%%%%%%%%%%%%%%%%%%%%%%
%%%
Consequently, this inflation process leads to a generating function
\begin{equation}\label{E:K3}
{\bf C}_3(z)={\bf C}_{3}^{(\text{a})}(z)+{\bf
C}_{3}^{(\text{b})}(z).
\end{equation}
Note that during inflation (a) and (b) only {\it one} of the two
arcs of an $\mathbf{C}_3$-class element is being inflated.

\item {\bf $\mathbf{C}_4$-class:} this class consists of arc-triples
     $(\alpha_1,\beta,\alpha_2)$, where $\alpha_1$ and $\alpha_2$
     are $2$-arcs, respectively, that cross $\beta$.
\begin{itemize}
\item $\beta$ is inflated to a $2$-stack,
      see Fig.\ \ref{F:step4}. Using similar arguments as in the case of
      $\mathbf{C}_3$-class, we arrive at
\begin{equation*}
    \mathcal{C}_{4}^{(\text{a})}=
     \mathcal{R}^2 \times (\textsc{Seq}(\mathcal{Z})^2-\mathcal{E})
     \times(\textsc{Seq}(\mathcal{Z})^2-\mathcal{E}),
\end{equation*}
    with generating function
\begin{equation*}
    {\bf C}_{4}^{(\text{a})}(z)=
    z^4\left(\left(\frac{1}{1-z}\right)^2-1\right)^2.
\end{equation*}
\item the arc $\beta$ is inflated to an arbitrary
     $\dagger$-stem\index{$\dagger$-stem},
     see Fig.\ \ref{F:step4},
\begin{equation*}
     \mathcal{C}_{4}^{(\text{b})}=
     (\mathcal{M}-\mathcal{R}^2)\times
     {\textsc{Seq}(\mathcal{Z})^4,}
\end{equation*}
    with generating function
\begin{equation*}
    {\bf C}_{4}^{(\text{b})}(z)=
    \left(\frac{\frac{z^4}{1-z^2}}{1-\frac{z^4}{1-z^2}
    \left(\frac{2z}{1-z}+\left(\frac{z}{1-z}\right)^2\right)}-z^4\right)
    \cdot \left(\frac{1}{1-z}\right)^4.
\end{equation*}
\end{itemize}
%%%
%%%%%%%%%%%%%%%%%%%%%%%%%%%%%%%%%%%%%%%%%%%%%%%%%%%%%%%%%%%%%%%%%%%%%%%%%%%%%%%
%%%
\restylefloat{figure}\begin{figure}[h!t!b!p]
\centering
\includegraphics[width=1\textwidth]{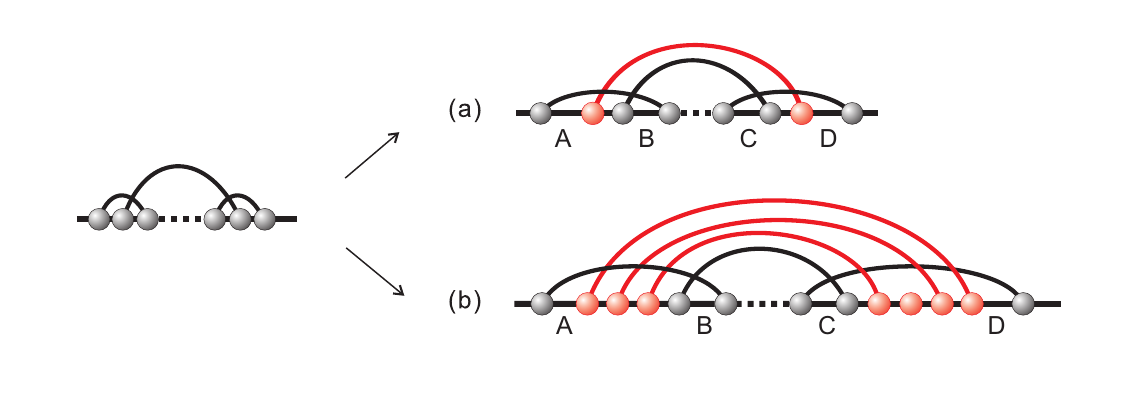}
\caption{\small $\mathbf{C}_4$-class: as for the inflation of
$\mathbf{C}_3$ only the non $2$-arc is inflated, distinguishing two
subcases. Arc-coloring as in Fig.\ \ref{F:step2a}} \label{F:step4}
\end{figure}
%%%
%%%%%%%%%%%%%%%%%%%%%%%%%%%%%%%%%%%%%%%%%%%%%%%%%%%%%%%%%%%%%%%%%%%%%%%%%%%%%%%
%%%
Accordingly we arrive at
\begin{equation}\label{E:K4}
{\bf C}_4(z)={\bf C}_{4}^{(\text{a})}(z)+{\bf
C}_{4}^{(\text{b})}(z).
\end{equation}
\end{itemize}
The inflation of any arc of $\gamma$ not considered in the previous
steps follows the logic of Proposition~\ref{P:k=2}. We observe that
$(s-2u_2-u_3-u_4)$ arcs of the shape $\gamma$ have not been
considered. Furthermore, $(2s+1-u_1-3u_2-2u_3-4u_4)$ intervals were
not considered for the insertion of isolated vertices. The inflation
of these along the lines of Proposition~\ref{P:k=2} gives rise to the class
\begin{equation*}
\mathcal{S}= \mathcal{M}^{s-2u_2-u_3-u_4}\times
(\textsc{Seq}(\mathcal{Z}))^{2s+1-u_1-3u_2-2u_3-4u_4},
\end{equation*}
having the generating function
\begin{eqnarray*}
{\bf S}(z) & = & \left(\frac{\frac{z^4}{1-z^2}}{1-\frac{z^4}{1-z^2}
           \left(\frac{2z}{1-z}+\left(\frac{z}{1-z}\right)^2\right)}\right)
            ^{s-2u_2-u_3-u_4}\times \\
 &&  \qquad\qquad\qquad\qquad   \quad
      \left(\frac{1}{1-z}\right)^{2s+1-u_1-3u_2-2u_3-4u_4}.
\end{eqnarray*}
Combining these observations Claim $1$ follows.\\
Observing that ${\bf Q}_{\gamma_1}(z)={\bf Q}_{\gamma_2}(z)$ for any
$\gamma_1,\gamma_2\in \mathcal{I}_k(s,\vec{u})$, we have, according
to eq.~(\ref{E:tk2}),
\begin{equation*}
{\bf Q}_k(z)=\sum_{s,\vec{u}\ge 0}\, i_k(s,\vec{u})\;{\bf Q}_\gamma(z),
\end{equation*}
where $\vec{u}\ge 0$ denotes $u_i\geq0$ for $1\le i\le 4$.
Proposition \ref{P:5tri} guarantees
\begin{align*}
&\ \sum_{s,\vec{u}\geq0}i_k(s,\vec{u})\; x^s y^{u_1}
    r^{u_2} w^{u_3} t^{u_4}\\
&=\frac{1+x}{1-(y-2)x+(2w-r-1)x^2+(2w-r-1)x^3}\ \times \\
& \quad\  {\bf F}_k\left(\frac{x(1+(2w-1)x+(t-1)x^2)}
{(1-(y-2)x+(2w-r-1)x^2+(2w-r-1)x^3)^2}\right).
\end{align*}
Setting $x=\varsigma_0(z)$, $y=\varsigma_1(z)$,
$r=\varsigma_2(z)$, $w=\varsigma_3(z)$,
$t=\varsigma_4(z)$, we arrive at
\begin{eqnarray*}
{\bf Q}_k(z)&=&
\frac{1-z^2+z^4}{1-z-z^2+z^3+2z^4+z^6-z^8+z^{10}-z^{12}}\ \times \\
& &{\bf F}_k\left(\frac{z^4(1-z^2-z^4+2z^6-z^8)}
{(1-z-z^2+z^3+2z^4+z^6-z^8+z^{10}-z^{12})^2}\right).
\end{eqnarray*}
By Theorem~\ref{T:d-finite_property}, ${\bf Q}_k(z)$ is $D$-finite. Pringsheim's
Theorem \cite{Tichmarsh:39} guarantees that ${\bf Q}_k(z)$ has a
dominant real positive singularity $\gamma_{k}$. We verify that for
$3\leq k\leq 9$, $\gamma_{k}$ which is the unique solution with
minimum modulus of the equation $\vartheta(z)=\rho_k^2$ is the
unique dominant singularity of ${\bf Q}_k(z)$,  and
$\vartheta'(z)\neq 0$. According to
Corollary~\ref{C:algeasym} we therefore have
\begin{equation*}
{\sf Q}_k(n)\sim  c_{k}\, n^{-((k-1)^2+(k-1)/2)}\,
(\gamma_{k}^{-1})^n ,\quad \text{for some {$c_{k}^{}>0$}},
\end{equation*}
and the proof of Theorem~\ref{T:queen} is complete.
\end{proof}

\begin{remark}\label{R:obacht}{\rm
We remark that Theorem~\ref{T:queen} does not hold for $k=2$,
i.e.~we cannot compute the generating function ${\bf Q}_{2}(z)$ via
eq.~(\ref{E:gut0}). The reason is that Lemma~\ref{L:recursions-0}
only holds for $k>2$ and indeed we find
\begin{equation}\label{E:obacht}
{\bf Q}_2(z)\neq
\frac{1-z^2+z^4}{q(z)}
{\bf F}_2\left(\frac{z^4(1-z^2-z^4+2z^6-z^8)}{q(z)^2}\right).
\end{equation}
However, the computation of the generating function ${\bf Q}_2(z)$
in Proposition~\ref{P:k=2} is based on Lemma~\ref{T:gfIk}, which
does hold for $k=2$.}
\end{remark}
%%%
%%%%%%%%%%%%%%%%%%%%%%%%%%%%%%%%%%%%%%%%%%%%%%%%%%%%%%%%%%%%%%%%%%%%%%%%%%
%%%
\section{Proofs of Lemma~\ref{L:recursion-0} and
Lemma~\ref{L:recursions-0}}\label{S:appendix}
%%%
%%%%%%%%%%%%%%%%%%%%%%%%%%%%%%%%%%%%%%%%%%%%%%%%%%%%%%%%%%%%%%%%%%%%%%%%%%
%%%

%%%%%%%%%%%%%%%%%%%%%%%%%%%%%%%%%%%%%%%%%%%%%%%%%%%%%%%%%%%%%%%%%%%%%%%%%%%%%%
%%%
\subsection{Proof of Lemma~\ref{L:recursion-0}}
\begin{proof}
By construction, eq.~(\ref{E:00}) and eq.~(\ref{E:wq}) hold. We next
prove eq.~(\ref{E:2arcp}). Choose a shape $\delta\in\mathcal
I_k(s+1, u_1,u_2+1)$ and label exactly one of the $(u_2+1)$ ${\bf
C}_2$-elements. We denote the leftmost ${\bf C}_2$-arc (being a
$2$-arc) by $\alpha$. Let $\mathcal{L}$ be the set of these labeled
shapes, $\lambda$, then
\begin{equation*}
\vert \mathcal{L}\vert\,=\,(u_2+1)\,i_k(s+1,u_1,u_2+1).
\end{equation*}
We next observe that the removal of $\alpha$ results in either a
shape or a matching. Let the elements of the former set be
$\mathcal{L}_1$ and those of the latter $\mathcal{L}_2$. By
construction,
\begin{equation*}
\mathcal{L}=\mathcal{L}_1\dot\cup\mathcal{L}_2.
\end{equation*}
%%%
%%%%%%%%%%%%%%%%%%%%%%%%%%%%%%%%%%%%%%%%%%%%%%%%%%%%%%%%%%%%%%%%%%%%%%
%%%
{\it Claim 1.}
\begin{equation*}
\vert \mathcal{L}_1 \vert\,=\,(u_1+1)\,i_k(s,u_1+1,u_2).
\end{equation*}
%%%
%%%%%%%%%%%%%%%%%%%%%%%%%%%%%%%%%%%%%%%%%%%%%%%%%%%%%%%%%%%%%%%%%%%%%
%%%
To prove Claim 1, we consider the labeled ${\bf C}_2$-element
$(\alpha,\beta)$. Let $\mathcal{L}_1^\alpha$ be the set of shapes
induced by removing $\alpha$. It is straightforward to verify that
the removal of $\alpha$ can lead to only one additional ${\bf
C}_1$-element, $\beta$. Therefore $\mathcal{L}_1$-shapes induce
unique $\mathcal{I}_k(s,u_1+1,u_2)$-shapes, having a labeled
$1$-arc, $\beta$, see
Fig.~\ref{F:insertu2_1}. This proves Claim 1.\\
%%%
%%%%%%%%%%%%%%%%%%%%%%%%%%%%%%%%%%%%%%%%%%%%%%%%%%%%%%%%%%%%%%%%%%%%%
%%%
%%%%%%%%%%%
%%%%%%%%%%%%%%%%%%%%%%%%%%%%%%%%%%%%%%%%%%%%%%%%%%%%%%%%%%%%%%%%%%%%%
%%%%%%%%%%
\restylefloat{figure}\begin{figure}[h!t!b!p]
\centering
\includegraphics[width=1\textwidth]{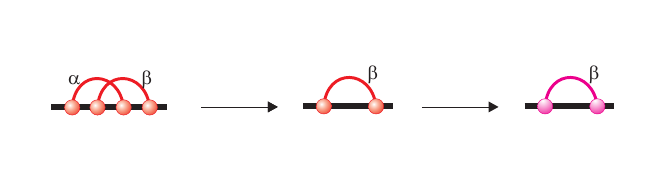}
\caption{The term $(u_1+1)\,i_k(s,u_1+1,u_2)$. \label{F:insertu2_1}}
\end{figure}
%%%%%%%%%%%
%%%%%%%%%%%%%%%%%%%%%%%%%%%%%%%%%%%%%%%%%%%%%%%%%%%%%%%%%%%%%%%%%%%%%
%%%%%%%%%%

\noindent{\it Claim 2.}
\begin{equation*}
\vert \mathcal{L}_2\vert\,=\,(u_1+1)\,i_k(s-1,u_1+1,u_2).
\end{equation*}
%%%
%%%%%%%%%%%%%%%%%%%%%%%%%%%%%%%%%%%%%%%%%%%%%%%%%%%%%%%%%%%%%%%%%%%%%
%%%
To prove Claim 2, we consider $\mathcal{M}_2^\alpha$, the set of
matchings, $\mu_2^\alpha$, obtained by removing $\alpha$. Such a
matching contains exactly one stack of length two, $(\beta_1,
\beta_2)$, where $\beta_2$ is nested in $\beta_1$. Let
$\mathcal{L}_2^\alpha$ be the set of shapes induced by collapsing
$(\beta_1, \beta_2)$ into $\beta_2$. We observe that $\alpha$
crosses $\beta_2$ and that $\beta_2$ becomes a $1$-arc. Therefore,
$\mathcal{L}_2$ is the set of labeled shapes, that induce unique
$\mathcal{I}_k(s-1,u_1+1,u_2)$-shapes having a labeled $1$-arc,
$\beta_2$, see Fig.~\ref{F:insertu2_2}. This proves Claim 2.\\
%%%%%%%%%%%
%%%%%%%%%%%%%%%%%%%%%%%%%%%%%%%%%%%%%%%%%%%%%%%%%%%%%%%%%%%%%%%%%%%%%
%%%%%%%%%%
\restylefloat{figure}\begin{figure}[h!t!b!p]
\centering
\includegraphics[width=1\textwidth]{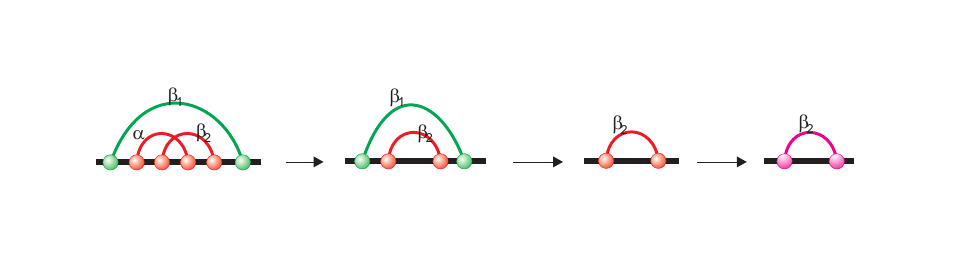}
\caption{The term $(u_1+1)\,i_k(s-1,u_1+1,u_2)$.}
\label{F:insertu2_2}
\end{figure}
%%%%%%%%%%%
%%%%%%%%%%%%%%%%%%%%%%%%%%%%%%%%%%%%%%%%%%%%%%%%%%%%%%%%%%%%%%%%%%%%%
%%%%%%%%%%
Combining Claim 1 and Claim 2 we derive eq.~(\ref{E:2arcp}).\\
It remains to show (by induction on $s$) that the numbers
$i_k(s,u_1,u_2)$ can be uniquely derived from eq.~(\ref{E:00}),
eq.~(\ref{E:wq}) and eq.~(\ref{E:2arcp}), whence the lemma.
\end{proof}
%%%%%%%%%%%
%%%%%%%%%%%%%%%%%%%%%%%%%%%%%%%%%%%%%%%%%%%%%%%%%%%%%%%%%%%%%%%%%%%%%
%%%%%%%%%%
\subsection{Proof of Lemma~\ref{L:recursions-0}}
\begin{proof}
By construction, eq.~(\ref{E:erni2}) and eq.~(\ref{E:5ini}) hold. We
next prove eq.~(\ref{u3recursion}). \\
Choose a shape $\delta\in \mathcal{I}_k(s+1,u_1,u_2,u_3+1,u_4)$ and
label exactly one of the $(u_3+1)$ $\mathbf{C}_3$-elements
containing a unique $2$-arc, $\alpha$. We denote the set of these
labeled shapes, $\lambda$, by $\mathcal{L}$. Clearly
\begin{equation*}
\vert \mathcal{L}\vert =(u_3+1)i_k(s+1,u_1,u_2,u_3+1,u_4).
\end{equation*}
We observe that the removal of $\alpha$ results in either a shape
($\mathcal{L}_1$) or a matching ($\mathcal{L}_2$), i.e.~we have
\begin{equation*}
\mathcal{L} = \mathcal{L}_1\, \dot\cup \, \mathcal{L}_2.
\end{equation*}
%%%%%%%%%%%%%%%%%%%%%%%%%%%%%
\restylefloat{figure}\begin{figure}
\centering
\includegraphics[width=1\textwidth]{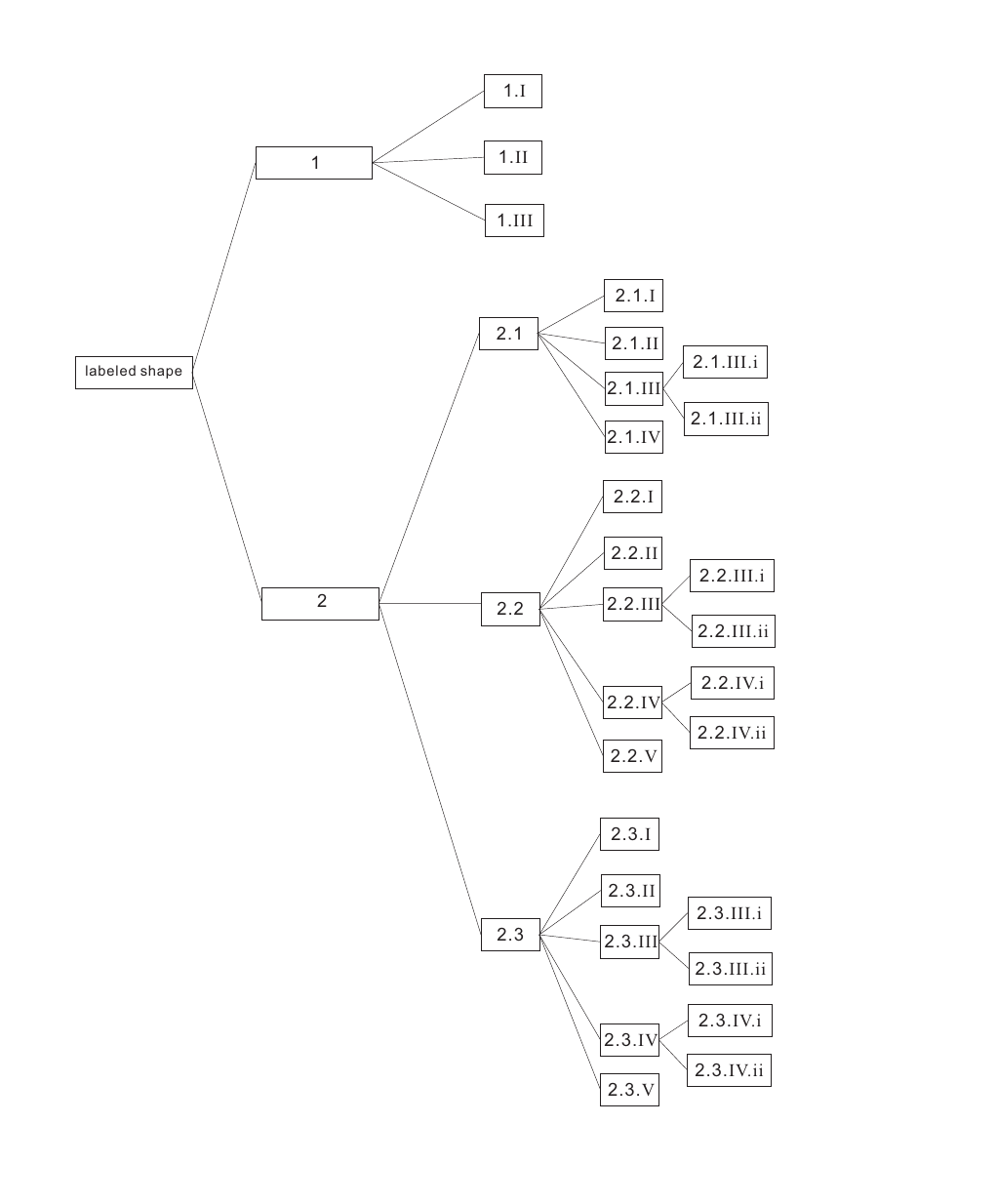}
\caption{Accounting: the scenarios arising from the removal of
$\alpha$ from a labeled ${\bf C}_3$-element of a
$\mathcal{I}_k(s+1,u_1,u_2,u_3+1,u_4)$-shape.} \label{F:tree}
\end{figure}
%%%%%%%%%%%%%%%%%%%%%%%%%%%%%%
%%%%%%%%%%%
%%%%%%%%%%%%%%%%%%%%%%%%%%%%%%%%%%%%%%%%%%%%%%%%%%%%%%%%%%%%%%%%%%%%%
%%%%%%%%%%

{\it Claim 1.}
%%%%%%%%%%%
%%%%%%%%%%%%%%%%%%%%%%%%%%%%%%%%%%%%%%%%%%%%%%%%%%%%%%%%%%%%%%%%%%%%%
%%%%%%%%%%
\begin{eqnarray}
\vert \mathcal{L}_1\vert & = &  2(u_3+1)\, i_k(s,u_1,u_2,u_3+1,u_4) + \\
                         &&     4(u_4+1)\, i_k(s,u_1,u_2,u_3-1,u_4+1) + \\
                &&   (2(s-u_1-2u_2-2u_3-3u_4))\, i_k(s,u_1,u_2,u_3,u_4).
\nonumber
\end{eqnarray}
To prove Claim 1, we consider the labeled ${\bf C}_3$-element of a
$\mathcal{L}_1$-shape, $(\alpha,\beta)$. We set
$\mathcal{L}_1^\alpha$ to be the set of shapes induced by removing
$\alpha$ and denote the resulting shapes by $\lambda_1^\alpha$.
%%%%%%%%%%%%%%%%%%%%%%%%%%%
\restylefloat{figure}\begin{figure}[h!t!b!p]
\centering
\includegraphics[width=1\textwidth]{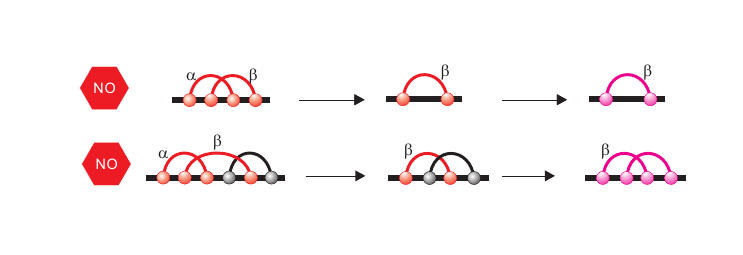}
\caption{\small The removal of $\alpha$ cannot give rise to
additional $\mathbf{C}_1$- or
$\mathbf{C}_2$-elements.}\label{F:s_10}
\end{figure}
%%%%%%%%%%%%%%%%%%%%%%%%%
By construction a $\lambda_1^\alpha$-shape cannot contain any
additional $\mathbf{C}_1$- or $\mathbf{C}_2$-elements, see
Fig.~\ref{F:s_10}. Clearly, the removal of $\alpha$ can lead to at
most one additional ${\bf C}_i$-element, whence
\begin{equation*}
\mathcal{L}_{1}=\mathcal{L}_1^{\mathbf{C}_3}\dot\cup
\mathcal{L}_1^{\mathbf{C}_4}\dot\cup \mathcal{L}_1^{0},
\end{equation*}
where $\mathcal{L}_1^{\mathbf{C}_i}, i=3,4$ denotes the set of
labeled shapes, $\lambda\in\mathcal{L}_1$, that induce a unique
shape having a labeled ${\bf C}_i$-element containing $\beta$ and
$\mathcal{L}_1^{0}$ the set of those shapes, in which there exists
no such ${\bf C}_i$-element. \\

We first prove
\begin{equation*}
\vert\mathcal{L}_1^{\mathbf{C}_3}\vert=2(u_3+1)\,
i_k(s,u_1,u_2,u_3+1,u_4).\leqno (1.\text{I})
\end{equation*}
%%%%%%%%%%%%%%%%%%%%%%%%%%%
\restylefloat{figure}\begin{figure}[h!t!b!p]
\centering
\includegraphics[width=1\textwidth]{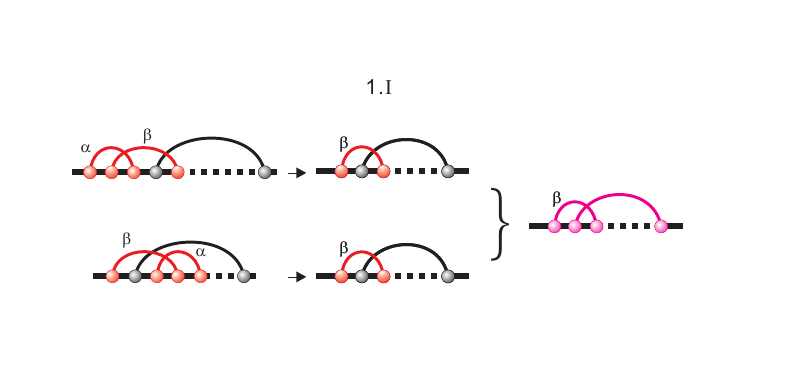}
\caption{\small We illustrate the effect of the removal of $\alpha$
when inducing a labeled ${\bf C}_3$-element.}\label{F:s_13}
\end{figure}
%%%%%%%%%%%%%%%%%%%%%%%%%
Indeed, in order to generate a labeled ${\bf C}_3$-element by
$\alpha$-removal from $\mathcal{L}_1$-shape, $\beta$ has to become a
$2$-arc in a labeled ${\bf C}_3$-element of a
$\mathcal{I}_k(s,u_1,u_2,u_3+1,u_4)$-shape, see Fig.~\ref{F:s_13}.\\

Next we prove
\begin{equation*}
\vert\mathcal{L}_1^{\mathbf{C}_4}\vert=4(u_4+1)\,
i_k(s,u_1,u_2,u_3-1,u_4+1). \leqno (1.\text{II})
\end{equation*}
%%%%%%%%%%%%%%%%%%%%%%%%%%%
\restylefloat{figure}\begin{figure}[h!t!b!p]
\centering
\includegraphics[width=1\textwidth]{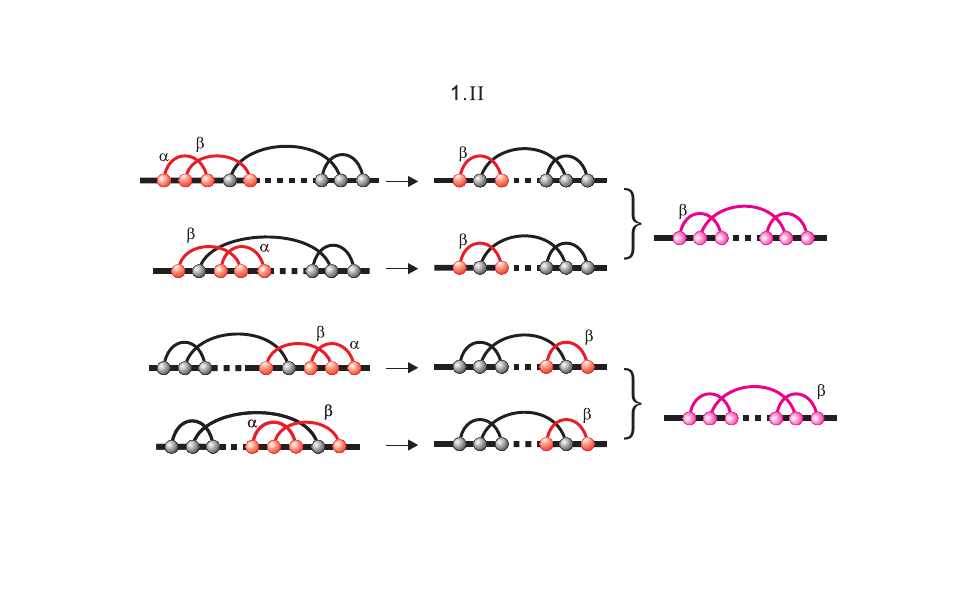}
\caption{\small We illustrate the effect of the removal of $\alpha$
with when inducing a labeled ${\bf C}_4$-element.}\label{F:s_14}
\end{figure}
%%%%%%%%%%%%%%%%%%%%%%%%%
Indeed in order to generate  a labeled ${\bf C}_4$-element by
$\alpha$-removal from $\mathcal{L}_1$-shape, $\beta$ has to become a
$2$-arc in a labeled ${\bf C}_4$-element of a
$\mathcal{I}_k(s,u_1,u_2,u_3-1,u_4+1)$-shape. We display all
possible scenarios in Fig.~\ref{F:s_14}. Otherwise $\beta$ becomes simply a labeled arc in a
$\mathcal{I}_k(s,u_1,u_2,u_3,u_4)$-shape, which is not contained in
any ${\bf C}_i$-element, whence
\begin{equation*}
\vert\mathcal{L}_1^{0}\vert= 2(s-u_1-2u_2-2u_3-3u_4)\,
i_k(s,u_1,u_2,u_3,u_4)\leqno (1.\text{III})
\end{equation*}
and Claim 1 follows.\\
%%%
%%%%%%%%%%%%%%%%%%%%%%%%%%%%%%%%%%%%%%%%%%%%%%%%%%%%%%%%%%%%%%%%%%%%%%%%%
%%%

We next consider $\mathcal{L}_2$. Let $\mathcal{M}_2^\alpha$ be the
set of
matchings, $\mu_2^\alpha$, obtained by removing $\alpha$.\\
%%%
%%%%%%%%%%%%%%%%%%%%%%%%%%%%%%%%%%%%%%%%%%%%%%%%%%%%%%%%%%%%%%%%%%%%%%%%%
%%%
{\it Claim 2.} Let $(\beta_1,\dots,\beta_\ell)$ denote a
$\mu_2^\alpha$-stack ($(\beta_1,\dots,\beta_\ell)\prec
\mu_2^\alpha$). Then we have
\begin{equation}\label{E:ree}
\mathcal{L}_2=\mathcal{L}_{2,1}\dot\cup\mathcal{L}_{2,2}
\dot\cup\mathcal{L}_{2,3},
\end{equation}
where
\begin{align*}
    \mathcal{L}_{2,1}=&
 \{\lambda\in\mathcal{L}_2\mid
\text{$\alpha, \beta_i\in\lambda, i=1,2$;
$(\beta_1,\beta_2)\prec\mu_2^\alpha$; $\alpha$ crosses $\beta_2$}\},\\
\mathcal{L}_{2,2}=& \{\lambda\in\mathcal{L}_2\mid\text{$\alpha,
\beta_i\in\lambda, i=1,2$;
$(\beta_1,\beta_2)\prec\mu_2^\alpha$; $\alpha$ crosses $\beta_1$}\},\\
    \mathcal{L}_{2,3}=&
\{\lambda\in\mathcal{L}_2\mid\text{$\alpha, \beta_i\in\lambda,
i=1,2,3$; $(\beta_1,\beta_2,\beta_3)\prec\mu_2^\alpha$; $\alpha$
crosses $\beta_2$}\}.
\end{align*}
%%%
%%%%%%%%%%%%%%%%%%%%%%%%%%%%%%%%%%%%%%%%%%%%%%%%%%%%%%%%%%%%%%%%%%%%%%%%%
%%%

To prove Claim 2, it suffices to observe that a
$\mathcal{M}_2^\alpha$-matching contains exactly one stack of length
either two or three. Now, eq.~(\ref{E:ree}) immediately follows by
inspection of
Figure~\ref{F:mnest}.\\
%%%%%%%%%%%%%%%%%%%%%%%%%%%%%
\restylefloat{figure}\begin{figure}[h!t!b!p]
\centering
\includegraphics[width=1\textwidth]{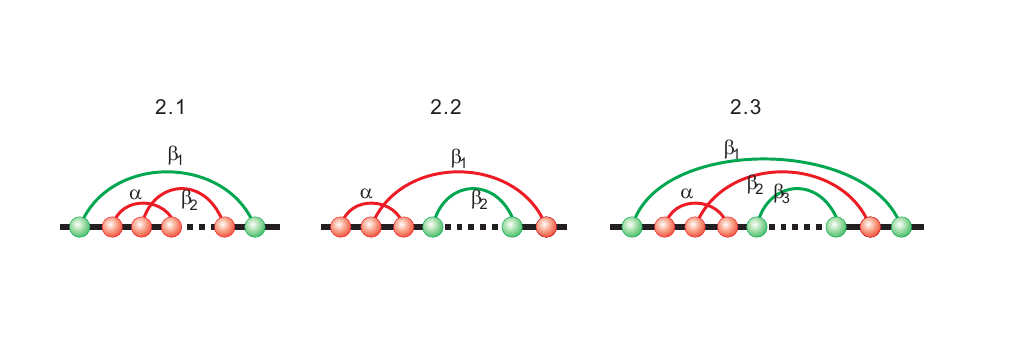}
\caption{\small $\mathcal{L}_2$ and $\mathcal{M}_2^\alpha$: how
stacks arise by the removal of $\alpha$.}\label{F:mnest}
\end{figure}
%%%%%%%%%%%%%%%%%%%%%%%%%%%%%%%%
\noindent{\it Claim 2.1}
\begin{eqnarray*}
|\mathcal{L}_{2,1}|&=&4(u_2+1)\,i_k(s-1,u_1,u_2+1,u_3,u_4)\\
&+&4(u_3+1)\,i_k(s-1,u_1,u_2,u_3+1,u_4)\\
&+&[4(u_4+1)\,i_k(s-1,u_1,u_2,u_3-1,u_4+1)\\
&+&2(u_4+1)\,i_k(s-1,u_1,u_2,u_3,u_4+1)]\\
&+&2((s-1)-u_1-2u_2-2u_3-3u_4))\,i_k(s-1,u_1,u_2,u_3,u_4).
\end{eqnarray*}

To prove Claim 2.1, let $\mathcal{M}_{2,1}^\alpha$ be the set of
matchings induced by removing $\alpha$ from a
$\mathcal{L}_{2,1}$-shape. We set $\mathcal{L}_{2,1}^\alpha$ to be
the set of shapes induced by collapsing the unique
$\mathcal{M}_{2,1}^\alpha$-stack  of length two into the arc
$\beta_2$. Clearly, such a shape cannot exhibit any
additional $\mathbf{C}_1$-elements, see Fig.\ \ref{F:s_211}.\\
%%%%%%%%%%%%%%%%%%%%%%%%%%%
\restylefloat{figure}\begin{figure}[h!t!b!p]
\centering
\includegraphics[width=1\textwidth]{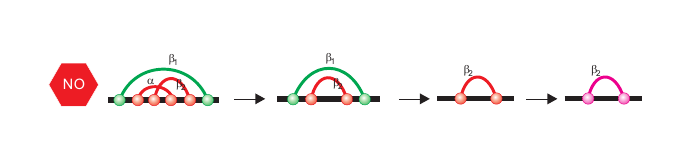}
\caption{\small $\lambda_{2,1}^\alpha$ cannot exhibit any additional
$\mathbf{C}_1$-elements.}\label{F:s_211}
\end{figure}
%%%%%%%%%%%%%%%%%%%%%%%%%

Since the removal of $\alpha$ and subsequent stack-collapse can lead
to at most one new $\mathbf{C}_i$ element, we have
\begin{equation*}
\mathcal{L}_{2,1} = \mathcal{L}_{2,1}^{\mathbf{C}_2} \dot\cup
\mathcal{L}_{2,1}^{\mathbf{C}_3} \dot\cup
\mathcal{L}_{2,1}^{\mathbf{C}_4} \dot\cup \mathcal{L}_{2,1}^0,
\end{equation*}
where $\mathcal{L}_{2,1}^{\mathbf{C}_i}$, $i=2,3,4$ denotes the set
of labeled shapes, $\lambda\in \mathcal{L}_{2,1}$, that induce
unique shapes having a labeled ${\bf C}_i$-element containing
$\beta_2$ and $\mathcal{L}_{2,1}^{0}$ denotes those in which there
exists no
${\bf C}_i$-element containing $\beta_2$.\\

We first prove
\begin{equation*}
\vert\mathcal{L}_{2,1}^{\mathbf{C}_2}\vert
=4(u_2+1)\,i_k(s-1,u_1,u_2+1,u_3,u_4).\leqno (2.1.\text{I}).
\end{equation*}
%%%%%%%%%%%%%%%%%%%%%%%%%%%
\restylefloat{figure}\begin{figure}[h!t!b!p]
\centering
\includegraphics[width=1\textwidth]{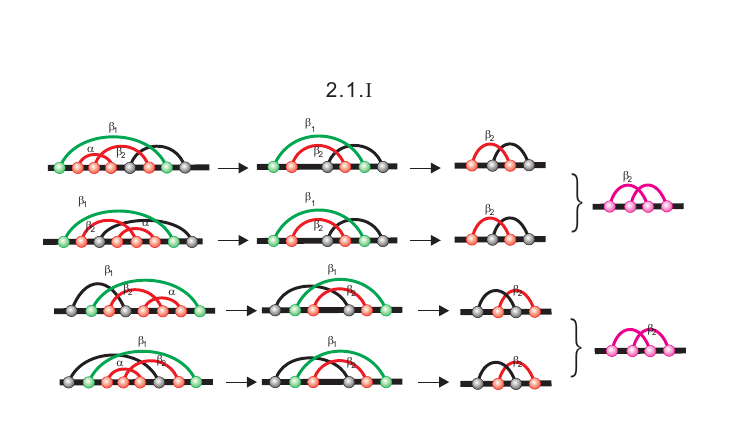}
\caption{\small In $\mathcal{L}_{2,1}$: the removal of $\alpha$ and
subsequent collapse of the unique stack of length two in
$\mathcal{M}_{2,1}^\alpha$, generating a labeled ${\bf C}_2$-element
(pink).}\label{F:s_212}
\end{figure}
\clearpage
%%%%%%%%%%%%%%%%%%%%%%%%%
Indeed, in order to generate a labeled $\mathbf{C}_2$-element via
$\alpha$-removal and subsequent stack-collapse from a
$\mathcal{L}_{2,1}$-shape, $\beta_2$ has to become a $2$-arc in a
${\bf C}_2$-element of a $\mathcal{I}_k(s-1,u_1,u_2+1,
u_3,u_4)$-shape. We display all possible scenarios in Fig.\
\ref{F:s_212}.\\

Next we prove
\begin{equation*}
\vert\mathcal{L}_{2,1}^{\mathbf{C}_3}\vert
=4(u_3+1)\,i_k(s-1,u_1,u_2,u_3+1,u_4).\leqno (2.1.\text{II})
\end{equation*}
%%%%%%%%%%%%%%%%%%%%%%%%%%%
\restylefloat{figure}\begin{figure}[h!t!b!p]
\centering
\includegraphics[width=1\textwidth]{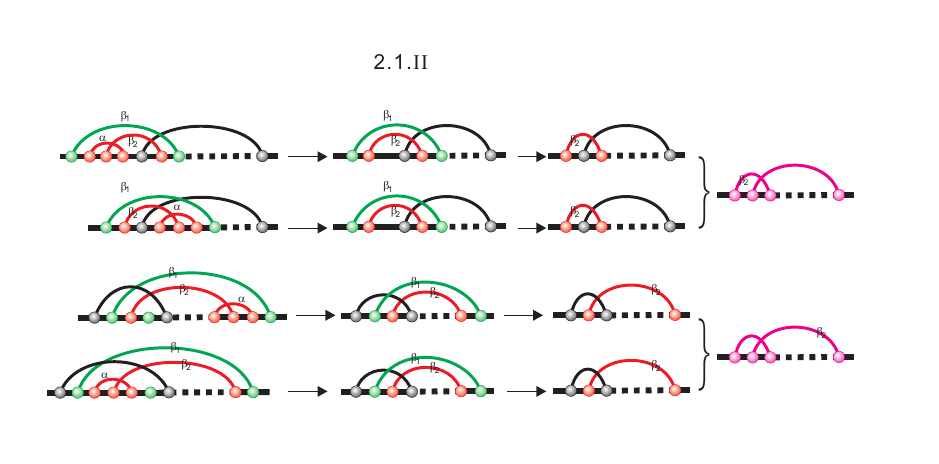}
\caption{\small In $\mathcal{L}_{2,1}$: the removal of $\alpha$ and
collapsing the unique stack of length two in
$\mathcal{M}_{2,1}^\alpha$, generating a labeled ${\bf C}_3$-element
(pink).}\label{F:s_213}
\end{figure}
%%%%%%%%%%%%%%%%%%%%%%%%%
\noindent In order to generate a labeled ${\bf C}_3$-element by
$\alpha$-removal and collapsing the unique stack of length two from
a $\mathcal{L}_{2,1}$-shape, $\beta_2$ has to become a $2$-arc or an
arc uniquely crossing a $2$-arc in a ${\bf C}_3$-element of a
$\mathcal{I}_k(s-1,u_1,u_2, u_3+1,u_4)$-shape. We display all
possible scenarios in Fig.~\ref{F:s_213}.

Third we prove
\begin{equation*}
\vert\mathcal{L}_{2,1}^{\mathbf{C}_4}\vert
=4(u_4+1)\,i_k(s-1,u_1,u_2,u_3-1,u_4+1)
+2(u_4+1)\,i_k(s-1,u_1,u_2,u_3,u_4+1).\leqno (2.1.\text{III})
\end{equation*}
%%%%%%%%%%%%%%%%%%%%%%%%%%%
\restylefloat{figure}\begin{figure}[h!t!b!p]
\centering
\includegraphics[width=1\textwidth]{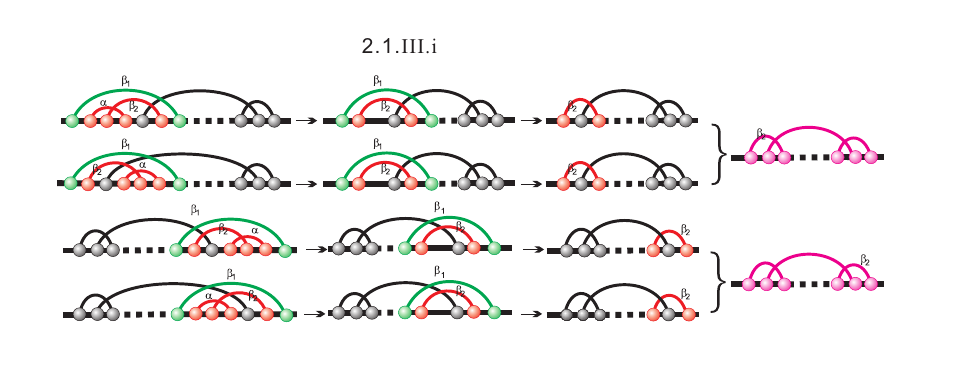}
\caption{\small How to derive a labeled ${\bf C}_4$-element: first
scenario. }\label{F:s_2141}
\end{figure}
%%%%%%%%%%%%%%%%%%%%%%%%%
%%%%%%%%%%%%%%%%%%%%%%%%%%%
\restylefloat{figure}\begin{figure}[h!t!b!p]
\centering
\includegraphics[width=1\textwidth]{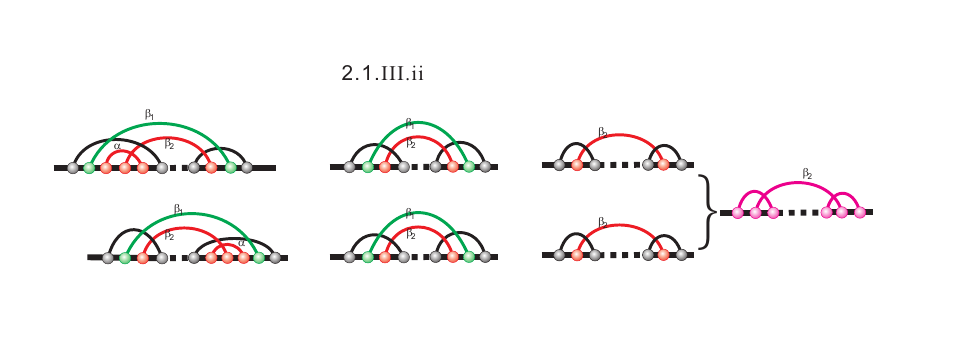}
\caption{\small How to derive a labeled ${\bf C}_4$-element:second scenario.}\label{F:s_2142}
\end{figure}
%%%%%%%%%%%%%%%%%%%%%%%%%
In order to generate a labeled ${\bf C}_4$-element by
$\alpha$-removal and collapsing the unique stack of length two from
a $\mathcal{L}_{2,1}$-shape, $\beta_2$ has to become either a
$2$-arc in a labeled ${\bf C}_4$-element of a
$\mathcal{I}_k(s-1,u_1,u_2,u_3-1,u_4+1)$-shape or an arc that
crosses two $2$-arcs in a labeled ${\bf C}_4$-element of a
$\mathcal{I}_k(s-1,u_1,u_2,u_3,u_4+1)$-shape. We display all
possible scenarios in Fig.~\ref{F:s_2141} and Fig.~\ref{F:s_2142}. Otherwise, $\beta_2$ becomes a labeled arc in a
$\mathcal{I}_k(s-1,u_1,u_2,u_3,u_4)$-shape, which is not contained
in any ${\bf C}_i$-element. Thus
\begin{equation*}
\vert\mathcal{L}_{2,1}^{0}\vert
=2((s-1)-u_1-2u_2-2u_3-3u_4))\,i_k(s-1,u_1,u_2,u_3,u_4), \leqno
(2.1.\text{IV})
\end{equation*}
from which Claim 2.1 follows.\\
%%%
%%%%%%%%%%%%%%%%%%%%%%%%%%%%%%%%%%%%%%%%%%%%%%%%
%%%
{\it Claim 2.2.
\begin{eqnarray*}
|\mathcal{L}_{2,2}| &=& 2u_1\,i_k(s-1,u_1,u_2,u_3,u_4)\\
&+&4(u_2+1)\,i_k(s-1,u_1,u_2+1,u_3-1,u_4)\\
&+&[2u_3\,i_k(s-1,u_1,u_2,u_3,u_4)\\
&+&2(u_3+1)\,i_k(s-1,u_1,u_2,u_3+1,u_4)]\\
&+&[4u_4\,i_k(s-1,u_1,u_2,u_3,u_4)\\
&+&2(u_4+1)\,i_k(s-1,u_1,u_2,u_3,u_4+1)]\\
&+&2((s-1)-u_1-2u_2-2u_3-3u_4))i_k(s-1,u_1,u_2,u_3,u_4).
\end{eqnarray*}
}
%%%
%%%%%%%%%%%%%%%%%%%%%%%%%%%%%%%%%%%%%%%%%%%%%%%%
%%%
In order to prove Claim 2.2, we consider $\mathcal{M}_{2,2}^\alpha$,
the set of matchings induced by removing $\alpha$ from a
$\mathcal{L}_{2,2}$-shape. We set $\mathcal{L}_{2,2}^\alpha$ to be
the set of shapes induced by collapsing the unique
$\mathcal{M}_{2,2}^\alpha$-stack of length two into $\beta_2$. The
removal of $\alpha$ and subsequent collapse can only lead to at most
one additional $\mathbf{C}_i$-element, whence
\begin{equation*}
\mathcal{L}_{2,2} = \mathcal{L}_{2,2}^{\mathbf{C}_1} \, \dot\cup\,
\mathcal{L}_{2,2}^{\mathbf{C}_2} \, \dot\cup \,
\mathcal{L}_{2,2}^{\mathbf{C}_3} \, \dot\cup \,
\mathcal{L}_{2,2}^{\mathbf{C}_4} \, \dot\cup \, \mathcal{L}_{2,2}^0,
\end{equation*}
using analogous notation and reasoning as in the proof of Claim 2.1.

We first prove
\begin{equation*}
\vert\mathcal{L}_{2,2}^{\mathbf{C}_1}\vert
=2u_1\,i_k(s-1,u_1,u_2,u_3,u_4).\leqno (2.2.\text{I})
\end{equation*}
%%%%%%%%%%%%%%%%%%%%%%%%%%%
\restylefloat{figure}\begin{figure}[h!t!b!p]
\centering
\includegraphics[width=1\textwidth]{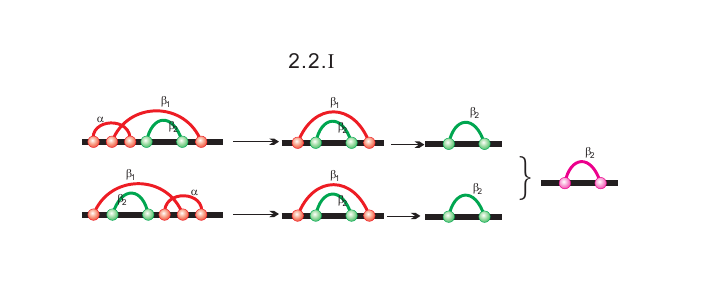}
\caption{\small Removal of $\alpha$ in $\mathcal{L}_{2,2}^\alpha$
and subsequent collapsing of the unique stack in
$\mathcal{M}_{2,2}^\alpha$, generating a labeled ${\bf
C}_1$-element.}\label{F:s_221}
\end{figure}
%%%%%%%%%%%%%%%%%%%%%%%%%
In order to generate a labeled $\mathbf{C}_1$-element by
$\alpha$-removal from a $\mathcal{L}_{2,2}$-shape and collapsing the
unique stack of length two, we need $\beta_2$ to be a $1$-arc in a
$\mathcal{I}_k(s-1,u_1,u_2,u_3,u_4)$-shape, see
Figure~\ref{F:s_221}. Note that this operation only transfers labels
but generates no new $1$-arcs.\\
Next we prove
\begin{equation*}
\vert\mathcal{L}_{2,2}^{\mathbf{C}_2}\vert
=4(u_2+1)\,i_k(s-1,u_1,u_2+1,u_3-1,u_4).\leqno (2.2.\text{II})
\end{equation*}
%%%%%%%%%%%%%%%%%%%%%%%%%%%
\restylefloat{figure}\begin{figure}[h!t!b!p]
\centering
\includegraphics[width=1\textwidth]{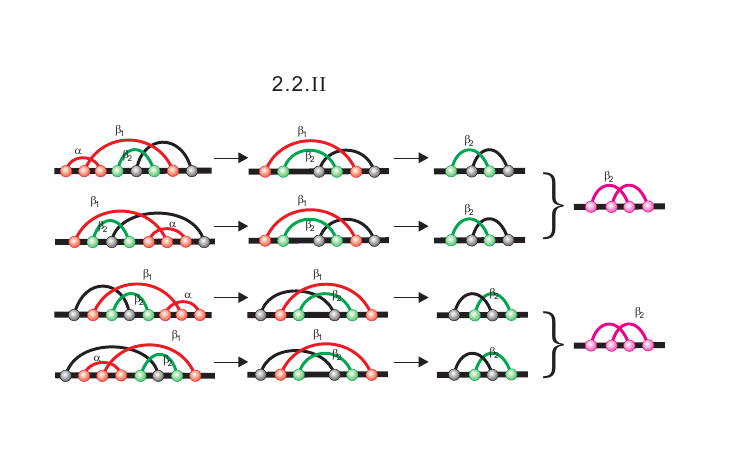}
\caption{\small Removal of $\alpha$ (red arc) in
$\mathcal{L}_{2,2}^\alpha$ and collapsing the unique stack of length
two in $\mathcal{M}_{2,2}^\alpha$, generating a labeled ${\bf
C}_2$-element.}\label{F:s_222}
\end{figure}
%%%%%%%%%%%%%%%%%%%%%%%%%
In order to generate a labeled ${\bf C}_2$-element by
$\alpha$-removal from a $\mathcal{L}_{2,1}$-shape and collapsing the
unique stack of length two, $\beta_2$ has to become a $2$-arc in a
labeled ${\bf C}_2$-element of
$\mathcal{I}_k(s-1,u_1,u_2+1,u_3-1,u_4)$. We display all possible
scenarios in Fig.~\ref{F:s_222}.\\

Third we prove
\begin{equation*}
\vert\mathcal{L}_{2,2}^{\mathbf{C}_3}\vert
=2u_3\,i_k(s-1,u_1,u_2,u_3,u_4)+
2(u_3+1)\,i_k(s-1,u_1,u_2,u_3+1,u_4).\leqno (2.2.\text{III})
\end{equation*}
%%%%%%%%%%%%%%%%%%%%%%%%%%%
\restylefloat{figure}\begin{figure}[h!t!b!p]
\centering
\includegraphics[width=1\textwidth]{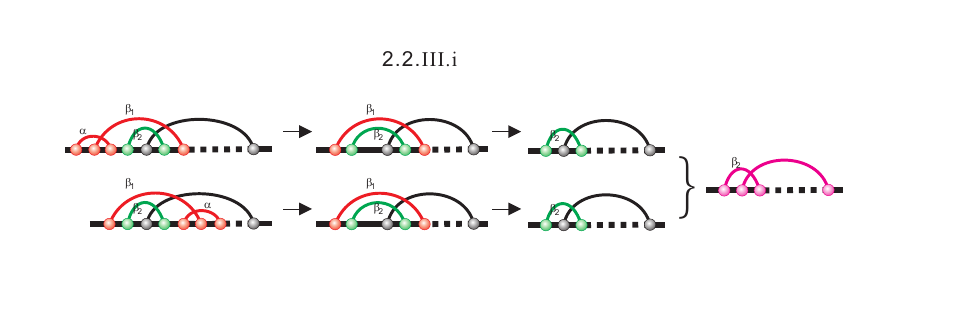}
\caption{\small The term $2u_3\,i_k(s-1,u_1,u_2,u_3,u_4)$: removal
of $\alpha$ in $\mathcal{L}_{2,2}^\alpha$ and collapsing the unique
stack of length two in $\mathcal{M}_{2,2}^\alpha$, generating a
labeled ${\bf C}_3$-element.}\label{F:s_2231}
\end{figure}
%%%%%%%%%%%%%%%%%%%%%%%%%
%%%%%%%%%%%%%%%%%%%%%%%%%%%
\restylefloat{figure}\begin{figure}[h!t!b!p]
\centering
\includegraphics[width=1\textwidth]{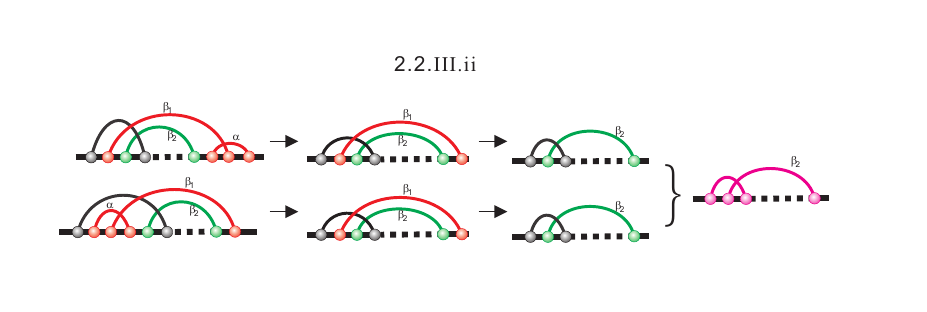}
\caption{\small The term
$2(u_3+1)\,i_k(s-1,u_1,u_2,u_3+1,u_4)$.}\label{F:s_2232}
\end{figure}
%%%%%%%%%%%%%%%%%%%%%%%%%
In order to generate a labeled ${\bf C}_3$-element by
$\alpha$-removal from a $\mathcal{L}_{2,2}$-shape and collapse of
the resulting unique stack, $\beta_2$ has to become either a $2$-arc
in a labeled ${\bf C}_3$-element of a
$\mathcal{I}_k(s-1,u_1,u_2,u_3,u_4)$-shape or an arc uniquely
crossing the $2$-arc in a labeled ${\bf C}_3$-element of a
$\mathcal{I}_k(s-1,u_1,u_2,u_3+1,u_4)$-shape. We display all
possible scenarios in Fig.~\ref{F:s_2231} and
Fig.\ref{F:s_2232}.\\

Fourth we prove
\begin{equation*}
\vert\mathcal{L}_{2,2}^{\mathbf{C}_4}\vert
=4u_4\,i_k(s-1,u_1,u_2,u_3,u_4)
+2(u_4+1)\,i_k(s-1,u_1,u_2,u_3,u_4+1).\leqno (2.2.\text{IV})
\end{equation*}
%%%%%%%%%%%%%%%%%%%%%%%%%%%
\restylefloat{figure}\begin{figure}[h!t!b!p]
\centering
\includegraphics[width=1\textwidth]{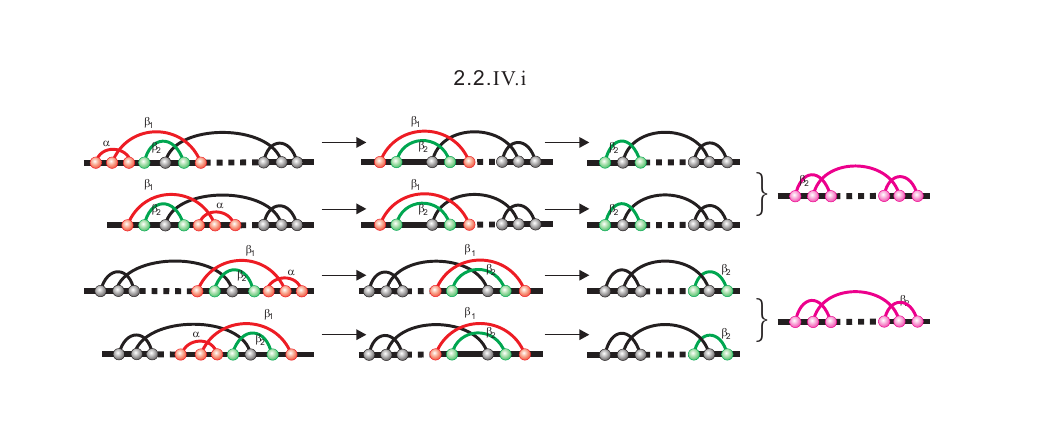}
\caption{\small The term
$4u_4\,i_k(s-1,u_1,u_2,u_3,u_4)$.}\label{F:s_2241}
\end{figure}
%%%%%%%%%%%%%%%%%%%%%%%%%
%%%%%%%%%%%%%%%%%%%%%%%%%%%
\restylefloat{figure}\begin{figure}[h!t!b!p]
\centering
\includegraphics[width=1\textwidth]{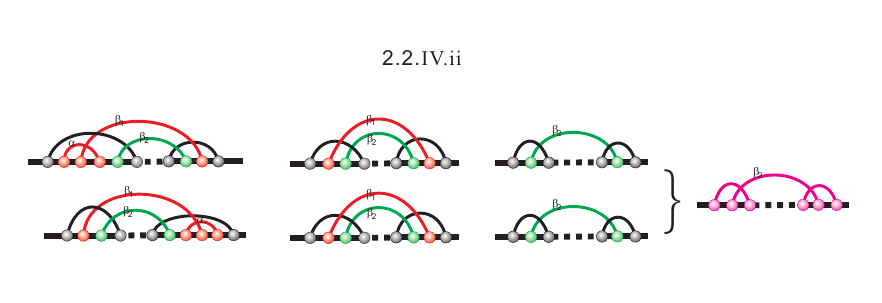}
\caption{\small The term
$2(u_4+1)\,i_k(s-1,u_1,u_2,u_3,u_4+1)$.}\label{F:s_2242}
\end{figure}
%%%%%%%%%%%%%%%%%%%%%%%%%
In order to generate a labeled ${\bf C}_4$-element, $\beta_2$ has to
become either a $2$-arc in a labeled ${\bf C}_4$-element of a
$\mathcal{I}_k(s-1,u_1,u_2,u_3,u_4)$-shape or an arc uniquely
crossing two $2$-arcs in a labeled ${\bf C}_4$-element of a
$\mathcal{I}_k(s-1,u_1,u_2,u_3,u_4+1)$-shape. We display all
possible scenarios in Fig.~\ref{F:s_2241} and Fig.~\ref{F:s_2242}.\\
It remains to observe that $\beta_2$ otherwise becomes a labeled arc
in a $\mathcal{I}_k(s-1,u_1,u_2,u_3,u_4)$-shape, which is not
contained in any ${\bf C}_i$-element. Thus
\begin{equation*}
\vert\mathcal{L}_{2,2}^{0}\vert=
2((s-1)-u_1-2u_2-2u_3-3u_4))\,i_k(s-1,u_1,u_2,u_3,u_4) \leqno
(2.2.\text{V})
\end{equation*}
and Claim 2.2 follows.\\

%%%%%%%%%%%%%%%%%%%%%%%%%%%%%%%%
\noindent{\it Claim 2.3
\begin{eqnarray*}
|\mathcal{L}_{2,3}|&=&2u_1\,i_k(s-2,u_1,u_2,u_3,u_4)\\
&+&4(u_2+1)\,i_k(s-2,u_1,u_2+1,u_3-1,u_4)\\
&+&[2u_3\,i_k(s-2,u_1,u_2,u_3,u_4)\\
&+&2(u_3+1)i_k(s-2,u_1,u_2,u_3+1,u_4)]\\
&+&[4u_4i_k(s-2,u_1,u_2,u_3,u_4)\\
&+&2(u_4+1)i_k(s-2,u_1,u_2,u_3,u_4+1)]\\
&+&2((s-2)-u_1-2u_2-2u_3-3u_4))i_k(s-2,u_1,u_2,u_3,u_4).
\end{eqnarray*}
} Let $\mathcal{M}_{2,3}^\alpha$ be the set of matchings induced by
removing $\alpha$ from a $\mathcal{L}_{2,3}$-shape. Let
$\mathcal{L}_{2,3}^\alpha$ denote the set of shapes induced by
collapsing the unique $\mathcal{M}_{2,3}^\alpha$-stack of length
three into the arc $\beta_3$. The removal of $\alpha$ and subsequent
stack-collapse can only lead to at most one additional
$\mathbf{C}_i$ ($i=1,2,3,4$) element, whence
\begin{equation*}
\mathcal{L}_{2,3} = \mathcal{L}_{2,3}^{\mathbf{C}_1} \, \dot\cup\,
\mathcal{L}_{2,3}^{\mathbf{C}_2} \, \dot\cup \,
\mathcal{L}_{2,3}^{\mathbf{C}_3} \, \dot\cup\,
\mathcal{L}_{2,3}^{\mathbf{C}_4}\, \dot\cup\, \mathcal{L}_{2,3}^0,
\end{equation*}
where $\mathcal{L}_{2,3}^{\mathbf{C}_i}$ denotes the set of labeled
shapes, $\lambda \in \mathcal{L}_{2,3}$, that induce unique shapes
having a labeled ${\bf C}_i$-element containing $\beta_3$ and
$\mathcal{L}_{2,3}^{0}$ denotes those shapes in which there exists
no such  ${\bf C}_i$-element.\\

We first note
\begin{equation*}
\vert\mathcal{L}_{2,3}^{\mathbf{C}_1}\vert
=2u_1\,i_k(s-2,u_1,u_2,u_3,u_4),\leqno (2.3.\text{I})
\end{equation*}
see Fig.\ \ref{F:s_231}.\\
%%%%%%%%%%%%%%%%%%%%%%%%%%%
\restylefloat{figure}\begin{figure}[h!t!b!p]
\centering
\includegraphics[width=1\textwidth]{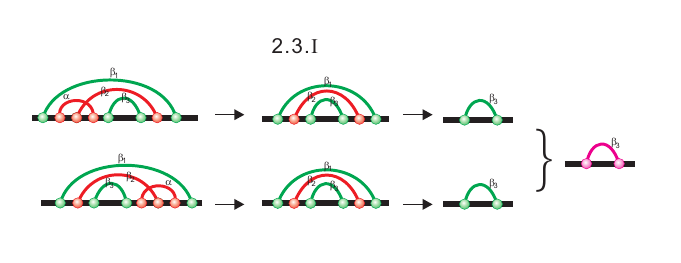}
\caption{\small The term $2u_1\,i_k(s-2,u_1,u_2,u_3,u_4)$.}
\label{F:s_231}
\end{figure}
%%%%%%%%%%%%%%%%%%%%%%%%%

Next we observe
\begin{equation*}
\vert\mathcal{L}_{2,3}^{\mathbf{C}_2}\vert
=4(u_2+1)\,i_k(s-2,u_1,u_2+1,u_3-1,u_4),\leqno (2.3.\text{II})
\end{equation*}
see Fig.~\ref{F:s_232}.\\
%%%%%%%%%%%%%%%%%%%%%%%%%%%
\restylefloat{figure}\begin{figure}[h!t!b!p]
\centering
\includegraphics[width=1\textwidth]{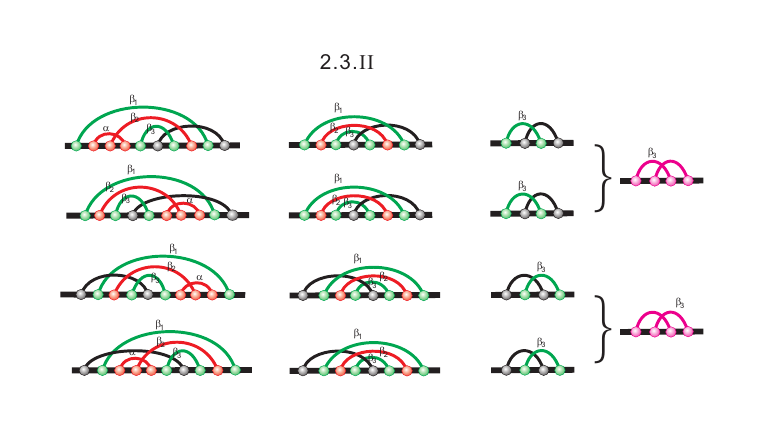}
\caption{\small The term $4(u_2+1)\,i_k(s-2,u_1,u_2+1,u_3-1,u_4)$.}
\label{F:s_232}
\end{figure}
%%%%%%%%%%%%%%%%%%%%%%%%%

Third we verify
\begin{equation*}
\vert\mathcal{L}_{2,3}^{\mathbf{C}_3}\vert
=2u_3\,i_k(s-2,u_1,u_2,u_3,u_4)+2(u_3+1)\,
i_k(s-2,u_1,u_2,u_3+1,u_4),\leqno (2.3.\text{III})
\end{equation*}
see Fig.~\ref{F:s_2331} and Fig.~\ref{F:s_2332}.\\
%%%%%%%%%%%%%%%%%%%%%%%%%%%
\restylefloat{figure}\begin{figure}[h!t!b!p]
\centering
\includegraphics[width=1\textwidth]{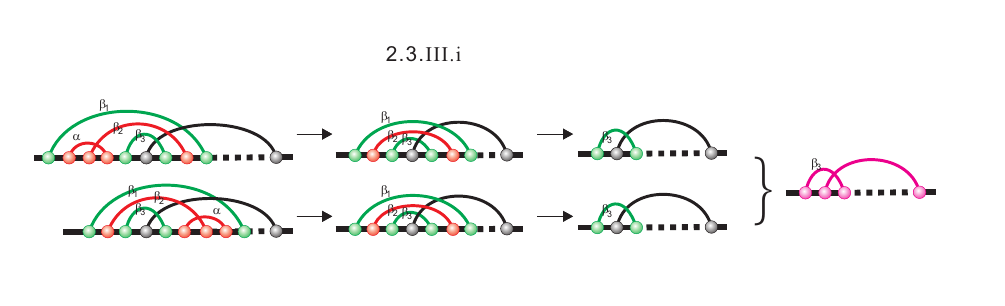}
\caption{\small The term
$2u_3\,i_k(s-2,u_1,u_2,u_3,u_4)$.}\label{F:s_2331}
\end{figure}
%%%%%%%%%%%%%%%%%%%%%%%%%
%%%%%%%%%%%%%%%%%%%%%%%%%%%
\restylefloat{figure}\begin{figure}[h!t!b!p]
\centering
\includegraphics[width=1\textwidth]{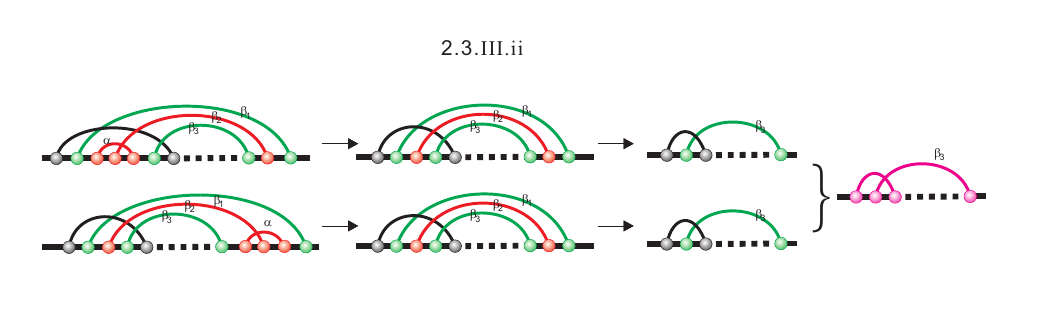}
\caption{\small The term
$2(u_3+1)\,i_k(s-2,u_1,u_2,u_3+1,u_4)$.}\label{F:s_2332}
\end{figure}
%%%%%%%%%%%%%%%%%%%%%%%%%

Fourth we note
\begin{equation*}
\vert\mathcal{L}_{2,3}^{\mathbf{C}_4}\vert =
4u_4\,i_k(s-2,u_1,u_2,u_3,u_4)
+2(u_4+1)\,i_k(s-2,u_1,u_2,u_3,u_4+1),\leqno (2.3.\text{IV})
\end{equation*}
see Fig.~\ref{F:s_2341} and Fig.~\ref{F:s_2342}.\\
%%%%%%%%%%%%%%%%%%%%%%%%%%%
\restylefloat{figure}\begin{figure}[h!t!b!p]
\centering
\includegraphics[width=1\textwidth]{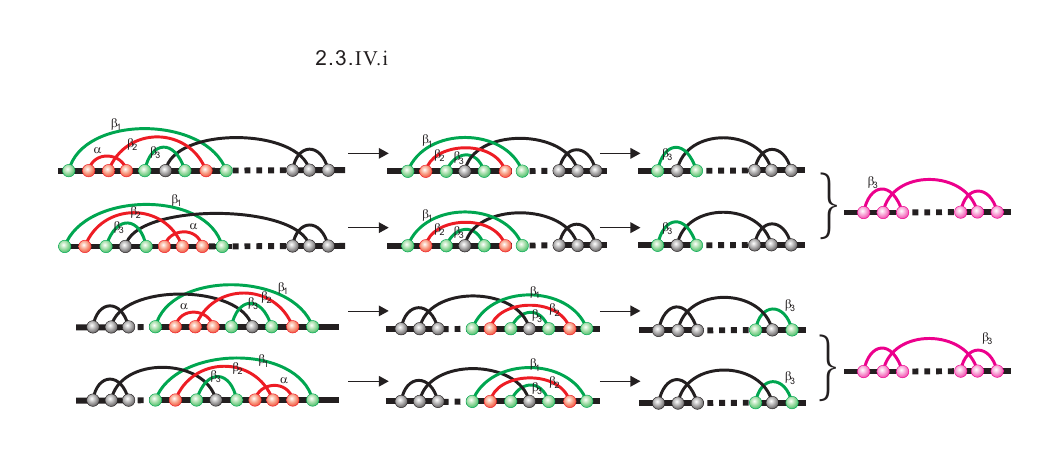}
\caption{\small The term
$4u_4\,i_k(s-2,u_1,u_2,u_3,u_4)$.}\label{F:s_2341}
\end{figure}
%%%%%%%%%%%%%%%%%%%%%%%%%
%%%%%%%%%%%%%%%%%%%%%%%%%%%
\restylefloat{figure}\begin{figure}[h!t!b!p]
\centering
\includegraphics[width=1\textwidth]{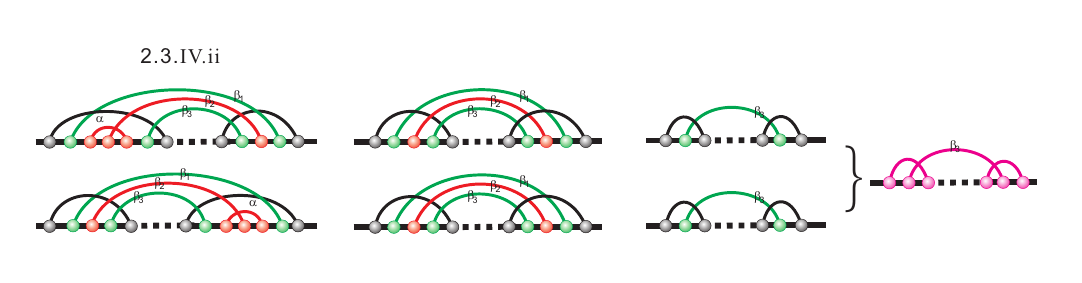}
\caption{\small The term
$2(u_4+1)\,i_k(s-2,u_1,u_2,u_3,u_4+1)$.}\label{F:s_2342}
\end{figure}
%%%%%%%%%%%%%%%%%%%%%%%%%

It remains to observe that $\beta_3$ becomes otherwise a labeled arc
in a $\mathcal{I}_k(s-1,u_1,u_2,u_3,u_4)$-shape, which is not
contained in any ${\bf C}_i$-element. Thus
\begin{equation*}
\vert\mathcal{L}_{2,3}^{0}\vert =2((s-2)-u_1-2u_2-2u_3-3u_4))\,
i_k(s-2,u_1,u_2,u_3,u_4) \leqno (2.3.\text{V})
\end{equation*}
and Claim 2.3 follows.\\

Eq. (\ref{u3recursion}) now follows from Claim 1, 2.1, 2.2, and
Claim 2.3.\\

Next we prove eq. (\ref{u4recursion}). We choose some $\eta\in
\mathcal{I}_k(s+1,u_1,u_2,u_3,u_4+1)$ and label one ${\bf
C}_4$-element denoting one of its two $2$-arcs by $\alpha$. We
denote the set of these labeled shapes, $\lambda$, by
$\mathcal{L}_*$. Clearly,
\begin{equation*}
\vert \mathcal{L}_*\vert =2(u_4+1)\,i_k(s+1,u_1,u_2,u_3,u_4+1).
\end{equation*}
Let $\gamma$ be the arc crossing $\alpha$. The removal of $\alpha$
can lead to either an additional ${\bf C}_2$- or an additional ${\bf
C}_3$-element in a shape, whence
\begin{equation}
\mathcal{L_*}= \mathcal{L}_*^{{\bf C}_2}\, \dot\cup\,
\mathcal{L}_*^{{\bf C}_3},
\end{equation}
where $\mathcal{L}_*^{{\bf C}_i}$ denotes the set of labeled shapes,
$\lambda\in\mathcal{L}_*$, that induce shapes having
a labeled ${\bf C}_i$-element containing $\gamma$.\\

First,
\begin{equation*}
\vert\mathcal{L}_*^{{\bf
C}_2}\vert=2(u_2+1)\,i_k(s,u_1,u_2+1,u_3,u_4),
\end{equation*}
follows by inspection of Fig.\ \ref{F:u4_2}.\\
%%%%%%%%%%%%%%%%%%%
\restylefloat{figure}\begin{figure}[h!b!p!t]
\centering
\includegraphics[width=1\textwidth]{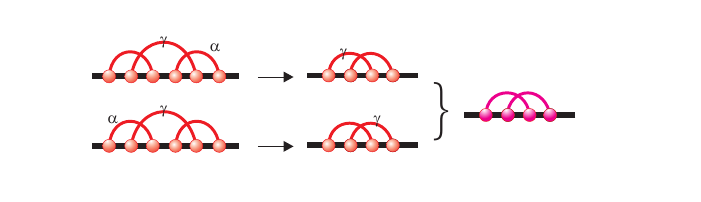}
\caption{\small The term
$2(u_2+1)\,i_k(s,u_1,u_2+1,u_3,u_4)$.}\label{F:u4_2}
\end{figure}
%%%%%%%%%%%%%%%%%%%%%

We next observe
\begin{equation*}
\vert\mathcal{L}_*^{{\bf
C}_3}\vert=(u_3+1)\,i_k(s,u_1,u_2,u_3+1,u_4).
\end{equation*}
see Fig.\ \ref{F:u4_1}, from which eq. \ref{u4recursion} immediately
follows.\\
%%%%%%%%%%%%%%%%%%%
\restylefloat{figure}\begin{figure}[h!b!p!t]
\centering
\includegraphics[width=1\textwidth]{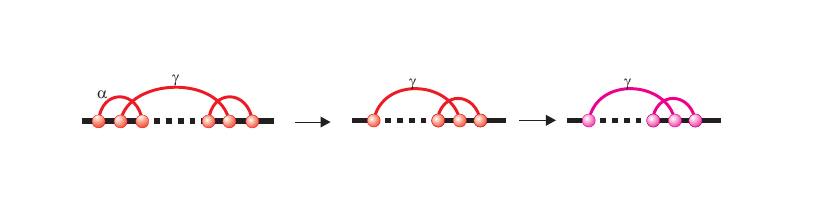}
\caption{\small The term
$(u_3+1)\,i_k(s,u_1,u_2,u_3+1,u_4)$.}\label{F:u4_1}
\end{figure}
%%%%%%%%%%%%%%%%%%%%%

It remains to show that the numbers $i_k(s,u_1,u_2,u_3,u_4)$ can be
uniquely derived from eq.~(\ref{E:erni2}), eq.~(\ref{E:5ini}),
eq.~(\ref{u3recursion}) and eq.~(\ref{u4recursion}). This follows by
induction on $s$.
\end{proof}

%%%%%%%%%%%%%%%%%%%%%%%%%%%%%%%
%%%%%%%%%%%%%%%%%%%%%%%%%%%%%%%%%%%%%%%%%%%%%%%%%%%%%%%%%%%%%%%%%%%%%%%%%%%%%%%%%%%%%%%%%%%%%%%%%%%%%%%%%%%%%%%%%%%%%%%%%
\newchap{Asymptotic enumeration of canonical $3$-noncrossing skeleton diagrams}
\thispagestyle{fancyplain}
%%%
%%%%%%%%%%%%%%%%%%%%%%%%%%%%%%%%%%%%%%%%%%%%%%%%%%%%%%%%%%%%%%%%%%%%%%%%%%%
%%%

A {\it skeleton diagram}, $S$, is a labeled graph whose core has no noncrossing arcs and dependency graph is connected. Here {\it dependency graph} is defined as a graph whose each vertex corresponds to each arc of diagram and two vertices are adjacent if and only if two corresponding arcs cross, see Fig.~\ref{F:dependency}. Recall that an {\it interval} is a sequence of consecutive, unpaired
bases $(i, i + 1, \ldots, j)$, where $i - 1$ and $j + 1$ are paired. The skeleton diagram and the interval have been present in Fig.~\ref{F:skeleton-intro} vividly.

In this chapter, we will discuss about the generating function equation of 3-noncrossing skeleton matching and conclude the explicit generating function of 3-noncrossing, 3-canonical skeleton diagrams in Section \ref{S:ca-sk-gf}. At last we will study the asymptotic enumeration of numbers of skeleton matchings and skeleton structures, and discuss the asymptotic distribution of arcs in a canonical $3$-noncrossing skeleton diagrams with certain length.

%%%%
%%%%%%%%%%%%%%%%%%%%%%%%%%%%%%%%%%%%%%%%%%%%%%%%%%%%%%%%%%%%%%%%%
%%%%
%%%%
%%%%%%%%%%%%%%%%%%%%%%%%%%%%%%%%%%%%%%%%%%%%%%%%%%%%%%%%%%%%%%%%%
%%%%
%%%%
%%%%%%%%%%%%%%%%%%%%%%%%%%%%%%%%%%%%%%%%%%%%%%%%%%%%%%%%%%%%%%%%%
%%%%
%%%%
%%%%%%%%%%%%%%%%%%%%%%%%%%%%%%%%%%%%%%%%%%%%%%%%%%%%%%%%%%%%%%%%%
%%%%

\restylefloat{figure}\begin{figure}[h!b!p!t]
\centering
\includegraphics{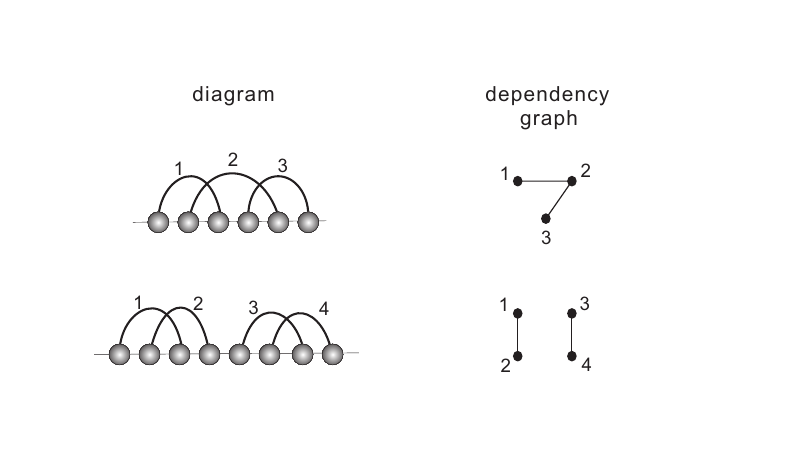}
\caption{Diagram and dependency graph: we can clearly see that the top diagram's dependency graph is connected while the bottom one's is not, which implies that skeleton diagram is the top diagram, not the bottom one.\label{F:dependency}}
\end{figure}

%%%
%%%%%%%%%%%%%%%%%%%%%%%%%%%%%%%%%%%%%%%%%%%%%%%%%%%%%%%%%%%%%%%%%%%%%%%%
%%%
\section{Shapes and irreducible structures}
%%%
%%%%%%%%%%%%%%%%%%%%%%%%%%%%%%%%%%%%%%%%%%%%%%%%%%%%%%%%%%%%%%%%%%%%%%%%
%%%

%%%
%%%%%%%%%%%%%%%%%%%%%%%%%%%%%%%%%%%%%%%%%%%%%%%%%%%%%%%%%%%%%%%%%%%%%%%%%%
%%%
\begin{definition}
A ${\sf V}_k$-shape is a $k$-noncrossing matching having stacks of length
exactly one.
\end{definition}
%%%
%%%%%%%%%%%%%%%%%%%%%%%%%%%%%%%%%%%%%%%%%%%%%%%%%%%%%%%%%%%%%%%%%%%%%%%%%%
%%%
In the following part we refer to ${\sf V}_k$-shape satisfying  skeleton's property simply as {\it skeleton shapes}.
That is, given a $3$-noncrossing skeleton, $\delta$, its shape
is obtained by first replacing each stem by an arc and then removing all
isolated vertices, see Fig.~\ref{F:skeleton-shape}.

\restylefloat{figure}\begin{figure}[h!b!t!p]
\centerline{\includegraphics[width=0.65\textwidth]{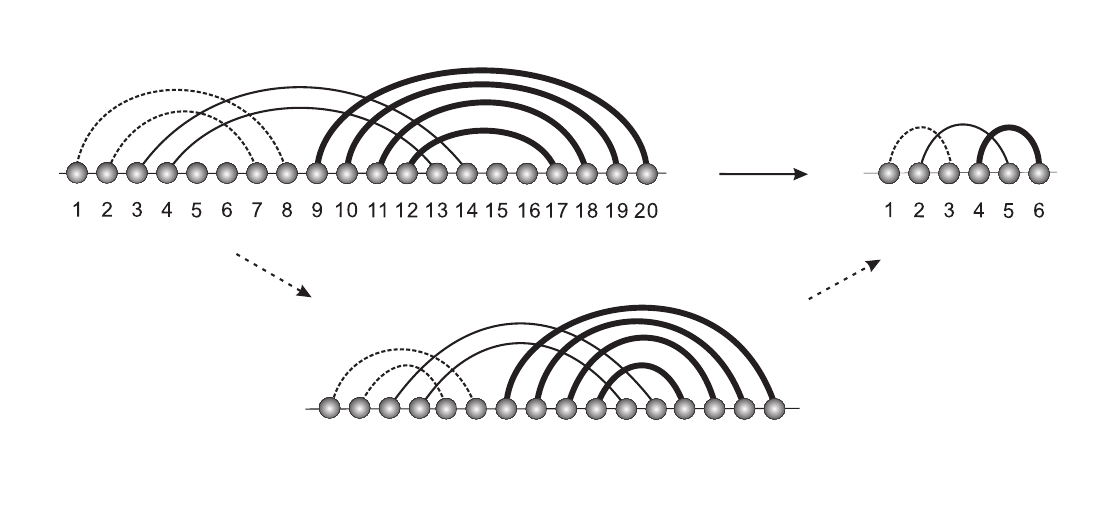}}
\caption{\small From diagrams to shapes: A $3$-noncrossing skeleton
diagram (top-left) is mapped in two steps into its skeleton shape
(top-right).} \label{F:skeleton-shape}
\end{figure}
%%%%%%%%%%%%%%%%%%%%%%%%%%%%%%%%%%%
%%%%%%%%%%%%%%%%%%%%%%%%%%%%%%%%%%%%%%%%%%%%%%%%%%%%%%%%%%%%%%%%%%%%%%
%%%%%%%%%%%%%%%%%%%%%%%%%%%%%%%%%%%

\begin{definition}\label{D:irreducible}
A $k$-noncrossing diagram of length $n$ is {\it irreducible} (see Fig. \ref{F:irreducible}), if there does not exist such a vertex $\ell\in\{1,2,\ldots, n\}$, which separates the set of arcs into two disjoint subsets, ${\bf A}_1$ and ${\bf A}_2$,
\begin{eqnarray*}
{\bf A}_1&=&\{(i,j)\mid j<\ell\},\\
{\bf A}_2&=& \{(i,j)\mid i\geq\ell\}.
\end{eqnarray*}
\end{definition}

\restylefloat{figure}\begin{figure}[h!b!t!p]
\centerline{\includegraphics{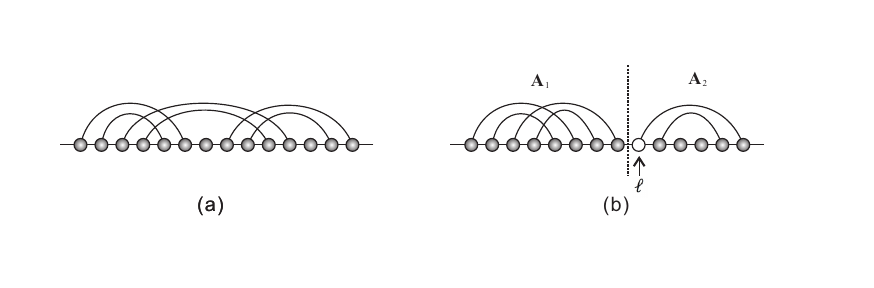}}
\caption{\small Irreducible and un-irreducible: ({\sf a}) shows the irreducible $3$-noncrossing diagram, in which there is no such vertex $\ell$ defined in Definition \ref{D:irreducible}. However, in ({\sf b}), the vertex pointed by an arrow is the $\ell$ which separates the set of arcs into ${\bf A}_1$ and ${\bf A}_2$.\label{F:irreducible}}
\end{figure}

%%%
%%%%%%%%%%%%%%%%%%%%%%%%%%%%%%%%%%%%%%%%%%%%%%%%%%%%%%%%%%%%%%%%%%%%%%%%%%%%
%%%

%%%%%%%%%%%%%%%%%%%%%%%%%%%%%%%%%%%%%%%%%%%%%%%%%%%%%%%%%%%%%%%%%%%%%%%%%
%%%

\section{$3$-noncrossing skeleton matching}\label{S:skeleton-shape}

%%%
%%%%%%%%%%%%%%%%%%%%%%%%%%%%%%%%%%%%%%%%%%%%%%%%%%%%%%%%%%%%%%%%%%%%%%%%%%
%%%

In this section we enumerate $3$-noncrossing skeleton matching and conclude a functional equation.

\begin{definition}\label{D:skeleta_matching}
A diagram is named by {\bf matching}, if and only if the diagram has no isolated vertices.
\end{definition}
%%%
%%%%%%%%%%%%%%%%%%%%%%%%%%%%%%%%%%%%%%%%%%%%%%%%%%%%%%%%%%%%%%%%%%%%%
%%%
Now we begin to compute the generating function of skeleton matching. We denote ${\sf Irr}(h)$ and ${\sf S}(h)$ as the number of irreducible matchings and skeleton matchings respectively. These two matchings both satisfy $3$-noncrossing property and contain $h$ arcs. In term of the definition of skeleton diagram, the arc number of a skeleton diagram is two at least. For showing the generating function of skeleton matching from the constant term, we suppose ${\sf S}(0)={\sf S}(1)=1$.

\begin{lemma}\label{irreducible1}
Let ${\bf Irr}(z)=\sum_{h\geq0}{\sf Irr}(h)z^h$ be the OGF of $3$-noncrossing irreducible diagram, then we have
the following functional equation,
\begin{equation}
1+\sum_{h\geq 1}{\sf S}(h){\bf F}_3(z)^{2h-1}z^h={\bf Irr}(z).
\end{equation}
\end{lemma}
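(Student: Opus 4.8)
The plan is to exploit the well-known decomposition of an arbitrary $3$-noncrossing matching into a sequence of irreducible components, and then further decompose each irreducible matching in terms of a skeleton ``core'' with stems inflated onto its arcs. First I would recall the structural fact (cf.\ \cite{Jin:10}) that every matching $M$ has a unique factorization $M = (M_1, M_2, \dots, M_t)$ into irreducible blocks placed side by side along the line; at the level of generating functions this gives $\mathbf{F}_3(z) = \dfrac{1}{1-\mathbf{Irr}(z)}$, or equivalently $\mathbf{Irr}(z) = 1 - \dfrac{1}{\mathbf{F}_3(z)}$. Since the claimed identity is $1 + \sum_{h\ge1}{\sf S}(h)\,\mathbf{F}_3(z)^{2h-1}z^h = \mathbf{Irr}(z)$, the real content is to show that the left-hand side also equals $\mathbf{Irr}(z)$, i.e.\ to express irreducible matchings via skeleton matchings.

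The key step is the bijection between irreducible matchings and skeleton matchings with ``decorations''. Given an irreducible $3$-noncrossing matching with $h$ arcs: every arc either belongs to the crossing core (the skeleton part) or not; contracting each maximal stem to a single arc and discarding the noncrossing arcs that hang off the skeleton, one lands on a skeleton matching $S$ with some number of arcs, say $h'$; conversely, to reconstruct an irreducible matching from $S$ one inflates each of the $h'$ skeleton-arcs into a stem (a nested stack, possibly with further matchings hanging inside) and one may also insert arbitrary matchings into the $2h'-1$ ``gaps'' between the $2h'$ endpoints of $S$ — except, to preserve irreducibility, the two outermost gaps must remain empty (otherwise a splitting vertex $\ell$ would appear). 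Each inflated stem and each inserted matching contributes a factor of $\mathbf{F}_3(z)$ (here I would be careful: a stem of size $\ge1$ together with whatever nests inside it is exactly counted by $\mathbf{F}_3(z)$ after accounting for the fact that the single outermost arc is already present in $S$, so each skeleton-arc supplies one ``free'' arc of weight $z$ and two flanking $\mathbf{F}_3(z)$-factors from the interior, giving the pattern $\mathbf{F}_3(z)^{2}\cdot z$ per arc, and the $2h'-1$ internal gaps each give $\mathbf{F}_3(z)$, with the two boundary gaps forced empty). Collecting these weights over a skeleton matching with $h$ arcs yields exactly $z^h\,\mathbf{F}_3(z)^{2h-1}$, so summing over all skeleton matchings produces $\sum_{h\ge1}{\sf S}(h)z^h\mathbf{F}_3(z)^{2h-1}$; adding the degenerate contribution (the empty matching, or equivalently the ${\sf S}(0)=1$ term) accounts for the constant $1$, and the total equals $\mathbf{Irr}(z)$.

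I would therefore organize the proof as: (a) state the irreducible-block factorization and record $\mathbf{Irr}(z) = 1 - 1/\mathbf{F}_3(z)$ (or just use it implicitly as a known identity, since only $\mathbf{Irr}(z)$ appears in the statement); (b) define the contraction map from irreducible matchings to skeleton matchings and verify it is well-defined and that the fibre over a skeleton matching with $h$ arcs is in bijection with the set of ways to choose, for each of the $h$ arcs, a stem-with-contents (weight $z\,\mathbf{F}_3(z)^2$) and, for each of the $2h-1$ interior endpoint-gaps, an arbitrary matching (weight $\mathbf{F}_3(z)$), while forcing the two boundary gaps to be empty; (c) translate this into the generating-function identity; (d) reconcile the index conventions ${\sf S}(0)={\sf S}(1)=1$ with the ``arc number $\ge 2$'' convention for genuine skeleta, so that the $h=0$ term gives the $1$ and the $h=1$ term gives $z\,\mathbf{F}_3(z)$ consistently.

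The main obstacle I anticipate is step (b): getting the exact bookkeeping of weights right, in particular (i) making sure that ``inflate an arc to a stem and fill its interior'' is counted by precisely $z\,\mathbf{F}_3(z)^2$ and not an off-by-one variant, and (ii) correctly identifying which gaps must be empty to maintain irreducibility — this is where the definition of irreducible (the non-existence of a separating vertex $\ell$) has to be used carefully, and it is easy to miscount the number of ``active'' gaps as $2h-1$ versus $2h+1$. A clean way to handle (ii) is to note that a separating vertex can only occur at the leftmost or rightmost endpoint, so irreducibility is equivalent to forbidding nonempty material strictly to the left of the first endpoint and strictly to the right of the last, leaving exactly the $2h-1$ genuinely internal gaps free; once this is pinned down the generating-function computation is routine.
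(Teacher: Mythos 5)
Your overall instinct---reduce the irreducible matching to a skeleton core with arbitrary matchings filling its $2h-1$ internal intervals---is the right one, but two steps of your sketch fail as stated. First, the extraction map is not pinned down correctly. The ``crossing core'' (the set of crossed arcs, after discarding the noncrossing arcs hanging off it) of an irreducible matching need not be a skeleton matching at all: take the perfect matching $(1,10),(2,4),(3,5),(6,8),(7,9)$; it is $3$-noncrossing and irreducible, every inner arc is crossed, yet the dependency graph of the crossed arcs has two components, so your map does not land in the class counted by ${\sf S}(h)$. The paper's proof defines the core as the set of arcs crossing a \emph{boundary} (maximal) arc, and the essential use of irreducibility is precisely to show that the boundary's dependency graph is connected, hence that this core is a skeleton; in that setup the example above simply falls into the $h=1$ term $z\,{\bf F}_3(z)$ (single maximal arc, arbitrary matching inside). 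Your proposal uses irreducibility only to force the two outermost gaps to be empty, which is the easy part, and omits this central connectivity argument.

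Second, the fibre description and its weights are inconsistent with the identity you are proving. Assigning $z\,{\bf F}_3(z)^2$ to each skeleton arc (stem-with-contents) \emph{and} ${\bf F}_3(z)$ to each of the $2h-1$ internal gaps multiplies out to $z^h{\bf F}_3(z)^{4h-1}$, not the claimed $z^h{\bf F}_3(z)^{2h-1}$. In the correct decomposition there is no stem inflation at all: the skeleton sits inside the irreducible matching as a literal sub-matching, each of its arcs carries the bare weight $z$, and every other arc---including arcs stacked parallel to skeleton arcs---lies inside one of the $2h-1$ intervals and is absorbed into the factor ${\bf F}_3(z)^{2h-1}$. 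Contracting stems, as you propose, also clashes with the indexing of the statement: the skeleton matching $(1,6),(2,5),(3,8),(4,7)$ contains two stacks and must be counted by ${\sf S}(4)$ with weight $z^4$, not by its contracted shape with $h'=2$ and stem factors; shape-level inflation with stem generating functions is what the paper does later, in the proof of Theorem~\ref{T:skeleta-gen}, not here. To repair your argument you would have to drop the stem contraction, define the core via boundary arcs, prove its connectivity from irreducibility as in the paper, check that all remaining arcs lie inside single intervals of the core, and only then read off the routine count $z^h{\bf F}_3(z)^{2h-1}$, with the conventions ${\sf S}(0)={\sf S}(1)=1$ supplying the constant term and the $z\,{\bf F}_3(z)$ term.
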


\begin{proof}
For an arbitrary irreducible structure $\mathcal
{R}$ (See Figure \ref{IR}), we denote $\mathcal{B}$ be a set of arcs
of $\mathcal{R}$. Let $(i,j)$ be an arbitrary arc in $\mathcal{B}$,
then there will be no arcs such as $(i',j')$, which satisfies $i'<i,
j'>j$. We call such $\mathcal{B}$ the boundary of
$\mathcal{R}$. Then we set
\[
\mathcal{S}=\{\text{arc}\ \alpha\in\mathcal{R}: \alpha\
\text{crosses some arc in}\ \mathcal{B}\}
\]
\restylefloat{figure}\begin{figure}[!htbp]
\centering
\includegraphics[width=1\textwidth]{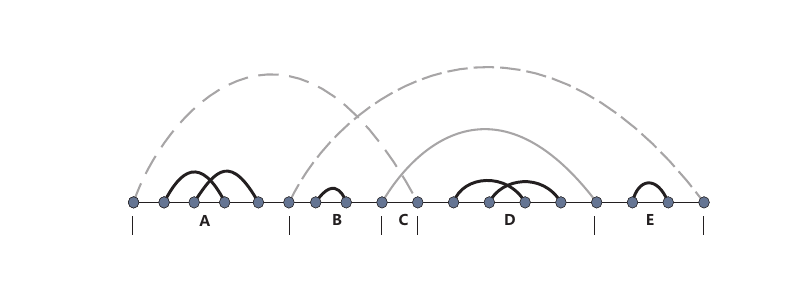}
\caption{The irreducible structure $\mathcal {R}$: The broken lines
are the {\it boundary} of $\mathcal {R}$. The grey lines are the
{\it skeleton} of $\mathcal {R}$, denoted by $\mathcal {S}$. And
letters A, B, C, D, E respectively represent the {\it intervals} of
$\mathcal {S}$.\label{IR}}
\end{figure}
If $\mathcal{S}\neq\emptyset$, we will prove $\mathcal{B}$ is a skeleton.

Assume $\mathcal{B}$ is not a skeleton, without losing generality, by definition, the dependency graph of $\mathcal{B}$ can be separated into several connected components.
Without losing generality, we set that the dependency graph of $\mathcal{B}$ contains two connected components, $G_1$ and $G_2$.

Therefore, we have
\begin{align}
\mathcal{R}& = \bigcup_{(i,j)\in \mathcal{B}}\bigcup_{i\leq i'<j'\leq j}(i',j')\nonumber\\
&= \left(\bigcup_{(i,j)\in V(G_1)}\bigcup_{i\leq i'<j'\leq j}(i',j')\right)\cup \left(\bigcup_{(i,j)\in V(G_2)}\bigcup_{i\leq i'<j'\leq j}(i',j')\right)\label{irreducible_part}.
\end{align}

In eq. (\ref{irreducible_part}), we denote the first arcs set as $\mathcal{R}_1$ and the second arcs set as $\mathcal{R}_2$. In $\mathcal{R}_1$, let the left and right most points be $p$ and $q$ respectively, while in $\mathcal{R}_2$, let the left and right most points be $k$ and $l$ respectively. Obviously $p, q, k, l$ are all the vertices of $\mathcal{B}$, hence we have $p<q<k<l$ or $k<l<p<q$, which means $\mathcal{R}_1$ and $\mathcal{R}_2$ are disjoint, moreover,
\begin{align*}
\mathcal{R}_1&=\{(i,j)\mid j<q+1\},\\
\mathcal{R}_2&=\{(i,j)\mid i\geq q+1\},
\end{align*}
or
\begin{align*}
\mathcal{R}_1&=\{(i,j)\mid i\geq p\},\\
\mathcal{R}_2&=\{(i,j)\mid j< p\}.
\end{align*}

It is contradicted to the irreducibility of $\mathcal{R}$. Therefore, $\mathcal{B}$ is a skeleton. It immediately leads that $\mathcal{S}$ is also a skeleton.

Then we discuss about
$\mathcal{R}\backslash\mathcal{S}$. Let arc
$\beta\in\mathcal{R}\backslash\mathcal{S}$. Trivially, $\beta$ must
be in the interval of the skeleton $\mathcal{S}$. It means that all
arcs in $\mathcal{R}\backslash\mathcal{S}$ are in the intervals of
the skeleton $\mathcal{S}$, in other words, in each interval of
$\mathcal{S}$, there is a structure of $3$-noncrossing perfect
matching, denoted by $\mathcal{F}_3$, including empty set.

Then we get the following conclusion. Let
$\mathcal{R}_{\mathcal{S}}$ be a subset of irreducible structure
$\mathcal{R}$, all of whose boundaries induce the skeleton structure
$\mathcal{S}$, then we have the symbolic relation,
\begin{equation*}
\mathcal{R}=\emptyset+\mathcal{R}_\emptyset+\left(\sum_{\mathcal{S}\in{\mathbb{S}}}\mathcal{R}_{\mathcal{S}}\right),
\end{equation*}
where $\mathbb{S}$ is the set of all the skeleton matching structures.

Furthermore, we denote the arc number of some $\mathcal{S}$ as  ${\rm a}(\mathcal{S})$, then
such $\mathcal{S}$ has $2{\rm a}(\mathcal{S})-1$ intervals (if $\mathcal{S}=\emptyset$, there will be only
one interval). According this we have the following
symbolic relation,
\begin{align}
\mathcal{R}&=\emptyset+\mathcal{A}\times\mathcal{F}_3+\sum_{\mathcal{S}\in\mathbb{S}} \mathcal{A}^{{\rm a}(\mathcal{S})}\times(\mathcal{F}_3)^{2{\rm a}(\mathcal{S})-1}\nonumber\\
&=\emptyset+\mathcal{A}\times\mathcal{F}_3+\sum_{h\geq2} {\sf S}(h)\left[\mathcal{A}^{h}\times(\mathcal{F}_3)^{2h-1}\right],\label{symbolicrelation}
\end{align}
where $\mathcal{A}$ is the class of arcs. According to eq. (\ref{symbolicrelation}) and $\mathcal{A}\doteq z$ we have the functional equation
\begin{equation}
{\bf Irr}(z)=1+\sum_{h\geq1}{\sf S}(h)z^h({\bf F}_3(z))^{2h-1}.
\end{equation}

\end{proof}

\begin{lemma}\label{irreducible2}
The generating function of ${\bf Irr}(z)$ satisfies the following equation,
\begin{equation*}
{\bf Irr}(z)=2-{1 \over {\bf F}_3(z)}.\label{irreducible}
\end{equation*}
\end{lemma}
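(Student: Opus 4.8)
The plan is to obtain the identity from the unique decomposition of an arbitrary $3$-noncrossing matching into \emph{irreducible blocks}, which at the level of ordinary generating functions is a sequence construction. Let $\mathcal{I}_3$ denote the class of \emph{nonempty} irreducible $3$-noncrossing matchings, so that its OGF, with $z$ marking arcs, is $I(z)=\sum_{h\ge 1}{\sf Irr}(h)\,z^h={\bf Irr}(z)-1$ (the constant term of ${\bf Irr}(z)$ equals $1$ by the normalisation fixed in Lemma~\ref{irreducible1}). I will show that every element of $\mathcal{F}_3$, the class of $3$-noncrossing matchings, is in a unique way a concatenation of members of $\mathcal{I}_3$, i.e. $\mathcal{F}_3\cong\textsc{Seq}(\mathcal{I}_3)$. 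The sequence construction of Section~2.2 then gives
\[
{\bf F}_3(z)=\frac{1}{1-I(z)}=\frac{1}{1-({\bf Irr}(z)-1)}=\frac{1}{2-{\bf Irr}(z)},
\]
and, since ${\bf F}_3(z)=1+z+\cdots$ is invertible as a formal power series, solving for ${\bf Irr}(z)$ yields ${\bf Irr}(z)=2-1/{\bf F}_3(z)$, which is the assertion.

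The decomposition itself is the standard cut-point argument. Given a matching $M$ on $[2h]$, call $\ell\in\{2,\dots,2h\}$ a \emph{cut point} of $M$ if no arc $(i,j)$ satisfies $i<\ell\le j$; equivalently, with the notation of Definition~\ref{D:irreducible}, the arc set of $M$ is the disjoint union ${\bf A}_1\dot\cup{\bf A}_2$. Because $M$ has no isolated vertices, any such $\ell$ genuinely splits $M$ (both ${\bf A}_1$ and ${\bf A}_2$ are nonempty), and the cut points together with the endpoints partition $[2h]$ into consecutive intervals on each of which $M$ restricts to a matching possessing no interior cut point, that is, to a nonempty irreducible matching. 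This ordered sequence of blocks $(M_1,\dots,M_r)$ is uniquely determined by $M$, and conversely any concatenation of nonempty irreducible matchings is a matching whose cut points are precisely the junctions between consecutive blocks; hence $M\mapsto(M_1,\dots,M_r)$ is a size-preserving bijection between all matchings and $\textsc{Seq}(\mathcal{I}_3^{\,\mathrm{unrestricted}})$.

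The one point that still needs to be verified—and it is the only place where any care is required—is that this bijection respects the $3$-noncrossing condition, so that it restricts to $\mathcal{F}_3\cong\textsc{Seq}(\mathcal{I}_3)$. If $(i_1,j_1),(i_2,j_2),(i_3,j_3)$ are mutually crossing, say $i_1<i_2<i_3<j_1<j_2<j_3$, then no $\ell$ with $i_1<\ell\le j_1$ can be a cut point, since the arc $(i_1,j_1)$ straddles it; consequently all six endpoints lie in a single block $M_t$, so a $3$-crossing of $M$ is a $3$-crossing of one of its blocks, and trivially a $3$-crossing of any block is one of $M$. Therefore $M$ is $3$-noncrossing iff every $M_t$ is, and the bijection restricts as claimed. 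Combining this with the computation above finishes the proof; no analytic input is needed, and the main (indeed only) obstacle is the bookkeeping in the cut-point argument and its interaction with crossings, which is routine.
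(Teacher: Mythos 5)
Your proof is correct and takes essentially the same route as the paper: the paper peels off the leftmost irreducible block via the symbolic relation $\mathcal{F}_3=\emptyset+(\mathcal{R}-\emptyset)\times\mathcal{F}_3$, i.e.\ ${\bf F}_3(z)=1+({\bf Irr}(z)-1)\,{\bf F}_3(z)$, which is just the recursive form of your sequence construction $\mathcal{F}_3\cong\textsc{Seq}(\mathcal{I}_3)$ and yields the same identity after the same algebra. Your explicit verification that a cut point cannot separate a $3$-crossing, so the block decomposition restricts to the $3$-noncrossing classes, is a detail the paper leaves implicit.
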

\begin{proof}
Let $\mathcal{R}$ be the combinatorial class of irreducible $3$-noncrossing matching structures, and $\mathcal{F}_3$ be the class of $3$-noncrossing matching structures. Then we derive the following symbolic relation:
\begin{equation}
\mathcal{F}_3=\emptyset+(\mathcal{R}-\emptyset)\times\mathcal{F}_3.
\end{equation}
Therefore we have the functional equation that
\begin{equation}
{\bf F}_3(z)=1+({\bf Irr}(z)-1){\bf F}_3(z).
\end{equation}
Transform the equation above, we can get the eq. (\ref{irreducible}).
\end{proof}

\begin{theorem}
Let ${\bf S}(z)$ be the generating function of ${\sf S}(h)$, then, we have
\begin{equation}
{\bf S}(z{\bf F}_3(z)^{2})={\bf F}_3(z).
\end{equation}
\end{theorem}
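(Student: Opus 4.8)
The plan is to obtain the stated functional equation by combining Lemma~\ref{irreducible1} and Lemma~\ref{irreducible2} and then recognizing the resulting series as a composition. First I would record that ${\bf F}_3(0)=f_3(0,0)=1$, so that $z\,{\bf F}_3(z)^2$ is a formal power series with vanishing constant term; this guarantees that the composite ${\bf S}\big(z\,{\bf F}_3(z)^2\big)$ is a well-defined element of $\mathbb{C}[[z]]$, with
\begin{equation*}
{\bf S}\big(z\,{\bf F}_3(z)^2\big)=\sum_{h\geq 0}{\sf S}(h)\,z^h\,{\bf F}_3(z)^{2h}.
\end{equation*}

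Next I would start from the identity of Lemma~\ref{irreducible1},
\begin{equation*}
1+\sum_{h\geq 1}{\sf S}(h)\,{\bf F}_3(z)^{2h-1}z^h={\bf Irr}(z),
\end{equation*}
insert the closed form ${\bf Irr}(z)=2-1/{\bf F}_3(z)$ supplied by Lemma~\ref{irreducible2}, and multiply both sides by ${\bf F}_3(z)$ to clear the denominator. This produces
\begin{equation*}
{\bf F}_3(z)+\sum_{h\geq 1}{\sf S}(h)\,z^h\,{\bf F}_3(z)^{2h}=2\,{\bf F}_3(z)-1,
\end{equation*}
hence $\sum_{h\geq 1}{\sf S}(h)\,z^h\,{\bf F}_3(z)^{2h}={\bf F}_3(z)-1$. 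Adding the $h=0$ term ${\sf S}(0)=1$ to each side and comparing with the displayed expansion of ${\bf S}\big(z\,{\bf F}_3(z)^2\big)$ yields precisely ${\bf S}\big(z\,{\bf F}_3(z)^2\big)={\bf F}_3(z)$, which is the claim.

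The only points needing a little care — and so the main obstacle, although it is a mild one — are bookkeeping: one must check that the convention ${\sf S}(0)={\sf S}(1)=1$ fixed before Lemma~\ref{irreducible1} is exactly what lets the $h=0$ and $h=1$ terms slot in (the $h=1$ contribution to the sum in Lemma~\ref{irreducible1} is ${\sf S}(1)\,{\bf F}_3(z)\,z=z\,{\bf F}_3(z)$, matching one shape-arc carrying a single stem-slot flanked by two intervals), and one must note that multiplication by ${\bf F}_3(z)$ and the reindexing are legitimate in $\mathbb{C}[[z]]$, which they are since ${\bf F}_3(z)$ is a unit. No analytic input is required; the entire argument is an identity of formal power series.
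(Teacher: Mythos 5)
Your argument is correct and is essentially the paper's own proof: both combine Lemma~\ref{irreducible1} with Lemma~\ref{irreducible2}, multiply through by ${\bf F}_3(z)$, and absorb the $h=0$ term using ${\sf S}(0)=1$ to recognize the composition ${\bf S}\big(z\,{\bf F}_3(z)^2\big)$. Your extra remarks on the formal-power-series legitimacy of the composition and the role of the conventions are harmless additions, not a different method.
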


\begin{proof}
According to Lemma \ref{irreducible1} and Lemma \ref{irreducible2}, we
easily obtain
\begin{equation*}
1+\sum_{h\geq 1}{\sf S}(h){\bf F}_3(z)^{2h-1}z^h=2-{1 \over {{\bf F}_3(z)}}.
\end{equation*}
Do some proper transformations for the equation above, we get
\begin{equation*}
1+\sum_{h\geq 1}{\sf S}(h){\bf F}_3(z)^{2h}z^h={\bf F}_3(z).
\end{equation*}
Since ${\sf S}(0)=1$, then
\begin{equation*}
\sum_{h\geq 0}{\sf S}(h){\bf F}_3(z)^{2h}z^h={\bf F}_3(z).
\end{equation*}
viz. ${\bf S}(z{\bf F}_3(z)^{2})={\bf F}_3(z)$.
\end{proof}
%%%%%%%%%%%%%%%%%%%%%%%%%%%%%%%%%%%%%%%%%%%%%%%%%%%%%%%%%%%%%%%%%%%

%%%
%%%%%%%%%%%%%%%%%%%%%%%%%%%%%%%%%%%%%%%%%%%%%%%%%%%%%%%%%%%%%%%%%%%%%%%%%%%%%%
%%%
\section{Canonical $3$-noncrossing skeleton diagram}\label{S:ca-sk-gf}
%%%
%%%%%%%%%%%%%%%%%%%%%%%%%%%%%%%%%%%%%%%%%%%%%%%%%%%%%%%%%%%%%%%%%%%%%%%%%%%%%%
%%%

In this section we mainly discuss about the way of obtaining the OGF of $3$-canonical, $3$-noncrossing skeleton diagrams,  ${\bf S}^{[4]}_{3}(z)=\sum_{n=0}^\infty {\sf S}^{[4]}_{3}(n) z^n$ . And from now on, we simply note  $3$-canonical as {\it canonical}.

\begin{theorem}\label{T:skeleta-gen}
Let $z$ be an indeterminate, then ${\bf S}^{[4]}_{3}(z)$, the generating
function of $3$-noncrossing skeleton diagrams with $\sigma\geq 3$, $\lambda\geq 4$, is
given by
\begin{equation}\label{s34GF}
{\bf S}^{[4]}_{3}(z)=(1-z){\bf G}\left(\left({{\sqrt{w_0(z)}z}\over{1-z}}\right)^2\right),
\end{equation}
where
\begin{eqnarray*}
{\bf G}(x)={\bf S}(x)-1-x,
w_0(z)={z^4 \over {1-z^2+z^6}}.
\end{eqnarray*}
\end{theorem}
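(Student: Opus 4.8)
The plan is to build a canonical $3$-noncrossing skeleton diagram by inflating its skeleton shape, following exactly the same template as the proof of Proposition~\ref{P:k=2} and Theorem~\ref{T:queen}, but now with the skeleton constraint built into the shape. Let $\mathcal{S}^{[4]}_3$ denote the set of canonical $3$-noncrossing skeleton diagrams (minimum stack-length $\sigma\geq 3$, minimum arc-length $\lambda\geq 4$), and let $\varphi$ be the surjective map sending each such diagram to its skeleton shape $\gamma$, obtained by replacing every stem by a single arc and deleting isolated vertices. Since every arc of a skeleton is crossed by another arc, every shape-arc is crossed by another shape-arc, so the shape is again a skeleton; this is why the shape generating function is exactly $\mathbf{S}(x)$ after removing the two small cases (the neutral term $1$ and the single-arc term $x$), i.e.\ $\mathbf{G}(x)=\mathbf{S}(x)-1-x$. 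The partition $\mathcal{S}^{[4]}_3=\dot\cup_\gamma\varphi^{-1}(\gamma)$ gives
\begin{equation*}
{\bf S}^{[4]}_{3}(z)=\sum_{s\geq 2}\sum_{\gamma\in\mathcal{S}\text{-shapes with }s\text{ arcs}} {\bf Q}_\gamma(z),
\end{equation*}
and the first task is to compute $\mathbf{Q}_\gamma(z)$ for a shape $\gamma$ with $s$ arcs.

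The key simplification relative to Theorem~\ref{T:queen} is that, because $\lambda\geq 4$ and all shape-arcs are crossed, there are no $1$-arcs, no $2$-arcs, and no $3$-arcs in the shape — hence none of the ``colored'' classes $\mathbf{C}_1,\dots,\mathbf{C}_4$ and their delicate case analysis are needed here. Each of the $s$ shape-arcs is inflated to a stem whose constituent stacks now have size at least $\sigma=3$ (rather than $2$), separated by nonempty intervals of isolated vertices; and then isolated vertices are freely inserted into the $2s+1$ remaining positions. Concretely I would set up building blocks: a $3$-stack-or-longer has generating function $z^6/(1-z^2)$; an induced stack (a stack with at least one nonempty interval on at least one side) has generating function $\tfrac{z^6}{1-z^2}\left(\tfrac{2z}{1-z}+\left(\tfrac{z}{1-z}\right)^2\right)$; a stem is a stack followed by a sequence of induced stacks, so its generating function is
\begin{equation*}
\mathbf{M}(z)=\frac{\frac{z^6}{1-z^2}}{1-\frac{z^6}{1-z^2}\left(\frac{2z}{1-z}+\left(\frac{z}{1-z}\right)^2\right)};
\end{equation*}
and the free insertion of isolated vertices at the $2s+1$ positions contributes $(1-z)^{-(2s+1)}$. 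Then $\mathbf{Q}_\gamma(z)=(1-z)^{-1}\bigl(\mathbf{M}(z)(1-z)^{-2}\bigr)^s=(1-z)^{-1}\,w_0(z)^{s}\,\bigl(z/(1-z)\bigr)^{2s}$ where, after simplification, $w_0(z)=z^4/(1-z^2+z^6)$ — this algebraic reduction is the routine-but-essential bookkeeping step. Since $\mathbf{Q}_\gamma$ depends only on $s$, summing over shapes gives ${\bf S}^{[4]}_3(z)=(1-z)^{-1}\sum_{s\geq 2}\mathrm{(number\ of\ skeleton\ shapes\ with\ }s\mathrm{\ arcs)}\,\bigl(w_0(z)(z/(1-z))^2\bigr)^s$, and recognizing this sum as $\mathbf{G}$ evaluated at $\bigl(\sqrt{w_0(z)}\,z/(1-z)\bigr)^2$ — with the $s=0,1$ terms excluded precisely because $\mathbf{G}=\mathbf{S}-1-x$ — yields eq.~(\ref{s34GF}). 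The factor $(1-z)$ out front, rather than $(1-z)^{-1}$, comes from reconciling the $(1-z)^{-1}$ in $\mathbf{Q}_\gamma$ against the extra $(1-z)^{-2s}$ absorbed into the argument; I would double-check this normalization against the $s=2$ coefficient by hand.

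The main obstacle I anticipate is not the inflation itself but justifying that the shape-generating-function is literally $\mathbf{G}(x)=\mathbf{S}(x)-1-x$ with $x$ tracking the number of shape-arcs: one must verify that the ``shrink stems to arcs, delete isolated vertices'' operation is a well-defined surjection onto skeleton $\mathsf{V}_3$-shapes, that a diagram is a skeleton iff its shape is, and that the minimum arc-length $\lambda\geq 4$ together with $\sigma\geq 3$ is exactly what forbids $1$-, $2$-, and $3$-arcs in the shape (so that every shape-arc can be inflated independently with the simple stem-module above, with no colored-class corrections). Once that structural dictionary is nailed down, the remaining work is the symbolic-method computation of $w_0(z)$ and the substitution, both of which are mechanical.
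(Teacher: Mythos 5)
Your inflation template is the right one, but there is a genuine gap at exactly the point you flag as ``the main obstacle'': the generating function of skeleton shapes is \emph{not} ${\bf G}(x)={\bf S}(x)-1-x$. In this chapter ${\sf S}(h)$ counts $3$-noncrossing skeleton \emph{matchings} with $h$ arcs, and these may contain stacks of length greater than one (e.g.\ the matching $(1,6),(2,5),(3,8),(4,7)$ is a skeleton matching with $4$ arcs whose shape has only $2$ arcs), whereas a shape has all stacks of length exactly one. The paper bridges this with a separate step (Claim 1 of its proof): writing ${\bf IS}(x)=\sum_{h\ge 2}{\sf IS}(h)x^h$ for the shape generating function, inflating each shape-arc to a stack gives $\sum_{h\ge2}{\sf S}(h)x^h={\bf IS}\bigl(x/(1-x)\bigr)$, i.e.\ ${\bf IS}(x)=\sum_{h\ge2}{\sf S}(h)\bigl(\tfrac{x}{1+x}\bigr)^h$. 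Without this your argument cannot close, and in fact your ``routine bookkeeping'' identity is false: ${\bf M}(z)(1-z)^{-2}=\frac{z^6}{1-2z+2z^3-z^4-2z^7+z^8}$, which is \emph{not} $\frac{w_0(z)z^2}{(1-z)^2}=\frac{z^6}{(1-z^2+z^6)(1-z)^2}$ (the denominators differ by exactly $z^6$). What is true is $\frac{\eta_3(z)}{1+\eta_3(z)}=\frac{w_0(z)z^2}{(1-z)^2}$, where $\eta_3(z)={\bf M}(z)/(1-z)^2$; so the substitution $x\mapsto x/(1+x)$ supplied by the shape--matching lemma is precisely what turns the correct intermediate answer $(1-z)\,{\bf IS}(\eta_3(z))$ into $(1-z)\,{\bf G}\bigl(w_0(z)z^2/(1-z)^2\bigr)$. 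Summing your per-shape factor against the true shape numbers ${\sf IS}(h)$ is not ${\bf G}$ of anything you have identified, and your two errors (wrong outer function, wrong inner argument) only cancel via the very lemma your proposal lacks.

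Two smaller points. First, the number of slots for inserting isolated vertices is $2h-1$, not $2s+1$: a skeleton has only internal intervals (an interval requires $i-1$ and $j+1$ to be paired), so no isolated vertices are placed before the first or after the last paired vertex; with $2s+1$ slots you get the prefactor $(1-z)^{-1}$ rather than $(1-z)$ --- the normalization mismatch you noticed is real and is resolved by this, not by absorbing factors. Second, your justification for dropping the colored-class analysis is off: skeleton shapes \emph{do} contain $2$-arcs (the minimal skeleton shape is the crossing pair $(1,3),(2,4)$). The correct reason is that a skeleton shape has no $1$-arcs (a $1$-arc cannot be crossed), so every shape-arc encloses at least one shape-vertex, and after inflating each shape-arc to a stack of size $\ge 3$ that vertex contributes at least three endpoints under the innermost arc; hence $\lambda\ge 4$ holds automatically and isolated vertices may be inserted freely, which is exactly why the skeleton case needs no analogue of the $\mathbf{C}_1,\dots,\mathbf{C}_4$ corrections.
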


\begin{proof}
\ \\
{\it Claim 1. Let ${\bf IS}(z)=\sum_{n\geq 2}{\sf IS}(h)z^h$ be the generating function of the $3$-noncrossing skeleton-shape with $h$ arcs. Then we have
\begin{equation}\label{E:IStoSk}
{\bf IS}(z)=\sum_{h\geq 2}{\sf S}(h)\left(z\over{1+z}\right)^h.
\end{equation}
}

Let $\gamma$ be a shape in the set of skeleton-shape $\mathcal{IS}(h)$, and $\mathcal{S}_\gamma$ be the skeleton matching inflated from a shape $\gamma$. Obviously $\mathcal{S}_\gamma$ is derived from inflating each shape-arc to a stack, then we have the symbolic relation below,
\begin{equation}
\mathcal{S}_\gamma=\left(\mathcal{A}\times \textsc{Seq}(\mathcal{A})\right)^h,
\end{equation}
where $\mathcal{A}$ is the class of arcs.

Since $\mathcal{A}=z$, we derive the generating function of $\mathcal{S}_\gamma$ that
\begin{equation}
\mathcal{S}_\gamma(z)=\left(z\over{1-z}\right)^h.
\end{equation}
Therefore we have
\begin{align*}
\sum_{h\geq 2}{\sf S}(h)z^h&=\sum_{h\geq 2}\sum_{\gamma\in\mathcal{IS}(h)}\mathcal{S}_\gamma(z)\\
&=\sum_{h\geq 2}{\sf IS}(h)\left(z\over{1-z}\right)^h\\
&={\bf IS}\left(z\over{1-z}\right).
\end{align*}
Transform the variable in the equation above, we can get eq. (\ref{E:IStoSk}).

{\it Claim 2. the generating function ${\bf S}^{[4]}_{3}(z)$ satisfies the following equation,
\begin{equation}\label{SK34toSk}
{\bf S}^{[4]}_{3}(z)=(1-z){\bf IS}\left(z^6\over{1-2z+2z^3-z^4-2z^7+z^8}\right).
\end{equation}}

Let ${\mathcal S}^{[4]}_{3}$ denote the set of $3$-noncrossing,
$3$-canonical skeleton structures with arc length $\geq$ 4 and ${\mathcal IS}$ the set of
all $k$-noncrossing skeleton-shapes and ${\mathcal IS}(h)$ those having $h$
arcs, see Figure~\ref{F:skeleton-shape}.
%%%%%%%%%%%%%%%%%%%%%%%%%%%%%%%%%%%%%%%%%%%%%%%%%%%%%%%%%%%%%%%%%%%%%%%
%%%%%%%%%%%%%%%%%%%%%%%%%%%%%%%%%%%%%%%%%%%%%%%%%%%%%%%%%%%%%%%%%%%%%%%

%%%%%%%%%%%%%%%%%%%%%%%%%%%%%%%%%%%%%%%%%%%%%%%%%%%%%%%%%%%%%%%%%%%%%%%
%%%%%%%%%%%%%%%%%%%%%%%%%%%%%%%%%%%%%%%%%%%%%%%%%%%%%%%%%%%%%%%%%%%%%%
Then we have the surjective map
\begin{equation}\label{map}
\varphi\colon {\mathcal S}^{[4]}_{3}\rightarrow {\mathcal IS}.
\end{equation}

Eq. (\ref{map}) induces the partition
${\mathcal S}^{[4]}_{3}=\dot\cup_\gamma\varphi^{-1}(\gamma)$, where $\gamma$ is a skeleton-shape.
Then we have
\begin{equation}\label{E:sk-Hgf}
{\bf S}^{[4]}_{3}(z) =
\sum_{h\geq 2}\sum_{\gamma\in\,{\mathcal IS}(h)}
\mathbf{S}_\gamma(z).
\end{equation}
We proceed by computing the generating function $\mathbf{S}_\gamma(z)$.
We will construct $\mathbf{S}_\gamma(z)$ via simpler combinatorial classes
as building blocks considering the classes $\mathcal{M}$ (stems), $\mathcal{K}^{3}$ (stacks), $\mathcal{N}^{3}$ (induced stacks),
$\mathcal{L}$ (isolated vertices), $\mathcal{A}$ (arcs) and $\mathcal{Z}$
(vertices), where $\mathbf{Z}(z)=z$ and $\mathbf{R}(z)=z^2$.
We inflate $\gamma\in {\mathcal IS}(h)$ having $h$ arcs to a structure in two steps.\\

\noindent{\bf Step I:} we inflate any shape-arc to a stack of size at least
$3$ and subsequently add additional stacks. The latter are called
induced stacks \index{induced stack} and have to be separated by means of
inserting isolated vertices, see Fig.~\ref{F:addskeletonstack}.
%%%%%%%%%%%%%%%%%%%%%%%%%%%%%%%%%%%%%%%%%%%%%%%%%%%%%%%%%%%%%%%%%%%%%%%
%%%%%%%%%%%%%%%%%%%%%%%%%%%%%%%%%%%%%%%%%%%%%%%%%%%%%%%%%%%%%%%%%%%%%%%
\restylefloat{figure}\begin{figure}[h!t!b!p]
\centerline{\includegraphics[width=1\textwidth]{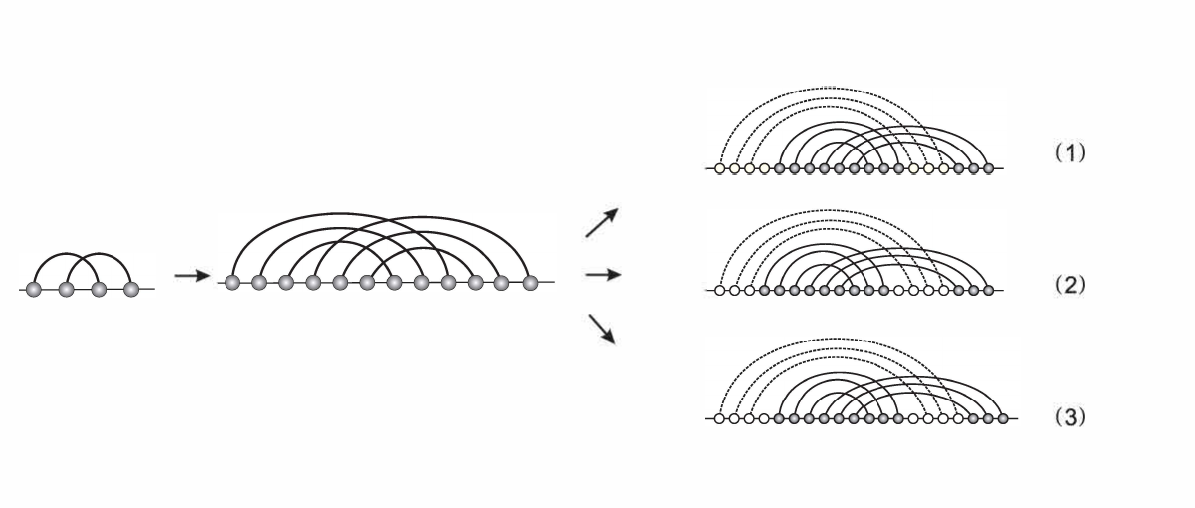}}
\caption{\small Step I:
a skeleton-shape (left) is inflated to a $3$-noncrossing,
canonical diagram. First, every arc in the skeleton-shape is inflated to
a stack of size at least three (middle), and then the skeleton-shape is inflated
to a new $3$-noncrossing, canonical skeleton structure (right) by adding
one stack of size three.
There are three ways to insert the intervals isolated vertices.}
\label{F:addskeletonstack}
\end{figure}
%%%%%%%%%%%%%%%%%%%%%%%%%%%%%%%%%%%%%%%%%%%%%%%%%%%%%%%%%%%%%%%%%%%%%%%
%%%%%%%%%%%%%%%%%%%%%%%%%%%%%%%%%%%%%%%%%%%%%%%%%%%%%%%%%%%%%%%%%%%%%%%
Note that during this first inflation step no intervals of isolated
vertices, other than those necessary for separating the nested stacks
are inserted. We generate
\begin{itemize}
\item isolated segments i.e.~sequences of isolated
vertices $\mathcal{L}= \textsc{Seq}(\mathcal{Z})$ with the generating function
\begin{eqnarray*}
 {\bf L}(z) & = &  \frac{1}{1-z}
\end{eqnarray*}
\item stacks, i.e.~pairs consisting of the minimal sequence of arcs
$\mathcal{R}^\sigma$ and an arbitrary extension consisting of arcs
of arbitrary finite length
\begin{equation*}
\mathcal{K}^{3}=
\mathcal{A}^{3}\times\textsc{Seq}\left(\mathcal{A}\right)
\end{equation*}
with the generating function
\begin{eqnarray*}
\mathbf{K}^3(z) & = & z^6\cdot \frac{1}{1-z^2}
\end{eqnarray*}
\item induced stacks, i.e.~stacks together with at least one nonempty
interval of isolated vertices on either or both its sides.
\begin{equation*}
\mathcal{N}^{3}=\mathcal{K}^{3}\times
\left(\mathcal{Z}\times\mathcal{L}
+\mathcal{Z}\times\mathcal{L}+\left(\mathcal{Z}\times
\mathcal{L}\right)^2\right),
\end{equation*}
with the generating function
\begin{equation*}
\mathbf{N}^3(z)=\frac{z^{6}}{1-z^2}\left(2\frac{z}{1-z}
+\left(\frac{z}{1-z}\right)^2\right)
\end{equation*}
\item stems, that is pairs consisting of stacks $\mathcal{K}^3$
and an arbitrarily long sequence of induced stacks
\begin{equation*}
\mathcal{M}^3=\mathcal{K}^{3}
\times \textsc{Seq}\left(\mathcal{N}^{3}\right),
\end{equation*}
with the generating function
\begin{eqnarray*}
\mathbf{M}^3(z)=\frac{\mathbf{K}^3(z)}{1-\mathbf{N}^3(z)}=
\frac{\frac{z^{6}}{1-z^2}}
{1-\frac{z^{6}}{1-z^2}\left(2\frac{z}{1-z}
+\left(\frac{z}{1-z}\right)^2\right)}.
\end{eqnarray*}
\end{itemize}

\noindent{\bf Step II:} here we insert additional isolated vertices at the
remaining $(2h-1)$ positions, see Fig.~\ref{F:addskeletonvertex}.
%%%%%%%%%%%%%%%%%%%%%%%%%%%%%%%%%%%%%%%%%%%%%%%%%%%%%%%%%%%%%%%%%%%%%%%
%%%%%%%%%%%%%%%%%%%%%%%%%%%%%%%%%%%%%%%%%%%%%%%%%%%%%%%%%%%%%%%%%%%%%%%
\restylefloat{figure}\begin{figure}[h!t!b!p]
\centerline{\includegraphics[width=1\textwidth]{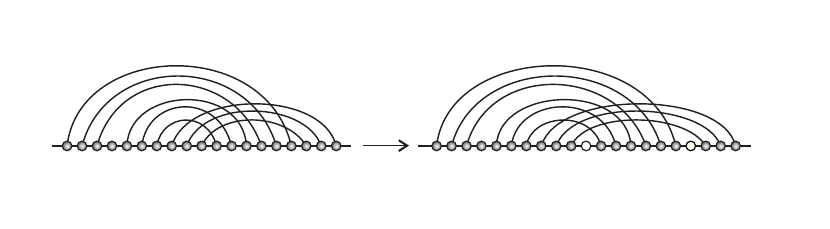}}
\caption{\small Step II: the diagram (left)
obtained in {\sf (1)} in Fig.~\ref{F:addskeletonstack} is inflated to a new
$3$-noncrossing, canonical skeleton diagram (right) by adding isolated
vertices (hollow discs).} \label{F:addskeletonvertex}
\end{figure}
%%%%%%%%%%%%%%%%%%%%%%%%%%%%%%%%%%%%%%%%%%%%%%%%%%%%%%%%%%%%%%%%%%%%%%%
%%%%%%%%%%%%%%%%%%%%%%%%%%%%%%%%%%%%%%%%%%%%%%%%%%%%%%%%%%%%%%%%%%%%%%%
Formally the second inflation is expressed via
\begin{itemize}
\item $\mathcal{J}=\mathcal{L}^{2h-1}$, where
\begin{eqnarray*}
\mathbf{J}(z) &= & \left(\frac{1}{1-z}\right)^{2h-1}
\end{eqnarray*}
\end{itemize}
Combining Step I and Step II we arrive at
\begin{eqnarray}
\mathcal{S}_{\gamma}&=&\left(\mathcal{M}^3\right)^h
\times\mathcal{L}^{2h-1}
\end{eqnarray}
and accordingly
\begin{eqnarray}
{\bf S}_{\gamma}(z)&=&
\left(\frac{\frac{z^{6}}{1-z^2}}
{1-\frac{z^{6}}{1-z^2}\left(2\frac{z}{1-z}
+\left(\frac{z}{1-z}\right)^2\right)}\right)^h
\left(\frac{1}{1-z}\right)^{2h-1}\nonumber\\
&=&(1-z)\left(\frac{z^{6}}{(1-z^2)(1-z)^2-(2z-z^2)
z^{6}}\right)^h\nonumber\\
&=&(1-z)\left(z^6\over{1-2z+2z^3-z^4-2z^7+z^8}\right)^h.
\end{eqnarray}
Since for any $\gamma,\gamma_1\in \mathcal{IS}(h)$ we have
$\mathbf{S}_\gamma(z)=\mathbf{S}_{\gamma_1}(z)$, we derive
\begin{equation*}
{\bf S}^{[4]}_{3}(z) = \sum_{h\geq 2}\sum_{\gamma\in\,{\mathcal
IS}(m)} \mathbf{S}_\gamma(z) =
\sum_{h\geq 2}{\sf IS}(h)\mathbf{S}_\gamma(z).
\end{equation*}
Setting
\begin{equation}\label{E:eta}
\eta_3(z)={z^6\over{1-2z+2z^3-z^4-2z^7+z^8}},
\end{equation}
we have, according to eq.~(\ref{E:sk-Hgf}) and Theorem~\ref{T:gfIk} the
following situation
\begin{eqnarray}
{\bf S}^{[4]}_{3}(z) &=&
\sum_{h\geq 2}{\sf IS}(h)\, {\bf S}_{\gamma}(z)\nonumber\\
&=&(1-z)\sum_{h\geq 2}{\sf IS}(h)\, \left(z^6\over{1-2z+2z^3-z^4-2z^7+z^8}\right)^h\nonumber\\
&=&(1-z)\, {\bf IS}\left(z^6\over{1-2z+2z^3-z^4-2z^7+z^8}\right),
\end{eqnarray}
Claim 2 is proved.

Now we substitute eq. (\ref{E:IStoSk}) into eq. (\ref{SK34toSk}), we arrive at eq. (\ref{s34GF}), where $w_0(z)={z^4 \over {1-z^2+z^6}}$. The theorem follows.

\end{proof}

%%%
%%%%%%%%%%%%%%%%%%%%%%%%%%%%%%%%%%%%%%%%%%%%%%%%%%%%%%%%%%%%%%%%%%%%%%%%%%
%%%
\section{Asymptotic analysis}\label{S:asymptotic}
%%%
%%%%%%%%%%%%%%%%%%%%%%%%%%%%%%%%%%%%%%%%%%%%%%%%%%%%%%%%%%%%%%%%%%%%%%%%%%
%%%

Denote $y=z{\bf F}_3^2(z)$, then we have
\begin{equation}
{\bf S}(y)={\bf F}_3(z).\label{skeleta-shape2}
\end{equation}

 We will use singularity analysis to analyze the asymptotic behavior of the coefficients of ${\bf S}(y)$. To apply singularity analysis to ${\bf S}(y)$, we need to know its radius of convergence $R$, the behavior of ${\bf S}(y)$ near $y=R$, the absence of other singularities of ${\bf S}$ on the circle of convergence, $\Delta$-analyticity of ${\bf S}(y)$ and the singular expansion of ${\bf S}(y)$. This information must be deduced from eq. (\ref{skeleta-shape2}) and a knowledge of ${\bf F}_3$. As is typical, we will see that ${\bf F}_3$ has a unique singularity. This information is then transferred to ${\bf S}$ via eq. (\ref{skeleta-shape2}), then singularity analysis applies.

\begin{lemma}[Existence and uniqueness of dominant singularity of ${\bf S}(y)$]\label{L:exs_and_uniq}
The dominant singularity of the generating function ${\bf S}(y)$ is $R=\rho{\bf F}_3^2(\rho)\approx 0.0729$, $\rho=1/16$, and ${\bf S}(y)$ has no singularity other from the circle of $\{y\mid |y|=R\}$, that is, ${\bf S}(y)$ is analytically continuable around any $y\neq \rho$ with $|y|\leq R$.
\end{lemma}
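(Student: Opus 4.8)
Write $\rho=1/16$ for the unique dominant singularity of ${\bf F}_3$ (Proposition~\ref{P:q} together with Theorem~\ref{T:B}), and put $\psi(z)=z\,{\bf F}_3(z)^2$, so the functional equation of Section~\ref{S:skeleton-shape} reads ${\bf S}(\psi(z))={\bf F}_3(z)$, i.e.\ $\sum_{h\ge 0}{\sf S}(h)\,\psi(z)^h={\bf F}_3(z)$, and $R=\psi(\rho)=\rho\,{\bf F}_3^2(\rho)$ is the value occurring in eq.~(\ref{skeleta-shape2}) at $z=\rho$. By Corollary~\ref{C:Bessel} and Proposition~\ref{P:fk}, ${\bf F}_3$ is $D$-finite, $\Delta$-analytic at $\rho$, bounded and $C^3$ on $\{|z|\le\rho\}$ (so ${\bf F}_3(\rho)=\sum_n f_3(2n,0)\rho^n<\infty$ and ${\bf F}_3'(\rho)<\infty$), with singular expansion ${\bf F}_3(z)=P_3(z-\rho)+c_3'(z-\rho)^4\log(z-\rho)(1+o(1))$, $P_3$ a polynomial of degree $\le 3$, $c_3'=-65536/\pi\neq 0$. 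Since $\psi$ is a polynomial in $z$ and ${\bf F}_3(z)$, it inherits $\Delta$-analyticity at $\rho$ and an expansion $\psi(z)=Q(z-\rho)+d(z-\rho)^4\log(z-\rho)(1+o(1))$ with $Q$ a polynomial, $Q(0)=R$, $Q'(0)=\psi'(\rho)$, $d=2\rho\,{\bf F}_3(\rho)c_3'\neq 0$. First I would record the elementary properties of $\psi$ that carry the proof: its Taylor coefficients at $0$ are nonnegative and strictly positive from exponent $1$ on (as ${\bf F}_3(z)=1+z+3z^2+\cdots$ has positive coefficients at consecutive indices), hence $\psi$ is strictly increasing on $[0,\rho]$ with $\psi(0)=0$, $\psi(\rho)=R$, $\psi'>0$ on $(0,\rho]$, and in particular $\psi'(\rho)={\bf F}_3(\rho)^2+2\rho\,{\bf F}_3(\rho){\bf F}_3'(\rho)>0$; moreover aperiodicity of the support of ${\bf F}_3$ yields
\begin{equation*}
|\psi(z)|=|z|\,|{\bf F}_3(z)|^2\le |z|\,{\bf F}_3(|z|)^2=\psi(|z|)\le R,\qquad |z|\le\rho,
\end{equation*}
with $|\psi(z)|=R$ only when $z=\rho$, so $\psi$ maps $\{|z|\le\rho\}\setminus\{\rho\}$ into the open disc $\{|y|<R\}$.

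Next I would bound the radius of convergence of ${\bf S}$ from below: for real $x\in[0,\rho)$ the functional equation gives $\sum_h{\sf S}(h)\psi(x)^h={\bf F}_3(x)$, and letting $x\uparrow\rho$ the left side increases to $\sum_h{\sf S}(h)R^h$ (monotone convergence, ${\sf S}(h)\ge 0$, $\psi(x)\uparrow R$) while the right side tends to ${\bf F}_3(\rho)<\infty$; hence $\sum_h{\sf S}(h)R^h={\bf F}_3(\rho)<\infty$, so ${\bf S}$ converges absolutely on $\{|y|\le R\}$ and has radius of convergence $\ge R$. To pin down the behaviour at and around $R$, I would invert $\psi$: since $Q'(0)=\psi'(\rho)\neq 0$, the equation $\psi(z)=y$ is solved near $z=\rho$ by a fixed-point iteration (the $\log$-correction being $o(z-\rho)$), producing a function $\chi$, $\Delta$-analytic at $R$, with $\chi(R)=\rho$, $\psi\circ\chi=\mathrm{id}$ and $\chi(y)-\rho=(y-R)/\psi'(\rho)+\cdots$. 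Using the local biholomorphism of $\psi$ at $\rho$ together with the strict estimate $|\psi(z)|<R$ on $\{|z|=\rho\}\setminus\{\rho\}$ and a finite-covering and gluing argument in the style of the proof of Proposition~\ref{chapter2:algeasym}, one extends $\chi$ to a $\Delta$-domain $\Delta_R$ at $R$ with $\chi(\Delta_R)$ contained in a $\Delta$-domain of analyticity of ${\bf F}_3$; then ${\bf S}(y)={\bf F}_3(\chi(y))$ on $\Delta_R$, so ${\bf S}$ is $\Delta$-analytic at $R$. As a $\Delta$-domain at $R$ contains $\{|y|\le R\}\setminus\{R\}$, this already shows ${\bf S}$ is analytically continuable around every $y\neq R$ with $|y|\le R$, hence has no singularity on $\{|y|=R\}$ other than (possibly) $R$.

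It remains to verify that $R$ is genuinely singular. On $\Delta_R$ we have ${\bf S}(y)={\bf F}_3(\chi(y))$, where ${\bf F}_3$ carries the singular term $c_3'(z-\rho)^4\log(z-\rho)$ at $z=\rho=\chi(R)$ and $\chi$ is a biholomorphism-to-leading-order at $R$. Feeding $\chi(y)-\rho=(y-R)/\psi'(\rho)+b(y-R)^4\log(y-R)(1+o(1))+\cdots$ (with $b=-d/\psi'(\rho)^5$ obtained from inverting the expansion of $\psi$) into the expansion of ${\bf F}_3$, and using $\psi'(\rho)={\bf F}_3(\rho)^2+2\rho\,{\bf F}_3(\rho){\bf F}_3'(\rho)$, a short computation shows the coefficient of $(y-R)^4\log(y-R)$ in ${\bf S}(y)$ equals $c_3'\,{\bf F}_3(\rho)^2/\psi'(\rho)^5$, which is nonzero since $c_3'\neq 0$; the two contributions (from the $\log$-term of ${\bf F}_3$ and from feeding the $\log$-correction of $\chi$ into $P_3$) do not cancel. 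Hence ${\bf S}$ is not analytic at $R$; together with the radius bound above and Pringsheim's theorem~\cite{Tichmarsh:39}, the radius of convergence of ${\bf S}$ is exactly $R=\rho\,{\bf F}_3^2(\rho)$ and $R$ is its unique dominant singularity. Numerically $\rho=1/16$ and $\rho\,{\bf F}_3^2(1/16)\approx 0.0729$, as claimed.

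The main obstacle is the inversion step of the second paragraph, namely showing that the analytic continuation of $\chi=\psi^{-1}$ stays inside the $\Delta$-domain of analyticity of ${\bf F}_3$ and meets no critical point of $\psi$, so that ${\bf S}={\bf F}_3\circ\chi$ really extends to a full $\Delta$-domain at $R$. The two structural facts that make this succeed are $\psi'(\rho)\neq 0$ and the strict inequality $|\psi(z)|<R$ on $\{|z|=\rho\}\setminus\{\rho\}$ established in the first paragraph, and the bookkeeping closely parallels the $\Delta$-analyticity part of the proof of Proposition~\ref{chapter2:algeasym}.
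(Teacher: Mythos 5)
Your reduction of the lemma to three steps is sensible, and two of them are fine: the monotone-convergence argument showing $\sum_h{\sf S}(h)R^h={\bf F}_3(\rho)<\infty$ (so the radius of convergence is at least $R$) is clean, and your computation of the coefficient of $(y-R)^4\log(y-R)$ in ${\bf F}_3(\chi(y))$, namely $c_3'\,{\bf F}_3(\rho)^2/\psi'(\rho)^5\neq 0$, is a legitimate substitute for the paper's argument that ${\bf S}^{(4)}(y)\rightarrow\infty$ as $y\rightarrow R^-$, and is consistent with the paper's later singular-expansion lemma where $c'=Kc_3'/(FT)^7$. The gap is in the middle step, which is exactly the substance of the lemma: ruling out singularities at points $\sigma\neq R$ with $|\sigma|=R$. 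Your own estimate $|\psi(z)|<R$ on $\{|z|=\rho\}\setminus\{\rho\}$ shows that such a $\sigma$ is \emph{not} in the image of the closed disc $\{|z|\le\rho\}$ under $\psi$; consequently you cannot obtain a local inverse $\chi$ near $\sigma$ by inverting $\psi$ at any point of that disc, and the finite-covering argument of Proposition~\ref{chapter2:algeasym} does not transfer: that proposition continues a \emph{forward} composition $f(\psi(z))$ using that $\psi$ maps the closed disc into the analyticity domain of $f$, whereas here you must continue the \emph{inverse} map across points lying outside the range of $\psi$ restricted to that disc. A power series with nonnegative coefficients that converges at $y=R$ can perfectly well have further singularities on, or even a natural boundary along, its circle of convergence, so no soft argument closes this; your concluding remark correctly locates the obstacle but the tool you point to does not address it.

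What the paper does at this point is the part your proposal has no substitute for: given a hypothetical singularity $\sigma\neq R$ on $|y|=R$, it produces a candidate preimage $r=\sigma/{\bf S}^2(\sigma)$ (using continuity of ${\bf S}$ on the closed disc, via the functional equation), and must then show that $r$ lies in the analyticity domain $\mathbb{C}\setminus[1/16,\infty)$ of ${\bf F}_3$ -- this uses the explicit hypergeometric representation eq.~(\ref{E:F3-gf}) together with a quantitative estimate (carried out with \textsf{Mathematica}) that $|r\,{\bf F}_3^2(r)|\geq R$ for $r\in[1/16,\infty)$ with equality only at $r=1/16$. Only once such an $r$ with $\psi(r)=\sigma$ and ${\bf F}_3$ analytic at $r$ is available can a singularity at $\sigma$ be attributed to $\dif y/\dif z=0$ at $r$, and that is then excluded because nonnegativity of the coefficients gives $|{\bf S}'(\sigma)|\le{\bf S}'(R)<\infty$, while $\dif y/\dif z=0$ forces ${\bf F}_3(r)={\bf F}_3'(r)=0$ and hence ${\bf S}'(\sigma)=\infty$ by the local factorization ${\bf F}_3(z)=(z-r)^kg(z)$. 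Neither the existence and location of the preimage $r$ nor the critical-point analysis appears in your proposal, so the uniqueness of the dominant singularity -- the heart of the lemma -- is assumed rather than proved.
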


\begin{proof}

{\it Claim 1. The radius of convergence, $R$, is equal to $\rho{\bf F}^2_3(\rho)$, where $\rho$ is the radius of convergence of ${\bf F}_3$.}\\

Since series ${\bf S}(y)$ has nonnegative coefficients, we know that ${\bf S}(y)$ has a radius of convergence of $R$. since ${\bf F}_3$ has nonnegative coefficients, $\dif y/\dif z>0$ for $0<z<\rho$ and so $z$ is an analytic function of $y$ for $0<y<R$. We now show that $y=\rho{\bf F}_3^2(\rho)$ is a singularity of ${\bf S}$, from which $R=\rho{\bf F}_3^2(\rho)$ follows.

From Proposition \ref{P:fk}, we know that ${\bf F}_3(\rho)$, ${\bf F}_3'(\rho)$, ${\bf F}_3''(\rho)$, ${\bf F}_3'''(\rho)$ exist and are positive but ${\bf F}_3^{(4)}(z)\rightarrow \infty$ as $z\rightarrow \rho$. From eq. (\ref{skeleta-shape2}),
\begin{equation}\label{deriv}
{\bf S}^{(4)}(z{\bf F}_3^2(z))=\frac{U({\bf F}_3, {\bf F}_3',{\bf F}_3'',{\bf F}_3'''){\bf F}_3^{(4)}(z)+V({\bf F}_3, {\bf F}_3',{\bf F}_3'',{\bf F}_3''')}{[{\bf F}_3(z)({\bf F}_3(z)+2z {\bf F}_3'(z))]^{7}},
\end{equation}
where $U$ and $V$ are both multi-variable polynomials and $U|_{z=\rho}>0$.

Thus ${\bf S}^{(4)}(y)\rightarrow \infty$ as $y\rightarrow (\rho{\bf F}^2_3(\rho))^-$ and so $R=\rho{\bf F}_3^2(\rho)$.\\

{\it Claim 2. There are no other singularities of ${\bf S}(y)$ existed on the circle of convergence $|y|=R$.}\\

By eq. (\ref{skeleta-shape2}), singularities of ${\bf S}(y)$ are due to singularities of ${\bf F}_3$ and values of $y$ near which inverting $y=z {\bf F}_3^2(z)$ uniquely is impossible.

Let us consider singularities of ${\bf F}_3(z)$. We have known that ${\bf F}_3(z)$ is $\Delta$-analytic and its dominant singularity is $\rho$ and unique. Then let us consider inverting $y=z {\bf F}_3^2(z)$ on $|y|=R$ with $y\neq R$. Thus $z\neq \rho$ and so ${\bf F}_3$ is defined. By the implicit function theorem any singularities must be due to $\dif y/\dif z=0$. We now use ${\bf S}'(R)\neq \infty$. Since ${\bf S}(y)$ has nonnegative coefficients, no singularity on $|y|=R$ can dominate $y=R$ and so ${\bf S}'(y)\neq \infty$ on $|y|=R$. By eq. (\ref{skeleta-shape2}),
\begin{equation*}
{\bf S}'(y)=\frac{\dif {\bf S}}{\dif y}=\frac{\dif {\bf F}_3}{\dif z}\bigg/\frac{\dif y}{\dif z}.
\end{equation*}

At a singularity, $\dif y/\dif z=0$ and so ${\bf F}_3'=0$. Since $\dif y/\dif z={\bf F}_3^2+2z{\bf F}_3{\bf F}_3'$, ${\bf F}_3=0$. If $y=\sigma\neq R$ is a singularity with $|\sigma|=R$ and $r$ is the corresponding value of $z$, we have shown that ${\bf S}(y)={\bf F}_3(z)=(z-r)^k g(z)$ where $k\geq 1$, $g(r)\neq 0$ and $g$ is analytic near $r$. Such $r$ is existed, which means $r$ is in the analytic domain of ${\bf F}_3(z)$, $ \mathbb{C}\backslash [1/16, \infty)$, due to eq. (\ref{E:F3-gf}). The proof is following:

From eq. (\ref{skeleta-shape2}), we derive the following equation,
\begin{equation*}
z {\bf F}^2_3(z)=z {\bf S}^2(z {\bf F}_3^2(z)),
\end{equation*}
then we have
\begin{equation}\label{inverseofy}
z=\frac{y}{ {\bf S}^2(y)}
\end{equation}
and
\begin{equation}\label{skeleta-shape3}
{\bf S}(y)={\bf F}_3(y {\bf S}^{-2}(y)).
\end{equation}
Therefore, from eq. (\ref{inverseofy}), we know that for any given $\sigma\neq R$ on the circle of $|y|=R$, we have corresponding value of $z$, $r$, which is $\frac{\sigma}{ {\bf S}^2(\sigma)}$. ${\bf S}^2(\sigma)$ cannot be zero, or else, $|z|$ will be infinity, then by eq. (\ref{inverseofy}) and eq. (\ref{skeleta-shape3}) we have $\lim_{|z|\rightarrow \infty} y(z)=\lim_{y\rightarrow \sigma}\frac{y}{ {\bf S}^{2}(y)}{\bf F}_3^2\left(\frac{y}{ {\bf S}^{2}(y)}\right)=\sigma$. But $\lim_{|z|\rightarrow \infty} |y(z)|> 1.83> R$, it is contradicted to $|\sigma|=R$. Hence $r$ cannot be infinity. Moreover, if $r$ lies on the line of $[1/16,\infty)$, we have $|r{\bf F}^2_3(r)|=|\sigma|=R$. However, by the rigorous calculation of \textsf{Mathematica}, when $r\in(1/16,1/8)$,
 \begin{equation*}
 r{\bf F}^2_3(r)=(1/16){\bf F}^2_3(1/16)+\Xi(r)(r-1/16),
 \end{equation*}
 where $\Xi(r)$ satisfying $|\Xi(r)|>0$ and $|\arg\Xi(r)|<0.19$. Besides, $\min|r{\bf F}^2_3(r)|>0.12>(1/16){\bf F}^2_3(1/16)$ when $r>1/8-1/32$. The two calculation results immediately imply that $\min_{r \geq 1/16}|r{\bf F}^2_3(r)|=(1/16){\bf F}^2_3(1/16)=R$ and the minimum arrives if and only if $r=1/16$. Such case is contradicted to $|\sigma|=R$ and $\sigma\neq R$. Now we have proved the existence of $r$.

Let us continue the main proof. Since
\begin{equation*}
{\bf S}'(y)=\frac{\dif {\bf F}_3/\dif z}{\dif (z {\bf F}_3^2)/\dif z}=\frac{(z-r)^{k-1}}{(z-r)^{2k-1}}f(z),
\end{equation*}
where $f(r)\neq 0$. This gives ${\bf S}'(\sigma)=\infty$, a contradiction. Hence no such singularity $\sigma$ exists, Claim 2 follows.

\end{proof}

\begin{lemma}[$\Delta$-analytic continuation of ${\bf S}(y)$]
${\bf S}(y)$ can be analytically continued to a $\Delta$-domain at the
singularity of $R$.
\end{lemma}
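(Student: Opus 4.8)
The plan is to write ${\bf S}(y)={\bf F}_3(\zeta(y))$, where $\zeta$ is the local inverse of $\psi(z)=z\,{\bf F}_3^2(z)$, and to transport the $\Delta$-analyticity of the outer function ${\bf F}_3$ through the inner function $\zeta$, following exactly the three-step scheme used in the proof of the $\Delta$-analyticity part of Proposition~\ref{chapter2:algeasym}. Recalling eq.~(\ref{skeleta-shape2}), which reads ${\bf S}(\psi(z))={\bf F}_3(z)$, and Lemma~\ref{L:exs_and_uniq}, we have $\psi(\rho)=R$ with $\rho=1/16$. The outer function ${\bf F}_3$ is $D$-finite (Corollary~\ref{C:Bessel}) and, by eq.~(\ref{E:F3-gf}), analytic on the whole slit plane $\mathbb{C}\setminus[1/16,\infty)$, which I denote by $\Delta_{{\bf F}_3}$. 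Moreover $\psi'(\rho)={\bf F}_3^2(\rho)+2\rho\,{\bf F}_3(\rho)\,{\bf F}_3'(\rho)>0$, since ${\bf F}_3(\rho)$ and ${\bf F}_3'(\rho)$ are finite and positive by Proposition~\ref{P:fk} (see also the proof of Lemma~\ref{L:exs_and_uniq}). Hence $\psi$ is conformal in a neighbourhood of $\rho$, its inverse $\zeta=\psi^{-1}$ is analytic near $R$ with $\zeta(R)=\rho$ and $\zeta'(R)=1/\psi'(\rho)>0$, and ${\bf S}(y)={\bf F}_3(\zeta(y))$.

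The argument then proceeds in three steps. \textbf{Step (a):} by Lemma~\ref{L:exs_and_uniq}, ${\bf S}(y)$ is analytic on $\{y:|y|\le R\}\setminus\{R\}$; equivalently $\zeta(y)$ is analytic there and $\zeta(y)\in\Delta_{{\bf F}_3}$, as $\zeta(y)=\rho$ would force $y=R$. \textbf{Step (b):} I build a slit neighbourhood $\widetilde\Delta_R=\{y:|y-R|<\delta,\ |\arg(y-R)|>\beta\}$ of $R$ with $\zeta(\widetilde\Delta_R)\subset\Delta_{{\bf F}_3}$. Writing $\zeta(y)=\rho+\zeta'(R)(y-R)+e(y)$ with $e(y)=o(y-R)$ and $\zeta'(R)>0$ real, the geometric estimate of eq.~(\ref{E:gamma-ineq})--eq.~(\ref{E:arg-psi}) from the proof of Proposition~\ref{chapter2:algeasym} yields $|\arg(\zeta(y)-\rho)-\arg(y-R)|\le\theta$ for $|y-R|<\delta$, with $\theta\to0$ as $\delta\to0$; since $\Delta_{{\bf F}_3}$ is the full slit plane, choosing $\delta$ small and $\beta$ slightly larger than $\theta$ forces $\zeta(y)\notin[1/16,\infty)$ whenever $|\arg(y-R)|>\beta$. \textbf{Step (c):} each $y'$ on the circle $|y|=R$ with $y'\neq R$ is, by Step~(a), a point of analyticity of ${\bf S}$, hence lies in a disc $U(y',\delta_{y'})$ of analyticity; the compact arc $\{|y|=R\}\setminus\widetilde\Delta_R$ is covered by finitely many such discs whose union $\Psi$ satisfies that $\Psi\cup U(R,\delta)\cup U(0,R)$ contains a disc of radius $\hat\delta>R$ minus the sector $\{|\arg(y-R)|\le\beta\}$, and then $\Delta=\{y:|y|<\delta^\ast,\ |\arg(y-R)|>\beta\}$ with $\delta^\ast\in(R,\hat\delta)$ is a $\Delta$-domain at $R$ on which ${\bf S}(y)$ is analytic. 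Steps~(a) and~(c) are a verbatim transcription of the corresponding steps in the proof of Proposition~\ref{chapter2:algeasym}, so their routine details are omitted; cf.\ also \cite{Reidys:10book}.

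The main obstacle is Step~(b): one must guarantee that the local inverse $\zeta$ maps a slit neighbourhood of $R$ back into the analyticity region of ${\bf F}_3$, and this rests on $\zeta'(R)\neq0$, i.e.\ on $\psi'(\rho)\neq0$. That is precisely where the positivity ${\bf F}_3(\rho),{\bf F}_3'(\rho)>0$ supplied by Proposition~\ref{P:fk} enters. Here the analysis is actually a shade easier than in Proposition~\ref{chapter2:algeasym}, because eq.~(\ref{E:F3-gf}) gives ${\bf F}_3$ analytic on the entire slit plane and so imposes no lower bound on the admissible opening angle; the only care needed is to take the radius $\delta$ of $\widetilde\Delta_R$ small enough that $\zeta(\widetilde\Delta_R)$ lies in a disc around $\rho$ in which $[1/16,\infty)$ reduces to the half-line $\{\rho+t:t\ge0\}$.
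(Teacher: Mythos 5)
Your Steps (a) and (c) are sound and essentially reproduce the paper's covering argument, but Step (b) rests on a claim that is false, and it is exactly the claim whose proof constitutes the real content of this lemma. You assert that since $\psi'(\rho)>0$, ``$\psi$ is conformal in a neighbourhood of $\rho$, its inverse $\zeta=\psi^{-1}$ is analytic near $R$.'' But $\psi(z)=z\,{\bf F}_3^2(z)$ is \emph{not} analytic at $\rho$: by Proposition~\ref{P:fk} (eq.~(\ref{E:Fk_sing_ex})) the singular expansion of ${\bf F}_3$ contains a $(z-\rho)^4\log(z-\rho)$ term, so $\psi$ is only finitely smooth at $\rho$ from within the slit plane, and the analytic inverse function theorem does not apply. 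Correspondingly $\zeta(y)=y/{\bf S}^{2}(y)$ (eq.~(\ref{inverseofy})) is \emph{not} analytic at $R$ --- indeed ${\bf S}^{(4)}(y)\to\infty$ as $y\to R^-$ by the proof of Lemma~\ref{L:exs_and_uniq}. Note also that the direction of composition is reversed relative to Proposition~\ref{chapter2:algeasym}: there the inner function is genuinely analytic at the point in question, so its Taylor expansion with $e(z)=o(z-\gamma)$ is legitimate; here your ``inner'' function carries the singularity, so the situation is not ``a shade easier'' but structurally different.

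Because of this, Step (b) has no foundation: before you can write ${\bf S}={\bf F}_3\circ\zeta$ on a slit neighbourhood $\widetilde\Delta_R$ of $R$ and invoke the argument estimate of eq.~(\ref{E:gamma-ineq})--(\ref{E:arg-psi}), you must first establish that $\zeta$ is \emph{defined and analytic} there, i.e.\ that $y(z)=z\,{\bf F}_3^2(z)$ is univalent on a slit neighbourhood $\Theta$ of $\rho$ and that $y(\Theta)$ contains a slit neighbourhood of $R$. That is precisely what the paper proves: it uses Fusy's method to locate the image domain, the Noshiro--Warschawski theorem (valid because $y'(\rho^-)>0$) to get univalence separately on $\Theta\cap\{\im z>0\}$ and $\Theta\cap\{\im z<0\}$, the expansion of $y$ plus a connectedness/curve argument to show the upper part of $\Theta$ maps into the upper half-plane, the symmetry principle for the lower part, and monotonicity of $y$ on $[0,\rho]$ to conclude global univalence in $\Theta$. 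Your proposal contains no substitute for this univalence analysis, so as written it assumes the conclusion of the hardest step. To repair it you would need to either carry out that univalence argument or otherwise justify, from the $C^3$-smoothness of $\zeta$ at $R$ within its (yet to be identified) domain, both the domain itself and the expansion $\zeta(y)=\rho+\zeta'(R)(y-R)+o(y-R)$ used in your Step (b).
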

\begin{proof}
{\it Claim 1. ${\bf S}(y)$ is analytic in a ``$\Delta$-neighborhood" for $R$, of the form
\begin{equation*}
\{|y-R|<\delta', \arg(y-R)\in (\phi,2\pi-\phi)\},
\end{equation*}
with $\delta'>0$ and $0<\phi<\pi/2$.}

 First, by Fusy's method \cite{Fusy:10}, the singular expansion of ${\bf S}(y)$ at $R$ holds in a neighborhood $\Omega$ of $R$, where $\Omega$ is the image of $y(z)$ mapping from a slit neighborhood $\Theta$, which is defined by $\{|z-\rho|<\delta\}\backslash\{|\arg(z-\rho)|\leq\phi'\}$, with $\delta>0$ and $0<\phi'<\pi/2$. Since $y'(z)$ converges to a positive constant at $z=\rho$, it is locally conformal at $\rho$. Hence $\Omega$ contains a ``$\Delta$-neighborhood" for $R$, of the form
\begin{equation}\label{E:delta-neigh}
N(\Delta)\equiv\{|y-R|<\delta',\text{arg}(y-R)\in(\phi,2\pi-\phi)\},
\end{equation}
with $\delta'>0$ and $0<\phi<\pi/2$.

Second, The claim that $y(z)$ is univalent (bijective and analytic) in $N(\Delta)$, is true. Since $y'(\rho)>0$, by Noshiro-Warschawski Theorem \cite{goodman:83}, we know that $y(z)$ is respectively univalent in $\Theta\cap\{\im z>0\}$ and $\Theta\cap\{\im z<0\}$ when $z$ is in some neighborhood of $\rho$. Due to ${\bf F}_3(z)$, $y$ has a singular expansion,
\begin{equation*}
y(z)=R+y'(\rho)(z-\rho)+o(z-\rho)
\end{equation*}
around $\rho$. Assuming $\delta$ in eq.~(\ref{E:delta-neigh}) has been chosen sufficiently small enough making $|z|<\rho$ when $|\arg (z-\rho)|>\frac{2\pi}{3}$, and
\begin{equation}\label{E:upper1}
\arg(y(z)-R)\in (0,\pi),
\end{equation}
when $\arg (z-\rho)\in(\phi',\frac{2\pi}{3}]$ , concluded by the same method for eq.(\ref{E:gamma-ineq}).

For the point $z\in\Theta$ of $\arg (z-\rho)\in(\frac{2\pi}{3}, \pi)$. We draw a straight line $\{s(t)\mid s(0)=z, s(1)=z_0,0\leq t\leq 1\}$, where $z_0$ satisfies $z_0\in\Theta$ and $\arg (z_0-\rho)=\frac{2\pi}{3}$. Clearly $y(z_0)$ is in the upper-half plain. Suppose $z$ maps to the lower-half plain. The smooth curve of $y(s(t))$ must intersect the line of $[0,R]$, which means there exists $t_0$ such that $y(s(t_0))=r_0$, $0\leq r_0\leq R$. Using eq. (\ref{inverseofy}) we solute that $s(t_0)=r_0/{\bf S}^2(r_0)$, which implies $s(t_0)$ is real. It is impossible since the line of $\{s(t)\}$ cannot touch the real axis due to the convexity of $\Theta\cap\{\im z>0\}$. Therefore, \begin{equation}\label{E:upper2}
y(z)\in \{z\mid \im z>0\}, \text{when $z\in \Theta$ and $\arg (z-\rho)\in(\frac{2\pi}{3}, \pi)$}.
\end{equation}

Eq. (\ref{E:upper1}) and eq. (\ref{E:upper2}) will lead us that $y(z)$ is mapping $\Theta\cap\{\im z>0\}$ to a upper-half plain. By the symmetric theorem for analytic function \cite{fang:96}, $y(z)$ is mapping $\Theta\cap\{\im z<0\}$ to a lower-half plain as well. Obviously, $y(z)$ is strictly monotonic increasing on the real segment of $[0,\rho]$ and $y([0,\rho])=[0,R]$, hence $y(z)$ is globally univalent in the whole $\Theta$.

According to eq. (\ref{skeleta-shape2}), the analyticity of inversion function of $y(z)$   and ${\bf F}_3$ respectively in ``$\Delta$-neighborhood" $N(\Delta)$ and $\Theta$ imply the truth of Claim 1.

\restylefloat{figure}\begin{figure}[h!t!b!p]
\begin{center}
\scalebox{0.8}[0.8]{\includegraphics{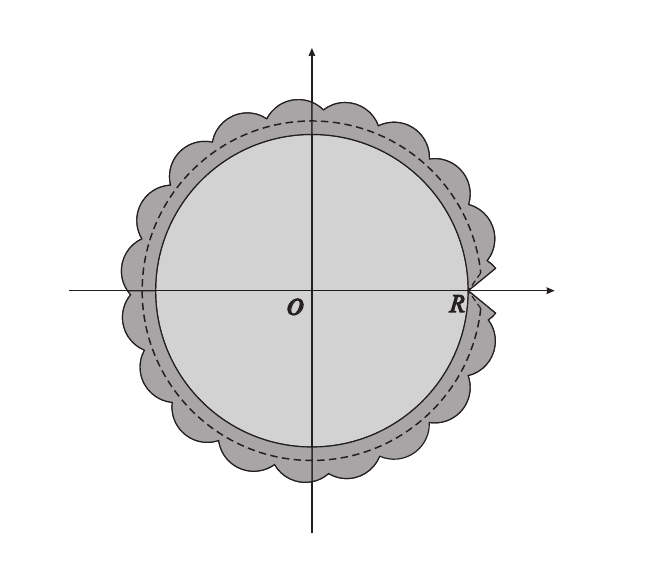}}
\end{center}
\caption{The construction of $\Delta$-domain for ${\bf S}(y)$: the dark grey part is the analytic continuous domain based on the disc of $U(0,R)$ (light grey). The dash line is the edge of $\Delta$-domain which is contained in the whole analytic domain (dark and light grey) of ${\bf S}(y)$.}\label{F:delta-analytic}
\end{figure}

{\it Claim 2. ${\bf S}(y)$ can be analytically continued to a $\Delta$-domain at the singularity of $R$.}

 The previous claim actually tells us ${\bf S}(y)$ can be analytically continued to a  ``$\Delta$-neighborhood" domain $N(\Delta)$, whose boundary makes an acute angle with the half line $\mathbb{R}_{\geq R}$.

  Claim 2 of Lemma \ref{L:exs_and_uniq} reveals that for any $y'$ on the circle of $|y|=R$ outside $N(\Delta)$, ${\bf S}$ can be analytically continued to $U(y',\delta_{y'})$ which contains in an image of $y(z)$ univalently mapping from a neighborhood $U(r,\varepsilon)$ where $y(r)=y'$. For the reason that $\{y\mid |y|=R\}$ without containing in $N(\Delta)$ is a closed curve $\ell$, we can find a finite neighborhoods, $\{U(y_i, \delta_{y_i})\}_{1\leq i\leq n}\subset \{U(y',\delta_{y'})\}_{|y'|=R}$, covering $\ell$.  Since
  \begin{equation*}
  U(y_i, \delta_{y_i})\cap U(y_{i+1}, \delta_{y_{i+1}})\cap U(0,R)\neq \emptyset,\quad 1\leq i\leq n-1,
  \end{equation*}
  we have that $\cup^n_{i=1} U(y_i, \delta_{y_i})$ is an analytic continuation domain for ${\bf S}(y)$.

  Combining these two facts and
   \begin{equation*}
   \bigcup^n_{i=1} U(y_i, \delta_{y_i})\cap U(0,R)\cap N(\Delta)\neq \emptyset,
   \end{equation*}
   we obtain a new analytic continuation domain for ${\bf S}(y)$, see Fig.~\ref{F:delta-analytic}.  Using the method of Step {\sf (c)} in Proposition~\ref{chapter2:algeasym}, we are able to find a $\Delta$-domain inside, which means we have proved ${\bf S}(y)$ is $\Delta$-analytic. Here the whole proof is complete.

\end{proof}

\begin{lemma}[Singular expansion of ${\bf S}(y)$]\label{L:sk-sing-expansion}
The singular expansion of ${\bf S}(y)$ for
$y\rightarrow R$ is given by
\begin{equation}
{\bf S}(y)=
\sum_{i=0}^{4} b_i(y-R)^i+c'(y-R)^{4}
\log (y-R)+o((y-R)^4)
\end{equation}
were $c'$ is
some negative constant, and $b_i=\frac1{i!}{\bf S}^{(i)}(R^-)$, $0\leq i\leq 3$.
\end{lemma}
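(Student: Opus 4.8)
The plan is to transfer the singular behaviour of ${\bf F}_3$ at $z=\rho=1/16$, known from Proposition~\ref{P:fk}, to ${\bf S}$ at $y=R=\rho\,{\bf F}_3^2(\rho)$, using the relation ${\bf S}(y)={\bf F}_3(z)$ with $y=z{\bf F}_3^2(z)$ (eq.~(\ref{skeleta-shape2})) together with the $\Delta$-analyticity of ${\bf S}$ and the local conformality of the correspondence $z\leftrightarrow y$ at $\rho$, both established in the two lemmas preceding this one. For $k=3$ one has $(k-1)^2+(k-1)/2-1=4$, so Proposition~\ref{P:fk} reads
\begin{equation*}
{\bf F}_3(z)=P_3(z-\rho)+c_3'\,(z-\rho)^{4}\log(z-\rho)\,(1+o(1)),\qquad z\to\rho,
\end{equation*}
with $\deg P_3\le 3$ and $c_3'=-65536/\pi<0$; in particular ${\bf F}_3$ is $C^3$ at $\rho$, the divergence of ${\bf F}_3^{(4)}$ near $\rho$ being purely logarithmic, and ${\bf F}_3(\rho),{\bf F}_3'(\rho)>0$.

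The first step is to locate the singular part of ${\bf S}^{(4)}$ through eq.~(\ref{deriv}),
\begin{equation*}
{\bf S}^{(4)}(z{\bf F}_3^2(z))=\frac{U({\bf F}_3,{\bf F}_3',{\bf F}_3'',{\bf F}_3'''){\bf F}_3^{(4)}(z)+V({\bf F}_3,{\bf F}_3',{\bf F}_3'',{\bf F}_3''')}{\left[{\bf F}_3(z)\left({\bf F}_3(z)+2z{\bf F}_3'(z)\right)\right]^{7}}.
\end{equation*}
As $z\to\rho$ inside the slit neighbourhood $\Theta$, the numerator polynomials $U$ and $V$ and the denominator converge to finite limits, the denominator being positive since ${\bf F}_3(\rho)>0$ and ${\bf F}_3(\rho)+2\rho{\bf F}_3'(\rho)>0$, while $U|_{z=\rho}>0$ by the remark following eq.~(\ref{deriv}). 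Differentiating the expansion of ${\bf F}_3$ above four times (which is legitimate because ${\bf F}_3$ is $\Delta$-analytic and, by Theorem~\ref{T:reg}, its remainder near the regular singular point $\rho$ is of analytic-times-power type, so Cauchy estimates on shrinking subdiscs control the derivatives of the $o$-term) gives ${\bf F}_3^{(4)}(z)=24\,c_3'\log(z-\rho)\,(1+o(1))$. Hence ${\bf S}^{(4)}\bigl(z{\bf F}_3^2(z)\bigr)\sim c^{*}\log(z-\rho)$ for a constant $c^{*}<0$ whose sign is forced by $c_3'<0$, $U|_{z=\rho}>0$ and the positivity of the denominator. Since $y'(\rho)={\bf F}_3^2(\rho)+2\rho{\bf F}_3(\rho){\bf F}_3'(\rho)>0$, local conformality at $\rho$ yields $z-\rho\sim (y-R)/y'(\rho)$, so $\log(z-\rho)=\log(y-R)+O(1)$ and therefore ${\bf S}^{(4)}(y)\sim c^{*}\log(y-R)$ as $y\to R$ in the $\Delta$-domain supplied by the preceding lemma.

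The final step is to integrate this four times. Since $t^{4}\log t$ has fourth derivative $24\log t+50$, integrating $\log t$ four times produces $\tfrac1{24}t^{4}\log t$ plus a polynomial of degree at most $3$; integrating the $o(\log(y-R))$ remainder inside the $\Delta$-domain produces remainders of the corresponding lower orders. One thus obtains
\begin{equation*}
{\bf S}(y)=\sum_{i=0}^{3}b_i\,(y-R)^{i}+\frac{c^{*}}{24}\,(y-R)^{4}\log(y-R)\,(1+o(1)),\qquad y\to R.
\end{equation*}
The polynomial part collects the four integration constants; because ${\bf S}^{(4)}$ is only logarithmically --- hence absolutely --- integrable at $R$, the one-sided derivatives ${\bf S}^{(i)}(R^{-})$, $0\le i\le 3$, are finite and $b_i=\frac1{i!}{\bf S}^{(i)}(R^{-})$. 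Putting $c'=c^{*}/24<0$ completes the argument.

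The hard part will be the rigorous handling of differentiation and integration of singular expansions inside a $\Delta$-domain: one must show that differentiating the $(1+o(1))$ remainder of Proposition~\ref{P:fk} four times leaves it $o(\log)$, which relies on combining $\Delta$-analyticity of ${\bf F}_3$ with the structure of solutions of a meromorphic ODE near a regular singular point (Theorem~\ref{T:reg}), and symmetrically for the integration step that produces the expansion of ${\bf S}$ from that of ${\bf S}^{(4)}$. Everything else --- finiteness and positivity of the limits of $U$, $V$, the denominator in eq.~(\ref{deriv}) and of $y'(\rho)$ --- is already in hand from Proposition~\ref{P:fk} and the two lemmas immediately above.
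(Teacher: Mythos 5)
Your argument is correct, and it reaches the same constant $c'=Kc_3'/(FT)^{7}$ as the paper (your $c^{*}/24$ coincides with eq.~(\ref{E:multiplier})), but it takes a genuinely different route. The paper proves the lemma in two stages: first (Claim~1) it subtracts the cubic Taylor polynomial, writes $E(y)={\bf S}(y)-\sum_{i\le 3}b_i(y-R)^i$, composes with $y(z)=z{\bf F}_3^2(z)$ and uses Fa\`a di Bruno's formula (eq.~(\ref{faadibruno})) to check that all coefficients of $(z-\rho)^m$, $m\le 3$, cancel in eq.~(\ref{inv-cal2}), so that $E(y(z))=(z-\rho)^4\log(z-\rho)(c''+o(1))$, and then transfers from $z$ to $y$ by a quotient limit; only afterwards (Claim~2) does it invoke eq.~(\ref{deriv}) and the differentiated expansion of ${\bf F}_3$ (eq.~(\ref{comb2})) to compare fourth derivatives and conclude $c'<0$. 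You instead work entirely at the level of ${\bf S}^{(4)}$: eq.~(\ref{deriv}) plus ${\bf F}_3^{(4)}(z)\sim 24c_3'\log(z-\rho)$ gives ${\bf S}^{(4)}(y)\sim c^{*}\log(y-R)$ with $c^{*}<0$, and four integrations produce the expansion, the polynomial part and the sign in one pass; the pure $(y-R)^4$ term arising from $\tfrac{d^4}{dt^4}[t^4\log t]=24\log t+50$ is correctly absorbed into the $(1+o(1))$ factor since $(y-R)^4=o\left((y-R)^4\log(y-R)\right)$, and the identification $b_i=\frac1{i!}{\bf S}^{(i)}(R^-)$ follows from the logarithmic (hence integrable) growth of ${\bf S}^{(4)}$. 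What your route buys is the avoidance of Fa\`a di Bruno and of the case ``$c'$ might vanish'' that the paper must rule out later; what it costs is a heavier reliance on differentiating the $o(1)$ remainder of Proposition~\ref{P:fk} and on integrating asymptotic relations inside the $\Delta$-domain --- but the paper incurs essentially the same debt in Claim~2, where it both differentiates the ${\bf F}_3$ expansion (eq.~(\ref{comb2})) and differentiates the claimed expansion of ${\bf S}$ four times to compare with eq.~(\ref{comb3}), so the level of rigor of the two arguments is comparable.
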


\begin{proof}

{\it Claim 1.
The singular expansion of ${\bf S}(y)$ for
$y\rightarrow R$ is given by
\begin{equation}\label{sk-sing-expansion}
{\bf S}(y)=
\sum_{i=0}^{3} b_i(y-R)^i+(y-R)^{4}
\log (y-R)\left(c'+o(1)\right).
\end{equation}
 Furthermore, $c'$ is
some constant, and $b_i=\frac1{i!}{\bf S}^{(i)}(R^-)$, $0\leq i\leq 3$.
}\\

Since ${\bf F}_3^{(m)}(\rho^-)$, $0\leq m\leq 3$ exists, $y^{(m)}(\rho^-)$ exists. Therefore ${\bf S}^{(m)}(R^-)$ exists. Then we set
\begin{equation}
E(y)={\bf S}(y)-\sum_{i=0}^{3} b_i(y-R)^i,\label{inv-sing-expansion}
\end{equation}
where $b_i=\frac1{i!}{\bf S}^{(i)}(R^-)$, $0\leq i\leq 3$.

Furthermore, by eq. (\ref{E:Fk_sing_ex}), we have the singular expansion of $y(z)$ which is
\begin{equation*}
y(z)=
\sum_{i=0}^{3} a_i(z-\rho)^i+c_3''(z-\rho)^{4}
\log (z-\rho)\left(1+o(1)\right)
\end{equation*}
where $c_3''$ is
some negative constant, and $a_i=\frac1{i!}y^{(i)}(\rho^-)$, $0\leq i\leq 3$.

Now we begin to discuss the higher term of the expansion of ${\bf S}(y)$.

By eq. (\ref{inv-sing-expansion}), we know that

\begin{align}
{\bf F}_3(z)={\bf S}(y(z))&=\sum_{i=0}^{3} b_i(y(z)-y(\rho))^i+E(y(z))\nonumber\\
&=\sum_{i=0}^{3} b_i\left[\sum_{j=1}^{3} a_j(z-\rho)^j+c_3'(z-\rho)^4\log(z-\rho)(1+o(1))\right]^i\nonumber\\
&+E(y(z))\label{inv-cal2}.
\end{align}

From Proposition \ref{P:fk} and Fa\`{a} di Bruno's Formula \cite{Stanley:001}, for $1\leq m\leq 3$ we have
\begin{align}
\frac1{m!}{\bf F}_3^{(m)}(\rho^-)&=\sum_{i=0}^m \sum_{t_1+t_2+\cdots+t_m=i \atop t_1+2t_2+\cdots+mt_m=m}
\frac{1}{t_1!t_2!\cdots t_m!}{\bf S}^{(i)}(R^-)\prod_{j=1}^m \left(\frac{y^{(j)}(\rho^-)}{j!}\right)^{t_j}\nonumber\\
&=\sum_{i=0}^m \frac1 {i!}{\bf S}^{(i)}(R^-)[(z-\rho)^m]\left(\sum_{j=1}^m\frac{y^{(j)}(\rho^-)}{j!}(z-\rho)^j\right)^i\nonumber\\
&=\sum_{i=0}^m b_i[(z-\rho)^m]\left(\sum_{j=1}^m a_j(z-\rho)^j\right)^i.\label{faadibruno}
\end{align}

So by
Eq. (\ref{faadibruno}) and ${\bf F}_3(\rho^-)={\bf S}(R^-)=b_0$, it leads that the every coefficient of $(z-\rho)^m$, $0\leq m\leq 3$ in the expansion of each side in eq. (\ref{inv-cal2}) is eliminated. Therefore we have

\begin{equation}
(z-\rho)^4\log(z-\rho)(c''+o(1))=E(y(z)),\label{inv-eq}
\end{equation}
where $c''$ is some constant.

We divide $E(y)$ by $(y-R)^4 \log(y-R)$ and then
\begin{align}
\lim_{y\rightarrow R}\frac{E(y)}{(y-R)^4 \log(y-R)}&=
\lim_{z\rightarrow \rho}\frac{E(y(z))}{(y(z)-y(\rho))^4\log(y(z)-y(\rho))}\nonumber\\
&=\lim_{z\rightarrow \rho}\frac{(z-\rho)^4\log(z-\rho)(c''+o(1))}{(y(z)-y(\rho))^4 \log(y(z)-y(\rho))}\nonumber\\
&=\text{a constant},\label{E:errorterms-deduc}
\end{align}
since $\frac{z-\rho}{y(z)-y(\rho)}$ and $\frac{\log(z-\rho)}{\log(y(z)-y(\rho))}$ both exist as $z\rightarrow \rho$. Therefore
\begin{equation}
{\bf S}(y)=\sum_{i=0}^{3} b_i(y-R)^i+(y-R)^4\log(y-R)(c'+o(1)),\label{inv-sing-expansion2}
\end{equation}
whence Claim 3.\\

{\it Claim 2. $c'$ is negative.}\\

From eq. (\ref{deriv}) with $T={\bf F}_3(\rho^-)+2\rho{\bf F}'_3(\rho^-)$ and $F={\bf F}_3(\rho^-)$,
\begin{equation}\label{comb1}
{\bf S}^{(4)}(y)\sim K\cdot {\bf F}_3^{(4)}(z)/(FT)^{7}
\end{equation}
as $y\rightarrow R$, where $K$ is a positive constant equaled $U({\bf F}_3,{\bf F}'_3,{\bf F}''_3,{\bf F}'''_3)|_{z=\rho}=F^4T^2$. From $y=z{\bf F}_3^2(z)$ we have
\begin{equation}
z-\rho\sim (y-R)/(FT).
\end{equation}
From eq. (\ref{E:Fk_sing_ex}),

\begin{equation}\label{comb2}
{\bf F}^{(4)}_3(z)\sim  c'_3 \frac{\dif^4 [(z-\rho)^4\log(z-\rho)]}{\dif z^4}=c'_3 (4!\log(z-\rho)+M),
\end{equation}
where $M$ is a positive integer.

Combining eq. (\ref{comb1})--(\ref{comb2}) we obtain
\begin{equation}\label{comb3}
{\bf S}^{(4)}(y)\sim \frac{K\cdot c'_3 }{(FT)^{7}}(4!(\log(y-R)-\log(FT))+M)\sim \frac{K\cdot c'_3}{(FT)^{7}}\frac{\dif^4 [(y-R)^4\log(y-R)]}{\dif y^4}.
\end{equation}
Differentiating eq. (\ref{sk-sing-expansion}) four times and comparing to eq.~(\ref{comb3}), we know that
\begin{equation}\label{E:multiplier}
c'=\frac{K\cdot c'_3}{(FT)^{7}},
\end{equation}
which means $c'$ is negative, whence Claim 2.
Based on the above claims, we do the final proof, we set
\begin{equation*}
E'(y)={\bf S}(y)-\sum_{i=0}^{3} b_i(y-R)^i-c'(y-R)^{4}
\log (y-R),
\end{equation*}
where
Substituting $y=z{\bf F}^2_3(z)$ into the equation above, we have
\begin{align}
E'(y(z))&={\bf F}_3(z)-\sum_{m=0}^{3} b_m(y(z)-R)^m-c'(y(z)-R)^{4}
\log (y(z)-R)\nonumber\\
&=\sum_{m=0}^3 \left[\frac1{m!}{\bf F}_3^{(m)}(\rho^-)-\sum_{i=0}^m b_i[(z-\rho)^m]\left(\sum_{j=1}^m a_j(z-\rho)^j\right)^i\right] (z-\rho)^m\nonumber\\
&+\left[c'_3-c'(FT)^4-2 b_1\rho F c'_3\right]\log(z-\rho)\nonumber\\
&+\theta((z-\rho)^4), \label{E:errorterms}
\end{align}
where $g(z)=\theta(f(z))$ means $\lim_{z\rightarrow\rho}|g(z)/f(z)|$ is a positive constant. In the view of the equation of eq.~(\ref{faadibruno}), eq.~(\ref{E:multiplier}) and $b_1=\frac{{\bf F}_3'(\rho)}{FT}$, we imply that the former two terms of eq.~(\ref{E:errorterms}) are actually zero, which means
\begin{equation*}
E'(y(z))=\theta((z-\rho)^4).
\end{equation*}
Using the same deducting method of eq.~(\ref{E:errorterms-deduc}), we have
\begin{equation*}
E'(y)=\theta((y-R)^4)
\end{equation*}
The proof is complete.
\end{proof}

    Currently, we have the $\Delta$-analytic property and the singular expansion of ${\bf S}(y)$. The finale is applying the transfer rule (Corollary \ref{chapter2:sim-transfer}) which yields directly the following asymptotic estimate for the number of $3$-noncrossing skeleton matching.

\begin{theorem}[Asymptotic number of skeleton matching]
The asymptotic behavior of the number of $3$-noncrossing skeleton matching with $h$ arcs is given by
\begin{equation*}
{\sf S}(h)\ \sim\ C\cdot h^{-5}R^{-h}, \end{equation*}
where $C\approx 3.03096$, see Fig.~\ref{F:skeleta-asym-check}.
\end{theorem}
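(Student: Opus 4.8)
The plan is to extract the coefficient asymptotics of ${\sf S}(h)=[y^h]{\bf S}(y)$ directly from the singular behaviour of ${\bf S}(y)$ at $y=R$, all of which has already been pinned down in this section: by Lemma~\ref{L:exs_and_uniq} the value $R=\rho\,{\bf F}_3^2(\rho)$ with $\rho=1/16$ is the unique dominant singularity of ${\bf S}(y)$, by the subsequent lemma ${\bf S}(y)$ is $\Delta$-analytic at $R$, and by Lemma~\ref{L:sk-sing-expansion} it admits the singular expansion
\begin{equation*}
{\bf S}(y)=\sum_{i=0}^{3}b_i(y-R)^i+c'(y-R)^4\log(y-R)\,(1+o(1)),\qquad c'<0 .
\end{equation*}
With all three ingredients in hand the coefficient asymptotics follow from the singularity-analysis transfer Theorem~\ref{chapter2:singularity-analysis} together with Corollary~\ref{chapter2:sim-transfer}; the only remaining work is a rescaling to the unit disc and the explicit evaluation of the leading constant.

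Concretely, I would first set $y=Rz$, so that ${\sf S}(h)=R^{-h}[z^h]{\bf S}(Rz)$. The polynomial part $\sum_{i\le3}b_i(y-R)^i$ contributes nothing to the asymptotics, and in the singular term the factor $(y-R)^4=R^4(1-z)^4$ is analytic while $\log(y-R)=\log R+\log(z-1)$ carries the non-analytic piece $\log(z-1)$; thus near $z=1$ one has ${\bf S}(Rz)=(\text{analytic})+c'R^4(z-1)^4\log(z-1)(1+o(1))$, which is still $\Delta$-analytic. Corollary~\ref{chapter2:sim-transfer} with $r=-4$ gives $[z^n](z-1)^4\log(z-1)\sim-4!\,n^{-5}=-24\,n^{-5}$ (equivalently $[z^n](1-z)^4\log\tfrac1{1-z}\sim24\,n^{-5}$, which is $(-1)^4\tfrac{4!\,(n-5)!}{n!}$, obtained by differentiating $(1-z)^{-\alpha}$ in $\alpha$ at $\alpha=-4$, cf.\ Theorem~\ref{chapter2:logarithm-asym}), and the transfer Theorem~\ref{chapter2:transfer} disposes of the $o(1)$ error term. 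Hence
\begin{equation*}
{\sf S}(h)\ \sim\ (-24\,c'R^4)\,h^{-5}\,R^{-h},
\end{equation*}
with exponential rate $R^{-1}$, subexponential order $h^{-5}$, and leading constant $C=-24\,c'R^4$, which is positive precisely because $c'<0$.

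The last step is to turn $C$ into a closed form. By eq.~(\ref{E:multiplier}) we have $c'=K\,c_3'/(FT)^7$ with $c_3'=-65536/\pi$ from the singular expansion of ${\bf F}_3$ (Proposition~\ref{P:fk}), $F={\bf F}_3(\rho^-)$, $T={\bf F}_3(\rho^-)+2\rho\,{\bf F}_3'(\rho^-)$, and $K=U\big({\bf F}_3,{\bf F}_3',{\bf F}_3'',{\bf F}_3'''\big)\big|_{z=\rho}>0$ the value at $\rho$ of the numerator polynomial produced when ${\bf S}(y(z))={\bf F}_3(z)$ is differentiated four times, see eq.~(\ref{deriv}). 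The four numbers ${\bf F}_3(\rho),\dots,{\bf F}_3'''(\rho)$ are closed-form, obtained by specialising the hypergeometric representation~(\ref{E:F3-gf}) and its first three derivatives at $z=1/16$, where the ${}_2F_1$-values collapse to elementary expressions in $\pi$; combined with $R=\rho F^2=256\bigl(11/8-64/(15\pi)\bigr)^2$ this assembles into
\begin{equation*}
C=-24\,c'R^4=\frac{24(-512+165\pi)^5}{3125(256-81\pi)^5\pi}\approx3.03096 .
\end{equation*}

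I expect the main obstacle to be exactly this final computation: correctly building the multivariate polynomial $U$ from the iterated chain rule applied to ${\bf S}(y(z))={\bf F}_3(z)$, and then evaluating ${\bf F}_3$ together with its first three derivatives at the singularity via the ${}_2F_1$ identity — the place where sign conventions and bookkeeping are easiest to mishandle. I would therefore cross-check the resulting constant $C\approx3.03096$ numerically against the directly computed coefficients ${\sf S}(h)$ before finalising, as in Fig.~\ref{F:skeleta-asym-check}.
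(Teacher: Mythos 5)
Your proposal is correct and follows essentially the same route as the paper: it invokes the three lemmas of this section (unique dominant singularity $R=\rho\,{\bf F}_3^2(\rho)$, $\Delta$-analyticity, and the singular expansion with $c'<0$), applies the transfer result Corollary~\ref{chapter2:sim-transfer} to the non-analytic term $(y-R)^4\log(y-R)$ to obtain ${\sf S}(h)\sim -24\,c'R^4\,h^{-5}R^{-h}$, and then evaluates $C=-24\,c'R^4$ in closed form via eq.~(\ref{E:multiplier}), $c_3'=-65536/\pi$ and the hypergeometric values of ${\bf F}_3$ and its derivatives at $\rho=1/16$, exactly as the paper does (its stated constant $C=-\frac{Kc_3'}{(FT)^7}R^4\cdot 4!$ is the same quantity). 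Your rescaling $y=Rz$ and the explicit coefficient extraction merely spell out what the paper compresses into ``applying the transfer rule yields directly the asymptotic estimate.''
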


The multiplier $C$ is not estimated but precisely computed as follow with the help of eq.~(\ref{deriv}) and eq.~(\ref{E:multiplier}),
\begin{equation*}
C=-\frac{K c'_3}{(FT)^7}\cdot R^4\cdot4!=\frac{24(-512+165\pi)^5}{3125(256-81\pi)^5\pi}.
\end{equation*}

\begin{theorem}\label{T:canonical-sk-asym}
The number of canonical $3$-noncrossing skeleton diagrams of length $n$ with arc length $\geq 4$, ${\sf S}^{[4]}_{3}(n)$, has asymptotic of
\begin{equation}
{\sf S}^{[4]}_{3}(n)\ \sim\ C' n^{-5} \eta^{-n},
\end{equation}
where $C'\approx 7892.16$ and $\eta\approx 0.4934$ is the minimal positive real solution of $\vartheta(z)=R$, where
\begin{equation*}
\vartheta(z)=\left({\sqrt{w_0(z)}z}\over{1-z}\right)^2
\end{equation*}
and $w_0(z)={z^4 \over {1-z^2+z^6}}$. See Fig.~\ref{F:skeleta-asym-check}.
\end{theorem}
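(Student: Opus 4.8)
The plan is to read off the asymptotics of ${\sf S}^{[4]}_3(n)$ from the closed form in Theorem~\ref{T:skeleta-gen} by treating ${\bf S}^{[4]}_3(z)$ as a supercritical composition and invoking Proposition~\ref{chapter2:algeasym}, exactly as was done for the modular diagrams ${\bf Q}_k(z)$ in Theorem~\ref{T:queen}. First I would rewrite eq.~(\ref{s34GF}) in the form
\begin{equation*}
{\bf S}^{[4]}_3(z)=(1-z)\,{\bf G}(\vartheta(z)),\qquad {\bf G}(x)={\bf S}(x)-1-x,\qquad \vartheta(z)=\left(\frac{\sqrt{w_0(z)}\,z}{1-z}\right)^2=\frac{z^6}{(1-z)^2(1-z^2+z^6)},
\end{equation*}
where squaring out the radical shows that the ``inner'' function $\vartheta$ is in fact rational, with $\vartheta(0)=0$; its poles sit at $z=1$ and at the zeros of $1-z^2+z^6$, all of modulus $>0.9$, so $\vartheta$ is analytic in a disc $|z|\le r$ with $\eta<r$.

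Next I would transfer the analytic information on ${\bf S}(y)$ collected in Lemma~\ref{L:exs_and_uniq}, in the $\Delta$-analytic continuation lemma, and in Lemma~\ref{L:sk-sing-expansion} to the outer function ${\bf G}(x)$: subtracting the polynomial $1+x$ changes neither the radius of convergence $R=\rho\,{\bf F}_3^2(\rho)$ nor $\Delta$-analyticity at $R$, and it leaves the singular expansion
\begin{equation*}
{\bf G}(x)=\sum_{i=0}^{3}\tilde b_i\,(x-R)^i+c'\,(x-R)^4\log(x-R)\,(1+o(1)),\qquad c'<0,
\end{equation*}
which is precisely the form eq.~(\ref{E:fx-sing-exp}) required of the ``outer'' function, with the relevant integer being $k=4$. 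I would then apply Proposition~\ref{chapter2:algeasym} with $f={\bf G}$ and $\psi=\vartheta$: after verifying that $\eta$, the minimal positive real root of $\vartheta(\eta)=R$, is the unique dominant singularity of ${\bf G}(\vartheta(z))$, that $|\vartheta(z)|\le R$ for $|z|\le\eta$, and that $\vartheta'(\eta)>0$, the proposition yields that ${\bf G}(\vartheta(z))$ has a log-power singular expansion at $\eta$ and $[z^n]{\bf G}(\vartheta(z))\sim A\,n^{-5}\,\eta^{-n}$ for some $A>0$. Finally, multiplication by the polynomial $(1-z)$, which is regular and nonzero at $z=\eta\ne 1$, only rescales the singular part by the factor $(1-\eta)$, giving ${\sf S}^{[4]}_3(n)\sim C'\,n^{-5}\,\eta^{-n}$ with $C'=(1-\eta)A>0$.

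To pin down the numerical value $C'\approx 7892.16$ I would track the constants along the transfer. Composing the expansions of ${\bf G}$ and of $\vartheta$ at $\eta$ (using $\vartheta(z)-R=\vartheta'(\eta)(z-\eta)(1+o(1))$) produces the log-coefficient $c'\,\vartheta'(\eta)^4$ in the variable $(z-\eta)$; the sim-transfer rule (Corollary~\ref{chapter2:sim-transfer}) then supplies the factor $-4!$ together with a factor $\eta^4$ from rescaling, so $A=-24\,c'\,\vartheta'(\eta)^4\eta^4$. Combining this with the matching-constant identity $C=-24\,c'\,R^4$ that already underlies the asymptotics of ${\sf S}(h)$ (via eq.~(\ref{E:multiplier})), one gets the clean relation $C'=(1-\eta)\,C\,\big(\eta\,\vartheta'(\eta)/R\big)^4$, which I would evaluate at $\eta\approx 0.4934$, $R\approx 0.0729$, $C\approx 3.03096$.

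I expect the main obstacle to be the verification of the hypotheses of Proposition~\ref{chapter2:algeasym} for the composition --- concretely, showing that $\vartheta$ attains the value $R$ on the closed disc $\{|z|\le\eta\}$ only at $z=\eta$ and that $|\vartheta(z)|\le R$ there. Since $\vartheta$ is rational but does not visibly have nonnegative Taylor coefficients, a Pringsheim-type argument does not apply directly, and this has to be settled by a direct (numerical) study of $\vartheta$ on the circle $|z|=\eta$, mirroring the corresponding step for ${\bf Q}_k(z)$ in Theorem~\ref{T:queen}. A lesser point is checking $\vartheta'(\eta)\ne 0$, which guarantees that $\vartheta$ is locally conformal at $\eta$ and hence that the composition does not manufacture singular powers below $(z-\eta)^4$, so that the coefficient asymptotics are genuinely of order $n^{-5}$ rather than of a higher one.
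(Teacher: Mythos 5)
Your proposal is correct and follows essentially the same route as the paper: it reads the asymptotics off eq.~(\ref{s34GF}) by combining the singular expansion of ${\bf S}$ (hence ${\bf G}$) at $R$ from Lemma~\ref{L:sk-sing-expansion} with Proposition~\ref{chapter2:algeasym} and Corollary~\ref{chapter2:sim-transfer}, after checking that $\eta$ is the unique dominant singularity with $|\vartheta(z)|\le R$ on $|z|\le\eta$ and $\vartheta'(\eta)>0$, and your constant $C'=(1-\eta)\,C\,\bigl(\eta\,\vartheta'(\eta)/R\bigr)^4$ agrees with the paper's expression. The only step you defer to a ``direct (numerical) study'' --- maximizing $|\vartheta(z)|$ on the circle $|z|=\eta$ --- is carried out in the paper analytically, by parametrizing $z=\eta(\cos x+\mathrm{i}\sin x)$ and showing the factors $|1-z|^2$ and $|1-z^2+z^6|^2$ of the denominator attain their minima only at $x=0$.
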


\begin{proof}
Pringsheim's
Theorem \cite{Tichmarsh:39} guarantees that ${\bf S}^{[4]}_{3}(z)$ has a
dominant real positive singularity $\eta$. We verify that $\eta$, which is the unique solution of
minimum modulus of the equation $\vartheta(z)=R$, is the
unique dominant singularity of ${\bf S}^{[4]}_{3}(z)$,  and
$\vartheta'(\eta)> 0$.

Here we discuss a claim that $|\vartheta(z)|$ is constraint in the closed disc with radius of $R$ when $\{|z|\leq \eta\}$. By maximum modulus theorem, we only need to focus on $\{|z|=\eta\}$. Since
\begin{equation}\label{E:theta-norm}
|\vartheta(z)|=\frac{\eta^6}{|1-z|^2|1-z^2+z^6|}.
\end{equation}
Studying each factor of the denominator of eq. (\ref{E:theta-norm}) by replacing $z$ to $\eta(\cos x+\i\sin x)$, we have the following functions corresponding to $x$,
\begin{eqnarray*}
|1-z|^2&=&1-2\eta\cos x+\eta^2,\\
|1-z^2+z^6|^2&=&1 + \eta^4 + \eta^{12} - 2 \eta^2 \cos 2 x - 2 \eta^8 \cos 4 x + 2 \eta^6\cos 6 x.
\end{eqnarray*}
Finding critical points for every function above and we can easily conclude that both functions reach the minimum when $x=0$. Therefore the maximum of $|\vartheta(z)|$ for $|z|=\eta$ is $|\vartheta(\eta)|$, which is also equal to $R$, whence the claim.

Furthermore,
\begin{equation*}
{\bf G}(x)={\bf S}(x)-1-x,
\end{equation*}
which means ${\bf G}(x)$ has the same expansion formation as ${\bf S}(x)$. According to
Proposition~\ref{chapter2:algeasym} we therefore have
\begin{equation*}
{\sf S}^{[4]}_{3}(n)\sim  C'\, n^{-5}\,
(\eta^{-1})^n.
\end{equation*}
Here $C'$ can be precisely computed as follow,
\begin{equation*}
C'=(1-\eta)\cdot\vartheta'(\eta)^4\cdot\frac{820125 \pi^7}{256 (256 - 81 \pi)^5 (-512 + 165 \pi)^3}\cdot 4!\cdot\eta^4,
\end{equation*}
which is concluded by eq. (\ref{s34GF}), eq. (\ref{sk-sing-expansion}) and Corollary~\ref{chapter2:sim-transfer}. The proof of Theorem~\ref{T:canonical-sk-asym} is complete.
\end{proof}

\restylefloat{figure}\begin{figure}
\centering
\includegraphics[width=1\textwidth]{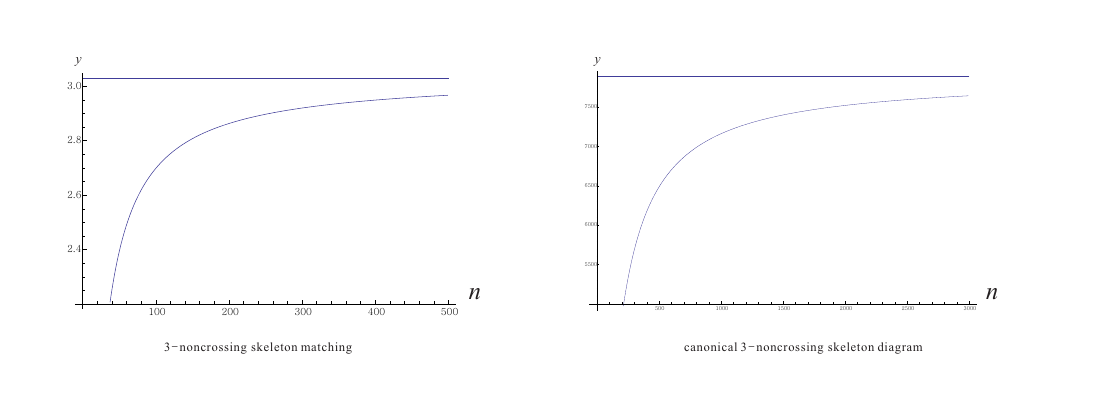}
\caption{The computational experiment of asymptotic enumeration: The straight lines in both figures are $y=C$ (left) and $y=C'$ (right) respectively. The curves are plotted by two lists of points, ${\sf S}(n)/(n^{-5}R^{-n}), n=1,\ldots, 500$ (left) and ${\sf S}^{[4]}_3(n)/(n^{-5}\eta^{-n}), n=1,\ldots, 3000$ (right). We notice that each curve is well approaching the corresponding straight line, which means the asymptotic enumeration makes sense.\label{F:skeleta-asym-check}}
\end{figure}

\section{Discussion}

So far, we have concluded the asymptotic properties of the number of $3$-noncrossing skeleton matching and canonical diagrams with length $n$. The two results solves the complexity problem of Huang's folding algorithm \cite{Huang:09}, which means that we have had a strict mathematical way to inform how much time we cost in Huang's algorithm. In this section we will make a exploration to the skeleton diagrams. First we are going to take a look at the statistics of arcs in canonical $3$-noncrossing skeleton diagrams, then we prospect the further research to the asymptotic analysis for $k$-noncrossing skeleton diagrams.

\subsection{Statistics of arcs in canonical $3$-noncrossing skeleton diagrams}\label{S:statistics}

In this section, we discuss the distribution of arcs in canonical $k$-noncrossing skeleta with certain number of vertices. Let $\mathbb{A}_{n}(S)$ denote the number of arcs in a $3$-noncrossing skeleton diagrams, $S$ and let $\mathcal{A}(n,h)$ and $\textsf{A}(n,h)$ denote the set and number of canonical $3$-noncrossing skeleton diagrams, having exactly $h$ arcs. Now we will study $\mathbb{A}_n$, where $\mathbb{P}(\mathbb{A}_n=h)=\frac{\textsf{A}(n,h)}{\textsf{S}_3^{[4]}(n)}$.

To compute the distribution of $3$-noncrossing skeleta diagrams, i.e. $\mathbb{A}_n$, the first step is to compute the bivariate generating function
\begin{equation*}
{\bf A}(z,u)=\sum_{n \geq 0} \sum_{0\leq h \leq
\frac{n}{2}} {\sf A}(n,h)\,u^h z^n.
\end{equation*}

\begin{theorem}\label{T:bivarate-g-arcs}
Let $u$ and $z$ be indeterminants. Then we have the identity of formal power series
\begin{equation}\label{E:bivarate-g-arcs}
{\bf A}(z,u) =(1-z){\bf G}\left(\frac{w(u,z)z^6}{(1-z)^2}\right),
\end{equation}
where $w(u,z)$ is given by
\begin{equation*}
w(u,z)=\frac{u^3}{1-u z^2+u^3 z^6}
\end{equation*}
and
\begin{equation*}
{\bf G}(x)={\bf S}(x)-1-x.
\end{equation*}
Considered as a relation between analytic functions, eq. (\ref{E:bivarate-g-arcs}) holds for $u=\exp^s$ and $|s|\leq \varepsilon$ for $\varepsilon$ sufficiently small and $|z|\leq \frac{1}{2}$.
\end{theorem}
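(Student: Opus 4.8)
The plan is to mimic the proof of Theorem~\ref{T:skeleta-gen} (eq.~(\ref{s34GF})), but this time keeping track of the number of arcs via the extra variable $u$. First I would set up the same surjection $\varphi\colon \mathcal{S}^{[4]}_{3}\to \mathcal{IS}$ onto skeleton-shapes, inducing the partition $\mathcal{S}^{[4]}_{3}=\dot\cup_\gamma\varphi^{-1}(\gamma)$ and hence
\begin{equation*}
{\bf A}(z,u)=\sum_{h\geq 2}\sum_{\gamma\in\mathcal{IS}(h)}{\bf A}_\gamma(z,u),
\end{equation*}
where ${\bf A}_\gamma(z,u)$ enumerates canonical diagrams of shape $\gamma$ with $u$ marking arcs. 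The building blocks are exactly those of Theorem~\ref{T:skeleta-gen} (isolated vertices $\mathcal{L}$, $3$-stacks $\mathcal{K}^3$, induced stacks $\mathcal{N}^3$, stems $\mathcal{M}^3$), except that now each arc carries a factor $u$. Concretely, an arc class becomes $\mathcal{A}\doteq uz^2$, so a $3$-stack has generating function $u^3z^6/(1-uz^2)$, an induced stack becomes $\frac{u^3z^6}{1-uz^2}\big(2\frac{z}{1-z}+(\frac{z}{1-z})^2\big)$, and a stem becomes
\begin{equation*}
{\bf M}^3(z,u)=\frac{\frac{u^3z^6}{1-uz^2}}{1-\frac{u^3z^6}{1-uz^2}\left(2\frac{z}{1-z}+\left(\frac{z}{1-z}\right)^2\right)}.
\end{equation*}
Inflating $\gamma\in\mathcal{IS}(h)$ via $\mathcal{S}_\gamma=(\mathcal{M}^3)^h\times\mathcal{L}^{2h-1}$ then yields ${\bf A}_\gamma(z,u)=(1-z)\,\big(w(u,z)z^6/(1-z)^2\big)^h$ after simplifying the denominator $(1-uz^2)(1-z)^2-(2z-z^2)u^3z^6$; the point here is the algebraic identity that this simplification equals $(1-z)^2\big(1-uz^2+u^3z^6\big)/u^3$ up to the right power, so that the per-arc factor collapses to $w(u,z)=u^3/(1-uz^2+u^3z^6)$. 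This is the one genuinely computational step and the place where a sign or exponent slip would be easy; I would verify it by expanding and also by the sanity check that setting $u=1$ recovers $\eta_3(z)$ of eq.~(\ref{E:eta}), i.e.\ $w(1,z)=1/(1-z^2+z^6)=w_0(z)/z^4$.

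Next, since ${\bf A}_{\gamma_1}(z,u)={\bf A}_{\gamma_2}(z,u)$ for $\gamma_1,\gamma_2\in\mathcal{IS}(h)$, I get ${\bf A}(z,u)=\sum_{h\geq2}{\sf IS}(h)\,(1-z)\,\big(w(u,z)z^6/(1-z)^2\big)^h=(1-z)\,{\bf IS}\big(w(u,z)z^6/(1-z)^2\big)$. Then I substitute Claim~1 of Theorem~\ref{T:skeleta-gen}, eq.~(\ref{E:IStoSk}), namely ${\bf IS}(z)=\sum_{h\geq2}{\sf S}(h)(z/(1+z))^h={\bf G}\big(z/(1+z)\big)$ with ${\bf G}(x)={\bf S}(x)-1-x$, wait — more precisely ${\bf IS}(x)={\bf G}(x/(1+x))$ only after checking the shrink map correctly; I would rather observe that from eq.~(\ref{s34GF}) and eq.~(\ref{SK34toSk}) the already-established composition ${\bf IS}(\eta_3(z))={\bf G}((\sqrt{w_0(z)}z/(1-z))^2)$ tells us how ${\bf IS}$ composes with the relevant rational argument, and that replacing $w_0(z)$ by $z^4 w(u,z)$ throughout (which is exactly the effect of marking arcs) gives ${\bf A}(z,u)=(1-z){\bf G}\big(w(u,z)z^6/(1-z)^2\big)$, i.e.\ eq.~(\ref{E:bivarate-g-arcs}). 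Setting $u=1$ returns ${\bf S}^{[4]}_3(z)$, confirming consistency.

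Finally I would address the analytic statement. As formal power series the identity is unconditional. For the analytic claim with $u=\exp^s$, $|s|\leq\varepsilon$ and $|z|\leq\frac12$, it suffices to check that the inner argument $w(u,z)z^6/(1-z)^2$ stays strictly inside the disc of convergence of ${\bf G}$ (equivalently of ${\bf S}$), whose radius is $R=\rho{\bf F}_3^2(\rho)\approx0.0729$ by Lemma~\ref{L:exs_and_uniq}. Since $w(1,z)z^6/(1-z)^2=\vartheta(z)$ of Theorem~\ref{T:canonical-sk-asym} and that function has modulus $<R$ on $|z|\leq\frac12$ (indeed its dominant singularity $\eta\approx0.4934>\frac12$), a continuity/compactness argument in the joint variable $(s,z)$ gives that for $\varepsilon$ small enough the modulus of $w(\exp^s,z)z^6/(1-z)^2$ remains below $R$ for all $|s|\leq\varepsilon$, $|z|\leq\frac12$; also $1-uz^2+u^3z^6$ stays away from $0$ there, so $w(u,z)$ is analytic. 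The main obstacle is really the bookkeeping in the first paragraph — getting the per-arc substitution and the denominator simplification exactly right so that the clean closed form $w(u,z)=u^3/(1-uz^2+u^3z^6)$ emerges — together with making sure the $u$-marking is placed consistently on every arc (including those inside stacks and stems, not just shape-arcs); everything after that is a substitution plus a routine analyticity estimate.
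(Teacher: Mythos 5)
Your overall framework is the paper's: inflate skeleton shapes with every arc weighted by $uz^2$, sum over shapes, and convert shapes to skeleton matchings. But the central computational step as you state it is wrong, and the way you recover from it is not a proof. The per-shape generating function is
\begin{equation*}
{\bf A}_\gamma(z,u)=(1-z)\,\eta(z,u)^h,\qquad
\eta(z,u)=\frac{u^3z^6}{(1-uz^2)(1-z)^2-(2z-z^2)u^3z^6},
\end{equation*}
and the denominator does \emph{not} factor as $(1-z)^2(1-uz^2+u^3z^6)$ (with or without your spurious division by $u^3$): the two expressions differ by exactly $u^3z^6$. So your claim ${\bf A}_\gamma(z,u)=(1-z)\bigl(w(u,z)z^6/(1-z)^2\bigr)^h$ is false, and consequently ${\bf A}(z,u)\neq(1-z)\,{\bf IS}\bigl(w(u,z)z^6/(1-z)^2\bigr)$. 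The clean $w(u,z)$ only appears \emph{after} summing over shapes and applying the shape-to-matching relation ${\bf IS}(x)=\sum_{h\geq2}{\sf S}(h)\bigl(x/(1+x)\bigr)^h$ from Claim~1 of Theorem~\ref{T:skeleta-gen}: the substitution $x\mapsto x/(1+x)$ adds $u^3z^6$ to the denominator, and the identity
\begin{equation*}
(1-uz^2)(1-z)^2-(2z-z^2)u^3z^6+u^3z^6=(1-z)^2\bigl(1-uz^2+u^3z^6\bigr)
\end{equation*}
is precisely what yields $\eta/(1+\eta)=w(u,z)z^6/(1-z)^2$ and hence ${\bf A}(z,u)=(1-z){\bf G}\bigl(w(u,z)z^6/(1-z)^2\bigr)$. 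Your fallback -- ``replace $w_0(z)$ by $z^4w(u,z)$ throughout, which is exactly the effect of marking arcs'' -- asserts rather than proves this; it is pattern-matching against eq.~(\ref{s34GF}) and is not a derivation. Note also that your $u=1$ sanity check as phrased ($w(1,z)=w_0(z)/z^4$) would not detect the error, since already at $u=1$ one has $\eta_3(z)\neq w_0(z)z^2/(1-z)^2$; the inequality between the per-shape factor and the argument of ${\bf G}$ is visible in Theorem~\ref{T:skeleta-gen} itself, where ${\bf S}_\gamma(z)=(1-z)\eta_3(z)^h$ but the final formula involves $\eta_3/(1+\eta_3)$.

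The fix is exactly the paper's route: keep ${\bf A}_\gamma(z,u)=(1-z)\eta(z,u)^h$, write ${\bf A}(z,u)=(1-z)\sum_{h\geq2}{\sf IS}(h)\eta(z,u)^h$, substitute ${\bf IS}(x)=\sum_{h\geq2}{\sf S}(h)(x/(1+x))^h$, and verify the displayed polynomial identity to obtain the theorem. Your treatment of the analytic statement (analyticity of $w(e^s,z)$ and a compactness argument keeping the inner argument inside the disc of radius $R$ for $|z|\leq\tfrac12$, $|s|\leq\varepsilon$) is reasonable and in fact more explicit than the paper, which does not elaborate on that part.
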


\begin{proof}
Let $\mathcal{S}_3^{[4]}(\gamma,h,n)$ denote the set of canonical $3$-noncrossing structures, having length $n$ and $h$ arcs, contained in the preimage of a fixed skeleton shape, $\gamma\in\mathcal{IS}(m)$. Then $\mathcal{S}_3^{[4]}=\dot{\cup}\psi^{-1}(\gamma)$ and $\psi^{-1}(\gamma)=\dot{\cup}_n\mathcal{S}_3^{[4]}(\gamma,h,n)$ where $\psi:\mathcal{S}_3^{[4]}\rightarrow \mathcal{IS}$ is the surjective projection into skeleton shape. Then
\begin{eqnarray*}
{\bf A}(z,u) & = &
\sum_{m\geq 0}\sum_{\gamma\in\,{\mathcal IS}(m)}\underbrace{
\sum_{n,h}\vert \mathcal{S}_{3}^{[4]}(\gamma,h,n)\vert z^nu^h}_{
\mathbf{A}_\gamma(z,u)},
\end{eqnarray*}
where $\mathbf{A}_\gamma(z,u)$ is the bivariate generating function of
$3$-noncrossing canonical skeleton diagrams having
the shape $\gamma$.
A structure inflated from $\gamma$ has $s$ stems and $(2s-1)$ intervals
of isolated vertices.
We build these structures in a modular way via the combinatorial classes
$\mathcal{M}$ (stems), $\mathcal{K}^3$ (stacks),
$\mathcal{N}^{3}$ (induced stacks), $\mathcal{L}$ (isolated vertices),
$\mathcal{R}$ (labelled arcs) and $\mathcal{Z}$ (vertices), where
$\mathbf{Z}(z)=z$ and $\mathbf{R}(z,u)=uz^2$.
We proceed in complete analogy to the proof of Theorem~\ref{T:skeleta-gen}, in fact
all we have to is to substitute $\mathbf{R}(z,u)=uz^2$, i.e.~the
bivariate generating function of labelled arcs for
$\mathbf{R}(z)=z^2$. Accordingly we generate

\begin{itemize}
\item isolated segments i.e.~sequences of isolated vertices
$\mathcal{L}= \textsc{Seq}(\mathcal{Z})$, with the generating function
\begin{eqnarray*}
 {\bf L}(z) & = &  \frac{1}{1-z}
\end{eqnarray*}
\item stacks , i.e.~pairs consisting of the minimal sequence of arcs
$\mathcal{R}^3$ and an arbitrary extension consisting of arcs
of arbitrary finite length
$\mathcal{K}^{3}=\mathcal{R}^{3}\times\textsc{Seq}
\left(\mathcal{R}\right)$, with the generating function
\begin{eqnarray*}
\mathbf{K}^\tau(z,u) & = & \frac{(uz^2)^{3}}{1-uz^2}
\end{eqnarray*}
\item induced stacks, i.e.~stacks together with at least one nonempty
interval of isolated vertices on either or both its sides
\begin{equation*}
\mathcal{N}^{3}=\mathcal{K}^{3}\times
\left(\mathcal{Z}\times\mathcal{L}
+\mathcal{Z}\times\mathcal{L}+\left(\mathcal{Z}\times
\mathcal{L}\right)^2\right),
\end{equation*}
with the generating function
\begin{equation*}
\mathbf{N}^3(z,u)=\frac{(uz^2)^{3}}{1-uz^2}\left(2\frac{z}{1-z}
+\left(\frac{z}{1-z}\right)^2\right)
\end{equation*}
\item stems that is pairs consisting of stacks $\mathcal{K}^\tau$
and an arbitrarily long sequence of induced stacks
\begin{equation*}
\mathcal{M}^3=\mathcal{K}^{3}
\times \textsc{Seq}\left(\mathcal{N}^{3}\right),
\end{equation*}
with the generating function
\begin{eqnarray*}
\mathbf{M}^3(z,u)=\frac{\mathbf{K}^3(z,u)}
{1-\mathbf{N}^3(z,u)}=
\frac{\frac{(uz^2)^{3}}{1-uz^2}}
{1-\frac{(uz^2)^{3}}{1-uz^2}\left(2\frac{z}{1-z}
+\left(\frac{z}{1-z}\right)^2\right)}.
\end{eqnarray*}
\end{itemize}
Plainly, the second inflation is identical to that of Theorem~4.4.
Combining Step I and Step II we derive
\begin{eqnarray*}
\mathcal{A}_{\gamma}&=&\left(\mathcal{M}^3\right)^s
\times\mathcal{L}^{2s-1}
\end{eqnarray*}
and compute

\begin{eqnarray*}
{\bf A}_{\gamma}(z,u)&=
\left(\frac{\frac{(uz^2)^3}{1-uz^2}}
{1-\frac{(uz^2)^3}{1-uz^2}\left(2\frac{z}{1-z}
+\left(\frac{z}{1-z}\right)^2\right)}\right)^h
\left(\frac{1}{1-z}\right)^{2h-1}\nonumber\\
&=
(1-z)\left(\frac{u^3 z^6}{1-2z+(1-u)z^2+2uz^3-uz^4-2u^3z^7+u^3 z^8}\right)^h.
\end{eqnarray*}
Since for any $\gamma,\gamma_1\in\mathcal{IS}(h)$,
${\bf A}_{\gamma}(z,u)={\bf A}_{\gamma_1}(z,u)$ holds we obtain
\begin{eqnarray*}
{\bf A}(z,u)=\sum_{h\geq 0}\sum_{\gamma\in\,\mathcal{IS}(h)}
\mathbf{A}_\gamma(z,u) =
\sum_{h= 0}{\sf IS}(h)\,\mathbf{A}_\gamma(z,u).
\end{eqnarray*}
We set
\begin{equation*}
\eta(z,u)=\frac{u^3 z^6}{1-2z+(1-u)z^2+2uz^3-uz^4-2u^3z^7+u^3 z^8}.
\end{equation*}
Then we have ${\bf A}_{3}(z,u) =
\sum_{h\geq 2} {\sf IS}(h){\bf A}_{\gamma}(z,u)$ and
according to eq. (4.3.9),
\begin{equation*}
{\bf IS}(x)=\sum_{h\geq 2}{\sf IS}(h) x^h=\sum_{h\geq 2}{\sf S}(h)\left(x\over{1+x}\right)^h.
\end{equation*}
Therefore we arrive, setting $x=\eta(z,u)$, we have
\begin{equation*}
{\bf A}(z,u) =(1-z){\bf G}\left(\frac{w(u,z)z^6}{(1-z)^2}\right),
\end{equation*}
where $w(u,z)$ is given by
\begin{equation*}
w(u,z)=\frac{u^3}{1-u z^2+u^3 z^6}.
\end{equation*}

The proof of the theorem is complete.

\end{proof}

Theorem \ref{T:bivarate-g-arcs} puts us to use singularity analysis in order to compute the asymptotic distribution of the r.v. $\mathbb{A}_{n}$. Next we discuss about the singularities of a specific parametrization of ${\bf A}(z,u)$. We set $u=\exp^s$ and consider
\begin{equation*}
{\bf A}^*(z,u)=\sum_{n\geq 0} \alpha_{n}(s)z^n,
\end{equation*}
where $\alpha_{n}(s)=\sum_{h\leq \frac{n}{2}}{\sf A}(n,h)\exp^{sh}$. The following analysis gives us an asymptotic enumeration for the coefficients of ${\bf A}^*(z,s)$ in order to establish the central limit theorem, Theorem \ref{T:normal}, for the distribution of the number of arcs.

\begin{proposition}\label{P:h-arc}
Suppose $\epsilon > 0$, $k\ge 2$ and $u=e^{s}$, where $|s|<\varepsilon$. \\
{\sf (a)} Any dominant singularity of ${\bf A}^*(z,s)$ is a
singularity of
\begin{equation*}
{\bf G}\left(\frac{w(\exp^{s},z)z^6}{(1-z)^2}\right).
\end{equation*}
Let $\gamma(s)$ be a solution of the equation
\begin{equation}\label{E:solution-2}
\frac{w(u,z)z^6}{(1-z)^2}-R=0,
\end{equation}
where $R=256\left(\frac{11}{8}-\frac{64}{15\pi}\right)^2$ such that $\gamma(0)$ is the minimal real positive solution of
eq.~(\ref{E:solution-2}). Then $\gamma(s)$ is analytic  in $s$
and a dominant singularity of ${\bf A}^*(z,s)$.\\
{\sf (b)} $\gamma(s)$ is the unique dominant singularity of
${\bf A}^*(z,s)$ and
\begin{equation}\label{E:uniform-2}
{[z^n] {\bf A}^*(z,s)}\sim
a(s)\, n^{-5}
\left( \frac{1}{\gamma(s)}
\right)^n,
\end{equation}
for some {$a(s)\in\mathbb{C}$,} uniformly in $s$ contained in
a neighborhood of $0$.
In particular, the subexponential factors of the coefficients of
${\bf A}^*(z,s)$ coincide with those of ${\bf
G}(z)$ and are independent of $s$.
\end{proposition}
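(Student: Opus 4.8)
The plan is to treat this as a two-variable lift of the asymptotic analysis already carried out for $\mathbf{S}^{[4]}_3(z)$ in Theorem~\ref{T:canonical-sk-asym}, proving that the singularity structure deforms analytically as $s$ varies near $0$. First I would record the obvious consequences of Theorem~\ref{T:bivarate-g-arcs}: since $\mathbf{A}^*(z,s)=(1-z)\mathbf{G}\!\left(\frac{w(e^s,z)z^6}{(1-z)^2}\right)$ and $\mathbf{G}(x)=\mathbf{S}(x)-1-x$ differs from $\mathbf{S}(x)$ only by a polynomial, any dominant singularity of $\mathbf{A}^*(z,s)$ must come either from a singularity of $\mathbf{S}$ reached through the inner function $\vartheta_s(z):=\frac{w(e^s,z)z^6}{(1-z)^2}$, i.e.\ from $\vartheta_s(z)=R$, or from a pole of $\vartheta_s$ itself; the latter contribute only to the polynomial part and are irrelevant for $|z|$ small. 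This gives part {\sf (a)}'s first assertion. For the root $\gamma(s)$ of eq.~(\ref{E:solution-2}), I would invoke the implicit function theorem: at $s=0$ we have $\gamma(0)=\eta$ from Theorem~\ref{T:canonical-sk-asym}, and $\partial_z\vartheta_0(\eta)=\vartheta'(\eta)>0\neq 0$ was verified there, so $\gamma(s)$ is analytic in a neighborhood of $0$ and $\gamma(s)\to\eta$ as $s\to 0$.

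Next I would establish that $\gamma(s)$ remains the unique dominant singularity of $\mathbf{A}^*(z,s)$ for $|s|$ small, which is the heart of part {\sf (b)}. The template is the claim proved inside Theorem~\ref{T:canonical-sk-asym}: for $s=0$ one shows $|\vartheta(z)|<R$ on $|z|=\eta$ except at $z=\eta$, by a maximum-modulus argument and explicit minimization of the factors $|1-z|^2$ and $|1-z^2+z^6|$ over the circle. Since $\vartheta_s(z)$ depends analytically (indeed continuously, with continuous derivatives) on $s$ and since the strict inequality $|\vartheta_0(z)|<R$ on the compact set $\{|z|=\eta\}\setminus\{|z-\eta|<\epsilon_0\}$ is preserved under small perturbation, while near $z=\eta$ the condition $\partial_z\vartheta_0(\eta)>0$ together with $\vartheta_0(\eta)=R$ forces $|\vartheta_s(z)|$ to exceed $R$ only along the relevant boundary direction, one gets that for $|s|$ small $\gamma(s)$ is the unique $z$ of minimal modulus with $\vartheta_s(z)=R$, and $\vartheta_s$ is regular in a slightly larger disc. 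Then $\mathbf{S}$ being $\Delta$-analytic at $R$ (the lemmas preceding Theorem~\ref{T:canonical-sk-asym}), the composition $\mathbf{G}(\vartheta_s(z))$ is $\Delta$-analytic at $\gamma(s)$ by the argument of Proposition~\ref{chapter2:algeasym}/Step {\sf (c)}, applied with inner function $\vartheta_s$; the relevant hypotheses ($|\vartheta_s(z)|\le R$ for $|z|\le\gamma(s)$, $\vartheta_s'(\gamma(s))\neq 0$) hold uniformly for small $s$.

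With $\Delta$-analyticity and the singular expansion of $\mathbf{S}$ at $R$ (Lemma~\ref{L:sk-sing-expansion}, of the form $\sum_{i\le 3}b_i(x-R)^i + c'(x-R)^4\log(x-R)(1+o(1))$ with $c'<0$), I would transfer: substituting $x=\vartheta_s(z)$ and using the Taylor expansion $\vartheta_s(z)=R+\vartheta_s'(\gamma(s))(z-\gamma(s))+O((z-\gamma(s))^2)$, the function $\mathbf{A}^*(z,s)$ inherits a singular expansion whose leading singular term is proportional to $(z-\gamma(s))^4\log(z-\gamma(s))$ with multiplier $a(s)=(1-\gamma(s))\,c'\,\vartheta_s'(\gamma(s))^4$ up to the combinatorial constant from $\mathbf{G}$; this is continuous and nonzero in $s$ near $0$. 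Corollary~\ref{chapter2:sim-transfer} then yields $[z^n]\mathbf{A}^*(z,s)\sim a(s)\,(-4!)\,n^{-5}\gamma(s)^{-n}$, absorbing constants into $a(s)$, and the $o$-terms transfer uniformly because all the ingredients — the $\Delta$-domain, the singular expansion error, and the constants — vary continuously (hence uniformly on a compact $s$-neighborhood) with $s$. The subexponential factor $n^{-5}$ is manifestly independent of $s$, matching that of $\mathbf{G}$.

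The main obstacle I anticipate is not the transfer step but verifying the uniqueness of the dominant singularity and the $\Delta$-analyticity \emph{uniformly} in $s$: for $s=0$ this required an explicit, somewhat delicate minimization of $|1-z|^2$ and $|1-z^2+z^6|$ over the circle $|z|=\eta$, and one must argue that the analogous strict inequality survives for the deformed polynomial $1-e^sz^2+e^{3s}z^6$ (the denominator of $w(e^s,z)$) and the deformed numerator for all $|s|<\varepsilon$. I would handle this by a compactness/continuity argument — the set of $(s,z)$ with $|s|\le\varepsilon$, $|z|=\eta(s)$ is compact, the function $|\vartheta_s(z)|$ is continuous, and the only zero of $R-|\vartheta_s(z)|$ on each circle is at $z=\gamma(s)$ for $s=0$; a standard perturbation/continuity argument (or Rouché on $\vartheta_s(z)-R$) extends this to small $s$ — rather than by recomputing the critical-point analysis for the perturbed polynomials, which would be the tedious route. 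Everything else is a routine propagation of the $k=3$, $s=0$ case already in the excerpt.
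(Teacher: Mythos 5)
The paper itself states Proposition~\ref{P:h-arc} without proof (it is used directly to feed Theorem~\ref{T:normal}), so there is no in-text argument to compare against line by line; what can be judged is whether your route is consistent with the machinery the thesis builds, and it is. Your plan is exactly the natural parametrized version of what the paper does at $s=0$: identify the inner function $\vartheta_s(z)=\frac{w(e^s,z)z^6}{(1-z)^2}$, note $\vartheta_0=\vartheta$ and $\gamma(0)=\eta$ with $\vartheta'(\eta)>0$ (verified in Theorem~\ref{T:canonical-sk-asym}), get analyticity of $\gamma(s)$ from the implicit function theorem, carry the maximum-modulus/uniqueness claim and the $\Delta$-analyticity of the composition over from Theorem~\ref{T:canonical-sk-asym} and Proposition~\ref{chapter2:algeasym} by a perturbation argument, and then transfer the singular expansion of Lemma~\ref{L:sk-sing-expansion} through $\vartheta_s$ via Corollary~\ref{chapter2:sim-transfer}, which yields the $n^{-5}$ subexponential factor independent of $s$ and $a(s)$ proportional to $(1-\gamma(s))c'\vartheta_s'(\gamma(s))^4\gamma(s)^4$, continuous and nonzero near $s=0$. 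This is the standard ``singularity perturbation'' argument and it is the right one here; a particular strength of your compactness/continuity treatment is that it works for complex $s$, where Pringsheim-type positivity arguments are unavailable.

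Two places are thinner than they should be in a final writeup. First, the sentence that near $z=\eta$ the conditions $\vartheta_0(\eta)=R$, $\vartheta_0'(\eta)>0$ ``force $|\vartheta_s(z)|$ to exceed $R$ only along the relevant boundary direction'' is vague, and in fact $|\vartheta_s(z)|\le R$ need not hold on the whole circle near $\gamma(s)$; what you actually need is that for $z$ in a punctured neighborhood of $\gamma(s)$ inside a $\Delta$-domain, $\vartheta_s(z)$ lands in the region where ${\bf S}$ is analytic, i.e.\ in the union of the open disc $|x|<R$ and the $\Delta$-domain of ${\bf S}$ at $R$ — this is precisely Step {\sf (b)} of Proposition~\ref{chapter2:algeasym}, and it should be invoked (with parameters) rather than replaced by a modulus inequality. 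Second, the uniformity in $s$ of eq.~(\ref{E:uniform-2}) is asserted by ``everything varies continuously''; to make this rigorous you must exhibit a single $\Delta$-domain (fixed angle and radius ratio) valid for all $|s|<\varepsilon$ and error terms in the composed singular expansion bounded uniformly in $s$, which follows from joint analyticity of $\vartheta_s(z)$ in $(s,z)$ and the $s$-independence of the expansion of ${\bf S}$ at $R$, but needs to be said. A minor slip: poles of $\vartheta_s$ do produce singularities of ${\bf G}(\vartheta_s(z))$ (not merely of the ``polynomial part''); the correct reason they are irrelevant is that the zeros of $(1-z)^2(1-e^sz^2+e^{3s}z^6)$ lie outside $|z|\le|\gamma(s)|$ for $|s|$ small, by continuity from $s=0$.
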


Proposition \ref{P:h-arc} and Theorem \ref{T:normal} will lead us to the following central limit theorem for the distribution of arcs in canonical $3$-noncrossing skeleton diagrams.

\begin{theorem}\label{T:distr-asym}
Suppose that the random variable $\mathbb{A}_n(S)$ denotes the number of arcs in a canonical $3$-noncrossing skeleton diagram, $S$. Then there exists a pair $(\mu,\sigma)$ such that the normalize random variable $\mathbb{A}^*_n$ has asymptotically normal distribution with parameter $(0,1)$, where $\mu$ and $\sigma^2$ are given by
\begin{equation}\label{E:was-arc}
\mu=
-\frac{\gamma'(0)}{\gamma(0)}=0.384971,
\qquad \qquad \sigma^2=
\left(\frac{\gamma'(0)}{\gamma(0)}
\right)^2-\frac{\gamma''(0)}{\gamma(0)}=0.0686453,
\end{equation}
where $\gamma(s)$ is the unique dominant singularity of ${\bf A}(z,e^s)$.
\end{theorem}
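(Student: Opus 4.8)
The plan is to invoke Theorem~\ref{T:normal} (Bender's central limit theorem) with the bivariate generating function $f(z,u)={\bf A}(z,u)$ from Theorem~\ref{T:bivarate-g-arcs}, setting $u=e^s$. The single hypothesis to verify is eq.~(\ref{E:knackpunkt}): the uniform asymptotic expansion $[z^n]{\bf A}^*(z,s)\sim c(s)\,n^{\alpha}\,\gamma(s)^{-n}$ for $s$ near $0$, with $c(s)$ continuous and nonzero, $\alpha$ constant, and $\gamma(s)$ analytic (hence $C^\infty$) near $0$. But this is precisely the content of Proposition~\ref{P:h-arc}, with $\alpha=-5$. So the proof is short: first I would state that Proposition~\ref{P:h-arc} supplies eq.~(\ref{E:uniform-2}), which is eq.~(\ref{E:knackpunkt}) with $c(s)=a(s)$, $\alpha=-5$, uniformly in a neighborhood of $0$; second, Proposition~\ref{P:h-arc}~(a) guarantees $\gamma(s)$ is analytic in $s$, hence satisfies the smoothness requirement; therefore Theorem~\ref{T:normal} applies directly and yields that $\mathbb{A}^*_n$ is asymptotically normal with parameter $(0,1)$.

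The remaining task is to identify $(\mu,\sigma^2)$ via eq.~(\ref{E:was}), i.e.
\begin{equation*}
\mu=-\frac{\gamma'(0)}{\gamma(0)},\qquad
\sigma^2=\left(\frac{\gamma'(0)}{\gamma(0)}\right)^2-\frac{\gamma''(0)}{\gamma(0)}.
\end{equation*}
Here $\gamma(s)$ is defined implicitly by eq.~(\ref{E:solution-2}), namely $\dfrac{w(e^s,z)z^6}{(1-z)^2}=R$ with $w(u,z)=\dfrac{u^3}{1-uz^2+u^3z^6}$ and $R=256\left(\frac{11}{8}-\frac{64}{15\pi}\right)^2$. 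I would compute $\gamma(0)=\eta\approx0.4934$ (the dominant singularity of ${\bf S}^{[4]}_3(z)$ found in Theorem~\ref{T:canonical-sk-asym}, since at $s=0$ eq.~(\ref{E:solution-2}) reduces to $\vartheta(z)=R$), then obtain $\gamma'(0)$ and $\gamma''(0)$ by implicit differentiation of $\Phi(s,z):=\dfrac{w(e^s,z)z^6}{(1-z)^2}-R=0$ with respect to $s$, using $\gamma'(0)=-\Phi_s/\Phi_z$ evaluated at $(0,\eta)$ and the analogous second-order formula. Substituting these numerical values gives $\mu=0.384971$ and $\sigma^2=0.0686453$ as claimed in eq.~(\ref{E:was-arc}).

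The only genuinely substantive input is Proposition~\ref{P:h-arc} itself, whose proof (sketched in the surrounding text, following the supercritical-composition scheme of Proposition~\ref{chapter2:algeasym} and Corollary~\ref{chapter2:sim-transfer}) requires: establishing that ${\bf G}$ — which by ${\bf G}(x)={\bf S}(x)-1-x$ inherits the $\Delta$-analyticity and singular expansion of ${\bf S}(x)$ at $x=R$ from Lemmas on ${\bf S}(y)$ — composed with the algebraic inner function $\psi_s(z)=w(e^s,z)z^6/(1-z)^2$, has a unique dominant singularity $\gamma(s)$ solving $\psi_s(\gamma(s))=R$; checking $|\psi_s(z)|\le R$ on $|z|\le\gamma(s)$ and $\psi_s'(\gamma(s))\neq0$ (a maximum-modulus argument as in the proof of Theorem~\ref{T:canonical-sk-asym}, now carried out uniformly for $s$ in a small neighborhood of $0$); and invoking the continuity/analyticity of the implicitly defined $\gamma(s)$. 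I expect the main obstacle to be the \emph{uniformity} in $s$ of the singularity analysis — ensuring the $\Delta$-domain, the error estimates in the transfer theorem, and the location $\gamma(s)$ of the dominant singularity all vary continuously (indeed analytically) with $s$, so that eq.~(\ref{E:uniform-2}) holds uniformly rather than merely pointwise; once that is secured, the application of Theorem~\ref{T:normal} and the evaluation of $(\mu,\sigma^2)$ are routine.
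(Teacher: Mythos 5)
Your proposal is correct and takes essentially the same route as the paper: there, Theorem~\ref{T:distr-asym} is likewise obtained by combining Proposition~\ref{P:h-arc} (which supplies the uniform estimate eq.~(\ref{E:uniform-2}), i.e.~eq.~(\ref{E:knackpunkt}) with $\alpha=-5$ and $\gamma(s)$ analytic) with Bender's central limit theorem, Theorem~\ref{T:normal}, and the constants $\mu$, $\sigma^2$ are read off from eq.~(\ref{E:was}) using the implicitly defined $\gamma(s)$ of eq.~(\ref{E:solution-2}), with $\gamma(0)=\eta$. Your identification of the uniformity in $s$ as the substantive input hidden in Proposition~\ref{P:h-arc} matches the paper's (brief) treatment as well.
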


Fig.~\ref{F:arc-distr} are shown to authenticate that the asymptotic arc distribution of canonical $3$-noncrossing skeleton diagram is almost coincided with the actual frequencies as compute for a certain length of a diagram.

\restylefloat{figure}\begin{figure}[t]
\begin{center}
\includegraphics[width=1\textwidth]{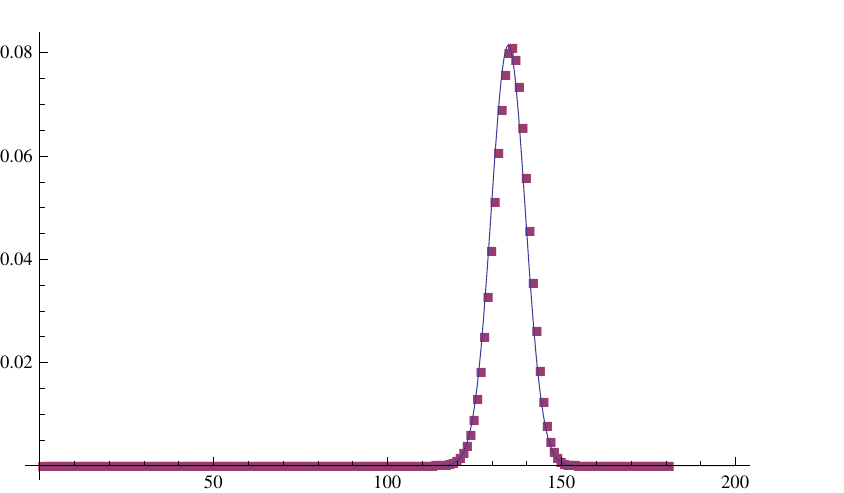}
\hskip15pt
\end{center}
\caption{Central limit theorems of Theorem~\ref{T:distr-asym}
versus exact enumeration data for 3-noncrossing canonical skeleton diagrams with length
$n=350$. We display the asymptotic arc distributions (solid
curves) and actual frequencies (square) as computed for $n={350}$.
\label{F:arc-distr}}
\end{figure}

\subsection{$k$-noncrossing skeleton diagrams}

 For the further study, we plan to extend the space of $3$-noncrossing skeleton diagrams to $k$-noncrossing, where $k\geq 4$. The functional equation for the generating function of $k$-noncrossing skeleton matching is not hard to obtain. Using the same method in Section \ref{S:skeleton-shape} by just replacing ``3" to ``$k$", we have
 \begin{equation}
 {\bf S}_k(z{\bf F}^2_k(z))={\bf F}_k(z),
 \end{equation}
 where ${\bf S}_k$ is the generating function of $k$-noncrossing matching. Then we derive the canonical $k$-noncrossing skeleton diagrams as
 \begin{equation}\label{E:s_kgf}
{\bf S}^{[4]}_{k}(z)=(1-z){\bf G}_k\left(\left({{\sqrt{w_0(z)}z}\over{1-z}}\right)^2\right),
\end{equation}
where
\begin{eqnarray*}
{\bf G}_k(x)={\bf S}_k(x)-1-x,
w_0(z)={z^4 \over {1-z^2+z^6}}.
\end{eqnarray*}

The core to do the asymptotic analysis of eq. (\ref{E:s_kgf}) is to
analyze ${\bf S}_k$. If we follow the same method in Section \ref{S:asymptotic}, we will discover that the property of analytic continuation for ${\bf F}_k(z), k\geq 4$ is unknown. This problem induces a big trouble for discussing the analytic continuation of ${\bf S}_k$. The other problem is that we do not know the explicit value of ${\bf F}_k(\rho_k)$, which directly leads the unknown of singularity of ${\bf S}_k$. Once these problems were figured out, the {\sf cross} problem would be totally solved. Definitely, it is a challenge work because the proof might be totally different from that used to prove Lemma~\ref{L:exs_and_uniq} and Lemma~\ref{L:sk-sing-expansion}.

%%%%%%%%%%%%%%%%%%%%%%%%%%%%%%%%%%%%
%%%%%%%%%%%%%%%%%%%%%%%%%%%%%%%%%%%%%%%%%%%%%%%%%%%
\renewcommand\it{}
\newpage
\def\papername{Bibliography}\label{bibliographypage}
\bibliographystyle{plain}

\bibliography{thesis-bio}

\end{document}